\newtheorem{theorem}{Theorem}[section] 
\newtheorem{lemma}[theorem]{Lemma}
\newtheorem{corollary}[theorem]{Corollary}
\newtheorem{proposition}[theorem]{Proposition}
\theoremstyle{definition}
\newtheorem{definition}[theorem]{Definition}
\newtheorem{remark}[theorem]{Remark}
\numberwithin{equation}{section}
\newcommand{\Ker}{\operatorname{Ker}}
\newcommand{\relmiddle}[1]{\mathrel{}\middle#1\mathrel{}}
\begin{document}

\title[Classification of reductive homogeneous spaces with $\rho$-inequality]{Classification of reductive homogeneous spaces satisfying strict inequality for Benoist-Kobayashi's $\rho$ functions}

\author{Kazushi Maeda}
\address[K.Maeda]{Graduate School of Mathematical Sciences, The University of Tokyo, 3-8-1 Komaba, Meguro, 153-8914 Tokyo, Japan}
\email{kmaeda@ms.u-tokyo.ac.jp}

\keywords{Reductive Groups, Homogeneous Spaces, Tempered Representations, Square integrable Representations, Discrete Series, Benoist-Kobayashi's $\rho$ Functions} 

\begin{abstract}
Let $G$ be a real reductive Lie group and $H$ a reductive subgroup of $G$. 
Benoist-Kobayashi studied when $L^2(G/H)$ is a tempered representation of $G$.
They introduced the functions $\rho$ on Lie algebras and gave a necessary and sufficient condition for the temperedness of $L^2(G/H)$ in terms of an inequality on $\rho$.
In a joint work with Y. Oshima, we considered when $L^2(G/H)$ is equivalent to a unitary subrepresentation of $L^2(G)$ and gave a sufficient condition for this in terms of a strict inequality of $\rho$.
In this paper, we will classify the pairs $(\mathfrak{g}, \mathfrak{h})$ with $\mathfrak{g}$ complex reductive and $\mathfrak{h}$ complex semisimple which satisfy that strict inequality of $\rho$.

\end{abstract}

\maketitle

\section{Introduction}

Let $G$ be a real algebraic reductive Lie group and $H$ an algebraic reductive subgroup of $G$.
The quotient space $G/H$ has a $G$-invariant measure $\nu$.
Then $G$ acts continuously on the Hilbert space 
\[L^2(G/H) := \left\{f:G/H \rightarrow \mathbb{C} \relmiddle| 
f\ \text{is measurable},\ 
\int_{G/H} |f(x)|^2 d\nu < \infty
\right\}\]
and we have a unitary representation $L^2(G/H)$ of $G$ called the regular representation.

It is known that a unitary representation $\pi$ of $G$ decomposes as a direct integral of irreducible unitary representations of $G$.
Let $\widehat{G}$ be the unitary dual of $G$, that is, the space of all equivalence classes of irreducible unitary representations of $G$ equipped with the Fell topology.
Then there exists a Borel measure $\mu$ on $\widehat{G}$ and a measurable function $m : \widehat{G} \rightarrow \mathbb{N} \cup \{\infty\}$ such that 
\[\pi \simeq \int_{\widehat{G}}^{\oplus} \mathcal{H}_\sigma^{\oplus m(\sigma)} d\mu(\sigma).\] 
We denote by supp\,$(\pi)$ the set of irreducible unitary representations contributing to the decomposition of $\pi$.
When $\pi$ is the regular representation $L^2(G)$, we write supp\,$(L^2(G))$ as $\widehat{G}_{\text{temp}}$.
A unitary representation $\pi$ of $G$ is called tempered if supp\,$(\pi) \subset \widehat{G}_{\text{temp}}$.
The formula that decomposes $L^2(G/H)$ into a direct integral is called the Plancherel formula.
It has been studied for many years in a variety of settings.
We present several results below.
\begin{itemize}
  \item When $G$ is a compact topological group and $H = \{e\}$, it is called the Peter-Weyl theorem, and the regular representation $L^2(G)$ decomposes into a direct sum of all finite dimensional irreducible representations of $G$.
  \item When $G$ is a semisimple Lie group and $H = \{e\}$, Harish-Chandra obtained the Plancherel formula.
  \item When $G/H$ is a semisimple symmetric space, a great deal of studies has been done on the irreducible decomposition of $L^2(G/H)$, 
  and Plancherel formulas were given by T. Oshima, Delorme \cite{Del}, van den Ban and Schlichtkrull \cite{BaSc}.
\end{itemize}
However, such formulas are unknown in general for many homogeneous spaces $G/H$.

Let $\mathfrak{g}$, $\mathfrak{h}$ denote the Lie algebras of $G$, $H$ respectively.
In a series of papers \cite{BK,BKII,BKIII,BKIV}, Benoist and Kobayashi studied when the regular representation $L^2(G/H)$ is tempered.
In the first paper \cite{BK}, they introduced the functions $\rho$ on $\mathfrak{h}$, and characterized the temperedness of $L^2(G/H)$ by the inequality of $\rho$.

For a finite dimensional $\mathfrak{h}$-module $(\pi, V)$, they defined a non-negative valued piecewise linear function $\rho_V$ on a maximal split abelian subspace $\mathfrak{a}$ of $\mathfrak{h}$ by
\[\rho_V(Y) := \displaystyle \frac{1}{2} \sum_{\lambda \in \Lambda_Y} m_{\lambda} \left| \operatorname{Re} \lambda \right|\quad (Y \in \mathfrak{a})\]
where $\Lambda_Y$ is the set of all eigenvalues of $\pi(Y)$ in the complexification $V_\mathbb{C}$ of $V$ and $m_{\lambda}$ is the multiplicity of the eigenvalue $\lambda$.
They proved that 
\[L^2(G/H)\ \text{is tempered if and only if the inequality}\ \rho_\mathfrak{h} \leq \rho_{\mathfrak{g}/\mathfrak{h}}\ \text{on}\ \mathfrak{a}\]
where $V = \mathfrak{h}$ or $\mathfrak{g}/\mathfrak{h}$ on which $\mathfrak{h}$ acts as the adjoint action.

In the third paper \cite{BKIII}, they studied the relationship between the temperedness of $L^2(G/H)$ and the stabilizer of $\mathfrak{h}$-module $\mathfrak{g}/\mathfrak{h}$.
They proved that the inequality $\rho_\mathfrak{h} \leq \rho_{\mathfrak{g}/\mathfrak{h}}$ implies that the set of points in $\mathfrak{g}/\mathfrak{h}$ whose stabilizer in $\mathfrak{h}$ is amenable reductive is dense.
Moreover, they proved that the inequality $\rho_\mathfrak{h} \leq \rho_{\mathfrak{g}/\mathfrak{h}}$ holds if the set of points in $\mathfrak{g}/\mathfrak{h}$ whose stabilizer in $\mathfrak{h}$ is abelian is dense.
The proof of the second statement is reduced to the case where $\mathfrak{g}$ and $\mathfrak{h}$ are complex semisimple Lie algebras.
In particular, for a complex semisimple Lie algebra $\mathfrak{g}$ and a complex semisimple Lie subalgebra $\mathfrak{h}$, the inequality $\rho_\mathfrak{h} \leq \rho_{\mathfrak{g}/\mathfrak{h}}$ is equivalent to the condition that the set of points in $\mathfrak{g}/\mathfrak{h}$ whose stabilizer in $\mathfrak{h}$ is abelian is dense.
The proof of this claim is reduced to the case where $\mathfrak{g}$ is simple and is established by classifying the pairs $(\mathfrak{g}, \mathfrak{h})$ which satisfy  $\rho_\mathfrak{h} \nleq \rho_{\mathfrak{g}/\mathfrak{h}}$.

In \cite{MO}, we generalized the notion of square integrable (irreducible) representations to possibly reducible unitary representations of a unimodular Lie group $G$, and 
studied when $L^2(G/H)$ is a square integrable representation (\cite[Definition 2.2]{MO}).
A square integrable unitary representation is unitary equivalent to a subrepresentation of a direct sum of copies of the left regular representation $L^2(G)$, and hence it is tempered.  
\begin{theorem}[{\cite[Theorem 3.2]{MO}\label{intro:1}}]
Let $G$ be an algebraic reductive Lie group and
 $H$ an algebraic reductive subgroup of $G$. 
 The unitary representation $L^2(G/H)$ is a square integrable representation if 
 $\rho_{\mathfrak{h}}(Y) < \rho_{\mathfrak{g}/\mathfrak{h}}(Y)$ for any $Y\in \mathfrak{a} \setminus \{0\}$.
\end{theorem}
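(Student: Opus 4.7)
My plan is to reduce the square integrability of $L^2(G/H)$ to $L^2(G)$-integrability of a dense family of matrix coefficients, and to obtain that by sharpening the matrix coefficient bound underlying Benoist--Kobayashi's temperedness criterion. Concretely, by the characterization used in \cite[Definition 2.2]{MO}, $L^2(G/H)$ is square integrable provided there exists a dense subspace $D \subset L^2(G/H)$ such that every matrix coefficient
\[
\Phi_{f_1, f_2}(g) := \langle L(g) f_1, f_2 \rangle_{L^2(G/H)} = \int_{G/H} f_1(g^{-1} x) \overline{f_2(x)}\, dx
\]
lies in $L^2(G)$. I would take $D$ to be the subspace of smooth, $K$-finite, compactly supported functions on $G/H$; the problem then reduces to a pointwise decay estimate on $\Phi_{f_1, f_2}$ that is square integrable against $dg$.

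Second, I would revisit the proof of Benoist--Kobayashi's temperedness criterion in \cite{BK}. At its core lies a bound of the form
\[
|\Phi_{f_1, f_2}(g)| \leq C\, \Xi_G(g)\, e^{(\rho_\mathfrak{h} - \rho_{\mathfrak{g}/\mathfrak{h}})(Y(g))},
\]
where $\Xi_G$ is the Harish-Chandra $\Xi$-function of $G$ and $Y(g) \in \mathfrak{a}$ measures the ``$H$-displacement'' of $g$ (controlling the overlap $g\cdot\mathrm{supp}(f_1)\cap\mathrm{supp}(f_2)$ in $G/H$). Because $\rho_\mathfrak{h}$ and $\rho_{\mathfrak{g}/\mathfrak{h}}$ are continuous and positively $1$-homogeneous on $\mathfrak{a}$, the strict hypothesis together with a compactness argument on the unit sphere yields a uniform gap $\rho_{\mathfrak{g}/\mathfrak{h}}(Y) - \rho_\mathfrak{h}(Y) \geq \delta \|Y\|$ for some $\delta > 0$, so the bound improves to
\[
|\Phi_{f_1, f_2}(g)| \leq C\, \Xi_G(g)\, e^{-\delta \|Y(g)\|}.
\]

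Third, since $\Xi_G \in L^{2+\varepsilon}(G)$ for every $\varepsilon > 0$ and the exponential factor $e^{-\delta\|Y(g)\|}$ dominates the polynomial Haar-density growth along the $H$-Cartan directions of $G = K A^+ K$, integrating in Cartan coordinates yields $\Phi_{f_1, f_2} \in L^2(G)$, completing the proof.

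The hard part will be the second step. Benoist and Kobayashi phrase their estimate as a volume bound on tubes in $G/H$ rather than as a pointwise decay statement on $\Phi_{f_1, f_2}$, and the displacement $Y(g)$ takes values in $\mathfrak{a} \subset \mathfrak{h}$ rather than in the $G$-Cartan. Converting the volume estimate into a clean multiplicative bound on $\Phi_{f_1, f_2}$, and verifying that $\|Y(g)\|$ grows comparably to the $G$-Cartan parameter of $g$ modulo bounded contributions of the centralizer of $\mathfrak{a}$ (so that the $\delta$-gap really overpowers the Haar polynomial), is where the bulk of the technical work will lie.
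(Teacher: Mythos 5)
The paper does not actually prove this statement: it is imported verbatim from \cite[Theorem 3.2]{MO} (and restated as Theorem \ref{thm:SqintRho}), so there is no internal proof to compare against. Judged on its own terms, your architecture is the right one: pass to the dense subspace of compactly supported functions (condition (1) of the definition of square integrability), obtain pointwise decay of $\Phi_{f_1,f_2}$ in terms of the $H$-displacement $Y(g)$, convert the strict hypothesis into a uniform gap $\rho_{\mathfrak{g}/\mathfrak{h}}(Y)-\rho_{\mathfrak{h}}(Y)\ge\delta\|Y\|$ by homogeneity and compactness of the unit sphere, and integrate over the tube $\Omega H\Omega$ on which the coefficient is supported. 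Steps one and three are sound.

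The gap is the pivotal estimate in your second step. The inequality $|\Phi_{f_1,f_2}(g)|\le C\,\Xi_G(g)\,e^{(\rho_{\mathfrak{h}}-\rho_{\mathfrak{g}/\mathfrak{h}})(Y(g))}$ is not what Benoist--Kobayashi prove, and it is false in general. Their mechanism gives $|\Phi_{f_1,f_2}(g)|\le \|f_1\|_\infty\|f_2\|_\infty\,\nu(gS_1\cap S_2)$ with $S_i=\operatorname{supp}(f_i)$, together with the volume estimate $\nu(gS_1\cap S_2)\le C\,e^{-\rho_{\mathfrak{g}/\mathfrak{h}}(Y(g))}$ for $g\in\Omega e^{Y(g)}\Omega$, $Y(g)\in\mathfrak{a}_+$ (and $\Phi_{f_1,f_2}\equiv 0$ off the tube). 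Since $\Xi_G(g)$ is comparable to $e^{-\frac12\rho_{\mathfrak{g}}(Y(g))}$ up to polynomial factors and $\rho_{\mathfrak{g}}=\rho_{\mathfrak{h}}+\rho_{\mathfrak{g}/\mathfrak{h}}$ on $\mathfrak{a}$, your bound amounts to $e^{\frac12\rho_{\mathfrak{h}}(Y)-\frac32\rho_{\mathfrak{g}/\mathfrak{h}}(Y)}$, which is strictly stronger than the sharp volume bound whenever $\rho_{\mathfrak{h}}<\rho_{\mathfrak{g}/\mathfrak{h}}$. Already for $G=SL_2(\mathbb{R})$, $H=A$ and $Y=\mathrm{diag}(s,-s)$, the intersection volume decays exactly like $e^{-2s}=e^{-\rho_{\mathfrak{g}/\mathfrak{h}}(Y)}$, whereas your bound would force decay like $(1+s)e^{-3s}$; so the estimate you lean on is unavailable. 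The good news is that the correct, weaker bound already suffices and renders $\Xi_G$ and the $L^{2+\varepsilon}$ discussion unnecessary: using $\mathrm{vol}_G(\Omega e^{Y}\Omega)\asymp e^{\rho_{\mathfrak{g}}(Y)}$ and bounded overlap of the sets $\Omega e^{Y}\Omega$, one gets, up to polynomial factors,
\[
\int_G|\Phi_{f_1,f_2}(g)|^2\,dg\;\lesssim\;\int_{\mathfrak{a}_+}e^{\rho_{\mathfrak{g}}(Y)-2\rho_{\mathfrak{g}/\mathfrak{h}}(Y)}\,dY\;=\;\int_{\mathfrak{a}_+}e^{(\rho_{\mathfrak{h}}-\rho_{\mathfrak{g}/\mathfrak{h}})(Y)}\,dY,
\]
which converges precisely because of your uniform gap $\delta\|Y\|$ (which also absorbs the polynomial corrections). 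Rewriting step two around the volume estimate in this form closes the argument; as written, it does not.
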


The square integrability of the regular representation $L^2(G/H)$ suggests, for instance, the following for the discrete series for $G/H$.

\begin{corollary}[{\cite[Corollary 3.5]{MO}}]
  Let $G$ be an algebraic reductive group and
 $H$ an algebraic reductive subgroup of $G$. 
Suppose that $\rho_{\mathfrak{h}}(Y) < \rho_{\mathfrak{g}/\mathfrak{h}}(Y)$ for any
 $Y \in \mathfrak{a} \setminus \{0\}$.
Then $\textup{Disc}(G/H) \subset \textup{Disc}(G)$.
In particular,
 if $\textup{Disc}(G) = \emptyset$, then $\textup{Disc}(G/H) = \emptyset$.
\end{corollary}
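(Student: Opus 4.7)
The plan is to derive this corollary directly from Theorem \ref{intro:1} by unpacking the definition of a square integrable representation. Under the hypothesis $\rho_{\mathfrak{h}}(Y) < \rho_{\mathfrak{g}/\mathfrak{h}}(Y)$ for all $Y \in \mathfrak{a} \setminus \{0\}$, Theorem \ref{intro:1} asserts that $L^2(G/H)$ is a square integrable unitary representation of $G$. By \cite[Definition 2.2]{MO}, this means that $L^2(G/H)$ is unitarily equivalent to a closed $G$-subrepresentation of a direct sum $L^2(G)^{\oplus I}$ of copies of the left regular representation, for some index set $I$.

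Next I would take any $\pi \in \textup{Disc}(G/H)$. By definition, $\pi$ is an irreducible unitary representation of $G$ that occurs as a closed $G$-invariant subrepresentation of $L^2(G/H)$. Composing this inclusion with the embedding above produces an isometric, $G$-equivariant map $\iota \colon \pi \hookrightarrow L^2(G)^{\oplus I}$. For each $i \in I$ let $p_i \colon L^2(G)^{\oplus I} \to L^2(G)$ be the projection onto the $i$-th coordinate; then $p_i \circ \iota$ is a bounded intertwining operator from $\pi$ to $L^2(G)$.

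Since $\iota$ is nonzero, there exists some $i \in I$ with $p_i \circ \iota \neq 0$. By irreducibility of $\pi$ and Schur's lemma, $\ker(p_i \circ \iota)$ is a closed $G$-invariant subspace of $\pi$, hence either $0$ or all of $\pi$; nonvanishing forces it to be $0$. Thus $p_i \circ \iota$ is injective with closed image (its image is a nonzero closed $G$-subrepresentation of $L^2(G)$ equivalent to $\pi$), exhibiting $\pi$ as an element of $\textup{Disc}(G)$. This proves the inclusion $\textup{Disc}(G/H) \subset \textup{Disc}(G)$, and the "in particular" assertion is an immediate consequence.

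There is no substantial obstacle here: the corollary is a formal consequence of Theorem \ref{intro:1} together with the definition of square integrability from \cite{MO}. The only step requiring minor care is the passage from an embedding into $L^2(G)^{\oplus I}$ down to an embedding into a single copy of $L^2(G)$, which is handled by Schur's lemma as above.
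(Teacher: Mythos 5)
Your derivation is correct and is essentially the intended one: the paper presents this corollary as a direct consequence of Theorem \ref{intro:1} (deferring details to \cite{MO}), namely that square integrability embeds $L^2(G/H)$ into a direct sum of copies of $L^2(G)$, and Schur's lemma then places each irreducible closed subrepresentation inside a single copy of $L^2(G)$. The one point worth making explicit is why the image of $p_i\circ\iota$ is closed: by Schur's lemma $(p_i\circ\iota)^*(p_i\circ\iota)$ is a positive scalar, so $p_i\circ\iota$ is a scalar multiple of an isometry.
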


The main result of this paper is a classification of pairs $(\mathfrak{g}, \mathfrak{h})$, where $\mathfrak{g}$ is a complex reductive Lie algebra whose derived Lie algebra $[\mathfrak{g}, \mathfrak{g}]$ is a direct sum of simple ideals of classical type and $\mathfrak{h}$ is a complex semisimple Lie subalgebra of $\mathfrak{g}$, satisfying $\rho_\mathfrak{h} \leq \rho_{\mathfrak{g}/\mathfrak{h}}$ and $\rho_\mathfrak{h} \nless \rho_{\mathfrak{g}/\mathfrak{h}}$ on $\mathfrak{a}\setminus\{0\}$.
A classification of such pairs $(\mathfrak{g}, \mathfrak{h})$ is reduced to a classification where $\mathfrak{g}$ is simple by Lemma \ref{reduction2}.
The following is the main result of this paper (Theorem \ref{main}).
\begin{theorem}\label{intro:2}
  Let $\mathfrak{g}$ be a complex simple Lie algebra of classical type, i.e., $\mathfrak{g} = \mathfrak{sl}(\mathbb{C}^n), \mathfrak{so}(\mathbb{C}^n)$ or $\mathfrak{sp}(\mathbb{C}^{2n})$.
  All pairs $(\mathfrak{g}, \mathfrak{h})$ which satisfy $\rho_\mathfrak{h} \leq \rho_{\mathfrak{g}/\mathfrak{h}}$ and $\rho_\mathfrak{h} \nless \rho_{\mathfrak{g}/\mathfrak{h}}$ on $\mathfrak{a}\setminus\{0\}$ are exactly those of in Table \ref{classif}.
\end{theorem}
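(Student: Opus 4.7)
The plan is to start from the Benoist--Kobayashi classification in \cite{BKIII} of pairs $(\mathfrak{g},\mathfrak{h})$ with $\mathfrak{g}$ complex simple classical and $\mathfrak{h}$ complex semisimple satisfying the non-strict inequality $\rho_\mathfrak{h}\le\rho_{\mathfrak{g}/\mathfrak{h}}$, and then, for each such pair on that list, to test whether the inequality is in fact strict on $\mathfrak{a}\setminus\{0\}$. Since $\mathfrak{h}$ is reductive in $\mathfrak{g}$, we have $\mathfrak{g}\cong\mathfrak{h}\oplus\mathfrak{g}/\mathfrak{h}$ as $\mathfrak{h}$-modules, so $\rho_{\mathfrak{g}}=\rho_{\mathfrak{h}}+\rho_{\mathfrak{g}/\mathfrak{h}}$, and the question is equivalent to whether there exists a nonzero $Y\in\mathfrak{a}$ with $\rho_{\mathfrak{g}}(Y)=2\rho_{\mathfrak{h}}(Y)$.

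For classical $\mathfrak{g}$ I would realize the adjoint $\mathfrak{h}$-module structure of $\mathfrak{g}$ in terms of the branching of the standard representation $\mathbb{C}^n$. Fixing a decomposition of $\mathbb{C}^n$ into $\mathfrak{h}$-isotypic components $V_1\oplus\cdots\oplus V_r$ and letting $\{\mu_1,\dots,\mu_n\}$ denote the eigenvalues of $Y$ on $\mathbb{C}^n$, one obtains
\[\rho_{\mathfrak{sl}(\mathbb{C}^n)}(Y)\;=\;\tfrac12\sum_{i,j}|\operatorname{Re}\mu_i-\operatorname{Re}\mu_j|,\]
with analogous symmetric- or antisymmetric-square formulas for $\mathfrak{so}$ and $\mathfrak{sp}$. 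Likewise $\rho_\mathfrak{h}(Y)$ is computed from $\operatorname{ad}_{\mathfrak{h}}$, whose eigenvalues are again bilinear combinations of the $\mu_i$ through the embedding $\mathfrak{h}\hookrightarrow\mathfrak{sl}(\mathbb{C}^n)$. Thus on a ray $\mathbb{R}_{>0}Y$ the comparison $\rho_{\mathfrak{g}}(Y)=2\rho_\mathfrak{h}(Y)$ becomes a finite identity among real parts of eigenvalues; since both $\rho_\mathfrak{g}$ and $\rho_\mathfrak{h}$ are linear on each chamber of $\mathfrak{a}$, it suffices to test the identity on the interior and walls of each chamber.

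For each candidate pair I would pick $Y$ supported on a single isotypic component $V_i$ (or a minimal combination of two components), which makes most cross-terms $V_i\otimes V_j^*$ with $j\neq i$ contribute transparently and reduces equality to sharp numerical constraints on the dimensions $\dim V_i$ and on the action of $\mathfrak{h}|_{V_i}$; I expect this to isolate precisely the entries of Table \ref{classif}. Conversely, for the pairs absent from the table, I would exhibit on each open chamber an eigenvalue of $\operatorname{ad}_{\mathfrak{g}/\mathfrak{h}}(Y)$ whose absolute real part is not matched by any eigenvalue from $\operatorname{ad}_\mathfrak{h}(Y)$, producing strict inequality. The main obstacle is the combinatorial bulk: the BK list contains many sporadic small-rank embeddings (in particular non-tensor products inside $\mathfrak{sl}$, $\mathfrak{so}$ and $\mathfrak{sp}$), each requiring its own eigenvalue bookkeeping. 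A secondary subtlety is that, because $\rho$ is only piecewise linear, equality attained on a codimension-one facet must be carefully distinguished from equality in the interior of a chamber, which I would handle by a finite chamber-by-chamber check after fixing signs of $\operatorname{Re}\mu_i-\operatorname{Re}\mu_j$.
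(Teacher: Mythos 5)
Your reduction of the problem to the existence of a nonzero $Y$ with $\rho_{\mathfrak{g}}(Y)=2\rho_{\mathfrak{h}}(Y)$, and your method of computing both sides from the eigenvalues of $Y$ on the standard representation and hunting for witness vectors supported on a single isotypic component, are exactly the paper's working tools (this is how Propositions \ref{red2}--\ref{final} are proved, cone by cone). However, the organizing principle you propose --- ``start from the Benoist--Kobayashi classification of pairs satisfying $\rho_{\mathfrak{h}}\le\rho_{\mathfrak{g}/\mathfrak{h}}$ and test each pair on that list'' --- has a genuine gap: no such finite list exists. What \cite{BKIII} classifies are the pairs \emph{failing} the non-strict inequality; the pairs satisfying it form infinitely many families (and, for simple $\mathfrak{h}$ acting irreducibly, one family per dominant weight of each simple Lie algebra), so there is nothing finite to iterate over. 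The paper supplies the missing structure in two places. First, it runs a descent through Dynkin's classification of \emph{maximal} semisimple subalgebras (Theorems \ref{Dred}, \ref{Dtens}) together with the monotonicity statement of Lemma \ref{lem:til} ($\rho_{\widetilde{\mathfrak{h}}}<\rho_{\mathfrak{g}/\widetilde{\mathfrak{h}}}$ for a larger subalgebra forces the same for every smaller one), which is what makes the descent terminate; note that this descent cannot stop at maximal subalgebras, since Table \ref{classif} contains non-maximal entries such as $\mathfrak{sl}_{p+1}\oplus\mathfrak{h}_2\subset\mathfrak{sl}_{2p+1}$ with $\mathfrak{h}_2\subsetneq\mathfrak{sl}_p$, so one must also control which proper subalgebras of a borderline $\widetilde{\mathfrak{h}}$ still produce witness vectors (Lemmas \ref{kh}, \ref{khcomp}).

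Second, and more seriously for your plan, the case of a simple $\mathfrak{h}$ embedded irreducibly requires handling infinitely many highest weights $\lambda$ at once; a per-pair eigenvalue check cannot terminate. The paper's device (Proposition \ref{si}) is the auxiliary lower bound $\rho_{\mathfrak{g}}(Y)\ge f(\lambda;Y)=\tfrac12\sum_{\lambda_i+\lambda_j\ne 0}(\lambda_i-\lambda_j)(Y)$, which is monotone in the dominance order $\prec$ on dominant weights because $\Lambda(\mu)\subset\Lambda(\lambda)$ when $\mu\prec\lambda$; this reduces the verification of $2\rho_{\mathfrak{h}}<\rho_{\mathfrak{g}}$ to the finitely many minimal dominant weights of each $\mathfrak{h}$, independently of whether the image lands in $\mathfrak{sl}$, $\mathfrak{so}$ or $\mathfrak{sp}$. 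Without an argument of this kind (or some substitute that makes the set of representations to check finite), your proposal does not close.
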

Here, $\rho_\mathfrak{h} \nless \rho_{\mathfrak{g}/\mathfrak{h}}$ on $\mathfrak{a}\setminus\{0\}$ means that $\rho_\mathfrak{h} < \rho_{\mathfrak{g}/\mathfrak{h}}$ does not hold on $\mathfrak{a}\setminus\{0\}$.
When a pair $(\mathfrak{g}, \mathfrak{h})$ satisfies $\rho_\mathfrak{h} \leq \rho_{\mathfrak{g}/\mathfrak{h}}$ and $\rho_\mathfrak{h} \nless \rho_{\mathfrak{g}/\mathfrak{h}}$, there exists a nonzero vector $Y \in \mathfrak{a}$ such that $\rho_\mathfrak{h}(Y) = \rho_{\mathfrak{g}/\mathfrak{h}}(Y) > 0$.
Such a vector $Y$ is called \textit{a witness vector} in this paper.
For a pair $(\mathfrak{g}, \mathfrak{h})$ which satisfies $\rho_\mathfrak{h} \leq \rho_{\mathfrak{g}/\mathfrak{h}}$ and $\rho_\mathfrak{h} \nless \rho_{\mathfrak{g}/\mathfrak{h}}$, we also determine all witness vectors and listed them in Table \ref{classif}.

Roughly speaking, a Lie subalgebra $\mathfrak{h} \subset \mathfrak{g}$ satisfying $\rho_\mathfrak{h} \leq \rho_{\mathfrak{g}/\mathfrak{h}}$ is relatively small compared to $\mathfrak{g}$, whereas a Lie subalgebra $\mathfrak{h} \subset \mathfrak{g}$ satisfying $\rho_\mathfrak{h} \nless \rho_{\mathfrak{g}/\mathfrak{h}}$ is relatively large compared to $\mathfrak{g}$.
Thus, among those satisfying $\rho_\mathfrak{h} \leq \rho_{\mathfrak{g}/\mathfrak{h}}$, the pairs which do not satisfy $\rho_\mathfrak{h} < \rho_{\mathfrak{g}/\mathfrak{h}}$ are relatively few.
Indeed, Table \ref{classif} contains only eleven series. 

The classification is carried out by the same line as in Benoist-Kobayashi~\cite{BKIII} and explicit computations of $\rho_\mathfrak{h}$ and $\rho_\mathfrak{g}$.
The proof of Theorem \ref{intro:2} largely relies on Dynkin's classification of maximal semisimple Lie subalgebras in the classical simple Lie algebras up to conjugacy.

Let $\mathfrak{g} = \mathfrak{sl}(\mathbb{C}^n), \mathfrak{so}(\mathbb{C}^n)$ or $\mathfrak{sp}(\mathbb{C}^{2n})$ and $V = \mathbb{C}^n, \mathbb{C}^n$ or $\mathbb{C}^{2n}$ respectively.
Every semisimple Lie subalgebra $\mathfrak{h} \subset \mathfrak{g}$ satisfies exactly one of the following three conditions:
\begin{itemize}
  \item[(i)] $\mathfrak{h}$ acts reducibly on $V$,
  \item[(ii)] $\mathfrak{h}$ is non-simple and acts irreducibly on $V$,
  \item[(iii)] $\mathfrak{h}$ is simple and acts irreducibly on $V$.
\end{itemize}
The maximal semisimple Lie subalgebras $\mathfrak{h}$ satisfying the condition (i) is classified in Table $12$ and Table $12$a in \cite{DyS}, while those satisfying the condition (ii) is classified in Theorems 1.3 and 1.4 in \cite{DyM}.
For details regarding these, see Theorems \ref{Dred} and \ref{Dtens}.

In order to prove Theorem \ref{intro:2}, we first determine conditions equivalent to  $\rho_\mathfrak{h} < \rho_{\mathfrak{g}/\mathfrak{h}}$ for maximal semisimple Lie subalgebras $\mathfrak{h} \subset \mathfrak{g}$ which satisfy $\rho_\mathfrak{h} \leq \rho_{\mathfrak{g}/\mathfrak{h}}$ and each of the three cases (i) -- (iii). 

The case (i) is discussed in Propositions \ref{red2}, \ref{redu} and \ref{redex}.
The case (ii) is discussed in Proposition \ref{tens} and corresponds to the case of tensor representations.
the case (iii) is discussed in Proposition \ref{si}, we show that the inequality $\rho_\mathfrak{h} < \rho_{\mathfrak{g}/\mathfrak{h}}$ holds for almost all irreducible representations of $\mathfrak{h}$.

Using the results in three cases (Propositions \ref{red2} -- \ref{final}) and Dynkin's classification, we prove the theorem as follows.

When a maximal semisimple Lie subalgebra $\mathfrak{h}$ of $\mathfrak{g}$ satisfies $\rho_\mathfrak{h} < \rho_{\mathfrak{g}/\mathfrak{h}}$, the arguments terminates, since any semisimple Lie subalgebra $\mathfrak{h}^\prime \subset \mathfrak{h}$ also satisfies $\rho_{\mathfrak{h}^\prime} < \rho_{\mathfrak{g}/{\mathfrak{h}^\prime}}$ (see Lemma \ref{lem:til}).

When a maximal semisimple Lie subalgebra $\mathfrak{h}$ of $\mathfrak{g}$ does not satisfy $\rho_\mathfrak{h} < \rho_{\mathfrak{g}/\mathfrak{h}}$, we consider whether a semisimple Lie subalgebra $\mathfrak{h}^\prime \subset \mathfrak{h}$ satisfies $\rho_{\mathfrak{h}^\prime} < \rho_{\mathfrak{g}/{\mathfrak{h}^\prime}}$.
Thus, we proceed by passing from a larger $\mathfrak{h}$ to a smaller $\mathfrak{h}$, using Dynkin's classification of maximal semisimple Lie subalgebras of the simple Lie algebra $\mathfrak{g}$ in this process.

In section \ref{sec:si}, We deal with the case (iii).
In this case, there is no difference between the inequality $\rho_\mathfrak{h} \leq \rho_{\mathfrak{g}/\mathfrak{h}}$ and the inequality $\rho_\mathfrak{h} < \rho_{\mathfrak{g}/\mathfrak{h}}$.
That is, the pairs $(\mathfrak{g}, \mathfrak{h})$ that do not satisfy $\rho_\mathfrak{h} \leq \rho_{\mathfrak{g}/\mathfrak{h}}$ coincide with those that do not satisfy $\rho_\mathfrak{h} < \rho_{\mathfrak{g}/\mathfrak{h}}$.
Moreover, as stated in Proposition \ref{si}, there are only three types of such pairs.
To show this, we introduce a function on $\mathfrak{a}$.
Let us briefly outline the proof.

Let $\mathfrak{h}$ be a complex simple Lie algebra and $\pi : \mathfrak{h} \rightarrow {\rm End}_\mathbb{C}(V)$ an $n$-dimensional irreducible representation.
Since $\mathfrak{h}$ is semisimple, the image $\pi(\mathfrak{h})$ is contained in $\mathfrak{sl}(V) \simeq \mathfrak{sl}(\mathbb{C}^n)$.
Moreover, if there exists a non-degenerate symmetric (resp. skew-symmetric) bilinear form on $V$ that leaves $\pi(\mathfrak{h})$ invariant, $\mathfrak{h}$ is contained in $\mathfrak{so}(\mathbb{C}^n)$ (resp. $\mathfrak{sp}(\mathbb{C}^n)$).
We note that all pairs $(\mathfrak{g}, \mathfrak{h})$ of complex simple Lie algebras such that $\mathfrak{g}$ is of classical type.

Let $\lambda$ be the heighest weight of $V$ and $\Lambda(\lambda) \subset \mathfrak{a}^*$ the set of weights in $V$.
For $Y \in \mathfrak{a}_+$, we order the weights $\lambda_i \in \Lambda(\lambda)$ so that $\lambda_1(Y) \geq \dots \geq \lambda_r (Y)$.
We define the non-negative valued function $f(\lambda; Y)$ as 
\[f(\lambda; Y) = \frac{1}{2}\sum_{\substack{1 \leq i < j \leq r \\ \lambda_i + \lambda_j \neq 0}}(\lambda_i - \lambda_j)(Y).\]
By definition, when $\mathfrak{g} = \mathfrak{sl}(\mathbb{C}^n)$, the values of this function on $\mathfrak{a}$ is less than or equal to one half of $\rho_\mathfrak{g}$.
When $\mathfrak{g} = \mathfrak{so}(\mathbb{C}^n)$ or $\mathfrak{sp}(\mathbb{C}^n)$, this function is slightly less than or equal to $\rho_\mathfrak{g}$.
In any case, we have $\rho_\mathfrak{g}(Y) \geq f(\lambda; Y)$.
If $f(\lambda; \cdot) > 2\rho_\mathfrak{h}$ holds on $\mathfrak{a} \setminus \{0\}$, then we obtain $2\rho_\mathfrak{h} < \rho_{\mathfrak{g}}$ without considering the existence of a symmetric or skew-symmetric bilinear form on $V$, that is, independently of which classical simple Lie algebra $\mathfrak{g}$ is. 

In particular, the lower estimate of the function $\rho_\mathfrak{g}$ using this function $f$ is compatible with the partial order on dominant weights $\prec$.
More precisely, if $\mu \prec \lambda$ ($\mu, \lambda$ dominant weights), then $f(\mu; \cdot) \leq f(\lambda; \cdot)$ on $\mathfrak{a}$.
Therefore, it suffices to verify the inequality $f(\lambda; \cdot) > 2\rho_\mathfrak{h}$ on $\mathfrak{a} \setminus \{0\}$ for small dominant weights with respect to the partial order $\prec$.

\section{Square integrability of $L^2(G/H)$}\label{sec:SqintReg}
Let $G$ be a unimodular Lie group.
In \cite{MO}, we generalized the definition of square integrable irreducible representations of $G$ to possibly reducible representations as follows.

\begin{definition}
  Let $G$ be a unimodular Lie group 
  and let $(\pi,\mathcal{H})$ be a unitary representation of $G$.
  We say $\pi$ is \emph{square integrable} if it satisfies the following equivalent conditions;
  \begin{enumerate}
    \item There exists a dense subset $V\subset \mathcal{H}$
      such that for any vectors $u,v\in V$
      the matrix coefficient $c_{u,v}(g)=\langle\pi(g)u,v\rangle$
      is a square integrable function on $G$.
    \item $(\pi,\mathcal{H})$ is unitarily equivalent to a subrepresentation of a direct sum of copies of the left regular representation $L^2(G)$.
\end{enumerate}
\end{definition}
By definition, if a unitary representation $\pi$ of $G$ is square integrable, then it is also tempered.

Let $G$ be an algebraic reductive group and $H$ an algebraic reductive subgroup of $G$.
Benoist and Kobayashi introduced the functions $\rho$ on the Lie algebra $\mathfrak{h}$ of $H$ and characterized the temperedness of the unitary representation $L^2(G/H)$ in terms of the inequality of $\rho$.
Related to this, we give a sufficient condition for the square integrability of $L^2(G/H)$.
Let us recall the function $\rho$ introduced by Benoist-Kobayashi \cite{BK}.

Let $\mathfrak{h}$ be a real reductive Lie algebra and $\mathfrak{a}$ a maximal split abelian subspace of $\mathfrak{h}$.
For a finite dimensional $\mathfrak{h}$-module $(\pi, V)$, we define a function $\rho_V : \mathfrak{a} \rightarrow \mathbb{R}_{\geq 0}$ by
\[\rho_V(Y) := \displaystyle \frac{1}{2} \sum_{\lambda \in \Lambda_Y} m_{\lambda} \left| \operatorname{Re} \lambda \right|\quad (Y \in \mathfrak{a})\]
where $\Lambda_Y$ is the set of all eigenvalues of $\pi(Y)$ in the complexification $V_\mathbb{C}$ of $V$ and $m_{\lambda}$ is the multiplicity of the eigenvalue $\lambda$.

In this paper, we mainly deal with the case where $\mathfrak{h}$ is a complex semisimple Lie subalgebra of a complex semisimple Lie algebra $\mathfrak{g}$, and $V = \mathfrak{h}, \mathfrak{g}$ or $\mathfrak{g}/\mathfrak{h}$ on which $\mathfrak{h}$ acts as the adjoint action. 

Suppose $\mathfrak{h}$ is a real reductive Lie algebra. 
Let $\mathfrak{a}$ be a maximal split abelian subspace in $\mathfrak{h}$ and $\Phi(\mathfrak{h}, \mathfrak{a})$ the restricted root system associated with $\mathfrak{a}$.
We choose a positive system $\Phi^+(\mathfrak{h}, \mathfrak{a})$ and write the positive Weyl chamber as $\mathfrak{a}_+$.
Since this function $\rho_V$ is invariant under the action of Weyl group for the root system $\Phi(\mathfrak{h}, \mathfrak{a})$, $\rho_V$ is determined by its values on $\mathfrak{a}_+$.
We remark that $\rho_V$ is not linear but piecewise linear on $\mathfrak{h}$.
When $(\pi, V) = (\text{ad}, \mathfrak{h})$, this function $\rho_\mathfrak{h}$ coincides with twice the usual $\rho$ on $\mathfrak{a}_+$ , that is
\[\rho_\mathfrak{h}(Y) = \displaystyle\sum_{\alpha \in \Phi^+(\mathfrak{h}, \mathfrak{a})}(\dim_\mathbb{C} \mathfrak{h}_\alpha) \alpha(Y)\quad (Y \in \mathfrak{a}_+).\]

The following is Benoist-Kobayashi's characterization of temperedness of $L^2(G/H)$.
Let $\mathfrak{q}:=\mathfrak{g}/\mathfrak{h}$.
By the adjoint action, $\mathfrak{g}$ and $\mathfrak{q}$ can be regarded as $\mathfrak{h}$-modules.

\begin{theorem}[{\cite[Theorem 4.1]{BK}}]\label{thm:TempRho}
 Let $G$ be an algebraic semisimple Lie group and $H$ an algebraic reductive subgroup of $G$. 
Then $L^2(G/H)$ is tempered if and only if
 $\rho_{\mathfrak{h}}(Y) \leq \rho_{\mathfrak{q}}(Y)$ for any $Y \in \mathfrak{a}$.
\end{theorem}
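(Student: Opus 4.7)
The plan is to translate temperedness, via Harish-Chandra's $\Xi$-function characterization, into a sharp exponential decay estimate for matrix coefficients, and then to compute both sides of that estimate explicitly along the one-parameter subgroups $\exp(tY) \subset H$ with $Y \in \mathfrak{a}$. By the Cowling--Haagerup--Howe theorem, a unitary representation $\pi$ of $G$ is tempered iff every $K$-finite matrix coefficient is dominated by $\Xi$ up to a polynomial factor; Harish-Chandra's asymptotic gives $\Xi(\exp tY) \asymp (1+|t|)^{N}\, e^{-\frac{1}{2}|t|\,\rho_{\mathfrak{g}}(Y)}$ on the positive chamber of $G$ (in BK's normalization $\rho_{\mathfrak{g}} = 2\rho$). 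So temperedness of $L^2(G/H)$ is equivalent to the requirement that, for each $Y \in \mathfrak{a}$ and each $f \in C_c^\infty(G/H)$,
\[
|\langle \exp(tY)\cdot f,\, f\rangle| \;\leq\; C_f\,(1+|t|)^{N}\,e^{-\frac{1}{2}|t|\,\rho_{\mathfrak{g}}(Y)} \qquad (t\to\pm\infty).
\]

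Next I would take $f$ supported in a transverse chart $X\in\mathfrak{q}\mapsto \exp(X)H$ at $eH$, in which (since $Y\in\mathfrak{h}$) the $G$-action reads $\exp(tY)\cdot\exp(X)H = \exp\bigl(e^{t\,\mathrm{ad}(Y)}X\bigr)H$, and the pull-back of the $G$-invariant measure on $G/H$ is locally Lebesgue measure on $\mathfrak{q}$ up to a smooth factor equal to $1$ at the origin. A direct computation of the resulting overlap integral
\[
\int_{\mathfrak{q}} f(X)\,\overline{f\bigl(e^{t\,\mathrm{ad}(Y)}X\bigr)}\,dX,
\]
taken with $f$ a product of bumps in the real Jordan basis of $\mathrm{ad}(Y)|_\mathfrak{q}$, yields the sharp exponential decay rate $e^{-|t|\,\rho_{\mathfrak{q}}(Y)}$: real eigenvalues $\mu$ of multiplicity $m$ contribute $m|\mu|/2$ and complex pairs $(\lambda,\bar\lambda)$ contribute $|\operatorname{Re}\lambda|$ per occurrence in $\mathfrak{q}_\mathbb{C}$, matching exactly BK's definition of $\rho_{\mathfrak{q}}$. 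Comparing this rate with the previous display and using $\rho_{\mathfrak{g}} = \rho_{\mathfrak{h}}+\rho_{\mathfrak{q}}$, the temperedness bound rearranges algebraically to $\rho_{\mathfrak{h}}(Y)\leq \rho_{\mathfrak{q}}(Y)$; scanning over all $Y\in\mathfrak{a}$ gives the ``only if'' direction. For the ``if'' direction, one upgrades the pointwise inequality to a uniform matrix-coefficient bound over $G$ via the Cartan decomposition $G=K\,\overline{A_G^+}\,K$ and the bi-$K$-invariance of $\Xi$, applied to a polar decomposition of $G/H$ with respect to the $A$-action.

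The main technical obstacle is the \emph{boundary case} where $\rho_{\mathfrak{h}}(Y) = \rho_{\mathfrak{q}}(Y)$ for some nonzero $Y$: at leading order the two exponents match exactly, so one must still obtain $L^{2+\varepsilon}$ integrability of matrix coefficients for every $\varepsilon>0$. This requires controlling polynomial and logarithmic corrections uniformly over the compact and transverse directions, and using representation-theoretic multiplicity information on the $\mathfrak{a}$-weights of $\mathfrak{g}$ and $\mathfrak{h}$; this careful uniform analysis is the technical heart of Benoist--Kobayashi's original argument in \cite{BK}.
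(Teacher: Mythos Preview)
This theorem is not proved in the present paper; it is quoted from \cite[Theorem~4.1]{BK} as background for the paper's own work on the \emph{strict} inequality $\rho_{\mathfrak{h}} < \rho_{\mathfrak{q}}$. There is therefore no in-paper proof against which to compare your proposal.

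Regarding the sketch itself: the ``only if'' half is essentially sound and close in spirit to what Benoist--Kobayashi do. Your overlap-integral computation along $\exp(tY)$ with $Y \in \mathfrak{a} \subset \mathfrak{h}$ correctly identifies the sharp decay rate $e^{-|t|\rho_{\mathfrak{q}}(Y)}$ (using $\mathrm{tr}(\mathrm{ad}\,Y|_{\mathfrak{q}}) = 0$ so that the contracting eigenspaces alone account for the rate), and comparison with the $\Xi$-asymptotic yields the inequality. One caveat: your $C_c^\infty$ test vector is not $K$-finite, so you need the smooth-vector form of the Cowling--Haagerup--Howe bound, or an approximation step.

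The ``if'' half, however, is where your sketch stops being a proof. ``Upgrading via the Cartan decomposition and a polar decomposition of $G/H$'' hides precisely the substance of \cite{BK}: one must bound matrix coefficients for \emph{all} $g \in G$, and the $A_G$-component of a general $g$ has no reason to lie in (or even be comparable to) $\mathfrak{a} \subset \mathfrak{h}$. Benoist--Kobayashi resolve this by recasting temperedness as an $L^{2+\varepsilon}$ volume-growth condition on $H$ inside $G$ and reducing the resulting integral over $H$ to its Cartan subgroup via $H = K_H A K_H$ --- exactly the uniform analysis you flag in your last paragraph as ``the technical heart'' of their argument. So your proposal is an accurate heuristic for necessity and an honest pointer to \cite{BK} for sufficiency, but not an independent proof.
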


The following theorem shows the close relationship, for the regular representation $L^2(G/H)$, between square integrability, temperedness and an inequality on $\rho$.

\begin{theorem}\label{thm:SqintRho}
Let $G$ be an algebraic reductive Lie group and
 $H$ an algebraic reductive subgroup of $G$. 
The unitary representation of $G$ in $L^2(G/H)$ is a square integrable representation if 
 $\rho_{\mathfrak{h}}(Y) < \rho_{\mathfrak{q}}(Y)$ for any $Y\in \mathfrak{a} \setminus \{0\}$.
\end{theorem}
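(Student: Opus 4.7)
The plan is to prove the theorem by producing matrix coefficients of $L^2(G/H)$ that lie in $L^2(G)$ for a dense family of vectors; this makes the equivalent condition (1) of the definition of square integrability hold, which in turn yields the embedding in (2). Fix a Cartan involution $\theta$ of $G$ preserving $H$, and extend $\mathfrak{a}$ to a maximal split abelian subspace $\mathfrak{a}_G \supset \mathfrak{a}$ of $\mathfrak{g}$. A convenient dense subspace $V \subset L^2(G/H)$ consists of smooth $K$-finite functions of compact support modulo $H$; equivalently, elements of $V$ arise by $H$-averaging compactly supported smooth functions on $G$. For $f_1,f_2 \in V$ I would express the matrix coefficient as $c_{f_1,f_2}(g) = \int_{G/H} f_1(g^{-1}x)\overline{f_2(x)}\,d\nu(x)$ and reduce $\int_G |c_{f_1,f_2}(g)|^2\,dg$ to an integral over $\overline{\mathfrak{a}_G^+}$ via the Cartan decomposition $G = K A_G^+ K$, whose Jacobian grows like $e^{2\rho_\mathfrak{g}(Y)}$ in the interior of $\overline{\mathfrak{a}_G^+}$.

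The main step is a pointwise estimate for $c_{f_1,f_2}$ refining the one used by Benoist--Kobayashi for Theorem \ref{thm:TempRho}. The key ingredient is an integral estimate of the form
\[\int_H \Xi_G(gh)\, dh \;\lesssim\; \Xi_H(g_H)\cdot e^{-(\rho_\mathfrak{q} - \rho_\mathfrak{h})(\mu(g))},\]
where $\Xi_G$ is Harish-Chandra's spherical function, $\mu \colon G \to \overline{\mathfrak{a}^+}$ is a Cartan-type projection adapted to $\mathfrak{a}$, and $g_H$ is an $H$-component in an $HAK$-type decomposition. Combining this with the classical bound $\Xi_G(a) \lesssim e^{-\rho_\mathfrak{g}(\log a)}(1+\|\log a\|)^N$ and an application of the Cauchy--Schwarz inequality yields an integrand on $A_G^+$ controlled by $e^{-2(\rho_\mathfrak{q} - \rho_\mathfrak{h})(\mu(g))}$ times a polynomial in $\|\mu(g)\|$.

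Under the strict inequality $\rho_\mathfrak{h}(Y) < \rho_\mathfrak{q}(Y)$ for $Y\in\mathfrak{a}\setminus\{0\}$, piecewise linearity and positive homogeneity of $\rho_\mathfrak{h}$ and $\rho_\mathfrak{q}$ upgrade this to a uniform linear gap: the continuous positive function $Y \mapsto (\rho_\mathfrak{q}-\rho_\mathfrak{h})(Y)/\|Y\|$ attains a positive minimum $c > 0$ on the unit sphere of $\mathfrak{a}$, giving $\rho_\mathfrak{q}(Y) - \rho_\mathfrak{h}(Y) \geq c\|Y\|$. The resulting exponential decay $e^{-2c\|\mu(g)\|}$ dominates all polynomial factors and delivers finiteness of the $L^2$ integral, completing the proof. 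The main obstacle is establishing the refined integral estimate above with the sharp decay exponent: Benoist--Kobayashi's original argument yields only the non-strict temperedness conclusion, and upgrading it to a genuine $L^2$ estimate requires a more careful tracking of the Harish-Chandra asymptotics in the $HAK$-decomposition, together with technical care about the failure of properness of this decomposition when $H$ is non-compact.
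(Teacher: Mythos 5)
The paper does not actually prove Theorem \ref{thm:SqintRho} here: it is imported verbatim from \cite[Theorem 3.2]{MO}, so there is no in-text proof to compare against line by line. Judged on its own terms, your proposal identifies the right overall strategy (verify condition (1) of the definition of square integrability for a dense family of $H$-averaged compactly supported vectors, reduce the $L^2(G)$-norm of the matrix coefficient to an exponential integral, and use positive homogeneity plus compactness of the unit sphere of $\mathfrak{a}$ to convert the pointwise strict inequality into a uniform gap $\rho_{\mathfrak{q}}(Y)-\rho_{\mathfrak{h}}(Y)\geq c\|Y\|$). The last step is correct and is exactly the role the strict inequality must play.

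However, there is a genuine gap, and you name it yourself: the entire proof rests on the displayed estimate
$\int_H \Xi_G(gh)\,dh \lesssim \Xi_H(g_H)\,e^{-(\rho_{\mathfrak{q}}-\rho_{\mathfrak{h}})(\mu(g))}$,
which you assert but do not establish, and which you acknowledge is ``the main obstacle.'' This is not a peripheral technicality --- it is the theorem. Two concrete problems with the estimate as stated. First, the convergence of $\int_H \Xi_G(gh)\,dh$ is itself essentially equivalent to the hypothesis $\rho_{\mathfrak{h}}<\rho_{\mathfrak{q}}$ (since $\Xi_G(h)\sim e^{-\frac12\rho_{\mathfrak{g}}(\mu_H(h))}$ up to polynomials, the $H$-ball volume grows like $e^{\rho_{\mathfrak{h}}}$, and $\rho_{\mathfrak{g}}=\rho_{\mathfrak{h}}+\rho_{\mathfrak{q}}$ on $\mathfrak{a}$), so one cannot take the left-hand side for granted and must prove convergence and decay simultaneously. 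Second, the exponent on the right-hand side is evaluated at $\mu(g)\in\overline{\mathfrak{a}^+}$ with $\mathfrak{a}\subset\mathfrak{h}$, whereas the $L^2(G)$-integral requires decay of $c_{f_1,f_2}(g)$ in all directions of $\mathfrak{a}_G^+$; the functions $\rho_{\mathfrak{h}}$ and $\rho_{\mathfrak{q}}$ are only defined on $\mathfrak{a}$ and say nothing directly about directions transverse to $H$. The standard way around this is to observe that the matrix coefficient is supported in a tube $\Omega H\Omega$ around $H$, so that the $G$-integral localizes to $H$-directions, but then one must control the overlap multiplicity of that tube, which is again a $\rho_{\mathfrak{h}}$-versus-$\rho_{\mathfrak{q}}$ volume estimate. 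None of this bookkeeping is carried out in the proposal, so as written it is a plan rather than a proof.
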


\section{Properties of function $\rho$}
Let $\mathfrak{g}$ be a complex reductive Lie algebra.
Then $\mathfrak{g}$ decomposes as the direct sum of its abelian ideal $\mathfrak{z}$ and its semisimple ideal $\mathfrak{s}$, that is, $\mathfrak{g} = \mathfrak{z} \oplus \mathfrak{s}$.
Let $\mathfrak{h}$ be a complex semisimple Lie subalgebra  of $\mathfrak{g}$ and $\mathfrak{a}$ a maximal split abelian subspace in $\mathfrak{h}$.
We note that $\rho_\mathfrak{g} = \rho_\mathfrak{s}$ on $\mathfrak{a}$ and then the inequality $2\rho_\mathfrak{h} < \rho_\mathfrak{g}$ on $\mathfrak{a} \setminus \{0\}$ holds if and only if $2\rho_\mathfrak{h} < \rho_\mathfrak{s}$ on $\mathfrak{a} \setminus \{0\}$.
For this reason, we may assume that $\mathfrak{g}$ is semisimple.

Let $(\mathfrak{g}, \mathfrak{h})$ be complex semisimple Lie algebras.
We will prove two lemmas on the function $\rho_V$.
By using Lemma \ref{reduction2}, the question of whether $\rho_{\mathfrak{h}} < \rho_{\mathfrak{g}/\mathfrak{h}}$ holds can be reduced to the case where $\mathfrak{g}$ is a complex simple Lie algebra.

\begin{lemma}\label{lem:til}
  Let $\mathfrak{g} \supset \widetilde{\mathfrak{h}} \supset \mathfrak{h}$ be complex semisimple Lie algebras and $\widetilde{\mathfrak{a}} \subset \widetilde{\mathfrak{h}}$, $\mathfrak{a} \subset \mathfrak{h}$ maximal split abelian subspaces such that $\widetilde{\mathfrak{a}} \supset \mathfrak{a}$.
  If $\rho_{\widetilde{\mathfrak{h}}} < \rho_{\mathfrak{g}/\widetilde{\mathfrak{h}}}$ on $\widetilde{\mathfrak{a}} \setminus \{0\}$, then $\rho_\mathfrak{h} < \rho_{\mathfrak{g}/\mathfrak{h}}$ on $\mathfrak{a} \setminus \{0\}$.
\end{lemma}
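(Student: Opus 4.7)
The plan is to reduce the conclusion to the equivalent inequality $2\rho_\mathfrak{h} < \rho_\mathfrak{g}$ (and likewise the hypothesis to $2\rho_{\widetilde{\mathfrak{h}}} < \rho_\mathfrak{g}$), and then exploit two elementary but crucial properties of the function $\rho_V$: its additivity under direct sums of modules, and the fact that, for a fixed element $Y$, $\rho_V(Y)$ depends only on the eigenvalues of $\mathrm{ad}(Y)$ on $V$ (not on which ambient subalgebra is acting).

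First I would note that, since $\mathfrak{h}$ is semisimple, $\mathfrak{g}$ splits as $\mathfrak{g}\cong \mathfrak{h}\oplus \mathfrak{g}/\mathfrak{h}$ as $\mathfrak{h}$-modules, and similarly $\mathfrak{g}\cong \widetilde{\mathfrak{h}}\oplus \mathfrak{g}/\widetilde{\mathfrak{h}}$ as $\widetilde{\mathfrak{h}}$-modules. For $Y\in\mathfrak{a}$ (resp.\ $Y\in\widetilde{\mathfrak{a}}$), $\mathrm{ad}(Y)$ is semisimple, so the eigenvalues of $\mathrm{ad}(Y)$ on $\mathfrak{g}$ are, with multiplicity, the union of those on the two summands. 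Hence
\[\rho_\mathfrak{g}=\rho_\mathfrak{h}+\rho_{\mathfrak{g}/\mathfrak{h}}\ \text{on}\ \mathfrak{a},\qquad \rho_\mathfrak{g}=\rho_{\widetilde{\mathfrak{h}}}+\rho_{\mathfrak{g}/\widetilde{\mathfrak{h}}}\ \text{on}\ \widetilde{\mathfrak{a}}.\]
Thus the hypothesis is the same as $2\rho_{\widetilde{\mathfrak{h}}}(Y)<\rho_\mathfrak{g}(Y)$ for every $Y\in\widetilde{\mathfrak{a}}\setminus\{0\}$, and the desired conclusion is the same as $2\rho_\mathfrak{h}(Y)<\rho_\mathfrak{g}(Y)$ for every $Y\in\mathfrak{a}\setminus\{0\}$.

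Next, for any $Y\in\mathfrak{a}\subset\widetilde{\mathfrak{a}}$, the value $\rho_{\widetilde{\mathfrak{h}}}(Y)$ is the same whether we view $\widetilde{\mathfrak{h}}$ as a module over $\widetilde{\mathfrak{h}}$ or as a module over $\mathfrak{h}$, because in both cases it is computed from the eigenvalues of $\mathrm{ad}(Y)$ on $\widetilde{\mathfrak{h}}$. Decomposing $\widetilde{\mathfrak{h}}\cong \mathfrak{h}\oplus \widetilde{\mathfrak{h}}/\mathfrak{h}$ as $\mathfrak{h}$-modules and using additivity again, we get
\[\rho_{\widetilde{\mathfrak{h}}}(Y)=\rho_\mathfrak{h}(Y)+\rho_{\widetilde{\mathfrak{h}}/\mathfrak{h}}(Y)\geq \rho_\mathfrak{h}(Y)\]
since $\rho_V$ is nonnegative. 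Combining this with the hypothesis applied to $Y\in\mathfrak{a}\setminus\{0\}\subset\widetilde{\mathfrak{a}}\setminus\{0\}$ yields
\[2\rho_\mathfrak{h}(Y)\;\leq\;2\rho_{\widetilde{\mathfrak{h}}}(Y)\;<\;\rho_\mathfrak{g}(Y),\]
which is the equivalent form of $\rho_\mathfrak{h}(Y)<\rho_{\mathfrak{g}/\mathfrak{h}}(Y)$.

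There is no genuine obstacle here; the proof is a direct unpacking of definitions. The only point to handle cleanly is the invariance of $\rho_{\widetilde{\mathfrak{h}}}(Y)$ under restriction of the acting algebra from $\widetilde{\mathfrak{h}}$ to $\mathfrak{h}$, which is immediate from the eigenvalue-based definition of $\rho_V$.
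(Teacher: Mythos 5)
Your proof is correct and is essentially the same argument as the paper's: both rest on the monotonicity $\rho_{\mathfrak{h}} \leq \rho_{\widetilde{\mathfrak{h}}}$ on $\mathfrak{a}$ together with the additivity of $\rho_V$ under direct sum decompositions, the only cosmetic difference being that you pass through the equivalent form $2\rho_{\mathfrak{h}} < \rho_{\mathfrak{g}}$ while the paper compares complements $\widetilde{\mathfrak{q}} \subset \mathfrak{q}$ directly.
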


\begin{proof}
  We take an $\widetilde{\mathfrak{h}}$-invariant subspace $\widetilde{\mathfrak{q}}$ in $\mathfrak{g}$ with a direct sum decomposition $\mathfrak{g} = \widetilde{\mathfrak{h}} \oplus \widetilde{\mathfrak{q}}$.
  We can choose an $\mathfrak{h}$-invariant subspace $\mathfrak{q}$ in $\mathfrak{g}$ satisfying $\mathfrak{q} \supset \widetilde{\mathfrak{q}}$ and $\mathfrak{g} = \mathfrak{h} \oplus \mathfrak{q}$.
  Since $\rho_\mathfrak{h} \leq \rho_{\widetilde{\mathfrak{h}}}$ on $\mathfrak{a}$ and $\rho_{\widetilde{\mathfrak{q}}} \leq \rho_\mathfrak{q}$ on $\mathfrak{a}$, 
  the statement holds.
\end{proof}

The following two lemmas are analogs of \cite[Lemma 2.14, Lemma 2.16]{BKIII}.
\begin{lemma}\label{reduction1}
  Let $\mathfrak{h} = \mathfrak{h}_1 \oplus \mathfrak{h}_2$ be a complex semisimple Lie algebra which can be decomposed into two ideals of $\mathfrak{h}$ and $V$ a finite dimensional representation of $\mathfrak{h}$.
  \begin{itemize}
    \item[$(1)$] For any $Y_1 \in \mathfrak{h}_1$, $Y_2 \in \mathfrak{h}_2$, we have
      \[\rho_V(Y_1) \leq \rho_V(Y_1 + Y_2).\]
    \item[$(2)$] Let $\mathfrak{a}$ be a maximal split abelian subspace of $\mathfrak{h}$ and $\mathfrak{a}_i := \mathfrak{h}_i \cap \mathfrak{a}$ for $i=1,2$.
    Assume that $V = V_1 \oplus V_2$ is a direct sum of two finite-dimensional representations of $\mathfrak{h}$. If $\rho_{\mathfrak{h}_i} < \rho_{V_i}$ on $\mathfrak{a}_i \setminus \{0\}$ for $i = 1, 2$, then $\rho_\mathfrak{h} < \rho_V$ on $\mathfrak{a} \setminus \{0\}$.
  \end{itemize}
\end{lemma}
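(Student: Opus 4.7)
My plan is to derive (2) from (1) together with the obvious additivity of $\rho$ on direct summands, so the content lies in (1). The key observation enabling (1) is that $\mathfrak{h}_1$ and $\mathfrak{h}_2$ commute: $[\mathfrak{h}_1,\mathfrak{h}_2]\subset\mathfrak{h}_1\cap\mathfrak{h}_2=0$. Hence $\pi(Y_2)$ commutes with $\pi(Y_1)$, and more strongly every generalized eigenspace of $\pi(Y_1)$ is stable under the whole of $\pi(\mathfrak{h}_2)$, which is what lets one compare the two spectra.

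For (1), I would decompose $V_{\mathbb{C}}=\bigoplus_\lambda V_\lambda$ into the generalized eigenspaces of $\pi(Y_1)$ and study $\pi(Y_1+Y_2)$ on each $V_\lambda$. Writing $\pi(Y_1)|_{V_\lambda}=\lambda\,\mathrm{id}+N_\lambda$ with $N_\lambda$ nilpotent commuting with $\pi(Y_2)|_{V_\lambda}$, a Jordan-decomposition check shows that the eigenvalues of $\pi(Y_1+Y_2)$ on $V_\lambda$ are $\{\lambda+\mu_{\lambda,j}\}_j$, where the $\mu_{\lambda,j}$ are the eigenvalues of $\pi(Y_2)|_{V_\lambda}$ with multiplicity. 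The crucial input is the semisimplicity of $\mathfrak{h}_2$: this forces $Y_2\in[\mathfrak{h}_2,\mathfrak{h}_2]$, so $\operatorname{tr}\pi(Y_2)|_{V_\lambda}=0$ and $\sum_j\mu_{\lambda,j}=0$. The triangle inequality then yields
\[
\sum_j\bigl|\operatorname{Re}(\lambda+\mu_{\lambda,j})\bigr|\ \geq\ \Bigl|\sum_j\operatorname{Re}(\lambda+\mu_{\lambda,j})\Bigr|\ =\ (\dim V_\lambda)\,|\operatorname{Re}\lambda|,
\]
and summing over $\lambda$ (with the factor $\tfrac{1}{2}$ from the definition of $\rho_V$) gives $\rho_V(Y_1+Y_2)\geq\rho_V(Y_1)$.

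For (2), write $Y=Y_1+Y_2$ with $Y_i\in\mathfrak{a}_i$, using the decomposition $\mathfrak{a}=\mathfrak{a}_1\oplus\mathfrak{a}_2$. Since $\mathfrak{h}_j$ acts trivially on $\mathfrak{h}_i$ for $i\neq j$, the adjoint representation on $\mathfrak{h}=\mathfrak{h}_1\oplus\mathfrak{h}_2$ gives $\rho_\mathfrak{h}(Y)=\rho_{\mathfrak{h}_1}(Y_1)+\rho_{\mathfrak{h}_2}(Y_2)$, and the assumed splitting gives $\rho_V(Y)=\rho_{V_1}(Y)+\rho_{V_2}(Y)$. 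Applying (1) to each $V_i$ yields $\rho_{V_i}(Y)\geq\rho_{V_i}(Y_i)$. For $Y\neq 0$ at least one $Y_i$ is nonzero, and for that index the hypothesis gives the strict inequality $\rho_{\mathfrak{h}_i}(Y_i)<\rho_{V_i}(Y_i)\leq\rho_{V_i}(Y)$; for the other index one still has $\rho_{\mathfrak{h}_j}(Y_j)\leq\rho_{V_j}(Y)$ (trivially when $Y_j=0$, by the hypothesis plus (1) otherwise). Summing preserves the strict inequality and yields $\rho_\mathfrak{h}(Y)<\rho_V(Y)$.

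The only mildly delicate step I foresee is the trace-zero input in (1): one has to notice that the semisimplicity of $\mathfrak{h}_2$ transfers to its restricted action on the $\mathfrak{h}_2$-invariant subspace $V_\lambda$, forcing the sum of the $\mu_{\lambda,j}$ to vanish. Once this is in place everything else is formal—triangle inequality plus additivity of $\rho$ on direct sums—so the whole argument should be quite short.
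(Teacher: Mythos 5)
Your proof is correct, and your part (2) follows the paper's argument exactly: additivity of $\rho$ over the direct sum $V=V_1\oplus V_2$ and over the ideal decomposition of $\mathfrak{h}$, plus the monotonicity of part (1) applied to each summand, with strictness coming from whichever $Y_i$ is nonzero. For part (1) the paper simply cites Lemma 2.14 (1) of \cite{BKIII} rather than proving it, whereas you supply the argument; your version --- generalized eigenspaces of $\pi(Y_1)$ are $\pi(\mathfrak{h}_2)$-stable, $\operatorname{tr}\bigl(\pi(Y_2)|_{V_\lambda}\bigr)=0$ because $\mathfrak{h}_2=[\mathfrak{h}_2,\mathfrak{h}_2]$, then the triangle inequality on real parts --- is sound and is essentially the standard proof of the cited lemma.
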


\begin{proof}
  (1) This is proved in \cite[Lemma 2.14. (1)]{BKIII}.\par
  \noindent$(2)$ For a nonzero element $Y = Y_1 + Y_2 \in \mathfrak{a}_1 \oplus \mathfrak{a}_2 = \mathfrak{a}$, we have
  \[\rho_V(Y_1 + Y_2) = \rho_{V_1}(Y_1 + Y_2) + \rho_{V_2}(Y_1 + Y_2) \geq \rho_{V_1}(Y_1) + \rho_{V_2}(Y_2).\] 
  On the other hand, it follows that $\rho_\mathfrak{h}(Y_1 + Y_2) = \rho_{\mathfrak{h}_1}(Y_1) + \rho_{\mathfrak{h}_2}(Y_2)$.
  Since at least one of $Y_1$ and $Y_2$ is nonzero, we have $\rho_{\mathfrak{h}} < \rho_V$ on $\mathfrak{a} \setminus \{0\}$.
\end{proof}

\begin{lemma}\label{reduction2}
  Let $\mathfrak{g}$ be a complex semisimple Lie algebra and $\mathfrak{h}$ a complex semisimple Lie subalgebra of $\mathfrak{g}$, $\mathfrak{g} = \mathfrak{g}_1 \oplus \dots \oplus \mathfrak{g}_r$ a direct sum of simple ideals $\mathfrak{g}_i$ and $\mathfrak{q} := \mathfrak{g}/\mathfrak{h}$, $\mathfrak{h}_i := \mathfrak{h} \cap \mathfrak{g}_i$ and $\mathfrak{q}_i := \mathfrak{g}_i/\mathfrak{h}_i$ for $i = 1, 2, \dots, r$.
  Moreover, let $\mathfrak{a}$ be a maximal split abelian subspace in $\mathfrak{h}$ and $\mathfrak{a}_i := \mathfrak{a} \cap \mathfrak{h}_i$ for $i = 1, \dots, r$.
  Then the folloing two conditions are equivalent:
  \begin{itemize}
    \item $\rho_\mathfrak{h} < \rho_\mathfrak{q}$ on $\mathfrak{a} \setminus \{0\}$,
    \item $\rho_{\mathfrak{h}_i} < \rho_{\mathfrak{q}_i}$ on $\mathfrak{a}_i \setminus \{0\}$ for all $i = 1, 2, \dots, r$ and the following condition does not hold:
    \begin{align*}
      &\exists i, j \in \{1, 2, \dots, r\} \ \text{such that}\ i \neq j, \mathfrak{g}_i \simeq \mathfrak{g}_j \ \text{and}\\
      &\text{the diagonal}\  \Delta \mathfrak{g}_i (\subset \mathfrak{g}_i \oplus \mathfrak{g}_j)\ \text{is contained in}\ \mathfrak{h}.
    \end{align*}
  \end{itemize}
\end{lemma}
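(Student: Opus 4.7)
\textbf{The plan is} to decompose $\mathfrak{h}$ into simple ideals and analyze the projections $\pi_i\colon \mathfrak{h} \to \mathfrak{g}_i$, then derive a master identity for $\rho_\mathfrak{q}(Y) - \rho_\mathfrak{h}(Y)$. Writing $\mathfrak{h} = \bigoplus_\ell \mathfrak{s}_\ell$ as a sum of simple ideals and setting $I_\ell = \{i : \pi_i|_{\mathfrak{s}_\ell} \neq 0\}$, each restriction $\pi_i|_{\mathfrak{s}_\ell}$ is zero or injective by simplicity. Since the kernel of the Lie-algebra homomorphism $\pi_i|_\mathfrak{h}$ is an ideal of the semisimple $\mathfrak{h}$, it equals $\bigoplus_{i \notin I_\ell} \mathfrak{s}_\ell$, yielding $\mathfrak{h}_i = \bigoplus_{I_\ell = \{i\}} \mathfrak{s}_\ell$ and $\pi_i(\mathfrak{h}) = \mathfrak{h}_i \oplus \mathfrak{h}_i^\perp$ as a direct sum of ideals, with $\mathfrak{h}_i^\perp = \bigoplus_{|I_\ell| \geq 2,\, i \in I_\ell} \pi_i(\mathfrak{s}_\ell)$. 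Correspondingly, $\mathfrak{a} = \bigoplus_i \mathfrak{a}_i \oplus \bigoplus_{|I_\ell| \geq 2} \mathfrak{a}_\ell$ where $\mathfrak{a}_\ell$ is a maximal split abelian subspace of $\mathfrak{s}_\ell$. Writing $Y = \sum_i Y_i + \sum_{|I_\ell| \geq 2} Y_\ell$ and combining $\rho_\mathfrak{g}(Y) = \sum_i \rho_{\mathfrak{g}_i}(\pi_i(Y))$ with additivity of $\rho_V$ along short exact sequences of $\mathfrak{h}$-modules, an explicit calculation produces the master identity
\[
\rho_\mathfrak{q}(Y) - \rho_\mathfrak{h}(Y) = -\sum_i \rho_{\mathfrak{h}_i}(Y_i) + \sum_{|I_\ell| \geq 2} (|I_\ell| - 2)\rho_{\mathfrak{s}_\ell}(Y_\ell) + \sum_i \rho_{\mathfrak{g}_i/\pi_i(\mathfrak{h})}(\pi_i(Y)).
\]

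For the forward direction, restricting to $Y \in \mathfrak{a}_i \setminus \{0\}$ makes every other component vanish and collapses the identity to $\rho_{\mathfrak{q}_i}(Y) - \rho_{\mathfrak{h}_i}(Y)$, transferring the strict inequality to each $\mathfrak{a}_i$. To exclude diagonals, suppose $\mathfrak{g}_i \simeq \mathfrak{g}_j$ and $\Delta \mathfrak{g}_i \subset \mathfrak{h}$; pick a nonzero $H$ in a maximal split abelian subspace of $\mathfrak{g}_i$ and take $Y = (H, H, 0, \ldots, 0) \in \Delta \mathfrak{g}_i$. Then $\rho_\mathfrak{g}(Y) = 2\rho_{\mathfrak{g}_i}(H)$ since $Y$ centralizes $\mathfrak{g}_k$ for $k \neq i, j$, while $\rho_\mathfrak{h}(Y) \geq \rho_{\Delta \mathfrak{g}_i}(Y) = \rho_{\mathfrak{g}_i}(H)$ because $\Delta \mathfrak{g}_i$ is a subalgebra of $\mathfrak{h}$. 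Thus $2\rho_\mathfrak{h}(Y) \geq \rho_\mathfrak{g}(Y)$, contradicting $\rho_\mathfrak{h}(Y) < \rho_\mathfrak{q}(Y)$.

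For the backward direction, Lemma~\ref{reduction1}(1) applied to the ideal decomposition $\pi_i(\mathfrak{h}) = \mathfrak{h}_i \oplus \mathfrak{h}_i^\perp$ acting on $\mathfrak{g}_i/\pi_i(\mathfrak{h})$ yields $\rho_{\mathfrak{g}_i/\pi_i(\mathfrak{h})}(\pi_i(Y)) \geq \rho_{\mathfrak{g}_i/\pi_i(\mathfrak{h})}(Y_i) = \rho_{\mathfrak{q}_i}(Y_i)$. Substituting into the master identity gives
\[
\rho_\mathfrak{q}(Y) - \rho_\mathfrak{h}(Y) \geq \sum_{|I_\ell| \geq 2}(|I_\ell| - 2)\rho_{\mathfrak{s}_\ell}(Y_\ell) + \sum_i \bigl(\rho_{\mathfrak{q}_i}(Y_i) - \rho_{\mathfrak{h}_i}(Y_i)\bigr),
\]
which is strictly positive whenever some $Y_i \neq 0$ (by hypothesis) or some $Y_\ell \neq 0$ with $|I_\ell| \geq 3$.

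The remaining case, where $Y$ lies in $\bigoplus_{|I_\ell| = 2} \mathfrak{a}_\ell$, is \emph{the main obstacle}: one must verify $\sum_i \rho_{\mathfrak{g}_i/\pi_i(\mathfrak{h})}(\pi_i(Y)) > 0$. The key structural fact is that simplicity of $\mathfrak{g}_i$ forbids it from being a Lie-algebra direct sum of two or more nonzero commuting simple subalgebras, so $\pi_i(\mathfrak{h}) = \bigoplus_{i \in I_\ell} \pi_i(\mathfrak{s}_\ell) = \mathfrak{g}_i$ can occur only when exactly one $\mathfrak{s}_\ell$ with $i \in I_\ell$ projects isomorphically onto $\mathfrak{g}_i$. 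Applied to $\ell$ with $|I_\ell| = 2$, $I_\ell = \{i, j\}$, and $Y_\ell \neq 0$, having simultaneously $\pi_i(\mathfrak{h}) = \mathfrak{g}_i$ and $\pi_j(\mathfrak{h}) = \mathfrak{g}_j$ would force $\mathfrak{s}_\ell = \Delta \mathfrak{g}_i$, violating no-2-diagonal; hence $\pi_i(\mathfrak{h}) \subsetneq \mathfrak{g}_i$ for some $i \in I_\ell$. The hardest step is deducing from this proper containment that $\pi_i(Y)$ must have a nonzero eigenvalue on $\mathfrak{g}_i/\pi_i(\mathfrak{h})$, equivalent to ruling out $[\pi_i(Y), \mathfrak{g}_i] \subset \pi_i(\mathfrak{h})$; a root-space analysis in the simple $\mathfrak{g}_i$ shows this containment would force $\pi_i(\mathfrak{h})$ to contain the full Cartan of $\mathfrak{g}_i$ plus large families of root spaces, clashing with either the semisimplicity of $\pi_i(\mathfrak{h})$ or its proper inclusion in $\mathfrak{g}_i$.
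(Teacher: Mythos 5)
Your setup is essentially the paper's: your index sets $I_\ell$ and the pieces $\bigoplus_{I_\ell=I}\mathfrak{s}_\ell$ are exactly the ideals $\mathfrak{h}_I$ used there, your master identity is a correct repackaging of the paper's computation of $\rho_{\mathfrak{q}}$ via $\mathfrak{g}_i=\pi_i(\mathfrak{h})\oplus\mathfrak{s}_i$, and your forward direction (restriction to $\mathfrak{a}_i$, plus the diagonal test vector $(H,H,0,\dots,0)$ giving $2\rho_{\mathfrak{h}}(Y)\geq\rho_{\mathfrak{g}}(Y)$) and the bulk of the backward direction are sound. The reduction of everything to the single remaining case $Y\in\bigoplus_{|I_\ell|=2}\mathfrak{a}_\ell$ is also exactly where the paper puts the weight of the proof.

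The genuine gap is in your treatment of that remaining case. You correctly reduce it to showing that $\pi_i(\mathfrak{h})\subsetneq\mathfrak{g}_i$ and $H:=\pi_i(Y_\ell)\neq 0$ force $\rho_{\mathfrak{g}_i/\pi_i(\mathfrak{h})}(H)>0$, i.e.\ that $[H,\mathfrak{g}_i]\not\subset\pi_i(\mathfrak{h})$, but the mechanism you sketch --- ``a root-space analysis shows this would force $\pi_i(\mathfrak{h})$ to contain the full Cartan plus large families of root spaces, clashing with semisimplicity or proper inclusion'' --- is not a proof and, as stated, is not even a correct description of the obstruction. From $[H,\mathfrak{g}_i]=\bigoplus_{\alpha(H)\neq0}(\mathfrak{g}_i)_\alpha\subset\pi_i(\mathfrak{h})$ one gets the root spaces $(\mathfrak{g}_i)_{\pm\alpha}$ with $\alpha(H)\neq0$ and the coroots $h_\alpha$ they generate, but this need not be the full Cartan, and in any case a proper semisimple subalgebra containing a full Cartan and many root spaces is perfectly possible (e.g.\ $\mathfrak{so}_4\subset\mathfrak{so}_5$), so there is no automatic ``clash.'' The argument that actually closes this case is different in character: let $\mathfrak{s}_i$ be a $\pi_i(\mathfrak{h})$-invariant complement of $\pi_i(\mathfrak{h})$ in $\mathfrak{g}_i$ and let $\mathfrak{k}:=\{X\in\pi_i(\mathfrak{h}):[X,\mathfrak{s}_i]=0\}$. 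Since $\operatorname{ad}H$ is semisimple, $\rho_{\mathfrak{s}_i}(H)=0$ would give $H\in\mathfrak{k}$, so $\mathfrak{k}$ is a nonzero ideal of the semisimple $\pi_i(\mathfrak{h})$ and contains a simple ideal $\mathfrak{k}_1$; then $[\mathfrak{k}_1,\mathfrak{g}_i]=[\mathfrak{k}_1,\pi_i(\mathfrak{h})]+[\mathfrak{k}_1,\mathfrak{s}_i]\subset\mathfrak{k}_1$, so $\mathfrak{k}_1$ is a nonzero ideal of the simple $\mathfrak{g}_i$, whence $\pi_i(\mathfrak{h})=\mathfrak{g}_i$ and $\mathfrak{s}_i=0$, contradicting properness. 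This is precisely the paper's equivalence (3.1) ($\rho_{\mathfrak{s}}=0$ at some nonzero $Y\in\mathfrak{a}\cap\mathfrak{h}_I$ iff $\Delta\mathfrak{g}_i\subset\mathfrak{h}$), and it is the one idea your proposal is missing; everything else you wrote goes through.
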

As the proof of this lemma proceeds on the same line as that of \cite[Lemma 2.16]{BKIII}, we first recall the notations.
Let $\pi_i : \mathfrak{g} \rightarrow \mathfrak{g}_i$ be the $i$-th projection for $i = 1, \ldots, r$.
For a nonempty subset $I \subset \{1, \ldots, r\}$, an ideal $\mathfrak{h}_I$ of $\mathfrak{h}$ is defined inductively as follows:
\begin{itemize}
  \item $\mathfrak{h}_I := \mathfrak{h}_i = \mathfrak{h} \cap \mathfrak{g}_i$ for $I = \{i\}\ (i = 1, \ldots, r)$,
  \item $\mathfrak{h} \cap (\displaystyle\bigoplus_{i \in I}\mathfrak{g}_i) = \mathfrak{h}_I \oplus \bigoplus_{J \subsetneq I} \mathfrak{h}_J$ for $\# I \geq 2$.
\end{itemize}
For each $i \in \{1, \ldots, r\}$ we take a $\pi_i(\mathfrak{h})$-invariant subspace $\mathfrak{s}_i$ of $\mathfrak{g}_i$ such that $\mathfrak{g}_i = \pi_i(\mathfrak{h}) \oplus \mathfrak{s}_i$ and set $\mathfrak{s} := \mathfrak{s}_1 \oplus \dots \oplus \mathfrak{s}_r$.

For a subspace $V$ in $\mathfrak{g} = \mathfrak{g}_1 \oplus \cdots \oplus \mathfrak{g}_r$ and each map $\sigma \in \text{Map}(\{1, \ldots, r\}, \{+, -\})$, we define a vector subspace $V^\sigma$ in $\mathfrak{g}$ by
\[V^\sigma := \{(\sigma(1)v_1, \ldots, \sigma(r)v_r) \in \mathfrak{g} \mid (v_1, \ldots, v_r) \in V\}\]
and a subspace $\widetilde{V}$ of $\mathfrak{g}$ by $\widetilde{V} := \sum_{\sigma}V^\sigma$ where $\sigma$ is taken over all $\sigma \in \text{Map}(\{1, \ldots, r\}, \{+, -\})$.
Then we have $\widetilde{V} = \pi_1(V) \oplus \dots \oplus \pi_r(V)$.

For each nonempty subset $I \subset \{1, \ldots, r\}$, we take an $\mathfrak{h}_I$-submodule $\mathfrak{q}_I$ in $\widetilde{\mathfrak{h}_I}$ with a direct decomposition $\widetilde{\mathfrak{h}_I} = \mathfrak{h}_I \oplus \mathfrak{q}_I$ since $\mathfrak{h}_I$ is semisimple.
When $\#I \geq 2$, we can take $\mathfrak{q}_I$ to contain the $\mathfrak{h}_I$-submodule $(\mathfrak{h}_I)^\sigma$ for some $\sigma$.
When $\#I \geq 3$, we can also take $\mathfrak{q}_I$ to contain the direct sum of two $\mathfrak{h}_I$-submodules $(\mathfrak{h}_I)^\sigma \oplus (\mathfrak{h}_I)^\tau$ for some distinct $\sigma, \tau$.
Since $(\mathfrak{h}_I)^\sigma \simeq \mathfrak{h}_I$ for any $\sigma$, we have 
\begin{itemize}
  \item when $\#I = 1$, $\widetilde{\mathfrak{h}_I} = \mathfrak{h}_I$ and $\mathfrak{q}_I = \{0\}$,
  \item when $\#I = 2$, $\rho_{\mathfrak{h}_I} = \rho_{\mathfrak{q}_I}$ on $\mathfrak{a} \cap \mathfrak{h}_I$.
  \item when $\#I > 2$, $\rho_{\mathfrak{h}_I} < \rho_{\mathfrak{q}_I}$ on $(\mathfrak{a} \cap \mathfrak{h}_I) \setminus \{0\}$,
\end{itemize}
We note that $\mathfrak{q}_{\{i\}}$ and $\mathfrak{q}_i = \mathfrak{g}_i/{\mathfrak{h}_i}$ have different definitions.

\begin{proof}[Proof of Lemma \ref{reduction2}]
  When $\mathfrak{h} = \mathfrak{h}_1 \oplus \dots \oplus \mathfrak{h}_r$, Lemma \ref{reduction2} follows from Lemma \ref{reduction1}. 
  Thus we have to consider the case where $\mathfrak{h}_I \neq \{0\}$ for some $I \subset \{1, \dots, r\}$ with $\# I=2$.
  First, for $I = \{i, j\}$ with $\mathfrak{h}_I \neq \{0\}$ we prove the equivalence
  \begin{equation}\label{diag}
    \begin{split}
      &\text{there exists a nonzero element}\ Y \in \mathfrak{a} \cap \mathfrak{h}_I\ \text{such that}\ \rho_\mathfrak{s}(Y) = 0 \\
    &\Leftrightarrow \mathfrak{g}_i \simeq \mathfrak{g}_j\ \text{and}\ \Delta \mathfrak{g}_i \subset \mathfrak{h}.
    \end{split}
  \end{equation}
  Take an arbitrary $I=  \{i, j\}$ satisfying $\mathfrak{h}_I \neq \{0\}$.
  Then, for a nonzero element $Y \in \mathfrak{a} \cap \mathfrak{h}_I$, we can see that 
  \[\rho_\mathfrak{s}(Y) = 0 \Leftrightarrow \rho_{\mathfrak{s}_i}(\pi_i(Y)) = 0\ \text{and}\ \rho_{\mathfrak{s}_j}(\pi_j(Y)) = 0.\]
  We now assume that there exists a nonzero element $Y \in \mathfrak{a} \cap \mathfrak{h}_I$ such that $\rho_{\mathfrak{s}_i}(\pi_i(Y)) = 0$.
  We define $f_i : \pi_i(\mathfrak{h}) \rightarrow \text{End}(\mathfrak{s}_i)$ to be the map obtained from adjoint representation, then
  \[\{0\} \neq \mathbb{R}Y \subset \Ker f_i \subset \pi_i(\mathfrak{h}),\]
  so $\text{Ker}f_i$ is a nontrivial ideal of $\pi_i(\mathfrak{h})$.
  When $\pi_i(\mathfrak{h}) = \mathfrak{k}_1 \oplus \dots \oplus \mathfrak{k}_l$ is an ideal decomposition of $\pi_i(\mathfrak{h})$, we may assume that $\mathfrak{k}_1 \subset \Ker f_i$.
  By the decomposition $\mathfrak{g}_i = \pi_i(\mathfrak{h}) \oplus \mathfrak{s}_i$, we have
  \[\mathfrak{g}_i = \mathfrak{k}_1 \oplus \dots \oplus \mathfrak{k}_l \oplus \mathfrak{s}_i.\]
  Since $\mathfrak{k}_1$ commutes with $\mathfrak{s}_i$, $\mathfrak{k}_1$ is a nonzero ideal of $\mathfrak{g}_i$. 
  Then we have $\mathfrak{g}_i = \mathfrak{k}_1 = \pi_i(\mathfrak{h}) = \pi_i(\mathfrak{h}_I)$ and $\mathfrak{s}_i = \{0\}$.
  The same argument applies to the index $j$ as well and we have $\mathfrak{g}_j = \pi_j(\mathfrak{h}_I)$.
  By definition of $\mathfrak{h}_I$, we can see that $\mathfrak{g}_i \simeq \mathfrak{g}_j$ and $\Delta \mathfrak{g}_i \subset \mathfrak{h}$.
  The opposite implication of (\ref{diag}) is easily verified, and hence (\ref{diag}) is proved.
  
  Therefore, in order to prove this Lemma, it suffices to show the equivalence: 
  \begin{align*}
    &\rho_{\mathfrak{h}} < \rho_{\mathfrak{q}}\ \text{on}\ \mathfrak{a} \setminus \{0\} \\ 
    &\Leftrightarrow
    \left(\begin{gathered}
      \rho_{\mathfrak{h}_i} < \rho_{\mathfrak{q}_i}\ \text{on}\ \mathfrak{a}_i \setminus \{0\}\ \text{for every}\ i = 1, \cdots, r\ \text{and} \\
      \rho_{\mathfrak{s}} > 0\ \text{on}\ (\mathfrak{a} \cap \mathfrak{h}_I) \setminus \{0\}\ \text{for every}\ I\ \text{with}\ I \subset \{1, \dots, r\}, \#I=2
    \end{gathered}\right).
  \end{align*}
  Suppose $\rho_{\mathfrak{h}} < \rho_{\mathfrak{q}}$ on $\mathfrak{a} \setminus \{0\}$, then for any nonzero element $Y_i \in \mathfrak{a}_i$, 
  \[\rho_{\mathfrak{h}_i}(Y_i) \leq \rho_{\mathfrak{h}}(Y_i) < \rho_{\mathfrak{q}}(Y_i) = \rho_{\mathfrak{s}}(Y_i) + \displaystyle\sum_{\#I \geq 2}\rho_{\mathfrak{q}_I}(Y_i).\]
  Since $\mathfrak{h}_i$ acts trivially on $\mathfrak{s}_j$ with $j \neq i$ and $\mathfrak{q}_I$ with $\#I \geq 2$, then we have
  \[\rho_{\mathfrak{s}}(Y_i) + \displaystyle\sum_{\#I \geq 2}\rho_{\mathfrak{q}_I}(Y_i) = \rho_{\mathfrak{s}_i}(Y_i) = \rho_{\mathfrak{q}_i}(Y_i).\]
  Let $I \subset \{1, \dots, r\}$ satisfy $\#I=2$ and $\mathfrak{h}_I \neq \{0\}$.
  For any nonzero element $Y_I \in \mathfrak{a} \cap \mathfrak{h}_I$ we have
  \begin{equation*}
    \rho_{\mathfrak{h}_I}(Y_I) \leq \rho_{\mathfrak{h}}(Y_I) < \rho_{\mathfrak{q}}(Y_I) = \rho_{\mathfrak{s}}(Y_I) + \displaystyle\sum_{\#J \geq 2}\rho_{\mathfrak{q}_J}(Y_I) = \rho_{\mathfrak{s}}(Y_I) + \rho_{\mathfrak{q}_I}(Y_I).
  \end{equation*}
  Since $\rho_{\mathfrak{q}_I}(Y_I) = \rho_{\mathfrak{h}_I}(Y_I)$, we have $\rho_{\mathfrak{s}}(Y_I) > 0$.

  Conversely, suppose $\rho_{\mathfrak{h}_i} < \rho_{\mathfrak{q}_i}$ on $\mathfrak{a}_i \setminus \{0\}$ for every $i = 1, \cdots, r$ and $\rho_{\mathfrak{s}} > 0$ on $(\mathfrak{a} \cap \mathfrak{h}_I) \setminus \{0\}$ for every $I$ with $I \subset \{1, \dots, r\}$ and $\#I=2$.

  Let $Y \in \mathfrak{a} \setminus \{0\}$ and write 
  \[Y = \displaystyle\sum_{i=1}^{r}Y_i + \sum_{\#I = 2}Y_I + \sum_{\#I \geq 3}Y_I \in \bigoplus_{i=1}^r\mathfrak{a}_i \oplus \bigoplus_{\#I = 2}(\mathfrak{a} \cap \mathfrak{h}_I) \oplus \bigoplus_{\#I \geq 3}(\mathfrak{a} \cap \mathfrak{h}_I).\]
  Since we assumed that $\rho_{\mathfrak{h}_i} \leq \rho_{\mathfrak{q}_i}$ we have 
  \begin{align*}
    \rho_{\mathfrak{h}}(Y) &= \displaystyle\sum_{i=1}^{r}\rho_{\mathfrak{h}_i}(Y_i) + \sum_{\#I = 2}\rho_{\mathfrak{h}_I}(Y_I) + \sum_{\#I \geq 3}\rho_{\mathfrak{h}_I}(Y_I) \\
    &\leq \sum_{i=1}^{r}\rho_{\mathfrak{q}_i}(Y_i) + \sum_{\#I = 2}\rho_{\mathfrak{q}_I}(Y_I) + \sum_{\#I \geq 3}\rho_{\mathfrak{q}_I}(Y_I) \\
    &= \sum_{i=1}^{r}\rho_{\mathfrak{s}_i}(Y_i) + \sum_{\#I = 2}\rho_{\mathfrak{q}_I}(Y_I) + \sum_{\#I \geq 3}\rho_{\mathfrak{q}_I}(Y_I) \\
    &= \rho_{\mathfrak{s}}(\sum_{i=1}^{r}Y_i) + \sum_{\#I = 2}\rho_{\mathfrak{q}_I}(Y) + \sum_{\#I \geq 3}\rho_{\mathfrak{q}_I}(Y) \\
    &\leq \rho_{\mathfrak{s}}(Y) + \sum_{\#I = 2}\rho_{\mathfrak{q}_I}(Y) + \sum_{\#I \geq 3}\rho_{\mathfrak{q}_I}(Y) \\
    &\leq \rho_{\mathfrak{s} \oplus \sum_{\#I \geq 2}\mathfrak{q}_I}(Y) \\
    &= \rho_{\mathfrak{q}}(Y).
  \end{align*}
  If $\sum_{i=1}^{r}Y_i \neq 0$ or $\sum_{\#I \geq 3}Y_I \neq 0$, then we have $\rho_{\mathfrak{h}}(Y) < \rho_{\mathfrak{q}}(Y)$.
  If $Y = \displaystyle\sum_{\#I = 2}Y_I \in \bigoplus_{\#I = 2}\mathfrak{h}_I$, by the assumption $\rho_{\mathfrak{s}} > 0$ on $(\mathfrak{a} \cap \mathfrak{h}_I) \setminus \{0\}$ for every $I$ with $\#I=2$, we have 
  \[\rho_{\mathfrak{h}}(Y) = \displaystyle\sum_{\#I = 2}\rho_{\mathfrak{h}_I}(Y_I) = \displaystyle\sum_{\#I = 2}\rho_{\mathfrak{q}_I}(Y_I) < \rho_{\mathfrak{s} \oplus \sum_{\#I \geq 2}\mathfrak{q}_I}(Y) = \rho_{\mathfrak{q}}(Y).\]
  This completes the proof.
\end{proof}

\section{Classification}\label{Secclassif}
In this section we will classify the pairs $(\mathfrak{g}, \mathfrak{h})$ of complex semisimple Lie algebras satisfying $\rho_{\mathfrak{h}} \leq \rho_{\mathfrak{q}}$ and $\rho_{\mathfrak{h}} \nless \rho_{\mathfrak{q}}$ where $\mathfrak{g}$ is a simple Lie algebra of classical type, that is, $\mathfrak{g} = \mathfrak{sl}(\mathbb{C}^{n})$, $\mathfrak{so}(\mathbb{C}^{n})$ and $\mathfrak{sp}(\mathbb{C}^{2n})$.
Throughout this paper, for simplicity, we write $\mathfrak{sl}(\mathbb{C}^{n})$, $\mathfrak{so}(\mathbb{C}^{n})$ and $\mathfrak{sp}(\mathbb{C}^{2n})$ as $\mathfrak{sl}_n$, $\mathfrak{so}_n$ and $\mathfrak{sp}_n$ respectively and $\mathfrak{g}_2$, $\mathfrak{f}_4$, $\mathfrak{e}_6$, $\mathfrak{e}_7$, $\mathfrak{e}_8$ for the five complex exceptional simple Lie algebras.

To satisfy $\rho_{\mathfrak{h}} \leq \rho_{\mathfrak{q}}$ and $\rho_{\mathfrak{h}} \nless \rho_{\mathfrak{q}}$ means that there exist nonzero vectors $Y \in \mathfrak{h}$ such that $\rho_{\mathfrak{h}}(Y) = \rho_{\mathfrak{q}}(Y) > 0$, we shall call them witness vectors.

\subsection{List of Classification}\label{secmain}
We consider the case where $\mathfrak{g}$ is a complex classical simple Lie algebra and $\mathfrak{h}$ is a complex semisimple Lie subalgebra.
Theorem \ref{main} gives a list of the pairs $(\mathfrak{g}, \mathfrak{h})$ satisfying $\rho_{\mathfrak{h}} \leq \rho_{\mathfrak{q}}$ and $\rho_{\mathfrak{h}} \nless \rho_{\mathfrak{q}}$.
\begin{theorem}\label{main}
  Let $\mathfrak{g}$ be a complex classical simple Lie algebra and $\mathfrak{h}$ a complex semisimple Lie subalgebra of $\mathfrak{g}$. Then a pair $(\mathfrak{g}, \mathfrak{h})$ satisfying $\rho_{\mathfrak{h}} \leq \rho_{\mathfrak{q}}$ and $\rho_{\mathfrak{h}} \nless \rho_{\mathfrak{q}}$ is one of the pairs listed in Table \ref{classif}.
\end{theorem}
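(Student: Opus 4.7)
The plan is to prove Theorem \ref{main} by walking down Dynkin-chains of semisimple subalgebras of $\mathfrak{g}$, combined with the three-case split dictated by how $\mathfrak{h}$ acts on the defining representation $V$ of $\mathfrak{g}$. The crucial reduction is Lemma \ref{lem:til}: as soon as some semisimple $\widetilde{\mathfrak{h}} \supset \mathfrak{h}$ satisfies $\rho_{\widetilde{\mathfrak{h}}} < \rho_{\mathfrak{g}/\widetilde{\mathfrak{h}}}$ on $\widetilde{\mathfrak{a}} \setminus \{0\}$, the same strict inequality propagates to $\mathfrak{h}$, so any such $\mathfrak{h}$ cannot appear in Table \ref{classif}. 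Hence one only needs to follow those branches of the Dynkin tree in which every link fails the strict inequality, and at each level use the appropriate proposition to decide whether to list the pair or to descend further. By Lemma \ref{reduction2}, it also suffices throughout to take $\mathfrak{g}$ simple.

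First I would dispose of the case of a \emph{maximal} semisimple $\mathfrak{h} \subset \mathfrak{g}$ by splitting into the three situations (i)--(iii) of the introduction. Case (iii), $\mathfrak{h}$ simple acting irreducibly on $V$, is handled by Proposition \ref{si}: using the lower bound $f(\lambda; Y)$ on $\rho_\mathfrak{g}$ and its monotonicity in the dominance order $\prec$, the verification of $2\rho_\mathfrak{h} < f(\lambda; \cdot)$ is reduced to a short list of minimal highest weights, leaving only three exceptional embeddings. Case (ii), $\mathfrak{h}$ non-simple and acting irreducibly, is the tensor-product situation; Dynkin's classification \cite{DyM}, recorded here as Theorem \ref{Dtens}, combined with Proposition \ref{tens}, enumerates when equality can persist. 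Case (i), $\mathfrak{h}$ acting reducibly on $V$, is the most combinatorial: for a decomposition $V = V_1 \oplus \cdots \oplus V_k$ one estimates $\rho_{\mathfrak{g}/\mathfrak{h}}$ via the block-diagonal summands together with the off-diagonal $\operatorname{Hom}(V_i, V_j)$ contributions, and the tables of \cite{DyS} (Theorem \ref{Dred}) supply the finitely many configurations of $(\dim V_1, \dots, \dim V_k)$ to test; this is carried out in Propositions \ref{red2}, \ref{redu}, and \ref{redex}.

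Next, for each maximal $\mathfrak{h}$ that fails $\rho_\mathfrak{h} < \rho_{\mathfrak{g}/\mathfrak{h}}$ but still satisfies $\rho_\mathfrak{h} \leq \rho_{\mathfrak{g}/\mathfrak{h}}$, I would recurse: apply Dynkin's classification inside $\mathfrak{h}$ and check, for every maximal semisimple $\mathfrak{h}^\prime \subset \mathfrak{h}$, whether $\rho_{\mathfrak{h}^\prime} < \rho_{\mathfrak{g}/\mathfrak{h}^\prime}$ on $\mathfrak{a}^\prime \setminus \{0\}$. By Lemma \ref{lem:til} the recursion only needs to continue along links where strict inequality fails; since the rank drops at each step, the process terminates. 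For every surviving pair I would simultaneously exhibit a concrete witness vector $Y \in \mathfrak{a}$ with $\rho_\mathfrak{h}(Y) = \rho_{\mathfrak{g}/\mathfrak{h}}(Y) > 0$ read off the explicit $\rho$-formulas used in the corresponding proposition. Collecting the surviving pairs then produces exactly the eleven series of Table \ref{classif}.

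The main obstacle I expect is the bookkeeping of case (i): reducible embeddings branch in many directions, and one must juggle an asymmetry between the three classical types $\mathfrak{sl}_n$, $\mathfrak{so}_n$, $\mathfrak{sp}_n$ (in particular the appearance of a symmetric versus skew-symmetric invariant that forces $\mathfrak{h}$ into $\mathfrak{so}_n$ or $\mathfrak{sp}_n$), all while tracking which vectors $Y \in \mathfrak{a}$ realise equality. Case (iii) is conceptually delicate but the key observation that strict and non-strict inequality \emph{coincide} for simple irreducible embeddings reduces the matter to one clean argument via the function $f(\lambda; Y)$. The recursive descent itself is long but mechanical: it reuses Propositions \ref{red2}--\ref{final} repeatedly, and the real work is in carefully matching each row of Table \ref{classif} to the proposition that produces it together with its witness vector.
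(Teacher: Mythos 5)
Your proposal is correct and follows essentially the same route as the paper: reduce to simple $\mathfrak{g}$ via Lemma \ref{reduction2}, split according to how $\mathfrak{h}$ acts on $V$ using Dynkin's classifications (Theorems \ref{Dred} and \ref{Dtens}), decide each maximal case by Propositions \ref{red2}--\ref{final}, and descend along chains where strict inequality fails, using Lemma \ref{lem:til} to terminate the other branches. The only difference is one of detail, not of method: the paper organizes the descent by explicit numerical conditions on the block sizes $(n_i, m_j)$ rather than literally re-running Dynkin's classification inside each $\mathfrak{h}$, but this is the same argument.
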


\renewcommand{\arraystretch}{1.2}
\begin{table}[h]
  \begin{center}
    \caption{Pairs $(\mathfrak{g}, \mathfrak{h})$ which satisfy $\rho_{\mathfrak{h}} \leq \rho_{\mathfrak{q}}$ and $\rho_{\mathfrak{h}} \nless \rho_{\mathfrak{q}}$.}
    \label{classif}
    \begin{tabular}{|c|c|c|c|} \hline
      $\mathfrak{g}$ & $\mathfrak{h}$ & parameters &  witness vectors in $\mathfrak{a}_+$\\ \hline
      $\mathfrak{sl}_{2p}$ & $\mathfrak{sl}_p \oplus \mathfrak{sl}_p$ & & $a_i=b_i \quad ( \forall i )$ \\ \hline
      $\mathfrak{sl}_{2p+1}$ & $\mathfrak{sl}_{p+1} \oplus \mathfrak{sl}_p$ & & $a_1 \geq b_1 \geq a_2 \geq b_2 \geq \cdots$ \\ \hline
      $\mathfrak{sl}_{2p+1}$ & $\mathfrak{sl}_{p+1} \oplus \mathfrak{h}_2$ & $\mathfrak{h}_2 \subsetneq \mathfrak{sl}_p$ & $(a_1, 0, 0, \dots, 0)$ \\ \hline
      $\mathfrak{sl}_{2p+1}$ & $\mathfrak{sp}_p$ & & all elements in $\mathfrak{a}_+$ \\ \hline
      $\mathfrak{so}_{2p+2}$  & $\mathfrak{so}_{p+2} \oplus \mathfrak{h}_2$ & $\mathfrak{h}_2 \subset \mathfrak{so}_p$ & $(a_1, 0, 0, \ldots, 0)$ \\ \hline
      $\mathfrak{so}_{2p+1}$ & $\mathfrak{sl}_p$ & & $a_i = - a_{p-i+1} \quad ( \forall i )$ \\ \hline
      $\mathfrak{so}_{7+2}$ & $\mathfrak{g}_2$ & $\mathfrak{g}_2 \hookrightarrow \mathfrak{so}_7$ & $a_1 = a_2$ \\ \hline
      $\mathfrak{so}_{8+3}$ & $\mathfrak{so}_7 \oplus \mathfrak{h}_2$ & $\mathfrak{so}_7 \hookrightarrow \mathfrak{so}_8$, $\mathfrak{h}_2 \subset \mathfrak{so}_3$ & $a_1 = a_2 > 0 = a_3 = b_1$ \\ \hline
      $\mathfrak{sp}_{2p}$ & $\mathfrak{sp}_p \oplus \mathfrak{sp}_p \supsetneq \mathfrak{h} \supset \mathfrak{sp}_p$ & \begin{tabular}{c} $(\mathfrak{g}, \mathfrak{h}) \neq$ \\ $(\mathfrak{sp}_4, \mathfrak{sp}_2 \oplus (\mathfrak{sp}_1)^{\oplus2})$ \end{tabular} & $(a_1, 0, 0, \ldots, 0)$ \\ \hline
      $\mathfrak{sp}_{4}$ & $\mathfrak{sp}_2 \oplus (\mathfrak{sp}_1)^{\oplus2}$ & & $a_2 = b_1 = c_1$ \\ \hline
      $\mathfrak{sp}_{3}$ & $\mathfrak{sp}_1 \oplus \mathfrak{sp}_1 \oplus \mathfrak{sp}_1$ & & $a_1 = b_1 = c_1$ \\ \hline
    \end{tabular}
  \end{center}
\end{table}

In Table \ref{classif}, the leftmost column indicates what $\mathfrak{g}$ is.
In the second column from the left, we list all maximal complex semisimple Lie subalgebras $\mathfrak{h} \subsetneq \mathfrak{g}$ among those that satisfy $\rho_\mathfrak{h} \leq \rho_\mathfrak{q}$ and $\rho_\mathfrak{h} \nless \rho_\mathfrak{q}$. 

In the seventh case, $\mathfrak{h}$ is the exceptional simple Lie algebra $\mathfrak{g}_2$.
We regard $\mathfrak{g}_2$ as a Lie subalgebra of $\mathfrak{so}_{7+2}$ via the $7$-dimensional  irreducible representation $\mathfrak{g}_2 \hookrightarrow \mathfrak{so}_7$ and two copies of the trivial $1$-dimensional representation.

In the eighth case, $\mathfrak{h} = \mathfrak{so}_7 \oplus \mathfrak{h}_2$ is a Lie subalgebra of $\mathfrak{so}_7 \oplus \mathfrak{so}_3$.
We regard $\mathfrak{so}_7$ as a Lie subalgebra of $\mathfrak{so}_8$ via the irreducible representation $\mathfrak{so}_7 \hookrightarrow \mathfrak{so}_8$ called the spin representation.
We then regard $\mathfrak{so}_7 \oplus \mathfrak{h}_2$ as a Lie subalgebra of $\mathfrak{so}_{8+3}$ via the direct sum of the spin representation of $\mathfrak{so}_7$ and the standard representation of $\mathfrak{so}_3$.

In the ninth case, $\mathfrak{h} \supset \mathfrak{sp}_p$ means that $\mathfrak{h}$ contains either the first or the second direct summand $\mathfrak{sp}_p$.

All witness vectors in the positive Weyl chamber $\mathfrak{a}_+$ are written in the rightmost column, where the maximal split abelian subspace $\mathfrak{a}$ are those defined in Section \ref{pfofprops}.
The notation for the description of witness vectors will be explained in Section \ref{pfofprops}.
If $\mathfrak{h}$ is a direct sum of simple Lie algebras, as a maximal split abelian subspace of $\mathfrak{h}$, we can take the direct sum of each maximal abelian subspace.
In Table \ref{classif}, $(a_i), (b_i)$ and $(c_i)$ belong to the maximal split abelian subspace of the first, second, and third direct summand, respectively.\\

To prove Theorem \ref{main}, we state the conditions for whether $\rho_\mathfrak{h} < \rho_\mathfrak{q}$ holds for several pairs (Propositions \ref{red2} -- \ref{final}).
They correspond to the propositions in Section 3 of \cite{BKIII}.
For convenience, we add necessary and sufficient conditions studied in \cite{BKIII} for satisfying $\rho_{\mathfrak{h}} \leq \rho_{\mathfrak{q}}$ to the following propositions.

First, for Propositions \ref{red2} and \ref{incl}, we consider several examples in the case where $(\mathfrak{g}, \mathfrak{n}_\mathfrak{g}(\mathfrak{h}))$ is a symmetric pair.
Here, $\mathfrak{n}_\mathfrak{g}(\mathfrak{h})$ denotes the normalizer of $\mathfrak{h}$ in $\mathfrak{g}$.
\begin{proposition}\label{red2}
   Let $p \geq q \geq 1$.
  \begin{itemize}
    \item[$(1)$] If $\mathfrak{g} = \mathfrak{sl}_{p+q} \supset \mathfrak{h} = \mathfrak{sl}_p \oplus \mathfrak{sl}_q$, then we have the equivalence
      \[\rho_\mathfrak{h} \leq \rho_\mathfrak{q} \Leftrightarrow p \leq q+1\]
      and we always have $\rho_\mathfrak{h} \nless \rho_\mathfrak{q}$. 
      An element $Y = (a_1, \ldots, a_p, b_1, \ldots, b_q) \in \mathfrak{a}_+$ is a witness vector if and only if
      \begin{alignat*}{2}
       &a_i = b_i \quad (1 \leq i \leq p)& \quad &(\text{when}\ p=q), \\
       &a_1 \geq b_1 \geq a_2 \geq b_2 \geq \cdots&  &(\text{when}\ p=q+1).
      \end{alignat*}
    \item[$(2)$] If $\mathfrak{g} = \mathfrak{so}_{p+q} \supset \mathfrak{h} = \mathfrak{so}_p \oplus \mathfrak{so}_q$, then we have folloing equivalences
      \begin{align*}
        &\rho_\mathfrak{h} \leq \rho_\mathfrak{q} \Leftrightarrow p \leq q+2, \\
        &\rho_\mathfrak{h} < \rho_\mathfrak{q} \Leftrightarrow p \leq q+1.
      \end{align*}
      When $p=q+2$, an element $Y = (a_1, \ldots, a_{p^\prime}, b_1, \ldots, b_{q^\prime}) \in \mathfrak{a}_+$ is a witness vector if and only if $a_1 > 0$ and $a_2 = \cdots = a_{p^\prime} = b_1 = \cdots = b_{q^\prime} = 0$.
      Here, $p^\prime := \lfloor \frac{p}{2} \rfloor$ and $q^\prime := \lfloor \frac{q}{2} \rfloor$.
    \item[$(3)$] If $\mathfrak{g} = \mathfrak{sp}_{p+q} \supset \mathfrak{h} = \mathfrak{sp}_p \oplus \mathfrak{sp}_q$, then we always have $\rho_{\mathfrak{h}} \nleq \rho_{\mathfrak{q}}$.
  \end{itemize}
\end{proposition}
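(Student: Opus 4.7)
My plan is to handle the three cases separately by explicit computation of $\rho_{\mathfrak{h}}$ and $\rho_{\mathfrak{q}}$ on the positive Weyl chamber, working in standard coordinates $(a_1,\dots,a_{p'},b_1,\dots,b_{q'})$ on the Cartan subspace of $\mathfrak{h}=\mathfrak{h}_1\oplus\mathfrak{h}_2$. In each case the $\mathfrak{h}$-module structure of $\mathfrak{q}=\mathfrak{g}/\mathfrak{h}$ is classical: for part (1) the off-diagonal blocks give $\mathbb{C}\oplus(\mathbb{C}^p\otimes(\mathbb{C}^q)^\ast)\oplus((\mathbb{C}^p)^\ast\otimes\mathbb{C}^q)$ with weights $\pm(a_i-b_j)$; for part (2) the off-diagonal block is $\mathbb{C}^p\otimes\mathbb{C}^q$ with weights $\pm a_i\pm b_j$ plus zero-weight contributions from odd-dimensional factors; and for part (3) it is $\mathbb{C}^{2p}\otimes\mathbb{C}^{2q}$ with weights $\pm a_i\pm b_j$. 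Reading these off together with the standard $\rho$-formulas for classical root systems yields closed-form expressions for both $\rho_{\mathfrak{h}}$ and $\rho_{\mathfrak{q}}$.

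For part (1), I sort the combined multiset $\{a_i\}\cup\{b_j\}$ decreasingly as $c_1\geq\cdots\geq c_{p+q}$, label each $c_k$ by its type $\sigma_k\in\{a,b\}$, and set $d_m=\alpha_m-\beta_m$, where $\alpha_m,\beta_m$ are the cumulative counts of each type. Telescoping $c_k-c_\ell=\sum_{m=k}^{\ell-1}(c_m-c_{m+1})$ yields the identity
\[\rho_{\mathfrak{q}}(Y)-\rho_{\mathfrak{h}}(Y)=\sum_{m=1}^{p+q-1}(c_m-c_{m+1})\,d_m\bigl(d_m-(p-q)\bigr).\]
Since $d_m$ is an integer, each summand is non-negative iff $p-q\leq 1$, recovering the equivalence $\rho_{\mathfrak{h}}\leq\rho_{\mathfrak{q}}\iff p\leq q+1$ from \cite{BKIII}. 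Equality holds iff $d_m\in\{0,p-q\}$ at every $m$ with $c_m>c_{m+1}$; combining this with the tracelessness constraints $\sum_i a_i=\sum_j b_j=0$ forces $a_i=b_i$ in the $p=q$ case and the interleaving $a_1\geq b_1\geq a_2\geq b_2\geq\cdots\geq a_p$ in the $p=q+1$ case. Since witnesses with $\rho_{\mathfrak{h}}(Y)>0$ exist, $\rho_{\mathfrak{h}}<\rho_{\mathfrak{q}}$ never holds on $\mathfrak{a}\setminus\{0\}$.

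For part (2), since both $\rho_{\mathfrak{h}}$ and $\rho_{\mathfrak{q}}$ are invariant under sign flips of any weight coordinate, I may assume $a_i,b_j\geq 0$. Using $|a_i+b_j|+|a_i-b_j|=2\max(a_i,b_j)$,
\[\rho_{\mathfrak{q}}(Y)=2\sum_{i,j}\max(a_i,b_j)+\epsilon_q\sum_i a_i+\epsilon_p\sum_j b_j,\]
where $\epsilon_p,\epsilon_q\in\{0,1\}$ encode the parities of $p,q$. Sorting $\{a_i\}\cup\{b_j\}$ and running the analogue of the part-(1) telescoping, the coefficient of each $(c_m-c_{m+1})$ in $\rho_{\mathfrak{q}}-\rho_{\mathfrak{h}}$ turns out to be non-negative precisely when $p\leq q+2$, and strictly positive at every relevant $m$ for nonzero $Y$ precisely when $p\leq q+1$. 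For $p=q+2$, a direct check on $Y=(a_1,0,\ldots,0)$ shows $\rho_{\mathfrak{h}}(Y)=\rho_{\mathfrak{q}}(Y)=qa_1$, and a linear-perturbation argument rules out any other witness.

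For part (3), I simply exhibit a violating vector. Take $Y=(a,0,\ldots,0,a,0,\ldots,0)\in\mathfrak{a}_+$ with $a>0$, one nonzero entry in each summand. Using $\rho_{\mathfrak{sp}_n}(Y)=2\sum_i(n+1-i)Y_i$ and the weight list $\pm a_i\pm b_j$ of $\mathbb{C}^{2p}\otimes\mathbb{C}^{2q}$, a direct evaluation yields $\rho_{\mathfrak{h}}(Y)=2(p+q)a$ and $\rho_{\mathfrak{q}}(Y)=2(p+q-1)a$, so $\rho_{\mathfrak{h}}(Y)-\rho_{\mathfrak{q}}(Y)=2a>0$, contradicting $\rho_{\mathfrak{h}}\leq\rho_{\mathfrak{q}}$. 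The chief technical obstacle I anticipate is in part (2): the short roots from odd-dimensional factors and the differing Weyl-chamber conventions for $B_n$ vs.\ $D_n$ force a four-fold case split by the parities of $p,q$, and the witness analysis at $p=q+2$ requires careful perturbation estimates to rule out any nonzero coordinate besides $a_1$.
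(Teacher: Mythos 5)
Your proof is correct, and although it rests on the same basic device as the paper's --- merging the spectra of the two factors into one decreasing sequence $c_1\ge c_2\ge\cdots$ --- the execution is genuinely different. The paper bounds $\rho_{\mathfrak{h}}(Y)$ from above by the rearrangement inequality $\rho_{\mathfrak{h}}(Y)\le\sum_i\lambda_i c_i$ (with $\lambda_i$ the decreasing rearrangement of the $\rho$-coefficients of the two factors) and then compares $2\lambda_i$ with the coefficients $\mu_i$ of $\rho_{\mathfrak{g}}$ term by term, so its witness analysis must additionally track when the rearrangement inequality becomes an equality. You instead record the type of each $c_m$ and obtain an exact identity: your formula $\rho_{\mathfrak{q}}-\rho_{\mathfrak{h}}=\sum_m(c_m-c_{m+1})\,d_m\bigl(d_m-(p-q)\bigr)$ checks out in case (1), and in case (2) the analogous coefficient of $c_m-c_{m+1}$ (indices running over the merged sequence of the $p'+q'$ nonnegative coordinates, with $c_{p'+q'+1}:=0$) works out to $d_m\bigl(d_m-(p-q)\bigr)+m$, which is nonnegative for all admissible $d_m$ iff $p\le q+2$ and vanishes only at $m=1$ with $c_1$ of type $a$; this is uniform in the parities of $p,q$, so the four-fold case split and the perturbation argument at $p=q+2$ that you anticipate are both unnecessary, and the equality locus (hence the witness set) falls out immediately --- arguably more transparently than in the paper. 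Two small points to tighten: for the \emph{only if} directions you should still exhibit an explicit violating vector when $p\ge q+2$, e.g.\ $(1,0,\dots,0,-1;0,\dots,0)$ in case (1) and $(1,0,\dots,0;0,\dots,0)$ in case (2), since failure of termwise nonnegativity for some integer value of $d$ does not by itself produce a $Y$; and in (1) the tracelessness constraints are not actually needed, as $d_m\in\{0,p-q\}$ at every strict descent already forces the multiset coincidence, resp.\ the interleaving. Finally, your part (3) is a genuine addition: the paper's proof of this proposition treats only (1) and (2) and takes (3) from Benoist--Kobayashi, whereas your vector $Y=(a,0,\dots,0;a,0,\dots,0)$ with $\rho_{\mathfrak{h}}(Y)-\rho_{\mathfrak{q}}(Y)=2a$ verifies it directly, and that computation is correct.
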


\begin{proposition} \label{incl}
  Let $p \geq 1$.
  \begin{itemize}
    \item[$(1)$] If $\mathfrak{g} = \mathfrak{sl}_{p} \supset \mathfrak{h} = \mathfrak{so}_p$, then $\rho_\mathfrak{h} < \rho_\mathfrak{q}$.
    \item[$(2)$] If $\mathfrak{g} = \mathfrak{sl}_{2p} \supset \mathfrak{h} = \mathfrak{sp}_p$, then $\rho_{\mathfrak{h}} \nleq \rho_{\mathfrak{q}}$.
    \item[$(3)$] If $\mathfrak{g} = \mathfrak{so}_{2p} \supset \mathfrak{h} = \mathfrak{sl}_p$, then $\rho_{\mathfrak{h}} \nleq \rho_{\mathfrak{q}}$.
    \item[$(4)$] If $\mathfrak{g} = \mathfrak{sp}_{p} \supset \mathfrak{h} = \mathfrak{sl}_p$, then $\rho_\mathfrak{h} < \rho_\mathfrak{q}$.
  \end{itemize}
\end{proposition}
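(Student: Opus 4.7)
The plan is to identify $\mathfrak{q}=\mathfrak{g}/\mathfrak{h}$ explicitly as an $\mathfrak{h}$-module in each of the four cases, using the natural (complex) symmetric pair decomposition, and then to read off the weights and compare $\rho_{\mathfrak{h}}$ with $\rho_{\mathfrak{q}}$ on a fixed positive chamber $\mathfrak{a}_+$ of $\mathfrak{h}$. Concretely, in (1) one has the Cartan decomposition $\mathfrak{sl}_p=\mathfrak{so}_p\oplus\operatorname{Sym}^2_0(\mathbb{C}^p)$; in (2) the analogue $\mathfrak{sl}_{2p}=\mathfrak{sp}_p\oplus\wedge^2_0(\mathbb{C}^{2p})$, where $\wedge^2_0$ is the kernel of contraction with the symplectic form; in (3) and (4) one passes through the intermediate Levi $\mathfrak{gl}_p=\mathfrak{sl}_p\oplus\mathbb{C}$ to obtain $\mathfrak{q}\cong\mathbb{C}\oplus\wedge^2\mathbb{C}^p\oplus\wedge^2(\mathbb{C}^p)^*$ and $\mathfrak{q}\cong\mathbb{C}\oplus\operatorname{Sym}^2\mathbb{C}^p\oplus\operatorname{Sym}^2(\mathbb{C}^p)^*$ respectively.

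With these identifications, the weights of $\mathfrak{h}$ and $\mathfrak{q}$ in the standard coordinates $(a_i)$ on $\mathfrak{a}$ are immediate, and $\rho_{\mathfrak{h}}$ and $\rho_{\mathfrak{q}}$ become sums of absolute values $|a_i\pm a_j|$ that collapse to linear forms inside $\mathfrak{a}_+$. For (1), a direct calculation, with a minor case split according to the parity of $p$ caused by the $0$ weight of $\mathbb{C}^p$ under $\mathfrak{so}_p$, gives $\rho_{\mathfrak{q}}-\rho_{\mathfrak{h}}=2\sum_i|a_i|$, strictly positive on $\mathfrak{a}_+\setminus\{0\}$. For (2), the analogous computation yields $\rho_{\mathfrak{h}}-\rho_{\mathfrak{q}}=2\sum_i a_i$, strictly positive on $\mathfrak{a}_+\setminus\{0\}$, so in particular $\rho_{\mathfrak{h}}\nleq\rho_{\mathfrak{q}}$. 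For (3) it is enough to exhibit a single $Y\in\mathfrak{a}_+$ violating $\rho_{\mathfrak{h}}\leq\rho_{\mathfrak{q}}$; I will take $Y=(p,0,\ldots,0,-p)$ and compute $\rho_{\mathfrak{h}}(Y)=2p(p-1)$ against $\rho_{\mathfrak{q}}(Y)=2p(p-2)$.

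Case (4) is the main obstacle. There the desired strict inequality becomes
\[
\sum_{1\leq i\leq j\leq p}|a_i+a_j|\;>\;\sum_{1\leq i<j\leq p}(a_i-a_j)
\]
for $a_1\geq\cdots\geq a_p$ with $\sum_i a_i=0$ and $Y\neq 0$. Unlike in (1)--(3), the traceless condition forces the $a_i$ to change sign, so the left-hand side is only piecewise linear on $\mathfrak{a}_+$, subdivided by the hyperplanes $\{a_i+a_j=0\}$. My plan is to stratify $\mathfrak{a}_+$ according to the index $k$ at which $a_k\geq 0\geq a_{k+1}$, and within each such stratum according to the signs of the mixed sums $a_i+a_j$ with $i\leq k<j$, then remove the absolute values using the sign data of the stratum and use $\sum_i a_i=0$ to rewrite $\rho_{\mathfrak{q}}-\rho_{\mathfrak{h}}$ as a manifestly positive linear combination of the simple-root gaps $a_i-a_{i+1}\geq 0$. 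As a consistency check, at the fundamental coweights $\varpi_k$ of $\mathfrak{sl}_p$ one obtains the closed formula $\rho_{\mathfrak{q}}(\varpi_k)-\rho_{\mathfrak{h}}(\varpi_k)=k(p-k)(2+|p-2k|)/p$, which is strictly positive for $1\leq k\leq p-1$.
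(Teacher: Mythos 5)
Parts (1)--(3) of your proposal are correct and complete in substance. Your identifications of $\mathfrak{q}$ are right, the formula $\rho_{\mathfrak{q}}-\rho_{\mathfrak{h}}=2\sum_i|a_i|$ in (1) and $\rho_{\mathfrak{h}}-\rho_{\mathfrak{q}}=2\sum_i a_i$ in (2) check out against the weight data, and your witness $Y=(p,0,\ldots,0,-p)$ in (3) does give $\rho_{\mathfrak{h}}(Y)=2p(p-1)>2p(p-2)=\rho_{\mathfrak{q}}(Y)$. This is a genuinely more self-contained route than the paper, which simply quotes Proposition 3.3 of Benoist--Kobayashi III for (2) and (3); for (1) your computation is essentially the paper's.

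Part (4) is where the gap is: you have a plan, not a proof. The consistency check at the fundamental coweights cannot substitute for the stratum-by-stratum computation, because $\rho_{\mathfrak{q}}-\rho_{\mathfrak{h}}$ is a \emph{convex} positively homogeneous piecewise-linear function on $\mathfrak{a}_+$ ($\rho_{\mathfrak{q}}$ is a sum of absolute values of linear forms and $\rho_{\mathfrak{h}}$ is linear there), and convexity only bounds such a function \emph{above} at convex combinations of the extreme rays; positivity on the rays says nothing about the interior (compare $|x-y|$ on the quadrant $x,y\geq 0$). Moreover, the promised conclusion of the stratification --- ``a manifestly positive linear combination of the simple-root gaps'' --- will not literally materialize: already for $p=3$ one finds $\rho_{\mathfrak{q}}-\rho_{\mathfrak{h}}=2(a_2-a_3)$ on the stratum $a_2\geq 0$, where the coefficient of $a_1-a_2$ is zero and strictness on the stratum minus the origin requires invoking the stratum inequalities themselves, not just the gaps. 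Combined with the combinatorial growth of the staircase sign patterns of $\{a_i+a_j\}_{i\leq k<j}$, the missing computation is the actual content of the case.

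There is a much shorter route, which is the one the paper takes: let $\widetilde{a_1}\geq\cdots\geq\widetilde{a_p}$ be the decreasing rearrangement of $|a_1|,\ldots,|a_p|$. Since the Weyl group of $\mathfrak{sp}_p$ contains all permutations and sign changes, $\rho_{\mathfrak{g}}(Y)=\sum_i 2(p-i+1)\widetilde{a_i}$. On the other hand, pairing $i$ with $p-i+1$ gives
\[
\rho_{\mathfrak{h}}(Y)=\sum_{i=1}^{p}(p-2i+1)a_i=\sum_{i\leq p/2}(p-2i+1)(a_i-a_{p-i+1})\leq\sum_{i\leq p/2}(p-2i+1)(\widetilde{a_{2i-1}}+\widetilde{a_{2i}}),
\]
and comparing coefficients termwise ($2(p-2i+2)-2(p-2i+1)=2$ on the odd slots, $0$ on the even slots) yields $2\rho_{\mathfrak{h}}(Y)<\rho_{\mathfrak{g}}(Y)$ for $Y\neq 0$, which is equivalent to $\rho_{\mathfrak{h}}<\rho_{\mathfrak{q}}$. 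I would replace your stratification plan for (4) by this rearrangement argument.
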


Next, let $\mathfrak{g}$ be $\mathfrak{sl}_{n}$, $\mathfrak{so}_{n}$ or $\mathfrak{sp}_{n}$.
For Propositions \ref{tens} and \ref{si}, we consider the case where the semisimple Lie subalgebras $\mathfrak{h}$ of $\mathfrak{g}$ act irreducibly on $\mathbb{C}^n$ or $\mathbb{C}^{2n}$.

\begin{proposition} \label{tens}
  Let $p, q > 1$. 
  \begin{itemize}
    \item[$(1)$] If $\mathfrak{g} = \mathfrak{sl}_{pq} \supset \mathfrak{h} = \mathfrak{sl}_p \oplus \mathfrak{sl}_q$, then $\rho_\mathfrak{h} < \rho_\mathfrak{q}$.
    \item[$(2)$] If $\mathfrak{g} = \mathfrak{so}_{pq} \supset \mathfrak{h} = \mathfrak{so}_p \oplus \mathfrak{so}_q$, then $\rho_\mathfrak{h} < \rho_\mathfrak{q}$.
  \end{itemize}
  Let $p \geq 1$ and $q > 1$.
  \begin{itemize}
    \item[$(3)$] If $\mathfrak{g} = \mathfrak{so}_{4pq} \supset \mathfrak{h} = \mathfrak{sp}_p \oplus \mathfrak{sp}_q$, then $\rho_\mathfrak{h} \leq \rho_\mathfrak{q}$.
    Moreover, $\rho_\mathfrak{h} < \rho_\mathfrak{q} \Leftrightarrow pq > 2$.
    \item[$(4)$] If $\mathfrak{g} = \mathfrak{sp}_{pq} \supset \mathfrak{h} = \mathfrak{sp}_p \oplus \mathfrak{so}_q$, then $\rho_\mathfrak{h} < \rho_\mathfrak{q}$.
  \end{itemize}
\end{proposition}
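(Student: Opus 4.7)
The plan is to decompose $\mathfrak{g}$ as an $\mathfrak{h}$-module using the tensor product structure of the standard representation and to express $\rho_\mathfrak{q}-\rho_\mathfrak{h}$ as a sum of manifestly nonnegative terms which is strictly positive whenever $Y\neq 0$.

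For part $(1)$, since $\mathrm{End}(V_1\otimes V_2)\cong\mathrm{End}(V_1)\otimes\mathrm{End}(V_2)$ one obtains the $\mathfrak{h}$-module decomposition $\mathfrak{sl}_{pq}\cong\mathfrak{sl}_p\oplus\mathfrak{sl}_q\oplus(\mathfrak{sl}_p\otimes\mathfrak{sl}_q)$, so $\mathfrak{q}\cong\mathfrak{sl}_p\otimes\mathfrak{sl}_q$. Writing $Y=Y_1+Y_2$ with $Y_1=\mathrm{diag}(a_i)$ and $Y_2=\mathrm{diag}(b_j)$, the weights of $\mathrm{ad}(\mathfrak{sl}_{pq})$ restricted to $\mathfrak{a}$ are $(a_i+b_j)-(a_k+b_l)$, and splitting the sum defining $\rho_\mathfrak{g}$ according to whether $j=l$, $i=k$, or both differ (using $|x+y|+|x-y|=2\max(|x|,|y|)$ in the last case) produces the identity
\[
\rho_\mathfrak{q}(Y)-\rho_\mathfrak{h}(Y)=(q-2)\rho_{\mathfrak{sl}_p}(Y_1)+(p-2)\rho_{\mathfrak{sl}_q}(Y_2)+2\sum_{\substack{i<k\\ j<l}}\max\bigl(|a_i-a_k|,\,|b_j-b_l|\bigr).
\]
All three terms are nonnegative; since $p,q\geq 2$ guarantees at least one pair $(i<k,j<l)$, the last sum is strictly positive whenever $Y\neq 0$.

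Parts $(2)$, $(3)$, $(4)$ follow the same strategy with different module decompositions. For $(2)$, the Cartan identity $\Lambda^2(V_1\otimes V_2)\cong S^2V_1\otimes\Lambda^2V_2\oplus\Lambda^2V_1\otimes S^2V_2$, combined with $\Lambda^2\mathbb{C}^n\cong\mathfrak{so}_n$ and $S^2\mathbb{C}^n\cong\mathbb{C}\oplus S_0^2\mathbb{C}^n$, gives $\mathfrak{q}\cong(\mathfrak{so}_p\otimes S_0^2\mathbb{C}^q)\oplus(S_0^2\mathbb{C}^p\otimes\mathfrak{so}_q)$. For $(3)$, using $S^2\mathbb{C}^{2n}\cong\mathfrak{sp}_n$ and $\Lambda^2\mathbb{C}^{2n}\cong\mathbb{C}\omega\oplus\Lambda_0^2\mathbb{C}^{2n}$ one has $\mathfrak{q}\cong(\mathfrak{sp}_p\otimes\Lambda_0^2\mathbb{C}^{2q})\oplus(\Lambda_0^2\mathbb{C}^{2p}\otimes\mathfrak{sp}_q)$, and $(4)$ uses $S^2$ of the tensor product to obtain the analogue $\mathfrak{q}\cong(\mathfrak{sp}_p\otimes S_0^2\mathbb{C}^q)\oplus(\Lambda_0^2\mathbb{C}^{2p}\otimes\mathfrak{so}_q)$. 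In each case one computes $\rho_\mathfrak{g}$ as a sum over pairs of weights of the standard representation, isolates the contributions proportional to $\rho_{\mathfrak{h}_1}$ and $\rho_{\mathfrak{h}_2}$, and shows the remaining off-diagonal sum is strictly positive. The edge case $pq\leq 2$ in $(3)$ arises because $\Lambda_0^2\mathbb{C}^2=0$ annihilates the summand $\Lambda_0^2\mathbb{C}^{2p}\otimes\mathfrak{sp}_q$ when $p=1$; for $(p,q)=(1,2)$ a direct computation exhibits a witness vector of the form $(0;b,b)\in\mathfrak{a}$ on which $\rho_\mathfrak{h}=\rho_\mathfrak{q}$, while for $pq>2$ an extra off-diagonal term provides strict positivity.

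The main obstacle will be the combinatorial bookkeeping in cases $(2)$--$(4)$: one must carefully track zero weights coming from odd-dimensional standard representations of $\mathfrak{so}_{\mathrm{odd}}$, the multiplicities of trivial summands in $S^2$ and $\Lambda^2$, and the pairing $\lambda\leftrightarrow-\lambda$ forced by the invariant bilinear forms. Once these are organized, the identity analogous to the displayed one above takes the same ``diagonal plus off-diagonal'' form and positivity follows by the same elementary argument.
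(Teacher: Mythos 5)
Your part (1) is correct and complete, and it takes a genuinely different route from the paper. The paper sorts the numbers $a_j+b_k$ into a decreasing sequence $(c_i)$, writes $\rho_\mathfrak{g}(Y)=\sum_i(pq-2i+1)c_i$, and compares coefficient sequences via rearrangement inequalities, with a separate recomputation when $p=2$. You instead derive the exact identity
\[
\rho_\mathfrak{g}(Y)-2\rho_\mathfrak{h}(Y)=(q-2)\rho_{\mathfrak{sl}_p}(Y_1)+(p-2)\rho_{\mathfrak{sl}_q}(Y_2)+2\sum_{i<k,\ j<l}\max\bigl(|a_i-a_k|,|b_j-b_l|\bigr)
\]
by splitting the adjoint weights $(a_i+b_j)-(a_k+b_l)$ according to whether $j=l$, $i=k$, or neither, and using $|x+y|+|x-y|=2\max(|x|,|y|)$. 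I checked this identity; it is right, it holds on all of $\mathfrak{a}$ (no need to restrict to $\mathfrak{a}_+$), and strict positivity for $Y\neq 0$ is immediate since $p,q\geq 2$ guarantees the off-diagonal sum is nonempty. This is cleaner than the paper's argument and avoids its case split at $p=2$.

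For parts (2)--(4), however, what you have is a strategy, not a proof. The module decompositions via the Cartan identities ($\Lambda^2(V_1\otimes V_2)\cong S^2V_1\otimes\Lambda^2V_2\oplus\Lambda^2V_1\otimes S^2V_2$, and its $S^2$ analogue) are all correct, but the assertion that ``the identity takes the same diagonal-plus-off-diagonal form and positivity follows by the same elementary argument'' is exactly the part that needs to be done, and it is where the difficulty sits. In the orthogonal and symplectic cases the weight multisets are closed under $\lambda\mapsto-\lambda$ and contain zero weights (from $S^2_0$, $\Lambda^2_0$, and odd-dimensional standard representations), so the analogue of your $\max$-identity produces terms like $2\max(|\lambda(Y_1)|,|\mu(Y_2)|)$ over $\pm$-classes together with zero-weight contributions, and extracting the multiples of $\rho_{\mathfrak{h}_1}$ and $\rho_{\mathfrak{h}_2}$ with the correct coefficients requires the bookkeeping you defer. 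Most importantly, in case (3) with $(p,q)=(1,2)$ the proposition claims the weak inequality $\rho_\mathfrak{h}\leq\rho_\mathfrak{q}$ on all of $\mathfrak{a}$ together with the exact locus of equality; exhibiting one witness vector $(0;b,b)$ only shows $\rho_\mathfrak{h}\nless\rho_\mathfrak{q}$ and does not establish $\rho_\mathfrak{h}\leq\rho_\mathfrak{q}$ elsewhere (the paper needs the subcases $a\geq b_1$, $a<b_1$, and $a=0$ for this). You should either carry out the weight computation for each of the three remaining decompositions or fall back on the paper's rearrangement method there; as it stands, (2)--(4) are not yet proved.
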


\begin{remark}
  In the case where $\mathfrak{g} = \mathfrak{so}_8 \supset \mathfrak{h} = \mathfrak{sp}_2 \oplus \mathfrak{sp}_1$ in Proposition \ref{tens} $(3)$, $\mathfrak{h}$ is isomorphic to $\mathfrak{so}_5 \oplus \mathfrak{so}_3$ and this case is included in Proposition \ref{red2} $(2)$.
\end{remark}

\begin{proposition}\label{si}
  Let $\mathfrak{g} = \mathfrak{sl}_n$, $\mathfrak{so}_n$ or $\mathfrak{sp}_n$ and $V = \mathbb{C}^n, \mathbb{C}^n$ or $\mathbb{C}^{2n}$ respectively.
  For a simple Lie subalgebra $\mathfrak{h}$ of $\mathfrak{g}$ which acts irreducibly on $V$, the following three conditions are equivalent:
  \begin{itemize}
    \item $\rho_\mathfrak{h} \nleq \rho_\mathfrak{q}$,
    \item $\rho_\mathfrak{h} \nless \rho_\mathfrak{q}$,
    \item $(\mathfrak{g}, \mathfrak{h})$ is isomorphic to either $(\mathfrak{sl}_{n}, \mathfrak{sp}_p)$ with $n=2p$, $(\mathfrak{so}_7, \mathfrak{g}_2)$ or $(\mathfrak{so}_8, \mathfrak{so}_7)$.
  \end{itemize}
\end{proposition}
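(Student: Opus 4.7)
The plan follows the author's outline. Since $\mathfrak{g}=\mathfrak{h}\oplus\mathfrak{q}$ as $\mathfrak{h}$-modules one has $\rho_\mathfrak{g}=\rho_\mathfrak{h}+\rho_\mathfrak{q}$ on $\mathfrak{a}$, so $\rho_\mathfrak{h}<\rho_\mathfrak{q}$ is equivalent to $2\rho_\mathfrak{h}<\rho_\mathfrak{g}$, and likewise for $\leq$. The implication $\rho_\mathfrak{h}\nleq\rho_\mathfrak{q}\Rightarrow\rho_\mathfrak{h}\nless\rho_\mathfrak{q}$ is trivial, so it suffices to verify that each of the three exceptional pairs satisfies $\rho_\mathfrak{h}\nleq\rho_\mathfrak{q}$, and that every other irreducible embedding of a simple $\mathfrak{h}$ satisfies $\rho_\mathfrak{h}<\rho_\mathfrak{q}$.

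For the exceptional pairs, $(\mathfrak{sl}_{2p},\mathfrak{sp}_p)$ is already handled by Proposition \ref{incl}(2). For $(\mathfrak{so}_7,\mathfrak{g}_2)$ and $(\mathfrak{so}_8,\mathfrak{so}_7)$ I would compute $\rho_\mathfrak{h}$ and $\rho_\mathfrak{g}$ explicitly on a Cartan subalgebra $\mathfrak{a}$ of $\mathfrak{h}$, using the known weights of the $7$-dimensional representation of $\mathfrak{g}_2$ and the spin representation of $\mathfrak{so}_7$, and exhibit the witness vectors listed in Table \ref{classif}.

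The nontrivial task is: for every simple complex $\mathfrak{h}$ acting irreducibly on $V$ via a representation of highest weight $\lambda$ with $(\mathfrak{g},\mathfrak{h})$ outside the three exceptions, prove $2\rho_\mathfrak{h}<\rho_\mathfrak{g}$ on $\mathfrak{a}\setminus\{0\}$. The tools are the function $f(\lambda;\cdot)$ introduced in the overview and its monotonicity in $\lambda$. First I would establish the uniform lower bound $\rho_\mathfrak{g}\geq f(\lambda;\cdot)$: for $\mathfrak{g}=\mathfrak{sl}_n$, with weights ordered so that $\lambda_1(Y)\geq\cdots\geq\lambda_n(Y)$ for $Y\in\mathfrak{a}_+$, one has $\rho_\mathfrak{g}(Y)=\sum_{i<j}(\lambda_i-\lambda_j)(Y)\geq 2f(\lambda;Y)$; for $\mathfrak{g}=\mathfrak{so}_n$ or $\mathfrak{sp}_n$ the weights of $V$ are stable under negation, and the positive roots of $\mathfrak{g}$ include $\lambda_i-\lambda_j$ for every pair $i<j$ with $\lambda_i+\lambda_j\neq 0$, giving $\rho_\mathfrak{g}(Y)\geq f(\lambda;Y)$. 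Second I would prove $f(\mu;\cdot)\leq f(\lambda;\cdot)$ on $\mathfrak{a}_+$ when $\mu\prec\lambda$, by reducing to the one-step case $\lambda=\mu+\alpha$ and comparing the weight multisets of $V(\mu)$ and $V(\lambda)$ via a string argument.

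The final step is a case-by-case analysis over the Cartan types of $\mathfrak{h}$ ($A_\ell,B_\ell,C_\ell,D_\ell,E_6,E_7,E_8,F_4,G_2$): for each type, list a finite collection of minimal dominant weights $\lambda$ in the order $\prec$ producing irreducible embeddings into a classical $\mathfrak{g}$, and verify $f(\lambda;\cdot)>2\rho_\mathfrak{h}$ on $\mathfrak{a}\setminus\{0\}$ for all such minimal $\lambda$ except those yielding the three exceptional pairs; monotonicity then propagates the strict inequality to every larger $\lambda$. The main obstacle is this final enumeration: many low-dimensional fundamental representations must be examined, and for a few of them the bound $\rho_\mathfrak{g}\geq f$ is insufficient, so one must refine the estimate using the specific structure of the weight set or compute $\rho_\mathfrak{g}$ directly, invoking the existence of an $\mathfrak{h}$-invariant bilinear form on $V$ to decide whether the ambient $\mathfrak{g}$ is $\mathfrak{sl}$, $\mathfrak{so}$ or $\mathfrak{sp}$. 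Isolating the three exceptional embeddings from these near-miss cases is the principal combinatorial burden of the proof.
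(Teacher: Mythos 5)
Your plan is correct and follows essentially the same route as the paper: reduce to $2\rho_\mathfrak{h}<\rho_\mathfrak{g}$, bound $\rho_\mathfrak{g}$ from below by $f(\lambda;\cdot)$, use monotonicity of $f$ in the dominance order (which the paper gets from the saturation property $\Lambda(\mu)\subset\Lambda(\lambda)$ for $\mu\prec\lambda$, \cite[21.3]{Hum}, rather than a one-step string argument), and run a case-by-case check over the minimal dominant weights of each simple type, refining the estimate or invoking Proposition \ref{incl} in the borderline cases where the type of $\mathfrak{g}$ matters. The deferred enumeration is precisely the bulk of the paper's proof, so no gap in the approach itself.
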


We now consider several cases where $\mathfrak{h}$ acts reducibly on $V$. 
The following Proposition \ref{red3} is a generalization of Proposition \ref{red2}.

\begin{proposition}\label{red3}
  Let $r \geq 1$, $n \geq n_1 + \cdots + n_r$, and $n_1 \geq \cdots \geq n_r \geq 1$.
  \begin{itemize}
    \item[$(1)$] If $\mathfrak{g} = \mathfrak{sl}_n \supset \mathfrak{h} = \mathfrak{sl}_{n_1} \oplus \cdots \oplus \mathfrak{sl}_{n_r}$, then we have following equivalences
      \begin{align*}
        &\rho_\mathfrak{h} \leq \rho_\mathfrak{q} \Leftrightarrow 2n_1 \leq n+1, \\
        &\rho_\mathfrak{h} < \rho_\mathfrak{q} \Leftrightarrow (2n_1 \leq n \ \text{and}\  n_1 + n_2 \leq n-1).
      \end{align*}
      When $2n_1 = n+1$ and $2n_2 < n-1$, an element $Y \in \mathfrak{a}_+$ is a witness vector if and only if $Y = (a_1, 0, \ldots, 0, -a_1)\in \mathfrak{a}_+ \cap \mathfrak{sl}_{n_1}$.\\
    \item[$(2)$] If $\mathfrak{g} = \mathfrak{so}_n \supset \mathfrak{h} = \mathfrak{so}_{n_1} \oplus \cdots \oplus \mathfrak{so}_{n_r}$, then we have following equivalences
      \begin{align*}
        &\rho_\mathfrak{h} \leq \rho_\mathfrak{q} \Leftrightarrow 2n_1 \leq n+2, \\
        &\rho_\mathfrak{h} < \rho_\mathfrak{q} \Leftrightarrow 2n_1 \leq n+1.
      \end{align*}
      When $2n_1 = n+2$, an element $Y \in \mathfrak{a}_+$ is a witness vector if and only if $Y = (a_1, 0, \ldots, 0)\in \mathfrak{a}_+ \cap \mathfrak{so}_{n_1}$.
    \item[$(3)$] If $\mathfrak{g} = \mathfrak{sp}_n \supset \mathfrak{h} = \mathfrak{sp}_{n_1} \oplus \cdots \oplus \mathfrak{sp}_{n_r}$, then we have following equivalences
      \begin{align*}
        &\rho_\mathfrak{h} \leq \rho_\mathfrak{q} \Leftrightarrow 2n_1 \leq n\ \text{except for}\ (n = 2n_1 = 2n_2), \\
        &\rho_\mathfrak{h} < \rho_\mathfrak{q} \Leftrightarrow 2n_1 \leq n-1 \ \text{except for}\ (n=3, \,n_1 = n_2 = n_3 = 1).
      \end{align*}
      When $2n_1 = n$ except for $(n = 4$, $n_1 = 2$, $n_2 = n_3 = 1)$, an element $Y \in \mathfrak{a}_+$ is a witness vector if and only if $Y = (a_1, 0, \ldots, 0)\in \mathfrak{a}_+ \cap \mathfrak{sp}_{n_1}$.\\
      When $n = 4$, $n_1 = 2$ and $n_2 = n_3 = 1$, an element $Y = (a_1, a_2, b, c) \in \mathfrak{a}_+$ is a witness vector if and only if $a_2 = b = c$.\\
      When $n = 3$ and $n_1 = n_2 = n_3 = 1$, an element $Y = (a, b, c) \in \mathfrak{a}_+$ is a witness vector if and only if $a=b=c$.
      
  \end{itemize}
\end{proposition}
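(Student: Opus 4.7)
The strategy, common to all three cases, is to express $\rho_\mathfrak{g}(Y)$ and $\rho_\mathfrak{h}(Y)$ in terms of the eigenvalues of $Y$ on the defining representation $V$ and then compare. For $\mathfrak{g}=\mathfrak{sl}_n$ (case (1)), I would decompose $V = V_0 \oplus V_1 \oplus \cdots \oplus V_r$ with $V_i = \mathbb{C}^{n_i}$ for $i \geq 1$ and $V_0 = \mathbb{C}^m$ trivial ($m := n - \sum_{i \geq 1} n_i$). Writing $Y_i = \mathrm{diag}(\lambda_1^{(i)}, \ldots, \lambda_{n_i}^{(i)})$ sorted and of trace zero, the identifications $\mathfrak{g} \subset V\otimes V^*$, $\mathfrak{so}_n = \wedge^2 V$, $\mathfrak{sp}_n = \mathrm{Sym}^2 V$ yield, in case (1),
\[
\rho_\mathfrak{g}(Y) = \sum_{\alpha<\beta}|\lambda_\alpha - \lambda_\beta|, \qquad \rho_\mathfrak{h}(Y) = \sum_{i=1}^{r} \sum_{k<l}(\lambda_k^{(i)} - \lambda_l^{(i)}),
\]
where the first sum runs over all $n$ eigenvalues on $V$ (including the $m$ zeros from $V_0$); in cases (2) and (3) the difference $|\lambda_\alpha - \lambda_\beta|$ is replaced by $|\lambda_\alpha + \lambda_\beta|$ (with diagonal $\alpha = \beta$ included in the symplectic case).

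For necessity, the test vector with $Y_1 = (1, 0, \ldots, 0, -1)$ and $Y_i = 0$ for $i \geq 2$ immediately yields $2n_1 \leq n + 1$ in case (1), since $\rho_\mathfrak{h}(Y) = 2(n_1 - 1)$ and $\rho_\mathfrak{q}(Y) = 2(n - n_1)$; analogous block-1 test vectors force $2n_1 \leq n + 2$ in case (2) and $2n_1 \leq n$ in case (3). For sufficiency, the cleanest route is majorization: write $2\rho_\mathfrak{h}(Y) = \sum_{i,k} c_k^{(i)} \lambda_k^{(i)}$ with $c_k^{(i)} = 2(n_i - 2k + 1)$ and $\rho_\mathfrak{g}(Y) = \sum_\alpha d_\alpha \mu_\alpha$ with $d_\alpha = n - 2\alpha + 1$ and $\mu$ the globally sorted eigenvalue list. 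The rearrangement inequality bounds $2\rho_\mathfrak{h}(Y) \leq \sum_\alpha c_{(\alpha)} \mu_\alpha$, and expressing the $\mathfrak{sl}_n$ positive Weyl chamber in the fundamental coweight basis reduces the remaining comparison to the single majorization $(c_{(\alpha)}) \prec (d_\alpha)$. Because the sorted $c$-sequence has top value $2(n_1 - 1)$, the full majorization reduces to the top-level inequality $2(n_1 - 1) \leq n - 1$, matching the threshold obtained from the test vector.

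For strict inequality, equality $\rho_\mathfrak{h}(Y) = \rho_\mathfrak{q}(Y)$ at some $Y \neq 0$ demands equality simultaneously in the rearrangement step (forcing the block and global orderings of the $\lambda$'s to be compatible) and in the relevant partial sums of the majorization. In case (1) this analysis isolates exactly two equality sources: $2n_1 = n + 1$, which at the rearrangement level forces $Y$ to be supported in block 1 with $Y_1 = (a, 0, \ldots, 0, -a)$; and $n_1 + n_2 = n$ (so $m = 0$), which reduces to Proposition \ref{red2}(1) for the symmetric pair with its interleaving witnesses. Excluding both gives the strict condition $2n_1 \leq n$ and $n_1 + n_2 \leq n - 1$, and in the boundary regime $2n_1 = n + 1$, $2n_2 < n - 1$ the second source is absent, so the witnesses are exactly the block-1 rays described. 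Cases (2) and (3) follow the same scheme using the weight multisets coming from $\wedge^2 V$ and $\mathrm{Sym}^2 V$ in place of $V \otimes V^*$.

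The main obstacle will be the exceptional witness structures in case (3) for the small pairs $(\mathfrak{sp}_3, \mathfrak{sp}_1^{\oplus 3})$ and $(\mathfrak{sp}_4, \mathfrak{sp}_2 \oplus \mathfrak{sp}_1^{\oplus 2})$: here the witnesses $a = b = c$ and $a_2 = b = c$ do not come from block 1 but from genuinely new rays along which several cross-block inequalities saturate at once, so the generic majorization argument produces the correct threshold but does not by itself identify these extra rays. I would handle them by direct chamber-by-chamber computation of $\rho_\mathfrak{g}(Y)$ on the finitely many chambers of $\mathfrak{a}_+ \cap \mathfrak{h}$ cut by the global sorting of the block coordinates, and verify $\rho_\mathfrak{h}(Y) \leq \rho_\mathfrak{q}(Y)$ with equality exactly on the stated rays. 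The low rank $(n \leq 4)$ makes this tractable and produces the cross-block equality conditions listed in the statement.
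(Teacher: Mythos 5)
Your overall architecture matches the paper's: block-sorted eigenvalues, the rearrangement inequality to bound $\rho_\mathfrak{h}$, test vectors in the first block (and first two blocks) for necessity, and an equality analysis for the witness rays. The one place you diverge — replacing the paper's termwise comparison of coefficient sequences by a partial-sum majorization $(c_{(\alpha)})\prec(d_\alpha)$ plus Abel summation against the sorted eigenvalue list — is a legitimate and in principle cleaner route: for example, when $r=3$, $n=n_1+n_2+n_3$ with all $n_i$ even, the termwise inequality $2\lambda_i\le\mu_i$ actually \emph{fails} at the middle index $i=\lfloor n/2\rfloor$ (e.g.\ $n_1=n_2=n_3=2$ gives $2\lambda_3=2>1=\mu_3$), which is why the paper has to insert an ad hoc regrouping of the terms $c_{n'-1}-c_{n'+2}$ and $c_{n'}-c_{n'+1}$; the partial sums $6\le 9$ still dominate there, so majorization would absorb this case automatically.

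The genuine gap is your claim that ``the full majorization reduces to the top-level inequality $2(n_1-1)\le n-1$.'' It does not, and this is exactly where the content of the proposition lives. Checking the inequality at each fundamental coweight of the ambient $\mathfrak{sl}_n$ amounts to checking every partial sum $C_k=\sum_{\alpha\le k}c_{(\alpha)}\le D_k=k(n-k)$, and the intermediate $k$ are not controlled by $k=1$: the condition $n_1+n_2\le n-1$ in case (1) arises precisely from the partial sum around $k\approx n/2$ (for $r=2$, $n_1=n_2=n/2$ one gets $C_k=D_k$ identically, producing the witness family $a_i=b_i$ even though the top-level inequality is strict), and in case (3) the exceptional pair $(\mathfrak{sp}_3,\mathfrak{sp}_1^{\oplus 3})$ satisfies the top-level inequality strictly ($2\lambda_1=4<6=\mu_1$) yet admits the witness $a=b=c$, so ``the generic majorization argument produces the correct threshold'' is not true there either. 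To close the gap you must actually prove $C_k\le D_k$ for all $k$ (with a classification of when equality occurs), and this requires an argument depending on all the $n_j$, not just $n_1$ — the paper does it by induction on $r$, merging the two smallest blocks for $r\ge 4$ and treating $r\le 3$ by hand, with the all-even $r=3$ configuration and the low-rank symplectic cases singled out. Until the intermediate partial sums are verified, neither the sufficiency direction nor the completeness of your list of ``equality sources'' is established.
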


\begin{remark}
  In Proposition \ref{red3} (1), it is claimed that, under the assumption $\rho_\mathfrak{h} \leq \rho_\mathfrak{q}$, the existence of witness vectors is equivalent to the case where $2n_1 = n+1$ or $2n_1 = 2n_2 =n$ holds. 
  When $2n_1 = n+1 = n_2-2$ or $2n_1 = 2n_2 =n$ holds, the assumptions of Proposition \ref{red2} are satisfied, and hence it is included in Proposition \ref{red2}.
\end{remark}

\begin{proposition}\label{redu}
  Let $p, q \geq 1$.
  \begin{itemize}
    \item[$(1)$] If $\mathfrak{g} = \mathfrak{sl}_{2p+q} \supset \mathfrak{h} = \mathfrak{sp}_p \oplus \mathfrak{sl}_q$, then 
      \begin{align*}
        &\rho_\mathfrak{h} \leq \rho_\mathfrak{q} \Leftrightarrow q \leq 2p+1, \\
        &\rho_\mathfrak{h} < \rho_\mathfrak{q} \Leftrightarrow 1 < q \leq 2p.
      \end{align*}
      When $q=1$, every element in $\mathfrak{a}_+$ is a witness vector, namely $\rho_\mathfrak{h} \equiv \rho_\mathfrak{q}$ on $\mathfrak{a}_+$.\\
      When $q = 2p+1$, an element $Y = (a_1, \ldots, a_p, b_1, \ldots, b_{2p+1}) \in \mathfrak{a}_+$ is a witness vector if and only if $b_1 = -b_{2p+1} > 0$ and $a_1 = \cdots = a_p = b_2 = \cdots = b_{2p} = 0$. 
    \item[$(2)$] If $\mathfrak{g} = \mathfrak{so}_{2p+q} \supset \mathfrak{h} = \mathfrak{sl}_p \oplus \mathfrak{so}_q$, then 
      \begin{align*}
        &\rho_\mathfrak{h} \leq \rho_\mathfrak{q} \Leftrightarrow q \leq 2p+2, \\
        &\rho_\mathfrak{h} < \rho_\mathfrak{q} \Leftrightarrow 1 < q \leq 2p+1.
      \end{align*}
      When $q=1$, an element $Y = (a_1, \ldots, a_p) \in \mathfrak{a}_+$ is a witness vector if and only if $a_i = -a_{p-i+1}\quad (1 \leq i \leq \frac{p}{2})$.\\
      When $q=2p+2$, an element $Y = (a_1, \ldots, a_p, b_1, \ldots, b_{q^\prime}) \in \mathfrak{a}_+$ is a witness vector if and only if $b_1 > 0$ and $a_1 = \cdots = a_p = b_2 = \cdots = b_{q^\prime} = 0$.
    \item[$(3)$] If $\mathfrak{g} = \mathfrak{sp}_{p+q} \supset \mathfrak{h} = \mathfrak{sl}_p \oplus \mathfrak{sp}_q$, then 
      \begin{align*}
        &\rho_\mathfrak{h} \leq \rho_\mathfrak{q} \Leftrightarrow q \leq p, \\
        &\rho_\mathfrak{h} < \rho_\mathfrak{q} \Leftrightarrow 1 \leq q \leq p-1.
      \end{align*}
      When $p=q$, an element $Y = (a_1, \ldots, a_p, b_1, \ldots, b_q) \in \mathfrak{a}_+$ is a witness vector if and only if $b_1 > 0$ and $a_1 = \cdots = a_p = b_2 = \cdots = b_q = 0$.
    \item[$(4)$] If $\mathfrak{g} = \mathfrak{so}_{4p} \supset \mathfrak{h}^\prime = \mathfrak{sl}_{2p} \supset \mathfrak{h} = \mathfrak{sp}_p$ and $p \geq 2$, then $\rho_\mathfrak{h} \leq \rho_\mathfrak{q}$ holds.
      Moreover, 
      \[\rho_\mathfrak{h} < \rho_\mathfrak{q} \Leftrightarrow p \geq 3.\]
      When $p=2$, an element $Y = (a_1, a_2) \in \mathfrak{a}_+$ is a witness vector if and only if $a_1 = a_2$.
      This case is included in Proposition \ref{red3} $(2)$.
  \end{itemize}
\end{proposition}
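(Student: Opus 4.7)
The plan is to prove each of the four cases by an explicit eigenvalue computation on the positive Weyl chamber $\mathfrak{a}_+$ of $\mathfrak{h}$, in the spirit of \cite[Section 3]{BKIII}. In every case $\mathfrak{h}$ acts reducibly on the defining representation $V$ of $\mathfrak{g}$, so $V = V_1 \oplus V_2$ where $V_1, V_2$ are the standard modules of the two simple factors of $\mathfrak{h}$ (in case (4), $V = V_1 \oplus V_1^*$ with $V_1$ the standard $\mathfrak{sp}_p$-module). Let $\mathfrak{l} \subset \mathfrak{g}$ be the Levi subalgebra stabilizing this decomposition; then $\mathfrak{h} \subset \mathfrak{l}$ and $\mathfrak{g} = \mathfrak{n}^- \oplus \mathfrak{l} \oplus \mathfrak{n}$ with $\mathfrak{n}, \mathfrak{n}^-$ the nilradical of the associated parabolic and its opposite. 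Writing $(a_1, \ldots, a_p;\, b_1, \ldots, b_s)$ for standard coordinates on $\mathfrak{a}$, the eigenvalues of $\operatorname{ad}(Y)$ on each summand of $\mathfrak{g}$ are linear forms of type $\pm a_i \pm a_{i'}$, $\pm 2a_i$, $\pm b_j \pm b_{j'}$, $\pm 2b_j$, or $\pm a_i \pm b_j$, so both $\rho_\mathfrak{h}$ and $\rho_\mathfrak{q} = \rho_\mathfrak{g} - \rho_\mathfrak{h}$ become piecewise linear functions whose values on $\mathfrak{a}_+$ admit closed-form expressions.

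For cases (1)--(3) I would split $\rho_\mathfrak{q} - \rho_\mathfrak{h}$ into a \emph{diagonal} contribution from $\mathfrak{l}/\mathfrak{h}$ and an \emph{off-diagonal} contribution from $\mathfrak{n} \oplus \mathfrak{n}^-$. The diagonal block involves the known quotients $\mathfrak{sl}_{2p}/\mathfrak{sp}_p$, $\mathfrak{so}_{2p}/\mathfrak{sl}_p$, $\mathfrak{sp}_p/\mathfrak{sl}_p$ respectively, whose $\rho$-functions are non-negative and strictly positive off an explicit facet of $\mathfrak{a}_1$ by Proposition \ref{incl}. The off-diagonal block reduces on $\mathfrak{a}_+$ to a positive multiple of $\sum_{i, j}(|a_i + b_j| + |a_i - b_j|)$, and comparing this against the $\mathfrak{sl}_q$ (respectively $\mathfrak{so}_q$, $\mathfrak{sp}_q$) part of $\rho_\mathfrak{h}$ yields the thresholds $q \leq 2p+1$, $q \leq 2p+2$, $q \leq p$ in (1), (2), (3); strict positivity on $\mathfrak{a} \setminus \{0\}$ then fails precisely one step earlier. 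For case (4) I would combine the $\mathfrak{sp}_p$-module decompositions of $\mathfrak{sl}_{2p}/\mathfrak{sp}_p$ (given by Proposition \ref{incl}(2)) and of $\mathfrak{so}_{4p}/\mathfrak{sl}_{2p}$, reducing the comparison to a weighted sum over the roots of $\mathfrak{sp}_p$; the transition between $p = 2$ and $p \geq 3$ comes from a direct coefficient count.

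The main obstacle is pinning down the witness vectors in each equality case. One must show that simultaneous vanishing of every absolute-value slack forces $Y$ onto the precise subcone described in the proposition: for example, in part (1) with $q = 2p+1$, the off-diagonal slack is a sum of non-negative terms of the form $|a_i + b_j| - (a_i + b_j)$ and their minus-sign counterparts (the sign depending on the relative size of $a_i$ and $b_j$), and this sum vanishes on $\mathfrak{a}_+$ exactly when $a_1 = \cdots = a_p = b_2 = \cdots = b_{2p} = 0$ and $b_1 = -b_{2p+1} > 0$. The degenerate subcases $q = 1$ in (1), (2) and $p = 2$ in (4) are different in character: there $\mathfrak{h}$ loses a simple factor or its rank drops, so $\rho_\mathfrak{h}$ and $\rho_\mathfrak{q}$ coincide on a larger sublocus, and the self-duality of the embeddings $\mathfrak{sp}_p \hookrightarrow \mathfrak{sl}_{2p}$ and $\mathfrak{sl}_p \hookrightarrow \mathfrak{so}_{2p}$ is what identifies the witness set. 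The reduction claimed at the end of part (4) is then immediate from the accidental isomorphism $\mathfrak{sp}_2 \cong \mathfrak{so}_5$ together with Proposition \ref{red3}(2).
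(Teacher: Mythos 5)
Your overall framework---cutting $\mathfrak{g}=\mathfrak{n}^-\oplus\mathfrak{l}\oplus\mathfrak{n}$ along the parabolic stabilizing $V=V_1\oplus V_2$ and comparing piecewise-linear functions on $\mathfrak{a}_+$---is a legitimate reorganization of the eigenvalue bookkeeping that the paper performs by sorting the entries $\pm a_i,b_j$ into a decreasing sequence and bounding $\rho_\mathfrak{g}$ from below by the auxiliary functional $\phi(X)=\sum_i\mu_i x_i$ evaluated at explicitly chosen interleavings. The problem is the way you propose to decouple the two blocks, which rests on a misreading of Proposition \ref{incl}. You assert that the diagonal quotients $\mathfrak{sl}_{2p}/\mathfrak{sp}_p$ and $\mathfrak{so}_{2p}/\mathfrak{sl}_p$ take care of the first simple factor of $\rho_\mathfrak{h}$ ``by Proposition \ref{incl}''; but Proposition \ref{incl}(2),(3) say precisely the opposite, namely $\rho_{\mathfrak{sp}_p}\nleq\rho_{\mathfrak{sl}_{2p}/\mathfrak{sp}_p}$ and $\rho_{\mathfrak{sl}_p}\nleq\rho_{\mathfrak{so}_{2p}/\mathfrak{sl}_p}$. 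Concretely, in part (1) one computes $\rho_{\mathfrak{sp}_p}(Y_1)-\rho_{\mathfrak{sl}_{2p}/\mathfrak{sp}_p}(Y_1)=2\sum_i a_i>0$ for $0\neq Y_1\in\mathfrak{a}_+\cap\mathfrak{sp}_p$, so the off-diagonal block $\sum_{i,j}\bigl(|a_i+b_j|+|a_i-b_j|\bigr)=\sum_{i,j}2\max(a_i,|b_j|)$ must simultaneously dominate the $\mathfrak{sl}_q$-part of $\rho_\mathfrak{h}$ \emph{and} absorb this diagonal deficit. When $q=1$ it does so with zero margin (which is exactly why $\rho_\mathfrak{h}\equiv\rho_\mathfrak{q}$ there), so ``comparing the off-diagonal against the $\mathfrak{sl}_q$ part alone'' cannot produce the thresholds, let alone the witness loci.

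This coupling between the two blocks is the actual content of parts (1) and (2): the surplus of the off-diagonal term over $\rho_{\mathfrak{sl}_q}$ (resp.\ $\rho_{\mathfrak{so}_q}$) must be shown to exceed the diagonal deficit on all of $\mathfrak{a}_+$, and the equality analysis (e.g.\ the witness set $a_i=-a_{p-i+1}$ for $q=1$ in (2), or $b_1=-b_{2p+1}>0$ with all other coordinates zero for $q=2p+1$ in (1)) lives exactly on the locus where the two effects cancel. Your outline gives no mechanism for controlling this interaction; the paper does it by choosing, for each parity of $q$, an explicit interleaving of the sorted $a$'s and $b$'s and estimating $\rho_\mathfrak{g}-2\rho_\mathfrak{h}$ term by term. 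Only in part (3), where Proposition \ref{incl}(4) does give $\rho_{\mathfrak{sl}_p}<\rho_{\mathfrak{sp}_p/\mathfrak{sl}_p}$, and in part (4), which is a closed-form one-variable computation, does your decoupling go through as stated. (Also note that in parts (2) and (3) with $q$ odd the off-diagonal block carries extra weights $\pm a_i$ coming from the zero weight of $\mathbb{C}^q$, so your displayed formula for it needs a correction term.)
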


\begin{proposition}\label{redex}
  Let $q \geq 1$.
  \begin{itemize}
    \item[$(1)$] Let $(\mathfrak{g}, \mathfrak{h}) = (\mathfrak{so}_{7+q}, \mathfrak{g}_2 \oplus \mathfrak{so}_q)$ be the pair defined by $\mathfrak{g}_2 \oplus \mathfrak{so}_q \subset \mathfrak{so}_7 \oplus \mathfrak{so}_q \subset \mathfrak{so}_{7+q}$, where $\mathfrak{g}_2 \hookrightarrow \mathfrak{so}_7$ is the $7$-dimensional irreducible representation of $\mathfrak{g}_2$. Then
      \begin{align*}
        &\rho_\mathfrak{h} \leq \rho_\mathfrak{q} \Leftrightarrow 2 \leq q \leq 9, \\
        &\rho_\mathfrak{h} < \rho_\mathfrak{q} \Leftrightarrow 3 \leq q \leq 8.
      \end{align*}
      When $q=2$, an element $Y = (a_1, a_2, a_3) \in \mathfrak{a}_+$ is a witness vector if and only if $a_1 = a_2$.\\
      When $q=9$, an element $Y = (a_1, a_2, a_3, b_1, b_2, b_3, b_4) \in \mathfrak{a}_+$ is a witness vector if and only if $b_1 > 0$ and $a_1 = a_2 = a_3 = b_2 = b_3 = b_4 = 0$.
    \item[$(2)$] Let $(\mathfrak{g}, \mathfrak{h}) = (\mathfrak{so}_{8+q}, \mathfrak{so}_7 \oplus \mathfrak{so}_q)$ be the pair defined by $\mathfrak{so}_7 \oplus \mathfrak{so}_q \subset \mathfrak{so}_8 \oplus \mathfrak{so}_q \subset \mathfrak{so}_{8+q}$, where $\mathfrak{so}_7 \hookrightarrow \mathfrak{so}_8$ is the $8$-dimensional irreducible representation of $\mathfrak{so}_7$ called the spin representation. Then
      \begin{align*}
        &\rho_\mathfrak{h} \leq \rho_\mathfrak{q} \Leftrightarrow 3 \leq q \leq 10, \\
        &\rho_\mathfrak{h} < \rho_\mathfrak{q} \Leftrightarrow 4 \leq q \leq 9.
      \end{align*}
      When $q=3$, an element $Y = (a_1, a_2, a_3, b_1) \in \mathfrak{a}_+$ is a witness vector if and only if $a_1 = a_2$ and $a_3 = b_1 = 0$.\\
      When $q=10$, an element $Y = (a_1, a_2, a_3, b_1, b_2, b_3, b_4, b_5) \in \mathfrak{a}_+$ is a witness vector if and only if $b_1 > 0$ and $a_1 = a_2 = a_3 = b_2 = b_3 = b_4 = b_5 = 0$.
  \end{itemize}
\end{proposition}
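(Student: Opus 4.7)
The plan is to reduce both parts of the proposition to explicit piecewise-linear computations of $\rho_\mathfrak{h}$ and $\rho_\mathfrak{q}$ on $\mathfrak{a}$, following the general strategy of Benoist--Kobayashi~\cite{BKIII}. First I would decompose $\mathfrak{q} = \mathfrak{g}/\mathfrak{h}$ as an $\mathfrak{h}$-module. The $\mathfrak{h}$-invariant splittings $\mathbb{C}^{7+q} = \mathbb{C}^7 \oplus \mathbb{C}^q$ and $\mathbb{C}^{8+q} = \mathbb{C}^8 \oplus \mathbb{C}^q$ give
\[
\mathfrak{so}_{7+q} = \mathfrak{so}_7 \oplus (\mathbb{C}^7 \otimes \mathbb{C}^q) \oplus \mathfrak{so}_q, \qquad \mathfrak{so}_{8+q} = \mathfrak{so}_8 \oplus (\mathbb{C}^8 \otimes \mathbb{C}^q) \oplus \mathfrak{so}_q,
\]
and combined with $\mathfrak{so}_7 = \mathfrak{g}_2 \oplus V_7$ in part (1) and $\mathfrak{so}_8 = \mathfrak{so}_7 \oplus V_8$ (the spin representation) in part (2), they yield $\mathfrak{q} \cong V_7 \oplus (\mathbb{C}^7 \otimes \mathbb{C}^q)$ and $\mathfrak{q} \cong V_8 \oplus (\mathbb{C}^8 \otimes \mathbb{C}^q)$ respectively. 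Hence $\rho_\mathfrak{h} = \rho_{\mathfrak{g}_2} + \rho_{\mathfrak{so}_q}$ and $\rho_\mathfrak{q} = \rho_{V_7} + \rho_{\mathbb{C}^7 \otimes \mathbb{C}^q}$ in part (1), and analogously in part (2) with $\mathfrak{so}_7$ and $V_8$ replacing $\mathfrak{g}_2$ and $V_7$.

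Next I would extract the necessary range of $q$ from two families of test vectors. Taking $Y$ supported in the $\mathfrak{so}_q$-Cartan of the form $(0,\ldots,0; b_1, 0, \ldots, 0)$, the $V_7$- (resp.\ $V_8$-)contribution to $\rho_\mathfrak{q}(Y)$ vanishes and the tensor-product term pairs off to give $\rho_\mathfrak{q}(Y) = 7b_1$ (resp.\ $8b_1$), while $\rho_\mathfrak{h}(Y) = (q-2)b_1$. So $\rho_\mathfrak{h} \leq \rho_\mathfrak{q}$ forces $q \leq 9$ (resp.\ $q \leq 10$), and the equality case recovers the witness vectors claimed for $q = 9$ in (1) and $q = 10$ in (2). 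For the lower bound on $q$, I would evaluate on a vector supported in the $\mathfrak{g}_2$- or $\mathfrak{so}_7$-Cartan: when $q = 1$ in part (1), $\mathfrak{q} \cong 2V_7$ as $\mathfrak{g}_2$-module and the inequality $\rho_{\mathfrak{g}_2} > 2\rho_{V_7}$ holds along one edge of $\mathfrak{a}_+$, so $\rho_\mathfrak{h} \leq \rho_\mathfrak{q}$ fails; the case $q = 2$ in part (2) is handled by the analogous comparison using the weights $\tfrac{1}{2}(\pm a_1 \pm a_2 \pm a_3)$ of the spin representation.

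For sufficiency in the interior ranges and for the characterization of witness vectors at the remaining boundary values $q = 2$ in (1) and $q = 3$ in (2), I would subdivide $\mathfrak{a}_+$ into sub-chambers on which each summand $|\mu(Y_1) + \nu(Y_2)|$ appearing in $\rho_{\mathbb{C}^7 \otimes \mathbb{C}^q}$ (or $\rho_{\mathbb{C}^8 \otimes \mathbb{C}^q}$) has a definite sign. On each sub-chamber $\rho_\mathfrak{q} - \rho_\mathfrak{h}$ is linear, so both non-negativity and the equality locus reduce to a finite check at the extreme rays of the sub-chamber. The strict inequality $\rho_\mathfrak{h} < \rho_\mathfrak{q}$ in the interior values of $q$ then follows because every margin appearing in the case analysis is strictly positive on $\mathfrak{a}_+ \setminus \{0\}$, while at the boundary values of $q$ the equality locus collapses to the rays described in the statement.

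The main obstacle will be the combinatorics of the sub-chamber decomposition in the last step, since the piecewise-linear function $\rho_{\mathbb{C}^7 \otimes \mathbb{C}^q}(Y_1 + Y_2) = \tfrac{1}{2} \sum_{\mu, \nu} |\mu(Y_1) + \nu(Y_2)|$ a priori has a large number of linearity regions. The key simplification is the identity $|x + y| + |x - y| = 2 \max(|x|, |y|)$: pairing each weight $\nu$ of $\mathbb{C}^q$ with $-\nu$ collapses the sum into $\sum_\mu \sum_{j} \max(|\mu(Y_1)|, |b_j|)$ plus a small diagonal correction from the zero weights of $V_7$ (or $V_8$) and, when $q$ is odd, from the zero weight of $\mathbb{C}^q$. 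In this compact form the analysis reduces to bookkeeping comparisons among the $|\mu(Y_1)|$'s and the $|b_j|$'s, from which both the inequalities and the exact witness conditions can be read off.
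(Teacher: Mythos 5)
Your overall strategy---reduce everything to explicit piecewise-linear computations of $\rho_\mathfrak{h}$ and $\rho_\mathfrak{q}$, get the upper bounds on $q$ from a test vector in $\mathfrak{a}\cap\mathfrak{so}_q$ and the lower bounds from a test vector in the $\mathfrak{g}_2$- or $\mathfrak{so}_7$-factor, then verify the interior range and the equality locus by a case analysis over linearity regions---is essentially the paper's proof. The paper organizes the case analysis differently (it writes $\rho_\mathfrak{g}(Y)=\sum_i(q+7-2i)c_i$ for the decreasing rearrangement $c_i$ of the weight values and lower-bounds it by a function $\phi$ evaluated at a hand-picked interleaving of the $\widetilde{a_j}$ and $b_k$, one interleaving per value of $q$), whereas your identity $|x+y|+|x-y|=2\max(|x|,|y|)$ gives a cleaner closed form for the tensor-product contribution; either device works, and your test-vector computations ($\rho_\mathfrak{q}=7b_1$ vs.\ $(q-2)b_1$, resp.\ $8b_1$, and $\rho_{\mathfrak{g}_2}>2\rho_{V_7}$ for $q=1$) are correct and recover the witness vectors at $q=9$ and $q=10$.

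There is, however, a concrete error in your decomposition for part (2): $\mathfrak{so}_8/\mathfrak{so}_7$ under the spin embedding is \emph{not} the $8$-dimensional spin representation $V_8$. A dimension count already rules this out ($\dim\mathfrak{so}_8-\dim\mathfrak{so}_7=28-21=7$); the correct statement is $\Lambda^2(\mathrm{spin}_7)\cong\mathfrak{so}_7\oplus V_7$ with $V_7$ the standard $7$-dimensional representation, so $\mathfrak{q}\cong V_7\oplus(\mathbb{C}^8\otimes\mathbb{C}^q)$ where only the second summand involves the spin weights $\tfrac12(\pm a_1\pm a_2\pm a_3)$. This matters quantitatively: on $\mathfrak{a}_+$ one has $\rho_{V_8}(Y_1)-\rho_{V_7}(Y_1)=\max(a_1-a_2-a_3,0)$, so your version overestimates $\rho_\mathfrak{q}$ on the part of the chamber where $a_1>a_2+a_3$. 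That is exactly the dangerous direction for the claims ``$\rho_\mathfrak{h}\leq\rho_\mathfrak{q}$'' and for the witness-vector characterizations at $q=3$ and $q=10$ (an overestimate can hide a failure of the inequality or an equality point). It happens that the binding rays for this proposition lie where $a_1\leq a_2+a_3$, so the final thresholds survive, but as written the computation is carried out with the wrong module and the verification on the sub-chambers with $a_1>a_2+a_3$ would not be valid until the summand is corrected.
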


Finally, we consider special cases where $\mathfrak{h}$ acts reducibly on $V=\mathbb{C}^n$.

\begin{proposition}\label{final}
  Let $p \geq q \geq 1$. 
  \begin{enumerate}
    \item[$(1)$] Let $\mathfrak{g} = \mathfrak{sl}_{p+q} \supset \mathfrak{sl}_p \oplus \mathfrak{sl}_q \supset \mathfrak{h}$ and assume that $\mathfrak{h}$ acts irreducibly on $\mathbb{C}^p$. Then
      \begin{enumerate}
        \item[$(a)$] $\rho_\mathfrak{h} \nleq \rho_\mathfrak{q} \Leftrightarrow p \geq q+2$ and $\mathfrak{h} \supseteq \mathfrak{sl}_p$.
        \item[$(b)$] $(\rho_\mathfrak{h} \leq \rho_\mathfrak{q}$ and $\rho_\mathfrak{h} \nless \rho_\mathfrak{q}) \Leftrightarrow$ either
          \begin{itemize}
            \item $p=q+1$ and $\mathfrak{h} \supset \mathfrak{sl}_p$; or
            \item $p=q$ and $\mathfrak{h} = \mathfrak{sl}_p \oplus \mathfrak{sl}_q$; or
            \item $p$ is even, $q=1$ and $\mathfrak{h} = \mathfrak{sp}_{\frac{p}{2}}$.
          \end{itemize}
      \end{enumerate}
    \item[$(2)$] Let $\mathfrak{g} = \mathfrak{so}_{p+q} \supset \mathfrak{so}_p \oplus \mathfrak{so}_q \supset \mathfrak{h}$ and assume that $\mathfrak{h}$ acts irreducibly on $\mathbb{C}^p$. Then
      \begin{enumerate}
        \item[$(a)$] $\rho_\mathfrak{h} \nleq \rho_\mathfrak{q} \Leftrightarrow$ either
          \begin{itemize}
            \item $p\geq q+3$ and $\mathfrak{h} \supset \mathfrak{so}_p$; or
            \item $p=7$, $q=1$ and $\mathfrak{h} = \mathfrak{g}_2$; or
            \item $p=8$, $q \leq 2$ and $\mathfrak{h} = \mathfrak{so}_{7}$.
          \end{itemize}
        \item[$(b)$] $(\rho_\mathfrak{h} \leq \rho_\mathfrak{q}$ and $\rho_\mathfrak{h} \nless \rho_\mathfrak{q}) \Leftrightarrow$ either
          \begin{itemize}
            \item $p=q+2$ and $\mathfrak{h} \supset \mathfrak{so}_p$; or
            \item $p=7$, $q=2$ and $\mathfrak{h} = \mathfrak{g}_2$; or
            \item $p=8$, $q=3$ and $\mathfrak{h} \supset \mathfrak{so}_7$.
          \end{itemize}
      \end{enumerate}

      In the second case of $(a)$ and $(b)$, the morphism $\mathfrak{g}_2 \hookrightarrow \mathfrak{so}_7 \oplus \mathfrak{so}_q$ is defined by the $7$-dimensional irreducible representation $\mathfrak{g}_2 \hookrightarrow \mathfrak{so}_7$ and $q$ copies of the trivial $1$-dimensional representation of $\mathfrak{g}_2$.

      In the third case of $(a)$, the morphism $\mathfrak{so}_7 \hookrightarrow \mathfrak{so}_{8+q}$ is defined by the $8$-dimensional irreducible representation $\mathfrak{so}_7 \hookrightarrow \mathfrak{so}_8$ called the spin representation and $q$ copies of the trivial $1$-dimensional representation of $\mathfrak{so}_7$.

      In the third case of $(b)$, $\mathfrak{h} \supset \mathfrak{so}_7$ means that $\mathfrak{h} = \mathfrak{so}_7 \oplus \mathfrak{h}_2$ with $\mathfrak{h}_2 \subset \mathfrak{so}_3$. The morphism $\mathfrak{so}_7 \oplus \mathfrak{h}_2 \hookrightarrow \mathfrak{so}_8 \oplus \mathfrak{so}_3$ is defined by the spin representation $\mathfrak{so}_7 \hookrightarrow \mathfrak{so}_8$ and the inclusion map $\mathfrak{h}_2 \hookrightarrow \mathfrak{so}_3$.
    \item[$(3)$] Let $\mathfrak{g} = \mathfrak{sp}_{p+q} \supset \mathfrak{sp}_p \oplus \mathfrak{sp}_q \supset \mathfrak{h}$ and assume that $\mathfrak{h}$ acts irreducibly on $\mathbb{C}^{2p}$. Then
      \begin{enumerate}
        \item[$(a)$] $\rho_\mathfrak{h} \nleq \rho_\mathfrak{q} \Leftrightarrow$ either
          \begin{itemize}
            \item $p\geq q+1$ and $\mathfrak{h} \supset \mathfrak{sp}_p$; or
            \item $p=q$ and $\mathfrak{h} = \mathfrak{sp}_p \oplus \mathfrak{sp}_p$.
          \end{itemize}
        \item[$(b)$] $(\rho_\mathfrak{h} \leq \rho_\mathfrak{q}$ and $\rho_\mathfrak{h} \nless \rho_\mathfrak{q}) \Leftrightarrow p=q$ and $\mathfrak{sp}_p \oplus 0 \subset \mathfrak{h} \subsetneq \mathfrak{sp}_p \oplus \mathfrak{sp}_p$.
      \end{enumerate}

      Here, $\mathfrak{sp}_p \oplus 0 \subset \mathfrak{h}$ means that $\mathfrak{h}$ contains either the first or the second direct summand $\mathfrak{sp}_p$.
  \end{enumerate}
\end{proposition}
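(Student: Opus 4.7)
The three parts of Proposition \ref{final} are structurally parallel, so my plan is to treat them in tandem; I describe the strategy for part (1), the arguments for (2) and (3) differing only by the choice of classical Lie algebra and, in the case of (2), by the need to incorporate the sporadic embeddings $\mathfrak{g}_2 \hookrightarrow \mathfrak{so}_7$ and $\mathfrak{so}_7 \hookrightarrow \mathfrak{so}_8$. Write $\pi_i$ for the two projections of $\mathfrak{sl}_p \oplus \mathfrak{sl}_q$ onto its simple factors, and set $\mathfrak{h}_i := \pi_i(\mathfrak{h})$, so that $\mathfrak{h} \subset \mathfrak{h}_1 \oplus \mathfrak{h}_2$ with $\mathfrak{h}_1 \subset \mathfrak{sl}_p$ acting irreducibly on $\mathbb{C}^p$.

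For the sufficient direction of (b), I would verify each listed family directly. The cases $\mathfrak{h} \supset \mathfrak{sl}_p$ with $p = q$ or $p = q+1$ (whence $\mathfrak{h} = \mathfrak{sl}_p \oplus \mathfrak{h}_2'$ for some $\mathfrak{h}_2' \subset \mathfrak{sl}_q$) follow from Proposition \ref{red2}(1) when $\mathfrak{h}_2' = \mathfrak{sl}_q$; for smaller $\mathfrak{h}_2' \subsetneq \mathfrak{sl}_q$ the witness vector given there lies in $\mathfrak{a}_1$, and neither $\rho_\mathfrak{h}$ nor $\rho_\mathfrak{q}$ changes at that vector when $\mathfrak{h}_2'$ is shrunk, so equality persists. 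The remaining family with $q = 1$ and $\mathfrak{h} = \mathfrak{sp}_{p/2}$ is the boundary case $q=1$ of Proposition \ref{redu}(1). For the converse direction (and for (a)), I would case-split on $\mathfrak{h}_1$. If $\mathfrak{h}_1 \subsetneq \mathfrak{sl}_p$ and $\mathfrak{h}_1 \neq \mathfrak{sp}_{p/2}$, Proposition \ref{si} yields $\rho_{\mathfrak{h}_1} < \rho_{\mathfrak{sl}_p/\mathfrak{h}_1}$ on $\mathfrak{a}_1 \setminus \{0\}$; combined with the non-strict inequality for $\mathfrak{h}_2 \subset \mathfrak{sl}_q$ from Proposition \ref{red3}(1) and Lemma \ref{reduction1}(2), this gives strict inequality for the intermediate subalgebra $\mathfrak{h}_1 \oplus \mathfrak{h}_2 \subset \mathfrak{g}$, and Lemma \ref{lem:til} transfers the strict inequality to $\mathfrak{h}$ itself. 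If $\mathfrak{h}_1 = \mathfrak{sp}_{p/2}$, the pair $(\mathfrak{g}, \mathfrak{h}_1 \oplus \mathfrak{h}_2)$ is governed by Proposition \ref{redu}(1), and Lemma \ref{lem:til} again passes the appropriate inequality down to $\mathfrak{h}$, the only equality regime being $q = 1$. If $\mathfrak{h}_1 = \mathfrak{sl}_p$, a Goursat-type argument using the semisimplicity of $\mathfrak{h}$ shows that $\mathfrak{h}$ splits as $\mathfrak{sl}_p \oplus \mathfrak{h}_2$ (a non-trivial diagonal would require $\mathfrak{h}_2 \supset \mathfrak{sl}_p$, i.e.\ $p=q$ and $\mathfrak{h} = \mathfrak{sl}_p \oplus \mathfrak{sl}_q$, which is exactly the listed case of Proposition \ref{red2}(1)); the remaining possibilities then reduce to Proposition \ref{red3}(1) applied to the block-diagonal subalgebra structure.

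The main obstacle I anticipate lies in part (2), where the sporadic embeddings $\mathfrak{g}_2 \hookrightarrow \mathfrak{so}_7$ and the spin embedding $\mathfrak{so}_7 \hookrightarrow \mathfrak{so}_8$ appear as \emph{equality} cases of Proposition \ref{si}, so that the strict inequality cannot be extracted from the irreducible-factor analysis alone. For these I would rely on Proposition \ref{redex} to pin down the precise cutoffs $q = 2$ for $\mathfrak{g}_2$ and $q = 3$ for the spin $\mathfrak{so}_7$, and then propagate to smaller $\mathfrak{h}$ via Lemma \ref{lem:til}. Completeness of the list must finally be checked against Dynkin's classification (Theorems \ref{Dred} and \ref{Dtens}) to ensure no further sporadic embedding of a semisimple $\mathfrak{h}_1$ into $\mathfrak{sl}_p$, $\mathfrak{so}_p$, or $\mathfrak{sp}_p$ contributes a witness vector; the non-sporadic part of this check is handled uniformly by the dichotomy above, so the bulk of the case-by-case labour is concentrated in these two exceptional families.
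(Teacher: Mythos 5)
Your overall skeleton (split on whether $\mathfrak{h}$ surjects onto the large factor, feed the intermediate algebra into Propositions \ref{red2}--\ref{redex}, and push the inequality down with Lemma \ref{lem:til}) matches the paper's, but three steps as written do not go through. First, in part (1) you treat $p=q$ and $p=q+1$ uniformly and claim that for $\mathfrak{h}=\mathfrak{sl}_p\oplus\mathfrak{h}_2'$ with $\mathfrak{h}_2'\subsetneq\mathfrak{sl}_q$ ``the witness vector lies in $\mathfrak{a}_1$ \ldots so equality persists.'' For $p=q+1$ this is fine, but for $p=q$ the witness vectors of $(\mathfrak{sl}_{2p},\mathfrak{sl}_p\oplus\mathfrak{sl}_p)$ are exactly the vectors with $a_i=b_i$ for all $i$, so the only one lying in $\mathfrak{a}_1$ is $0$; your argument would wrongly put $\mathfrak{sl}_p\oplus\mathfrak{h}_2'$ ($p=q$, $\mathfrak{h}_2'\subsetneq\mathfrak{sl}_q$) on the equality list, contradicting the very statement being proved. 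These subalgebras satisfy the strict inequality, and showing this requires the sub-case analysis the paper carries out ($\mathfrak{h}_2'$ simple irreducible / nonsimple irreducible / reducible, via Propositions \ref{si}, \ref{tens}, \ref{red3}, \ref{redu} and Lemma \ref{kh}); your appeal to ``Proposition \ref{red3}(1) applied to the block-diagonal structure'' does not cover the case where $\mathfrak{h}_2'$ acts irreducibly on $\mathbb{C}^q$. Second, your Goursat step misidentifies the non-split case: if $\pi_1(\mathfrak{h})=\mathfrak{sl}_p$ and $\mathfrak{h}$ does not split, then $\mathfrak{h}\simeq\Delta\mathfrak{sl}_p$ is the graph of an isomorphism (forcing $p=q$), not $\mathfrak{sl}_p\oplus\mathfrak{sl}_q$; this is a strict-inequality case needing its own computation, e.g.\ $2\rho_{\Delta\mathfrak{sl}_p}(Y)=2\sum_{i<j}(a_i-a_j)<4\sum_{i<j}(a_i-a_j)=\rho_{\mathfrak{sl}_{2p}}(Y)$, which the paper performs for $\Delta\mathfrak{sl}_p$, $\Delta\mathfrak{so}_p$ and $\Delta\mathfrak{sp}_p$.

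Third, Lemma \ref{reduction1}(2) requires strict inequalities on \emph{both} factors, so combining strictness on the $\mathfrak{sl}_p$-side with only a non-strict bound on the $\mathfrak{sl}_q$-side does not give strictness on all of $\mathfrak{a}\setminus\{0\}$ (it fails for $Y\in\mathfrak{a}_2$). The correct device is Lemma \ref{kh} together with Lemma \ref{khcomp}, which supplies the needed strict bound $\rho_{\mathfrak{sl}_q}<\rho_{\mathfrak{g}/(\mathfrak{sl}_p\oplus\mathfrak{sl}_q)}$ on the second factor. Relatedly, Proposition \ref{si} only covers simple irreducible $\mathfrak{h}_1$; the nonsimple irreducible case must be routed through Theorem \ref{Dtens} and Proposition \ref{tens}, and in part (2) this produces one more borderline family, $\mathfrak{sp}_2\oplus\mathfrak{sp}_1\subset\mathfrak{so}_8$, for which Proposition \ref{tens}(3) gives only the non-strict inequality and the paper needs the additional Lemma \ref{lem:sotens}. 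Your plan anticipates the $\mathfrak{g}_2$ and spin-$\mathfrak{so}_7$ exceptions but misses this one.
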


We state two lemmas that will be used in the proof of Proposition \ref{final}.
The following lemma is an analog of Lemma 3.10 in \cite{BKIII} and follows from Lemma \ref{reduction1}.
\begin{lemma}\label{kh}
  Let $\mathfrak{g}$ be a semisimple Lie algebra, $\mathfrak{h} \subset \mathfrak{k} \subset \mathfrak{g}$ semisimple Lie subalgebras, $\mathfrak{k} = \mathfrak{k}_1 \oplus \mathfrak{k}_2$ an ideal decomposition of $\mathfrak{k}$, and $\mathfrak{h} = \mathfrak{h}_1 \oplus \mathfrak{k}_2$ an ideal decomposition of $\mathfrak{h}$ with $\mathfrak{h}_1 \subset \mathfrak{k}_1$. 
  If $\rho_{\mathfrak{h}_1} < \rho_{\mathfrak{k}_1/\mathfrak{h}_1}$ on $(\mathfrak{a} \cap \mathfrak{h}_1) \setminus \{0\}$ and $\rho_{\mathfrak{k}_2} < \rho_{\mathfrak{g}/\mathfrak{k}}$ on $(\mathfrak{a} \cap \mathfrak{k}_2) \setminus \{0\}$, then $\rho_\mathfrak{h} < \rho_\mathfrak{q}$.
\end{lemma}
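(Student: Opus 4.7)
The plan is to reduce the strict inequality $\rho_\mathfrak{h}<\rho_\mathfrak{q}$ to the two given hypotheses by decomposing $\mathfrak{g}/\mathfrak{h}$ as an $\mathfrak{h}$-module according to the filtration $\mathfrak{h}\subset\mathfrak{k}\subset\mathfrak{g}$ and then invoking Lemma \ref{reduction1}(1) for the ideal decomposition $\mathfrak{h}=\mathfrak{h}_1\oplus\mathfrak{k}_2$ of $\mathfrak{h}$.

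First I would choose $\mathfrak{h}$-invariant complements. By Weyl's complete reducibility applied to $\mathfrak{k}$ acting on $\mathfrak{g}$ and to $\mathfrak{h}$ acting on $\mathfrak{k}$, we obtain an isomorphism of $\mathfrak{h}$-modules
\[\mathfrak{q}=\mathfrak{g}/\mathfrak{h}\simeq(\mathfrak{k}/\mathfrak{h})\oplus(\mathfrak{g}/\mathfrak{k}),\]
so that $\rho_\mathfrak{q}(Y)=\rho_{\mathfrak{k}/\mathfrak{h}}(Y)+\rho_{\mathfrak{g}/\mathfrak{k}}(Y)$ for every $Y\in\mathfrak{a}$.

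Second, since $\mathfrak{h}=\mathfrak{h}_1\oplus\mathfrak{k}_2$ and $\mathfrak{k}=\mathfrak{k}_1\oplus\mathfrak{k}_2$ with $\mathfrak{h}_1\subset\mathfrak{k}_1$, and since $[\mathfrak{k}_1,\mathfrak{k}_2]=0$, the $\mathfrak{h}$-modules $\mathfrak{k}/\mathfrak{h}$ and $\mathfrak{k}_1/\mathfrak{h}_1$ are isomorphic, with $\mathfrak{k}_2$ acting trivially on $\mathfrak{k}_1/\mathfrak{h}_1$. Decomposing an arbitrary $Y\in\mathfrak{a}$ as $Y=Y_1+Y_2\in(\mathfrak{a}\cap\mathfrak{h}_1)\oplus(\mathfrak{a}\cap\mathfrak{k}_2)$, the two identities
\[\rho_\mathfrak{h}(Y)=\rho_{\mathfrak{h}_1}(Y_1)+\rho_{\mathfrak{k}_2}(Y_2),\qquad \rho_{\mathfrak{k}/\mathfrak{h}}(Y)=\rho_{\mathfrak{k}_1/\mathfrak{h}_1}(Y_1)\]
follow, and Lemma \ref{reduction1}(1) applied to the ideal decomposition $\mathfrak{h}=\mathfrak{h}_1\oplus\mathfrak{k}_2$ and the $\mathfrak{h}$-module $\mathfrak{g}/\mathfrak{k}$ gives $\rho_{\mathfrak{g}/\mathfrak{k}}(Y_1+Y_2)\geq\rho_{\mathfrak{g}/\mathfrak{k}}(Y_2)$.

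Combining these, I arrive at
\[\rho_\mathfrak{q}(Y)-\rho_\mathfrak{h}(Y)\geq\bigl(\rho_{\mathfrak{k}_1/\mathfrak{h}_1}(Y_1)-\rho_{\mathfrak{h}_1}(Y_1)\bigr)+\bigl(\rho_{\mathfrak{g}/\mathfrak{k}}(Y_2)-\rho_{\mathfrak{k}_2}(Y_2)\bigr).\]
By the two hypotheses, each bracket is nonnegative, vanishing on the zero vector of the relevant subspace and strictly positive otherwise. Since $Y\neq 0$ forces at least one of $Y_1,Y_2$ to be nonzero, at least one of the two brackets is strictly positive and we conclude $\rho_\mathfrak{h}(Y)<\rho_\mathfrak{q}(Y)$. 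There is no substantive obstacle: the only subtlety is recognising that the commuting ideals $\mathfrak{k}_1$ and $\mathfrak{k}_2$ make $\rho_{\mathfrak{k}/\mathfrak{h}}$ depend only on the $\mathfrak{h}_1$-component of $Y$, which is what allows the two hypotheses to contribute additively.
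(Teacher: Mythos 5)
Your argument is correct and is exactly the intended one: the paper itself gives no written proof of Lemma \ref{kh} beyond the remark that it ``follows from Lemma \ref{reduction1},'' and your chain $\rho_\mathfrak{q}(Y)=\rho_{\mathfrak{k}_1/\mathfrak{h}_1}(Y_1)+\rho_{\mathfrak{g}/\mathfrak{k}}(Y)\geq\rho_{\mathfrak{k}_1/\mathfrak{h}_1}(Y_1)+\rho_{\mathfrak{g}/\mathfrak{k}}(Y_2)>\rho_{\mathfrak{h}_1}(Y_1)+\rho_{\mathfrak{k}_2}(Y_2)=\rho_\mathfrak{h}(Y)$ is the natural filling-in of that remark. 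The one step worth flagging explicitly is that you apply Lemma \ref{reduction1}(1) with the roles of the two ideals swapped ($\rho_V(Y_2)\leq\rho_V(Y_1+Y_2)$ rather than $\rho_V(Y_1)\leq\rho_V(Y_1+Y_2)$), which is legitimate by the symmetry of that statement.
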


In the next lemma, we consider the setting where $\mathfrak{k}$ and $\mathfrak{h}$ are Lie subalgebras of $\mathfrak{g}$ and $\mathfrak{h}$ is the second direct summand of $\mathfrak{k}$.
\begin{lemma}\label{khcomp}
  Let $p \geq q \geq 1$.
  \begin{itemize}
    \item If $(\mathfrak{g}, \mathfrak{k}, \mathfrak{h}) = (\mathfrak{sl}_{p+q}, \mathfrak{sl}_p \oplus \mathfrak{sl}_q, \mathfrak{sl}_q)$, then $\rho_\mathfrak{h} < \rho_{\mathfrak{g}/\mathfrak{k}}$ on $\mathfrak{a} \setminus \{0\}$.
    \item If $(\mathfrak{g}, \mathfrak{k}, \mathfrak{h}) = (\mathfrak{so}_{p+q}, \mathfrak{so}_p \oplus \mathfrak{so}_q, \mathfrak{so}_q)$, then $\rho_\mathfrak{h} < \rho_{\mathfrak{g}/\mathfrak{k}}$ on $\mathfrak{a} \setminus \{0\}$.
    \item If $(\mathfrak{g}, \mathfrak{k}, \mathfrak{h}) = (\mathfrak{sp}_{p+q}, \mathfrak{sp}_p \oplus \mathfrak{sp}_q, \mathfrak{sp}_q)$, then $\rho_\mathfrak{h} \leq \rho_{\mathfrak{g}/\mathfrak{k}}$ on $\mathfrak{a}$.
    Furthermore, $\rho_\mathfrak{h} < \rho_{\mathfrak{g}/\mathfrak{k}}$ holds on $\mathfrak{a} \setminus \{0\}$ if and only if $p \geq q+1$.
  \end{itemize}
\end{lemma}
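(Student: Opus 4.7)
The plan is to treat each of the three cases by an explicit computation. In each case, the quotient $\mathfrak{g}/\mathfrak{k}$ is a classical symmetric pair complement, so as a $\mathfrak{k}$-module it is a tensor product of the standard representations of the two factors of $\mathfrak{k}$. Restricting to the second factor $\mathfrak{h}$, the module $\mathfrak{g}/\mathfrak{k}$ becomes a multiple of the standard representation of $\mathfrak{h}$, and both $\rho_{\mathfrak{g}/\mathfrak{k}}(Y)$ and $\rho_{\mathfrak{h}}(Y)$ on the positive Weyl chamber $\mathfrak{a}_+$ can be written in standard coordinates.

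Concretely, pick $\mathfrak{a}\subset\mathfrak{h}$ to be the standard split Cartan, with coordinates $(b_1,\ldots,b_q)$ (respectively $(b_1,\ldots,b_s)$ in the $\mathfrak{so}_{2s},\mathfrak{so}_{2s+1}$ cases), so that $b_1\geq\cdots\geq b_q\geq 0$ (or $b_1\geq\cdots\geq b_{q-1}\geq |b_q|$ in case 1) describes $\mathfrak{a}_+$. In case 1, $\mathfrak{g}/\mathfrak{k}\simeq \mathbb{C}^p\otimes(\mathbb{C}^q)^{*}\oplus(\mathbb{C}^p)^{*}\otimes\mathbb{C}^q$ restricts to $2p$ copies of the standard $\mathfrak{sl}_q$-module, giving $\rho_{\mathfrak{g}/\mathfrak{k}}(Y)=p\sum_i|b_i|$. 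Similarly, in case 2 one gets $p\sum_i b_i$, and in case 3 one gets $2p\sum_i b_i$. On the other side, summing the positive roots yields
\[\rho_{\mathfrak{h}}(Y)=\sum_i(q-2i+1)b_i,\quad \sum_i(2s-2i+1)b_i\ \text{or}\ \sum_i(2s-2i)b_i,\quad 2\sum_i(q-i+1)b_i\]
in the three cases, respectively.

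The comparison is now direct. In cases 1 and 2 the coefficients appearing in $\rho_{\mathfrak{h}}$ are bounded by $q-1$, $2s-1=q-2$, or $2s-2=q-2$, so $\rho_{\mathfrak{h}}(Y)$ is bounded above by $(q-1)\sum_i|b_i|$ or $(q-2)\sum_i b_i$; the hypothesis $p\geq q$ then gives strict inequality on $\mathfrak{a}_+\setminus\{0\}$, and the same inequality extends to all of $\mathfrak{a}\setminus\{0\}$ by Weyl group invariance. In case 3 one rewrites $\rho_{\mathfrak{g}/\mathfrak{k}}(Y)-\rho_{\mathfrak{h}}(Y)=2\sum_k(p-q+k-1)b_k$; the assumption $p\geq q$ makes each coefficient nonnegative, proving $\rho_{\mathfrak{h}}\leq\rho_{\mathfrak{g}/\mathfrak{k}}$, and strict inequality on $\mathfrak{a}\setminus\{0\}$ holds iff the coefficient of $b_1$, namely $p-q$, is positive, i.e.\ $p\geq q+1$.

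The step I expect to require the most care is case 3, precisely because the borderline $p=q$ behavior has to be detected: the witness vector $Y=(b_1,0,\ldots,0)$ realizes equality, so the passage from $\leq$ to $<$ must be tracked carefully, in contrast to cases 1 and 2 where the gap between the maximal coefficient of $\rho_{\mathfrak{h}}$ and the coefficient appearing in $\rho_{\mathfrak{g}/\mathfrak{k}}$ is already strict under $p\geq q$. Apart from that, the proof is a straightforward coordinate computation with the standard description of positive roots for types $A$, $B$, $C$, $D$.
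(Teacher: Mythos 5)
Your proposal is correct and takes essentially the same route as the paper: an explicit coordinate computation in which $\rho_{\mathfrak{g}/\mathfrak{k}}$ restricted to $\mathfrak{h}$ is a multiple of the standard representation and is compared coefficient by coefficient with $\rho_{\mathfrak{h}}$; in the symplectic case the paper likewise obtains $\rho_{\mathfrak{g}/\mathfrak{k}}(Y)-\rho_{\mathfrak{h}}(Y)=\sum_{i}2(p-q+i-1)b_i$ and reads off the equality locus exactly as you do. (One trivial slip: the chamber description $b_1\geq\cdots\geq b_{q-1}\geq|b_q|$ belongs to the $\mathfrak{so}_{2s}$ subcase of the second bullet, not to the $\mathfrak{sl}_q$ case, but since your estimates are in terms of $|b_i|$ nothing is affected.)
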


\begin{proof}[Proof of Lemma \ref{khcomp}]
  The first and the second statements follow from explicit computation.
  We prove the last statement.
  Note that $\rho_{\mathfrak{g}/\mathfrak{k}} = \rho_{\mathfrak{g}/\mathfrak{h}}$ on $\mathfrak{a}$.
  Let $(\mathfrak{g}, \mathfrak{k}, \mathfrak{h}) = (\mathfrak{sp}_{p+q}, \mathfrak{sp}_p \oplus \mathfrak{sp}_q, \mathfrak{sp}_q)$.
  For a nonzero element $Y = (a_1, \dots, a_q) \in \mathfrak{a}_+$, we have 
  \begin{align*}
    \rho_\mathfrak{g}(Y) - 2\rho_\mathfrak{h}(Y) &= \sum_{i=1}^{q}2(p+q-i+1)a_i - \sum_{i=1}^{q}4(q-i+1)a_i \\
    &= \sum_{i=1}^{q}2(p-q+i-1)a_i \geq 0.
  \end{align*}
  Then $\rho_\mathfrak{h}(Y) < \rho_{\mathfrak{g}/\mathfrak{h}}(Y)$ holds on $\mathfrak{a} \setminus \{0\}$ if and only if $p \geq q+1$.
\end{proof}

\subsection{Proof of Theorem \ref{main}}
In this section, assuming Propositions \ref{red2} -- \ref{final}, Lemmas \ref{kh} and \ref{khcomp}, we prove Theorem \ref{main}.

Before giving the proof of Theorem \ref{main}, we describe Dynkin's classification of the maximal semisimple Lie subalgebras of the simple Lie algebras.

\begin{theorem}[{\cite[\S 5]{DyS}}]\label{Dred}
  Let $\mathfrak{g}$ be a complex simple Lie algebra of classical type.
  \begin{itemize}
    \item[$(1)$] Let $\mathfrak{g} = \mathfrak{sl}_n$.
      Every maximal complex semisimple Lie subalgebra of $\mathfrak{g}$ which acts reducibly on $\mathbb{C}^n$ is conjugate to 
      $\mathfrak{sl}_{n-1}$, or $\mathfrak{sl}_{k} \oplus \mathfrak{sl}_{n-k}$ with some $k\ (2 \leq k \leq n-2)$.
    \item[$(2)$] Let $\mathfrak{g} = \mathfrak{so}_{2n+1}$.
      Every maximal complex semisimple Lie subalgebra of $\mathfrak{g}$ which acts reducibly on $\mathbb{C}^{2n+1}$ is conjugate to 
      $\mathfrak{so}_{2n-1}$, or $\mathfrak{so}_{2k} \oplus \mathfrak{so}_{2(n-k)+1}$ with some $k\ (2 \leq k \leq n)$.
    \item[$(3)$] Let $\mathfrak{g} = \mathfrak{sp}_n$.
      Every maximal complex semisimple Lie subalgebra of $\mathfrak{g}$ which acts reducibly on $\mathbb{C}^{2n}$ is conjugate to 
      $\mathfrak{sl}_{n}$, or $\mathfrak{sp}_{k} \oplus \mathfrak{sp}_{n-k}$ with some $k\ (1 \leq k \leq n-1)$.
    \item[$(4)$] Let $\mathfrak{g} = \mathfrak{so}_{2n}$.
      Every maximal complex semisimple Lie subalgebra of $\mathfrak{g}$ which acts reducibly on $\mathbb{C}^{2n}$ is conjugate to 
      $\mathfrak{so}_{2n-2}$, $\mathfrak{sl}_n$ or $\mathfrak{so}_{2k} \oplus \mathfrak{so}_{2(n-k)}$ with some $k\ (2\leq k \leq n-2)$.
  \end{itemize}
\end{theorem}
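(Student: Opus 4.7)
The plan is to reprove Dynkin's classification by analyzing how a maximal semisimple $\mathfrak{h}\subset\mathfrak{g}$ acting reducibly on $V$ must decompose $V$, and then invoking maximality to pin down $\mathfrak{h}$. Let $\mathfrak{h}$ be such a maximal subalgebra. Since $\mathfrak{h}$ is reductive, $V$ decomposes into $\mathfrak{h}$-isotypic components $V=\bigoplus_i V_i^{\oplus n_i}$, where the $V_i$ are pairwise non-isomorphic irreducibles. For $\mathfrak{g}=\mathfrak{sl}(V)$ this decomposition is unconstrained; for $\mathfrak{g}=\mathfrak{so}(V)$ or $\mathfrak{sp}(V)$, the invariant bilinear form $B$ must pair $V_i^{\oplus n_i}$ with $(V_i^*)^{\oplus n_i}$ if $V_i$ is not self-dual, and restricts non-degenerately to each self-dual isotypic piece. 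Consequently, in all cases $V$ admits a proper $\mathfrak{h}$-stable direct-sum splitting $V=W_1\oplus W_2$ with the form (if present) non-degenerate on each summand, or else with $W_1,W_2$ both totally isotropic and paired by $B$.

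First I would handle $\mathfrak{g}=\mathfrak{sl}(V)$. From $V=W_1\oplus W_2$ it follows that $\mathfrak{h}\subset\mathfrak{s}(\mathfrak{gl}(W_1)\oplus\mathfrak{gl}(W_2))$; semisimplicity forces $\mathfrak{h}\subset\mathfrak{sl}(W_1)\oplus\mathfrak{sl}(W_2)$, the derived subalgebra of the Levi. Maximality of $\mathfrak{h}$ then yields equality when $\dim W_1,\dim W_2\geq 2$, and collapses to $\mathfrak{sl}(W_1)=\mathfrak{sl}_{n-1}$ when $\dim W_2=1$ (the $\mathfrak{gl}_1$ factor being central, it is absorbed rather than contributing to $\mathfrak{h}$). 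This produces exactly the list in (1).

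Next I would treat the non-degenerate splitting case for types B, C, D uniformly: $\mathfrak{h}\subset\mathfrak{g}(W_1)\oplus\mathfrak{g}(W_2)$ (with $\mathfrak{g}(W_i)=\mathfrak{so}(W_i)$ or $\mathfrak{sp}(W_i)$ according to the type of $B$), and maximality forces equality up to dropping abelian summands. For $\mathfrak{so}_{2n+1}$, if $\dim W_2=2$ then $\mathfrak{so}(W_2)=\mathfrak{so}_2$ is abelian and the semisimple part is $\mathfrak{so}_{2n-1}$; otherwise one obtains $\mathfrak{so}_{2k}\oplus\mathfrak{so}_{2(n-k)+1}$. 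The same mechanism produces $\mathfrak{so}_{2n-2}$ and $\mathfrak{so}_{2k}\oplus\mathfrak{so}_{2(n-k)}$ inside $\mathfrak{so}_{2n}$, and $\mathfrak{sp}_k\oplus\mathfrak{sp}_{n-k}$ inside $\mathfrak{sp}_n$. The Lagrangian (totally isotropic) case $V=W_1\oplus W_2$ with $B(W_i,W_i)=0$ forces $\dim W_1=\dim W_2=\tfrac12\dim V$, which is impossible in $\mathfrak{so}_{2n+1}$; in $\mathfrak{sp}_n$ and $\mathfrak{so}_{2n}$ it embeds $\mathfrak{gl}(W_1)\hookrightarrow\mathfrak{g}$ via $A\mapsto(A,-A^*)$, whose semisimple part is $\mathfrak{sl}(W_1)\cong\mathfrak{sl}_n$; this accounts for the exceptional $\mathfrak{sl}_n$ entries in (3) and (4).

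The main obstacle will be the maximality half of the argument, which has two components. First, I must ensure that a \emph{coarsest} non-trivial $\mathfrak{h}$-invariant splitting is used: any further refinement $V=W_1'\oplus W_2'\oplus W_3'$ would place $\mathfrak{h}$ inside a proper semisimple subalgebra of the two-block Levi, contradicting maximality. Second, I need to verify that each candidate subalgebra in the list is not strictly contained in a larger proper semisimple subalgebra of $\mathfrak{g}$. The cleanest way is to compute the normalizer $\mathfrak{n}_{\mathfrak{g}}(\mathfrak{h})$ and observe that it equals $\mathfrak{h}$ plus a central toral piece, so any semisimple overalgebra strictly containing $\mathfrak{h}$ must leave some $\mathfrak{h}$-isotypic component invariant and act irreducibly on a strictly larger subspace, a case-by-case impossibility. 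Once these two maximality checks are in place, combining them with the above case analysis yields the four lists verbatim.
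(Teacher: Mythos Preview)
The paper does not give its own proof of this theorem: it is stated as a quotation of Dynkin's classification (the attribution \cite[\S 5]{DyS} in the theorem header is the entire ``proof'' the paper offers), and the result is used only as a black box in the proof of Theorem~\ref{main}. So there is nothing in the paper to compare your argument against beyond Dynkin's original.

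Your sketch follows exactly the classical line of Dynkin's argument: decompose $V$ under $\mathfrak h$, separate the nondegenerate and totally isotropic splittings for types $B,C,D$, pass to the derived algebra of the resulting Levi, and then invoke maximality. That is the correct strategy and would reproduce the lists. Two remarks on the execution. First, your case analysis for $\mathfrak{so}_{2n}$ silently restricts to even--even orthogonal splittings, matching the statement as printed; but your own mechanism (``$\mathfrak h\subset\mathfrak g(W_1)\oplus\mathfrak g(W_2)$ with $B$ nondegenerate on each $W_i$'') equally well produces odd--odd splittings $\mathfrak{so}_{2k+1}\oplus\mathfrak{so}_{2n-2k-1}$, and you give no reason to exclude them. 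You should either argue why they are not maximal semisimple (they are, in fact, so this is a gap in the statement rather than in your method) or flag the discrepancy. Second, the maximality verification you defer to ``case-by-case impossibility'' is where all the content lies; computing $\mathfrak n_{\mathfrak g}(\mathfrak h)$ shows each candidate is self-normalizing up to a torus, but that alone does not rule out a strictly larger \emph{semisimple} overalgebra that fails to normalize $\mathfrak h$. The clean way is to show that any proper semisimple overalgebra still acts reducibly with a strictly coarser invariant decomposition, forcing it to coincide with one of the listed Levi derived algebras already containing the candidate---and then check there are no nontrivial containments among the candidates themselves.
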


\begin{theorem}[{\cite[Theorem 1.3, 1.4]{DyM}}]\label{Dtens}
  Let $\mathfrak{g}$ be a complex semisimple Lie algebra of classical type.
  \begin{itemize}
    \item[$(1)$] Let $\mathfrak{g} = \mathfrak{sl}_n$.
      Every maximal complex semisimple Lie subalgebra of $\mathfrak{g}$ which is non-simple and acts irreducibly on $\mathbb{C}^n$ is conjugate to 
      $\mathfrak{sl}_{p} \oplus \mathfrak{sl}_{q}$ with some $p, q\ (pq = n, p \geq 2, q \geq 2)$.
    \item[$(2)$] Let $\mathfrak{g} = \mathfrak{so}_{n}$.
      Every maximal complex semisimple Lie subalgebra of $\mathfrak{g}$ which is non-simple and acts irreducibly on $\mathbb{C}^{2n+1}$ is conjugate to 
      $\mathfrak{so}_p \oplus \mathfrak{so}_q$ with some $p, q\ (pq = n, p \geq 3, q \geq 3)$, or 
      $\mathfrak{sp}_p \oplus \mathfrak{sp}_q$ with some $p, q\ (4pq = n,p \geq 2, q \geq 2)$.
    \item[$(3)$] Let $\mathfrak{g} = \mathfrak{sp}_n$.
      Every maximal complex semisimple Lie subalgebra of $\mathfrak{g}$ which is non-simple and acts irreducibly on $\mathbb{C}^{2n}$ is conjugate to 
      $\mathfrak{sp}_p \oplus \mathfrak{so}_q$ with some $p, q\ (pq = n, p \geq 2, q \geq 3)$.
  \end{itemize}
\end{theorem}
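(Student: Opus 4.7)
The plan is to reduce the classification to the representation theory of tensor products and then to a maximality analysis for each classical type. Throughout I write $V$ for the defining module, so $V = \mathbb{C}^n$ for $\mathfrak{g} = \mathfrak{sl}_n$ or $\mathfrak{so}_n$ and $V = \mathbb{C}^{2n}$ for $\mathfrak{g} = \mathfrak{sp}_n$, and I let $\mathfrak{h} = \mathfrak{h}_1 \oplus \cdots \oplus \mathfrak{h}_r$ be a maximal non-simple semisimple subalgebra of $\mathfrak{g}$ (so $r \geq 2$) acting irreducibly on $V$. The starting point is the standard fact that any irreducible module over a direct sum of semisimple Lie algebras is a tensor product of irreducibles of the summands; hence $V \simeq V_1 \otimes \cdots \otimes V_r$ with $V_i$ a nontrivial irreducible $\mathfrak{h}_i$-module, and the image of $\mathfrak{h}_i$ in $\mathfrak{gl}(V)$ lies in $\mathfrak{sl}(V_i)$ acting on the $i$-th tensor slot.

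First I would show that maximality forces $r = 2$. If $r \geq 3$, I would group the factors as $\mathfrak{h} \subset (\mathfrak{h}_1 \oplus \mathfrak{h}_2) \oplus (\mathfrak{h}_3 \oplus \cdots \oplus \mathfrak{h}_r)$ and consider $\mathfrak{sl}(V_1 \otimes V_2) \oplus \mathfrak{sl}(V_3 \otimes \cdots \otimes V_r)$, intersected with $\mathfrak{g}$ in the orthogonal and symplectic cases using the bilinear form on each tensor factor discussed below. This strictly contains $\mathfrak{h}$ as a non-simple semisimple subalgebra of $\mathfrak{g}$, contradicting maximality.

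Next, for $r = 2$ I would split into three cases by the type of $\mathfrak{g}$. For $\mathfrak{g} = \mathfrak{sl}_n$, the stabilizer in $\mathfrak{sl}(V)$ of the tensor decomposition $V = V_1 \otimes V_2$ is precisely $\mathfrak{sl}(V_1) \oplus \mathfrak{sl}(V_2)$ modulo scalars, so maximality forces $\mathfrak{h}_i = \mathfrak{sl}(V_i) = \mathfrak{sl}_{p_i}$ with $n = p_1 p_2$ and $p_1, p_2 \geq 2$. For $\mathfrak{g} = \mathfrak{so}_n$ or $\mathfrak{sp}_n$, since $V$ is irreducible and self-dual the $\mathfrak{h}$-invariant non-degenerate bilinear form is unique up to scalar and therefore factors as $B = B_1 \otimes B_2$, each $B_i$ being symmetric or skew by Schur. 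The sign rule (symmetric $\otimes$ symmetric and skew $\otimes$ skew are symmetric; symmetric $\otimes$ skew is skew), together with maximality, determines $\mathfrak{h}_i$ to be the full $\mathfrak{so}(V_i)$ or $\mathfrak{sp}(V_i)$. This produces the three families $\mathfrak{so}_p \oplus \mathfrak{so}_q$ and $\mathfrak{sp}_p \oplus \mathfrak{sp}_q$ inside $\mathfrak{so}_n$ (with $n = pq$ or $n = 4pq$) and $\mathfrak{sp}_p \oplus \mathfrak{so}_q$ inside $\mathfrak{sp}_n$ (with $n = pq$), together with the stated dimension thresholds that guarantee non-simplicity and nontrivial tensor factors.

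The main obstacle will be the maximality direction in each case: verifying that no proper intermediate semisimple subalgebra $\mathfrak{h} \subsetneq \mathfrak{h}' \subsetneq \mathfrak{g}$ exists. This can be handled by Dynkin's index formalism for embeddings of semisimple Lie algebras: any intermediate $\mathfrak{h}'$ acts on $V$, and either (i) $\mathfrak{h}'$ acts irreducibly and is again non-simple, so the previous tensor analysis applies and forces $\mathfrak{h}' = \mathfrak{h}$; (ii) $\mathfrak{h}'$ acts reducibly, contradicting irreducibility of the sub-action of $\mathfrak{h}$; or (iii) $\mathfrak{h}'$ is simple and acts irreducibly, in which case weight-theoretic and index constraints must be ruled out by inspection. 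A secondary subtlety is the exclusion of low-rank exceptional isomorphisms (for instance $\mathfrak{sp}_1 \simeq \mathfrak{so}_3 \simeq \mathfrak{sl}_2$, and triality for $\mathfrak{so}_8$), which is precisely what forces the dimension thresholds $p, q \geq 2$ in part (1), $p, q \geq 3$ in part (2), and $p \geq 2$, $q \geq 3$ in part (3).
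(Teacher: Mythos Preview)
The paper does not prove this theorem at all: it is stated as Theorem~\ref{Dtens} with a citation to Dynkin's original work \cite[Theorem 1.3, 1.4]{DyM} and is then used as a black box in the proof of Theorem~\ref{main}. There is therefore no proof in the paper to compare your proposal against.

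That said, your sketch follows the classical line of argument. The one place where real work is hidden is your case~(iii) in the maximality direction: ruling out a \emph{simple} intermediate subalgebra $\mathfrak{h} \subsetneq \mathfrak{h}' \subsetneq \mathfrak{g}$ acting irreducibly on $V$ is not a matter of ``inspection'' but is precisely the content of Dynkin's analysis of irreducible simple subalgebras of classical Lie algebras. For instance, one must exclude embeddings such as $\mathfrak{sl}_p \oplus \mathfrak{sl}_q \subset \mathfrak{sl}_{pq}$ sitting inside a simple $\mathfrak{h}'$ that still acts irreducibly on $\mathbb{C}^{pq}$; Dynkin handles this via his index computations and the classification of irreducible representations of simple Lie algebras of small dimension relative to $\dim V$. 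Your outline is correct in spirit, but a complete proof would need to either reproduce or invoke that part of \cite{DyM}.
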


\begin{proof}[Proof of Theorem \ref{main}]
(1) 
Let $\mathfrak{g} = \mathfrak{sl}_n$ and $\mathfrak{h}$ a complex semisimple Lie subalgebra such that $\rho_\mathfrak{h} \leq \rho_{\mathfrak{g}/\mathfrak{h}}$. 
If $\mathfrak{h}$ acts irreducibly on $\mathbb{C}^n$, by Theorem \ref{Dtens}, Propositions \ref{tens} and \ref{si} we can see that every subalgebra $\mathfrak{h}$ satisfies $\rho_\mathfrak{h} < \rho_\mathfrak{q}$.

We now assume $\mathfrak{h}$ acts reducibly on $\mathbb{C}^n$.
One has an irreducible decomposition $\mathfrak{h} \subset \mathfrak{sl}_{n_1} \oplus \dots \oplus \mathfrak{sl}_{n_r}$ with $n = \sum_{i=1}^{r}n_i$, $n_1 \geq \dots \geq n_r \geq 1$.
If $2n_1 \leq n$ and $n_1 + n_2 \leq n-1$, then $\rho_\mathfrak{h} < \rho_\mathfrak{q}$ by Proposition \ref{red3}.
For the remaining cases, it is reduced to Proposition \ref{final}.

In conclusion, all semisimple Lie subalgebras $\mathfrak{h}$ of $\mathfrak{g} = \mathfrak{sl}_n$ that satisfy $\rho_{\mathfrak{h}} \leq \rho_{\mathfrak{q}}$ and $\rho_\mathfrak{h} \nless \rho_\mathfrak{q}$ are those that appear in Proposition \ref{final}.\\

\noindent(2) 
Let $\mathfrak{g} = \mathfrak{so}_n$ and $\mathfrak{h}$ a complex semisimple Lie subalgebra. 
If $\mathfrak{h}$ acts irreducibly on $\mathbb{C}^n$, there is no $\mathfrak{h}$ satisfying $\rho_\mathfrak{h} \leq \rho_\mathfrak{q}$ and $\rho_\mathfrak{h} \nless \rho_\mathfrak{q}$. 
Indeed, if $\mathfrak{h}$ is simple this follows from Proposition \ref{si}.
If $\mathfrak{h}$ is nonsimple, that follows from by Theorem \ref{Dtens} and Proposition \ref{tens}.

We assume $\mathfrak{h}$ acts reducibly on $\mathbb{C}^n$, $\mathfrak{h} \subset \displaystyle\bigoplus_{i=1}^{r}\mathfrak{so}_{n_i} \oplus \bigoplus_{j=1}^s\mathfrak{sl}_{m_j}$ with $n = \displaystyle\sum_{i=1}^{r}n_i + 2\sum_{j=1}^{s}m_j$, $n_1 \geq \dots \geq n_r \geq 1$, $m_1 \geq \dots \geq m_s \geq 1$, $r, s \geq 0$.
In this case, at least one of the following conditions hold:
\begin{enumerate}
  \item[(i)] $1 < \displaystyle\sum_{i=1}^{r}n_i \leq 2\displaystyle\sum_{j=1}^{s}m_j + 1$,
  \item[(ii)] $2\max\{n_1, 2m_1\} \leq n+1$,
  \item[(iii)] $n_1 \geq \displaystyle\sum_{i=2}^{r}n_i + 2\sum_{j=1}^{s}m_j$,
  \item[(iv)] $r=1$, $n_1=1$, $n = 2\displaystyle\sum_{j=1}^{s}m_j + 1$, 
  \item[(v)] $r=0$, $n = 2\displaystyle\sum_{j=1}^{s}m_j$.
\end{enumerate}

In case (i), we have $\mathfrak{h} \subset \displaystyle\bigoplus_{i=1}^{r}\mathfrak{so}_{n_i} \oplus \bigoplus_{j=1}^s\mathfrak{sl}_{m_j} \subset \widetilde{\mathfrak{h}} := \mathfrak{so}_{\sum_{i=1}^{r}n_i} \oplus \mathfrak{sl}_{\sum_{j=1}^{s}m_j}$.
Proposition \ref{redu} implies $\rho_{\widetilde{\mathfrak{h}}} < \rho_{\widetilde{\mathfrak{q}}}$, so $\rho_\mathfrak{h} < \rho_\mathfrak{q}$.

In case (ii), we have $\mathfrak{h} \subset \displaystyle\bigoplus_{i=1}^{r}\mathfrak{so}_{n_i} \oplus \bigoplus_{j=1}^s\mathfrak{sl}_{m_j} \subset \widetilde{\mathfrak{h}} := \displaystyle\bigoplus_{i=1}^{r}\mathfrak{so}_{n_i} \oplus \bigoplus_{j=1}^{s}\mathfrak{so}_{2m_j}$.
Proposition \ref{red3} implies $\rho_{\widetilde{\mathfrak{h}}} < \rho_{\widetilde{\mathfrak{q}}}$, so $\rho_\mathfrak{h} < \rho_\mathfrak{q}$.

In case (iii), we have $\mathfrak{h} \subset \displaystyle\bigoplus_{i=1}^{r}\mathfrak{so}_{n_i} \oplus \bigoplus_{j=1}^s\mathfrak{sl}_{m_j} \subset \mathfrak{so}_{n_1} \oplus \mathfrak{so}_{n-n_1}$.
Since $n_1 \geq n-n_1 \geq 1$ holds, $\mathfrak{h}$ satisfies the hypothesis of Proposition  \ref{final}.
Every $\mathfrak{h}$ that satisfies $\rho_\mathfrak{h} \leq \rho_\mathfrak{q}$ and $\rho_\mathfrak{h} \nless \rho_\mathfrak{q}$ appears in Proposition \ref{final}.

In case (iv), we have $\mathfrak{h} \subset \mathfrak{sl}_{m_1} \oplus \dots \oplus \mathfrak{sl}_{m_s} \subset \mathfrak{sl}_p \subset \mathfrak{so}_{2p + 1}$ where $p := \sum_{j=1}^{s}m_j$.
If $\mathfrak{h} \supset \mathfrak{sl}_p$, one has $\rho_\mathfrak{h} \nless \rho_\mathfrak{q}$ from Proposition \ref{redu}.
If $\mathfrak{h} \subset \mathfrak{sl}_{p_1} \oplus \mathfrak{sl}_{p_2}$ with $p_1 + p_2 =p$, $p_1 \geq p_2 \geq 1$, we set $\widetilde{\mathfrak{h}} := \mathfrak{sl}_{p_1} \oplus \mathfrak{so}_{2p_2+1}$.
It follows $\rho_{\widetilde{\mathfrak{h}}} < \rho_{\widetilde{\mathfrak{q}}}$ from Proposition \ref{redu}, thus $\rho_\mathfrak{h} < \rho_\mathfrak{q}$.

In case (v), we have $\mathfrak{h} \subset \mathfrak{sl}_{m_1} \oplus \dots \oplus \mathfrak{sl}_{m_s} \subset \mathfrak{sl}_p \subset \mathfrak{so}_{2p}$ where $p := \sum_{j=1}^{s}m_j$.
We do not have to consider the case $\mathfrak{h} \supset \mathfrak{sl}_p$ because $\rho_\mathfrak{h} \nleq \rho_\mathfrak{q}$, in that case we can assume that $\mathfrak{h} \subset \mathfrak{sl}_{p_1} \oplus \mathfrak{sl}_{p_2}$. In the same way as in the case (iv), we have $\rho_\mathfrak{h} < \rho_\mathfrak{q}$.

In conclusion, all semisimple Lie subalgebras $\mathfrak{h}$ of $\mathfrak{g} = \mathfrak{so}_n$ that satisfy $\rho_\mathfrak{h} \leq \rho_\mathfrak{q}$ and $\rho_\mathfrak{h} \nless \rho_\mathfrak{q}$ are $\mathfrak{g} = \mathfrak{so}_{2p + 1} \supset \mathfrak{h} = \mathfrak{sl}_p$ and those that appear in Proposition \ref{final}.\\

\noindent(3) 
Let $\mathfrak{g} = \mathfrak{sp}_n$ and $\mathfrak{h}$ a complex semisimple Lie subalgebra.
If $\mathfrak{h}$ acts irreducibly on $\mathbb{C}^{2n}$, by Theorem \ref{Dtens}, Propositions \ref{tens} and \ref{si} we can see that complex semisimple Lie subalgebra $\mathfrak{h}$ satisfying $\rho_{\mathfrak{h}} \nleq \rho_{\mathfrak{q}}$ does not exist.

We assume that $\mathfrak{h}$ acts reducibly on $\mathbb{C}^{2n}$.
One has an irreducible decomposition $\mathfrak{h} \subset \displaystyle\bigoplus_{i=1}^{r}\mathfrak{sp}_{n_i} \oplus \bigoplus_{j=1}^s\mathfrak{sl}_{m_j}$ with $n = \displaystyle\sum_{i=1}^{r}n_i + \sum_{j=1}^{s}m_j$, $n_1 \geq \dots \geq n_r \geq 1$, $m_1 \geq \dots \geq m_s \geq 1$, $r, s \geq 0$.
We consider the following four cases:
\begin{enumerate}
  \item[(i)] $\displaystyle\sum_{i=1}^{r}n_i \leq \displaystyle\sum_{j=1}^{s}m_j - 1$,
  \item[(ii)] $2\max\{n_1, m_1\} \leq n-1$,
  \item[(iii)] $n_1 \geq \displaystyle\sum_{i=2}^{r}n_i + \sum_{j=1}^{s}m_j$,
  \item[(iv)] none of (i), (ii), or (iii) holds.
\end{enumerate}

In case (i), we have $\mathfrak{h} \subset \displaystyle\bigoplus_{i=1}^{r}\mathfrak{sp}_{n_i} \oplus \bigoplus_{j=1}^s\mathfrak{sl}_{m_j} \subset \widetilde{\mathfrak{h}} := \mathfrak{sp}_{\sum_{i=1}^{r}n_i} \oplus \mathfrak{sl}_{\sum_{j=1}^{s}m_j}$.
Proposition \ref{redu} implies $\rho_{\widetilde{\mathfrak{h}}} < \rho_{\widetilde{\mathfrak{q}}}$, so $\rho_{\mathfrak{h}} < \rho_{\mathfrak{q}}$.

In case (ii), we have $\mathfrak{h} \subset \displaystyle\bigoplus_{i=1}^{r}\mathfrak{sp}_{n_i} \oplus \bigoplus_{j=1}^s\mathfrak{sl}_{m_j} \subset \widetilde{\mathfrak{h}} := \displaystyle\bigoplus_{i=1}^{r}\mathfrak{sp}_{n_i} \oplus \bigoplus_{j=1}^{s}\mathfrak{sp}_{m_j}$.
First we consider the case where $n=3$, that is $\mathfrak{h} \subset \mathfrak{sp}_1 \oplus \mathfrak{sp}_1 \oplus \mathfrak{sp}_1$.
When $\mathfrak{h} = \mathfrak{sp}_1 \oplus \mathfrak{sp}_1 \oplus \mathfrak{sp}_1$, $\rho_\mathfrak{h} \leq \rho_\mathfrak{q}$ and $\rho_\mathfrak{h} \nless \rho_\mathfrak{q}$ follow from Proposition \ref{red3}.
On the other hand, when $\mathfrak{h} \subsetneq \mathfrak{sp}_1 \oplus \mathfrak{sp}_1 \oplus \mathfrak{sp}_1$, $\rho_{\mathfrak{h}} < \rho_{\mathfrak{q}}$ holds by direct computation.
Next, when $n \neq 3$, $\rho_{\widetilde{\mathfrak{h}}} < \rho_{\widetilde{\mathfrak{q}}}$ follows from Proposition \ref{redu}. Thus we have $\rho_\mathfrak{h} < \rho_\mathfrak{q}$.

In case (iii), we have $\mathfrak{h} \subset \displaystyle\bigoplus_{i=1}^{r}\mathfrak{sp}_{n_i} \oplus \bigoplus_{j=1}^s\mathfrak{sl}_{m_j} \subset \mathfrak{sp}_{n_1} \oplus \mathfrak{sp}_{n-n_1}$.
Since $n_1 \geq n-n_1 \geq 1$ holds, $\mathfrak{h}$ satisfies the hypothesis of Proposition  \ref{final}.
Every $\mathfrak{h}$ that satisfies $\rho_\mathfrak{h} \leq \rho_\mathfrak{q}$ and $\rho_\mathfrak{h} \nless \rho_\mathfrak{q}$ appears in Proposition \ref{final}.

In case (iv), we have $n = \displaystyle\sum_{i=1}^{r}n_i + m_1$ is even, $\displaystyle\sum_{i=1}^{r}n_i = m_1 = \frac{n}{2}$ and $r \geq 2$.
For subalgebra $\mathfrak{h}$ with $\mathfrak{h} \subset \displaystyle\bigoplus_{i=1}^{r}\mathfrak{sp}_{n_i} \oplus \mathfrak{sl}_{n/2} = \mathfrak{sp}_{n_1} \oplus \mathfrak{sp}_{n_2} \oplus \displaystyle\bigoplus_{i=3}^{r}\mathfrak{sp}_{n_i} \oplus \mathfrak{sl}_{n/2}$, we want to show that $\rho_{\mathfrak{h}} < \rho_{\mathfrak{q}}$ holds.
Since the case $r \geq 3$ can be reduced to the case $r=2$, we can assume $r=2$ and $\mathfrak{h} \subset \mathfrak{sp}_{n_1} \oplus \mathfrak{sp}_{n_2} \oplus \mathfrak{sl}_{n/2}$.
We set $\widetilde{\mathfrak{h}}_1 := \mathfrak{sp}_{n_1} \oplus \mathfrak{sp}_{n_2} \oplus \mathfrak{sl}_{n/2}$ and $\widetilde{\mathfrak{h}}_2 := \mathfrak{sp}_{n/2} \oplus \mathfrak{sl}_{n/2}$ and take abelian subspaces $\mathfrak{a}_1 \subset \widetilde{\mathfrak{h}}_1$, $\mathfrak{a}_2 \subset \widetilde{\mathfrak{h}}_2$ such that $\mathfrak{a}_1 \subset \mathfrak{a}_2$.
By definitions of $\widetilde{\mathfrak{h}}_1$, $\widetilde{\mathfrak{h}}_2$ and Proposition \ref{redu}, we have following inequalities
\[\rho_{\widetilde{\mathfrak{h}}_1} \leq \rho_{\widetilde{\mathfrak{h}}_2}
 \text{ on ${\mathfrak{a}_1}$}, \quad
 2\rho_{\widetilde{\mathfrak{h}}_2} \leq \rho_\mathfrak{g} \text{ on $\mathfrak{a}_2$}.\]
For the second inequality, Proposition \ref{redu} implies that every witness vector in $\mathfrak{a}_2$ belongs to $\mathfrak{a}_2 \cap \mathfrak{sp}_{n/2}$.
Then, for a nonzero element $Y \in \mathfrak{a}_1$, $2\rho_{\widetilde{\mathfrak{h}}_2}(Y) < \rho_\mathfrak{g}(Y)$ holds if $Y \notin \mathfrak{sp}_{n/2}$.
On the other hand, we can see that $\rho_{\widetilde{\mathfrak{h}}_1} < \rho_{\widetilde{\mathfrak{h}}_2}$ on $(\mathfrak{a}_1 \cap \mathfrak{sp}_{n/2}) \setminus \{0\}$.
It follows that $\rho_\mathfrak{h} < \rho_\mathfrak{q}$ from these observations.

In conclusion, all semisimple Lie subalgebras $\mathfrak{h}$ of $\mathfrak{g} = \mathfrak{sp}_n$ that satisfy $\rho_\mathfrak{h} \leq \rho_\mathfrak{q}$ and $\rho_\mathfrak{h} \nless \rho_\mathfrak{q}$ are $\mathfrak{h} = \mathfrak{sp}_1 \oplus \mathfrak{sp}_1 \oplus \mathfrak{sp}_1 \subset \mathfrak{g} = \mathfrak{sp}_3$ and those that appear in Proposition \ref{final}.
\end{proof}

\section{Proofs of propositions}\label{pfofprops}

In this section, we will prove eight propositions in the previous section by computing the function $\rho_\mathfrak{g}$, $\rho_\mathfrak{h}$ or $\rho_\mathfrak{q}$ on $\mathfrak{h}$ where $\mathfrak{q} := \mathfrak{g}/\mathfrak{h}$.

\subsection{Setting}\label{sec:setting}
As mentioned in Section \ref{sec:SqintReg}, these functions are determined by its values on the positive Weyl chamber with respect to some positive system.
First, we list below the maximal split abelian subspace $\mathfrak{a}$ of complex simple Lie algebras $\mathfrak{h}$ of classical or exceptional type, the restricted root systems $\Phi(\mathfrak{h}, \mathfrak{a})$ with respect to $\mathfrak{a}$ in $\mathfrak{h}$, the set of simple roots $\Delta(\mathfrak{h}, \mathfrak{a})$, the positive Weyl chamber $\mathfrak{a}_+$ and the fundamental dominant weights $\varpi_k \in \mathfrak{a}^\ast$ as in \cite{Hum}.
We realize $\mathfrak{a}$ as a Euclidian space $\mathbb{R}^N$ or its subspace.
We denote by $\{\varepsilon _i\} \subset (\mathbb{R}^N)^\ast$ the dual basis corresponding to the standard orthogonal basis of $\mathbb{R}^N$.
Throughout this paper, we use the notation defined below.

\begin{itemize}
  \item[$A_{n-1}:$] $\mathfrak{h} = \mathfrak{sl}_n$.
    \begin{fleqn}[30pt]
      \begin{align*}
        &\mathfrak{a} = \left\{(a_1, \ldots, a_n) \in \mathbb{R}^n \relmiddle| \textstyle\sum_{i=1}^{n} a_i = 0\right\}, \\
        &\Phi(\mathfrak{h}, \mathfrak{a}) = \left\{\varepsilon_i - \varepsilon_j \ (1 \leq i \neq j \leq n)\right\}, \\
        &\Delta(\mathfrak{h}, \mathfrak{a}) = \left\{\alpha_i := \varepsilon_i - \varepsilon_{i+1} \ (1 \leq i \leq n-1)\right\}, \\
        &\mathfrak{a}_+ = \left\{(a_1, \ldots, a_n) \in \mathfrak{a} \relmiddle| a_1 \geq \dots \geq a_n\right\}, \\
        &\rho_\mathfrak{h}(Y) = \sum_{\alpha \in \Phi^+(\mathfrak{h}, \mathfrak{a})}\alpha(Y) = \sum_{i=1}^{n} (n -2i +1)a_i \quad \text{for}\ Y = (a_1, \ldots, a_n) \in \mathfrak{a}_+, \\
        &\varpi_k := \sum_{i=1}^{k} \varepsilon_i\quad (1 \leq k \leq n-1).
      \end{align*}
    \end{fleqn}
  \item[$B_n:$] $\mathfrak{h} = \mathfrak{so}_{2n+1}$.
    \begin{fleqn}[30pt]
      \begin{align*}
        &\mathfrak{a} = \mathbb{R}^n, \\
        &\Phi(\mathfrak{h}, \mathfrak{a}) = \left\{\pm(\varepsilon_i - \varepsilon_j), \pm(\varepsilon_i + \varepsilon_j) \ (1 \leq i < j \leq n)\right\} \cup \left\{\pm \varepsilon_i\ (1 \leq i \leq n)\right\}, \\
        &\Delta(\mathfrak{h}, \mathfrak{a}) = \left\{\alpha_i := \varepsilon_i - \varepsilon_{i+1} \ (1 \leq i \leq n-1), \alpha_n := \varepsilon_n\right\}, \\
        &\mathfrak{a}_+ = \left\{(a_1, \ldots, a_n) \in \mathfrak{a} \relmiddle| a_1 \geq \dots \geq a_n \geq 0\right\}, \\
        &\rho_\mathfrak{h}(Y) = \sum_{i=1}^{n} (2n+1 -2i) a_i \quad \text{for}\ Y = (a_1, \ldots, a_n) \in \mathfrak{a}_+, \\
        &\varpi_k := \sum_{i=1}^{k} \varepsilon_i\quad (1 \leq k \leq n-1), \quad \varpi_{n} := \frac{1}{2}\sum_{i=1}^{n} \varepsilon_i.
      \end{align*}
    \end{fleqn}
  \item[$C_n:$] $\mathfrak{h} = \mathfrak{sp}_{n}$.
    \begin{fleqn}[30pt]
      \begin{align*}
        &\mathfrak{a} = \mathbb{R}^n, \\
        &\Phi(\mathfrak{h}, \mathfrak{a}) = \left\{\pm(\varepsilon_i - \varepsilon_j), \pm(\varepsilon_i + \varepsilon_j) \ (1 \leq i < j \leq n)\right\} \cup \left\{\pm 2\varepsilon_i\ (1 \leq i \leq n)\right\}, \\
        &\Delta(\mathfrak{h}, \mathfrak{a}) = \left\{\alpha_i := \varepsilon_i - \varepsilon_{i+1} \ (1 \leq i \leq n-1), \alpha_n := 2\varepsilon_n\right\}, \\
        &\mathfrak{a}_+ = \left\{(a_1, \ldots, a_n) \in \mathfrak{a} \relmiddle| a_1 \geq \dots \geq a_n \geq 0\right\}, \\
        &\rho_\mathfrak{h}(Y) = \sum_{i=1}^{n} 2(n+1 -i) a_i \quad \text{for}\ Y = (a_1, \ldots, a_n) \in \mathfrak{a}_+, \\
        &\varpi_k := \sum_{i=1}^{k} \varepsilon_i\quad (1 \leq k \leq n).
      \end{align*}
    \end{fleqn}
  \item[$D_n:$] $\mathfrak{h} = \mathfrak{so}_{2n}$.
    \begin{fleqn}[30pt]
      \begin{align*}
        &\mathfrak{a} = \mathbb{R}^n, \\
        &\Phi(\mathfrak{h}, \mathfrak{a}) = \left\{\pm(\varepsilon_i - \varepsilon_j), \pm(\varepsilon_i + \varepsilon_j) \ (1 \leq i < j \leq n)\right\}, \\
        &\Delta(\mathfrak{h}, \mathfrak{a}) = \left\{\alpha_i := \varepsilon_i - \varepsilon_{i+1} \ (1 \leq i \leq n-1), \alpha_n := \varepsilon_{n-1} + \varepsilon_n\right\}, \\
        &\mathfrak{a}_+ = \left\{(a_1, \ldots, a_n) \in \mathfrak{a} \relmiddle| a_1 \geq \dots \geq a_{n-1} \geq |a_n| \right\}, \\
        &\rho_\mathfrak{h}(Y) = \sum_{i=1}^{n} 2(n -i) a_i \quad \text{for}\ Y = (a_1, \ldots, a_n) \in \mathfrak{a}_+, \\
        &\varpi_k := \sum_{i=1}^{k} \varepsilon_i\quad (1 \leq k \leq n-2), \\
        &\varpi_{n-1} := \frac{1}{2}\sum_{i=1}^{n} \varepsilon_i, \quad \varpi_{n} := \frac{1}{2}\sum_{i=1}^{n} \varepsilon_i - \varepsilon_n.
      \end{align*}
    \end{fleqn}
  \item[$G_2:$] $\mathfrak{h} = \mathfrak{g}_2$.
    \begin{fleqn}[30pt]
      \begin{align*}
        &\mathfrak{a} = \left\{(a_1, a_2, a_3) \in \mathbb{R}^3 \mid \textstyle\sum_{i=1}^{3}a_i = 0\right\}, \\
        &\Phi(\mathfrak{h}, \mathfrak{a}) = \left\{\pm(\varepsilon_i - \varepsilon_j) \ (1 \leq i < j \leq 3)\right\} \\
        &\hspace{45pt}\cup \left\{\pm(2\varepsilon_i - \varepsilon_j - \varepsilon_k) \mid \{i, j, k\} = \{1, 2, 3\}\right\}, \\
        &\Delta(\mathfrak{h}, \mathfrak{a}) = \left\{\alpha_1 := \varepsilon_1 - \varepsilon_2, \alpha_2 := -2\varepsilon_1 + \varepsilon_2 + \varepsilon_3\right\}, \\
        &\mathfrak{a}_+ = \left\{(a_1, a_2, a_3) \in \mathfrak{a} \relmiddle| a_3 \geq 0 \geq a_1 \geq a_2 \right\}, \\
        &\rho_\mathfrak{h} = 10\alpha_1 + 6\alpha_2 \quad \text{on}\ \mathfrak{a}_+, \\
        &\varpi_1 := 2\alpha_1 + \alpha_2, \quad \varpi_2 := 3\alpha_1 + 2\alpha_2.
      \end{align*}
    \end{fleqn}
  \item[$F_4:$] $\mathfrak{h} = \mathfrak{f}_4$.
    \begin{fleqn}[30pt]
      \begin{align*}
        &\mathfrak{a} = \mathbb{R}^4, \\
        &\Phi(\mathfrak{h}, \mathfrak{a}) = \left\{\pm\varepsilon_i \pm \varepsilon_j \ (1 \leq i < j \leq 4)\right\} \cup \left\{\pm\varepsilon_i \mid 1 \leq i \leq 4\right\} \\
        &\hspace{45pt}\cup \left\{\textstyle\frac{1}{2}(\pm \varepsilon_1 \pm \varepsilon_2 \pm \varepsilon_3 \pm \varepsilon_4)\right\}, \\
        &\Delta(\mathfrak{h}, \mathfrak{a}) = \left\{
          \begin{gathered}
            \alpha_1 := \varepsilon_2 - \varepsilon_3, \alpha_2 := \varepsilon_3 - \varepsilon_4, \alpha_3 := \varepsilon_4, \\
            \alpha_4 := \textstyle\frac{1}{2}(\varepsilon_1 - \varepsilon_2 - \varepsilon_3 - \varepsilon_4)
          \end{gathered}\right\}, \\
        &\mathfrak{a}_+ = \left\{(a_1, a_2, a_3, a_4) \in \mathfrak{a} \relmiddle| a_2 \geq a_3 \geq a_4 \geq 0, a_1 \geq a_2 + a_3 + a_4 \right\}, \\
        &\rho_\mathfrak{h} = 16\alpha_1 + 30\alpha_2 + 42\alpha_3 + 22\alpha_4\quad \text{on}\ \mathfrak{a}_+, \\
        &\varpi_1 := 2\alpha_1 + 3\alpha_2 + 4\alpha_3 + 2\alpha_4, \quad \varpi_2 := 3\alpha_1 + 6\alpha_2 + 8\alpha_3 + 4\alpha_4, \\
        &\varpi_3 := 2\alpha_1 + 4\alpha_2 + 6\alpha_3 + 3\alpha_4, \quad \varpi_4 := \alpha_1 + 2\alpha_2 + 3\alpha_3 + 2\alpha_4.
      \end{align*}
    \end{fleqn}
  \item[$E_6:$] $\mathfrak{h} = \mathfrak{e}_6$.
    \begin{fleqn}[30pt]
      \begin{align*}
        &\mathfrak{a} = \left\{Y \in \mathbb{R}^8 \mid (\varepsilon_7 - \varepsilon_6)(Y) = (\varepsilon_6 - \varepsilon_5)(Y) = 0\right\}, \\
        &\Phi(\mathfrak{h}, \mathfrak{a}) = \left\{\pm\varepsilon_i \pm \varepsilon_j \ (1 \leq i < j \leq 5)\right\}  \\
        &\hspace{45pt}\cup \{\pm\textstyle\frac{1}{2}(\varepsilon_8 - \varepsilon_7 - \varepsilon_6 + \sum_{i=1}^{5}(-1)^{\nu_i}\varepsilon_i) \mid \sum_{i=1}^{5} \nu_i = 0 \mod 2\}, \\ 
        &\Delta(\mathfrak{h}, \mathfrak{a}) = \left\{
          \begin{gathered}
            \alpha_1 := \textstyle\frac{1}{2}(\varepsilon_1 + \varepsilon_8 - \sum_{i=2}^{7}\varepsilon_i), \alpha_2 := \varepsilon_1 + \varepsilon_2,\\
            \alpha_i := \varepsilon_{i-1} - \varepsilon_{i-2}\ (3 \leq i \leq 6)
          \end{gathered}
        \right\}, \\
        &\mathfrak{a}_+ = \left\{(a_1, a_2, \dots, a_8) \in \mathfrak{a} \relmiddle| 
          \begin{gathered}
            a_1 + a_8 \geq \textstyle\sum_{i=2}^{7}a_i \geq 0, \\
            a_5 \geq a_4 \geq a_3 \geq a_2 \geq |a_1|
          \end{gathered}
         \right\}, \\
        &\varpi_1 := \frac{1}{3}(4\alpha_1 + 3\alpha_2 + 5\alpha_3 + 6\alpha_4 + 4\alpha_5 + 2\alpha_6), \\
        &\varpi_2 := \alpha_1 + 2\alpha_2 + 2\alpha_3 + 3\alpha_4 + 2\alpha_5 + \alpha_6, \\
        &\varpi_3 := \frac{1}{3}(5\alpha_1 + 6\alpha_2 + 10\alpha_3 + 12\alpha_4 + 8\alpha_5 + 4\alpha_6), \\
        &\varpi_4 := 2\alpha_1 + 3\alpha_2 + 4\alpha_3 + 6\alpha_4 + 4\alpha_5 + 2\alpha_6, \\
        &\varpi_5 := \frac{1}{3}(4\alpha_1 + 6\alpha_2 + 8\alpha_3 + 12\alpha_4 + 10\alpha_5 + 5\alpha_6), \\
        &\varpi_6 := \frac{1}{3}(2\alpha_1 + 3\alpha_2 + 4\alpha_3 + 6\alpha_4 + 5\alpha_5 + 4\alpha_6).
      \end{align*}
    \end{fleqn}
  \item[$E_7:$] $\mathfrak{h} = \mathfrak{e}_7$.
    \begin{fleqn}[30pt]
      \begin{align*}
        &\mathfrak{a} = \left\{Y \in \mathbb{R}^8 \mid (\varepsilon_7 - \varepsilon_6)(Y) = 0\right\}, \\
        &\Phi(\mathfrak{h}, \mathfrak{a}) = \left\{\pm\varepsilon_i \pm \varepsilon_j \ (1 \leq i < j \leq 6)\right\} \cup \{\pm(\varepsilon_7 - \varepsilon_8)\} \\
        &\hspace{45pt}\cup \{\pm\textstyle\frac{1}{2}(\varepsilon_7 - \varepsilon_8 + \sum_{i=1}^{6}(-1)^{\nu_i}\varepsilon_i) \mid \sum_{i=1}^{6} \nu_i = 1 \mod 2\}, \\ 
        &\Delta(\mathfrak{h}, \mathfrak{a}) = \left\{
          \begin{gathered}
            \alpha_1 := \textstyle\frac{1}{2}(\varepsilon_1 + \varepsilon_8 - \sum_{i=2}^{7}\varepsilon_i), \alpha_2 := \varepsilon_1 + \varepsilon_2,\\
            \alpha_i := \varepsilon_{i-1} - \varepsilon_{i-2}\ (3 \leq i \leq 7)
          \end{gathered}
        \right\}, \\
        &\mathfrak{a}_+ = \left\{(a_1, a_2, \dots, a_8) \in \mathfrak{a} \relmiddle| 
          \begin{gathered}
            a_1 + a_8 \geq \textstyle\sum_{i=2}^{7}a_i \geq 0, \\
            a_6 \geq a_5 \geq a_4 \geq a_3 \geq a_2 \geq |a_1|
          \end{gathered}
         \right\}, \\
        &\varpi_1 := 2\alpha_1 + 2\alpha_2 + 3\alpha_3 + 4\alpha_4 + 3\alpha_5 + 2\alpha_6 + \alpha_7, \\
        &\varpi_2 := \frac{1}{2}(4\alpha_1 + 7\alpha_2 + 8\alpha_3 + 12\alpha_4 + 9\alpha_5 + 6\alpha_6 + 3\alpha_7), \\
        &\varpi_3 := 3\alpha_1 + 4\alpha_2 + 6\alpha_3 + 8\alpha_4 + 6\alpha_5 + 4\alpha_6 + 2\alpha_7, \\
        &\varpi_4 := 4\alpha_1 + 6\alpha_2 + 8\alpha_3 + 12\alpha_4 + 9\alpha_5 + 6\alpha_6 + 3\alpha_7, \\
        &\varpi_5 := \frac{1}{2}(6\alpha_1 + 9\alpha_2 + 12\alpha_3 + 18\alpha_4 + 15\alpha_5 + 10\alpha_6 + 5\alpha_7), \\
        &\varpi_6 := 2\alpha_1 + 3\alpha_2 + 4\alpha_3 + 6\alpha_4 + 5\alpha_5 + 4\alpha_6 + 2\alpha_7, \\
        &\varpi_7 := \frac{1}{2}(2\alpha_1 + 3\alpha_2 + 4\alpha_3 + 6\alpha_4 + 5\alpha_5 + 4\alpha_6 + 3\alpha_7).
      \end{align*}
    \end{fleqn}
  \item[$E_8:$] $\mathfrak{h} = \mathfrak{e}_8$.
    \begin{fleqn}[30pt]
      \begin{align*}
        &\mathfrak{a} = \mathbb{R}^8, \\
        &\Phi(\mathfrak{h}, \mathfrak{a}) = \left\{\pm\varepsilon_i \pm \varepsilon_j \ (1 \leq i < j \leq 8)\right\} \\
        &\hspace{45pt}\cup \left\{\textstyle\frac{1}{2}\sum_{i=1}^{8} (-1)^{\nu_i}\varepsilon_i \mid \sum_{i=1}^{8} \nu_i = 0 \mod 2\right\}, \\ 
        &\Delta(\mathfrak{h}, \mathfrak{a}) = \left\{
          \begin{gathered}
            \alpha_1 := \textstyle\frac{1}{2}(\varepsilon_1 + \varepsilon_8 - \sum_{i=2}^{7}\varepsilon_i), \alpha_2 := \varepsilon_1 + \varepsilon_2,\\
            \alpha_i := \varepsilon_{i-1} - \varepsilon_{i-2}\ (3 \leq i \leq 8)
          \end{gathered}
        \right\}, \\
        &\mathfrak{a}_+ = \left\{(a_1, a_2, \dots, a_8) \in \mathfrak{a} \relmiddle| 
          \begin{gathered}
            a_1 + a_8 \geq \textstyle\sum_{i=2}^{7}a_i \geq 0, \\
            a_8 \geq a_7 \geq a_6 \geq a_5 \geq a_4 \geq a_3 \geq a_2 \geq |a_1|
          \end{gathered}
         \right\}, \\ 
        &\varpi_1 := 4\alpha_1 + 5\alpha_2 + 7\alpha_3 + 10\alpha_4 + 8\alpha_5 + 6\alpha_6 + 4\alpha_7 + 2\alpha_8, \\
        &\varpi_2 := 5\alpha_1 + 8\alpha_2 + 10\alpha_3 + 15\alpha_4 + 12\alpha_5 + 9\alpha_6 + 6\alpha_7 + 3\alpha_8, \\
        &\varpi_3 := 7\alpha_1 + 10\alpha_2 + 14\alpha_3 + 20\alpha_4 + 16\alpha_5 + 12\alpha_6 + 8\alpha_7 + 4\alpha_8, \\
        &\varpi_4 := 10\alpha_1 + 15\alpha_2 + 20\alpha_3 + 30\alpha_4 + 24\alpha_5 + 18\alpha_6 + 12\alpha_7 + 6\alpha_8, \\
        &\varpi_5 := 8\alpha_1 + 12\alpha_2 + 16\alpha_3 + 24\alpha_4 + 20\alpha_5 + 15\alpha_6 + 10\alpha_7 + 5\alpha_8, \\
        &\varpi_6 := 6\alpha_1 + 9\alpha_2 + 12\alpha_3 + 18\alpha_4 + 15\alpha_5 + 12\alpha_6 + 8\alpha_7 + 4\alpha_8, \\
        &\varpi_7 := 4\alpha_1 + 6\alpha_2 + 8\alpha_3 + 12\alpha_4 + 10\alpha_5 + 8\alpha_6 + 6\alpha_7 + 3\alpha_8, \\
        &\varpi_8 := 2\alpha_1 + 3\alpha_2 + 4\alpha_3 + 6\alpha_4 + 5\alpha_5 + 4\alpha_6 + 3\alpha_7 + 2\alpha_8.
      \end{align*}
    \end{fleqn}
\end{itemize}

\subsection{Proof of Proposition \ref{red2}}
\begin{proof}
  (1) 
  Let $(\mathfrak{g}, \mathfrak{h}) = (\mathfrak{sl}_{p+q}, \mathfrak{sl}_p \oplus \mathfrak{sl}_q)$.
  Let $Y = (a_1, \dots, a_p, b_1, \dots, b_q) \in \mathfrak{a}_+$, that is, $(a_1, \dots, a_p) \in \mathfrak{a}_+ \cap \mathfrak{sl}_p$, $(b_1, \dots, b_q) \in \mathfrak{a}_+ \cap \mathfrak{sl}_q$, $a_1 \geq \dots \geq a_p$ and $b_1 \geq \dots \geq b_q$.
  We define real numbers $c_i\ (1\leq i \leq p+q)$ to be the sequence obtained by arranging $a_j\ (1 \leq j \leq p)$ and $b_k\ (1 \leq k \leq q)$ in decreasing order.
  We also define $\lambda_i\ (1 \leq i \leq p+q)$ to be the sequence obtained by arranging $p-2j+1\ (1 \leq j \leq p)$ and $q-2k+1\ (1 \leq k \leq q)$ in decreasing order.  
  Then we have 
  \begin{align*}
    \rho_\mathfrak{h}(Y) &= \sum_{i=1}^{p}(p-2i+1)a_i + \sum_{i=1}^{q}(q-2i+1)b_i \leq \sum_{i=1}^{p+q}\lambda_i c_i, \\
    \rho_\mathfrak{g}(Y) &= \sum_{i=1}^{p+q}(p+q-2i+1)c_i = \sum_{i=1}^{p+q}\mu_i c_i
  \end{align*}
  where $\mu_i = p+q-2i+1\ (1 \leq i \leq p+q)$.

  Suppose that $q \leq p \leq q+1$.
  First, we show that when $p = q$, the witness vectors are exhausted by elements of the form $(a_1, \dots, a_p, a_1, \dots, a_p) \in \mathfrak{a}_+$. 
  
  When $p=q$, since $2\lambda_{2i-1} = \mu_{2i-1}-1 < \mu_{2i-1}$ and $2\lambda_{2i} = \mu_{2i} +1 > \mu_{2i}$ for any $i\ (1 \leq i \leq p)$, we have 
  \[\rho_\mathfrak{g}(Y) - 2\rho_\mathfrak{h}(Y) \geq \sum_{i=1}^{2p}(\mu_i-2\lambda_i) c_i = \sum_{i=1}^{2p}(-1)^{i-1} c_i.\]
  The condition $2\rho_\mathfrak{h}(Y) = \rho_\mathfrak{g}(Y)$ is equivalent to $\{a_i, b_i\} = \{c_{2i-1}, c_{2i}\}$ and $c_{2i-1} = c_{2i}$ for each $i = 1, \dots, p$.
  Thus $Y$ is a witness vector if and only if $a_i = b_i$ for each $i = 1, \dots, p$.

  Next, we show that when $p = q+1$, an element $Y = (a_1, \dots, a_p, b_1, \dots, b_q) \in \mathfrak{a}_+$ is a witness vector if and only if $a_1 \geq b_1 \geq a_2 \geq b_2 \geq \dots \geq a_{p-1} \geq b_{p-1} \geq a_p$ holds.
  When $p=q+1$, we have $2\lambda_i = 2(p-i) = \mu_i\ (1 \leq i \leq 2p-1)$.
  Then we can see that $2\rho_\mathfrak{h}(Y) = \rho_\mathfrak{g}(Y)$ if and only if $c_{2i-1} = a_i$, $c_{2i} = b_i$  for each $i = 1, \dots, q$, this implies $a_1 \geq b_1 \geq a_2 \geq b_2 \geq \dots \geq a_{p-1} \geq b_{p-1} \geq a_p$.\\

  \noindent(2) 
  Let $(\mathfrak{g}, \mathfrak{h}) = (\mathfrak{so}_{p+q}, \mathfrak{so}_p \oplus \mathfrak{so}_q)$.
  The proof for the equivalence $2\rho_\mathfrak{h} < \rho_\mathfrak{g} \Leftrightarrow p \leq q+1$ can be devided into the following two cases: 
  (i) $p=q+1$, (ii) $p=q$.
  Let $p^\prime := \left\lfloor \frac{p}{2} \right\rfloor$ and $q^\prime := \left\lfloor \frac{q}{2} \right\rfloor$.

  (i) Let $Y = (a_1, \dots, a_{p^\prime}, b_1, \dots, b_{q^\prime}) \in \mathfrak{a}_+$, that is, $(a_1, \dots, a_{p^\prime}) \in \mathfrak{a}_+ \cap \mathfrak{so}_p$, $a_1 \geq \dots \geq a_{p^\prime} \geq 0\ (p \text{ odd})$  or $a_1 \geq \dots \geq a_{p^\prime-1} \geq |a_{p^\prime}| \geq 0\ (p \text{ even})$, and $(b_1, \dots, b_{q^\prime}) \in \mathfrak{a}_+ \cap \mathfrak{so}_{p-1}$, $b_1 \geq \dots \geq b_{q^\prime} \geq 0\ (q \text{ odd})$ or $b_1 \geq \dots \geq b_{q^\prime-1} \geq |b_{q^\prime}| \geq 0\ (q \text{ even})$.
  We define $\{c_i\}$ as in (1) and $\lambda_{2i-1} := p-2i\ (1 \leq i \leq p^\prime), \lambda_{2i} := p-2i-1\ (1 \leq i \leq q^\prime)$.
  Then we have
  \begin{align*}
    \rho_\mathfrak{h}(Y) &= \sum_{i=1}^{p^\prime}(p-2i)a_i + \sum_{i=1}^{q^\prime}(p-2i-1)b_i \leq \sum_{i=1}^{p^\prime + q^\prime}\lambda_i c_i,\\
    \rho_\mathfrak{g}(Y) &= \sum_{i=1}^{p^\prime + q^\prime}(2p-2i-1)c_i = \sum_{i=1}^{p^\prime + q^\prime}\mu_ic_i
  \end{align*}
  where $\mu_i = 2p-2i-1\ (1 \leq i \leq p^\prime + q^\prime)$.
  Since $\mu_{2i-1} - 2\lambda_{2i-1} = \mu_{2i} - 2\lambda_{2i} = 1$, we have $2\rho_\mathfrak{h} < \rho_\mathfrak{g}$.

  (ii) Let $Y = (a_1, \dots, a_{p^\prime}, b_1, \dots, b_{p^\prime}) \in \mathfrak{a}_+$, that is, $(a_1, \dots, a_{p^\prime}) \in \mathfrak{a}_+ \cap \mathfrak{so}_p$, $(b_1, \dots, b_{p^\prime}) \in \mathfrak{a}_+ \cap \mathfrak{so}_p$, and $a_1 \geq \dots \geq a_{p^\prime} \geq 0$, $b_1 \geq \dots \geq b_{p^\prime} \geq 0\ (p \text{ odd})$ or $a_1 \geq \dots \geq a_{p^\prime-1} \geq |a_{p^\prime}| \geq 0\ (p \text{ even})$, $b_1 \geq \dots \geq b_{p^\prime-1} \geq |b_{p^\prime}| \geq 0\ (p \text{ even})$.
  We define $\{c_i\}$ as in (1) and $\lambda_{2i-1} = \lambda_{2i} := p-2i\ (1 \leq i \leq p^\prime)$.
  Then we have
  \begin{align*}
    \rho_\mathfrak{h}(Y) &= \sum_{i=1}^{p^\prime}(p-2i)a_i + \sum_{i=1}^{p^\prime}(p-2i)b_i \leq \sum_{i=1}^{2p^\prime}\lambda_i c_i,\\
    \rho_\mathfrak{g}(Y) &= \sum_{i=1}^{2p^\prime}2(p-i)c_i = \sum_{i=1}^{2p^\prime}\mu_ic_i
  \end{align*}
  where $\mu_i = 2(p-i)\ (1 \leq i \leq 2p^\prime)$.
  Since $\mu_{2i-1} - 2\lambda_{2i-1} = 2$ and $\mu_{2i} = 2\lambda_{2i}$, we have $2\rho_\mathfrak{h} < \rho_\mathfrak{g}$.

  If $p=q+2$, we define $\lambda_{2i-1} := p-2i\ (1 \leq i \leq p^\prime), \lambda_{2i} := p-2i-2\ (1 \leq i \leq p^\prime-1)$ and $\mu_i := 2(p-i-1)\ (1 \leq i \leq 2p^\prime)$.
  Then we have 
  \begin{align*}
    \rho_\mathfrak{h}(Y) &= \sum_{i=1}^{p^\prime}(p-2i)a_i + \sum_{i=1}^{p^\prime}(p-2i-2)b_i \leq \sum_{i=1}^{2p^\prime}\lambda_i c_i,\\
    \rho_\mathfrak{g}(Y) &= \sum_{i=1}^{2p^\prime}2(p-i-1)c_i = \sum_{i=1}^{2p^\prime}\mu_ic_i.
  \end{align*}
  Since $2\lambda_{2i-1} = 2(p-2i) = \mu_{2i-1}$ and $2\lambda_{2i} = 2(p-2i-2) = \mu_{2i}-2$, $2\rho_\mathfrak{h}(Y) = \rho_\mathfrak{g}(Y)$ holds if and only if $c_1 = a_1 > 0$ and $c_i = 0\ (i>1)$.
  Then a nonzero element $Y = (a_1, \dots, a_{p^\prime}, b_1, \dots, b_{q^\prime}) \in \mathfrak{a}_+$ is a witness vctor if and only if $a_1>0$ and $a_2 = \dots = a_{p^\prime} = b_1 = \dots = b_{q^\prime} = 0$.
  Since $\rho_\mathfrak{h} \leq \rho_{\mathfrak{q}}$ holds if and only if $p \leq q+2$, the proof is complete.
\end{proof}

\subsection{Proof of Proposition \ref{incl}}
\begin{proof}[Proof of Proposition \ref{incl}]
  (1)
  Let $(\mathfrak{g}, \mathfrak{h}) = (\mathfrak{sl}_p, \mathfrak{so}_p)$.
  Let $p^\prime := \left\lfloor \frac{p}{2} \right\rfloor$ and $Y = (a_1, \dots, a_{p^\prime}) \in \mathfrak{a}_+$, that is, .
  By computation, we obtain the following
  \[ \rho_\mathfrak{h}(Y) = \sum_{i=1}^{p^\prime} (p -2i) a_i,\quad \rho_\mathfrak{g}(Y) = \sum_{i=1}^{p^\prime} 2(p -2i + 1) a_i.\]
  Then we have $\rho_\mathfrak{h} < \rho_\mathfrak{q}$.\\
  (2), (3) Since $\rho_\mathfrak{h} \nleq \rho_\mathfrak{q}$ holds by \cite[Proposition 3.3]{BKIII}, $\rho_\mathfrak{h} \nless \rho_\mathfrak{q}$ also holds.\\
  (4) 
  Let $(\mathfrak{g}, \mathfrak{h}) = (\mathfrak{sp}_p, \mathfrak{sl}_p)$ and $p^\prime := \left\lfloor \frac{p}{2} \right\rfloor$.
  Take a nonzero element $Y = (a_1, \ldots, a_p) \in \mathfrak{a}_+$ and define $\widetilde{a_i}\ (1 \leq i \leq p)$ to be the sequence obtained by arranging $|a_1|, |a_2|, \ldots, |a_p|$ in decreasing order.
  Then we have
  \begin{align*}
    \rho_\mathfrak{h}(Y) &= \sum_{i=1}^{p}(p -2i +1)a_i \\
    &\leq \sum_{i=1}^{p^\prime}(p -2i +1)(\widetilde{a_{2i-1}} + \widetilde{a_{2i}}).
  \end{align*}
  Since $Y$ is conjugate to $(\widetilde{a_1}, \ldots, \widetilde{a_p})$ under the action of the Weyl group for $\mathfrak{g}$, we have
  \[\rho_\mathfrak{g}(Y) = \sum_{i=1}^{p}2(p -i +1)\widetilde{a_i}.\]
  Let $\lambda_{2i-1} = \lambda_{2i} := p -2i +1\ (1 \leq i \leq p^\prime)$ and $\mu_i = 2(p -i +1)\ (1 \leq i \leq p)$, then
  \[\mu_{2i-1} - 2\lambda_{2i-1} = 2,\quad \mu_{2i} - 2\lambda_{2i} = 0.\]
  Thus it follows that $2\rho_\mathfrak{h}(Y) < \rho_\mathfrak{g}(Y)$.
\end{proof}

\subsection{Proof of Proposition \ref{tens}}
\begin{proof}[Proof of Proposition \ref{tens}]
  (1) 
  Let $(\mathfrak{g}, \mathfrak{h}) = (\mathfrak{sl}_{pq}, \mathfrak{sl}_p \oplus \mathfrak{so}_q)$.
  For a nonzero element $Y = (a_1, \ldots, a_p, b_1, \ldots, b_q) \in \mathfrak{a}_+$, we have
  \[\rho_\mathfrak{h}(Y) = \displaystyle\sum_{1 \leq i < j \leq p}(a_i -a_j) + \sum_{1 \leq i < j \leq q}(b_i -b_j).\]
  Defining real numbers $c_i\ (i = 1, 2, \ldots, pq)$ to be the sequence by arranging $a_j + b_k\ (1 \leq j \leq p, 1 \leq k \leq q)$ in decreasing order, we see that $Y$ is conjugate to $(c_1, \ldots, c_{pq})$ in $\mathfrak{g}$ and
  \begin{equation}\label{tens1eq}
    \rho_\mathfrak{g}(Y) = \sum_{1 \leq i < j \leq pq}(c_i - c_j). 
  \end{equation}
  Computing the right-hand side of equation (\ref{tens1eq}) with $c_1 = a_1 + b_1$, $c_2 = a_1 + b_2$, $\ldots$, $c_q = a_1 + b_q$, $c_{q+1} = a_2 + b_1$, $c_{q+2} = a_2 + b_2$, $\ldots$ , we have the inequality
  \[\rho_\mathfrak{g}(Y) \geq q^2\displaystyle\sum_{1 \leq i < j \leq p}(a_i -a_j) + p \sum_{1 \leq i < j \leq q}(b_i -b_j).\]
  By assumption $p,q>1$, $2\rho_\mathfrak{h}(Y) < \rho_\mathfrak{g}(Y)$ holds if $p>2$.
  When $p=2$ and $a_1 = a_2 = 0$, we can compute as follows
  \[\rho_\mathfrak{g}(Y) = p^2 \sum_{1 \leq i < j \leq q}(b_i - b_j).\]
  Then we have $\rho_\mathfrak{g}(Y) - 2\rho_\mathfrak{h}(Y) = (p^2-2)\sum(b_i -b_j) > 0$.\\

  \noindent(2) 
  Let $(\mathfrak{g}, \mathfrak{h}) = (\mathfrak{so}_{pq}, \mathfrak{so}_p \oplus \mathfrak{so}_q)$.
  Let $p^\prime := \left\lfloor \frac{p}{2} \right\rfloor$ and $q^\prime := \left\lfloor \frac{q}{2} \right\rfloor$.
  We may assume that $p \geq q > 1$, then $p^\prime \geq q^\prime$.
  Take any nonzero element $Y = (a_1, \ldots, a_{p^\prime}, b_1, \ldots, b_{q^\prime}) \in \mathfrak{a}_+$, then
  \begin{align*}
    \rho_\mathfrak{h}(Y) &= \sum_{i=1}^{p^\prime}(p-2i)a_i + \sum_{i=1}^{q^\prime}(q-2i)b_i\\
    &\leq \sum_{i=1}^{q^\prime}(p-2i)(a_i + b_i) + \sum_{i=q^\prime+1}^{p^\prime}(p-2i)a_i.
  \end{align*}
  Defining real numbers $c_i\ (1 \leq i \leq pq)$ to be the sequence by arranging $\pm a_i \pm b_j$, $\pm a_i$ (if $q$ is odd), $\pm b_j$ (if $p$ is odd)\ $(1 \leq i \leq p^\prime, 1 \leq j \leq q^\prime)$ and one $0$ (if $p,q$ are both odd) in decreasing order, we have
  \[\rho_\mathfrak{g}(Y) = \sum_{i=1}^{\frac{pq-1}{2}}(pq - 2i)c_i.\]
  By inequalities $c_i \geq a_i + b_i\ (1 \leq i \leq q^\prime)$ and $c_i \geq a_i\ (q^\prime+1 \leq i \leq p^\prime)$, it follows that 
  \begin{align*}
    &\rho_\mathfrak{g}(Y) - 2\rho_\mathfrak{h}(Y) \\
    &\geq \sum_{i=1}^{\frac{pq-1}{2}}(pq - 2i)c_i - \sum_{i=1}^{q^\prime}2(p-2i)(a_i + b_i) - \sum_{i=q^\prime+1}^{p^\prime}2(p-2i)a_i \\
    &\geq \sum_{i=1}^{q^\prime}\{p(q-2) + 2i\}(a_i + b_i) + \sum_{i=q^\prime+1}^{p^\prime}\{p(q-2) + 2i\}a_i > 0.
  \end{align*}\\

  \noindent(3) 
  Let $(\mathfrak{g}, \mathfrak{h}) = (\mathfrak{so}_{4pq}, \mathfrak{sp}_p \oplus \mathfrak{sp}_q)$.
  We may assume $q \geq p$.
  Take any nonzero element $Y = (a_1, \ldots, a_p, b_1, \ldots, b_q) \in \mathfrak{a}_+$, we have
  \begin{align*}
    \rho_\mathfrak{h}(Y) &= \sum_{i=1}^{p}2(p-i+1)a_i + \sum_{i=1}^{q}2(q-i+1)b_i\\
    &\leq \sum_{i=1}^{p}2(q-i+1)(a_i + b_i) + \sum_{i=p+1}^{q}2(q-i+1)b_i.
  \end{align*}
  Defining real numbers $c_i\ (i = 1, 2, \ldots, 4pq)$ to be the sequence by arranging $\pm a_i \pm b_j\ (1 \leq i \leq p, 1 \leq j \leq q)$ in decreasing order, we have
  \[\rho_\mathfrak{g}(Y) = \sum_{i=1}^{2pq}2(2pq - i)c_i\]
  and
  \begin{align*}
    &\rho_\mathfrak{g}(Y) - 2\rho_\mathfrak{h}(Y) \\
    &\geq \sum_{i=1}^{p}2\{2q(p-1) + i -2\}(a_i + b_i) + \sum_{i=p+1}^{q}2\{2q(p-1) +i -2\}b_i.
  \end{align*}
  If $p > 1$, $2\rho_\mathfrak{h}(Y) < \rho_\mathfrak{g}(Y)$ holds.
  Suppose that $p=1$, $Y = (a, b_1, \ldots, b_q)$.
  When $a \geq b_1$, we have
  \begin{align*}
    \rho_\mathfrak{g}(Y) - 2\rho_\mathfrak{h}(Y) &\geq \sum_{i=1}^{q}2(2q-i)(a + b_i) -4a - \sum_{i=1}^{q}4(q-i+1)b_i \\
    &> 4(q-2)b_1 \\
    &\geq 0.
  \end{align*}
  We now assume that $a < b_1$, then
  \begin{align*}
    \rho_\mathfrak{g}(Y) - 2\rho_\mathfrak{h}(Y) &\geq \sum_{i=1}^{q}2(2q-i)(a + b_i) + 2(q-1)(b_1-a) -4a - \sum_{i=1}^{q}4(q-i+1)b_i \\
    &= 2(q-2)b_1 + \sum_{i=2}^{q}2(i-2)b_i + \{3q(q-1)-2\}a.
  \end{align*}
  Since either $a$ or $b_1$ is positive, $2\rho_\mathfrak{h}(Y) < \rho_\mathfrak{g}(Y)$ holds if $q \geq 3$.
  If $q=2$ and $a>0$, we have $\rho_\mathfrak{g}(Y) - 2\rho_\mathfrak{h}(Y) \geq 4a > 0$.
  When $q=2$ and $a=0$, by a direct recalculation we have
  \[\rho_\mathfrak{g}(Y) - 2\rho_\mathfrak{h}(Y) = (10b_1 + 2b_2) - 2(4b_1 + 2b_2) = 2(b_1 - b_2).\]
  Thus we can see that $2\rho_\mathfrak{h}(Y) = \rho_\mathfrak{g}(Y)$ holds only if $p=1$, $q=2$, $a=0$ and $b_1 = b_2$.\\

  \noindent(4) 
  Let $(\mathfrak{g}, \mathfrak{h}) = (\mathfrak{sp}_{pq}, \mathfrak{sp}_p \oplus \mathfrak{so}_q)$.
  Let $q^\prime := \left\lfloor \frac{q}{2} \right\rfloor$.
  Take any nonzero element $Y = (a_1, \ldots, a_p, b_1, \ldots, b_{q^\prime}) \in \mathfrak{a}_+$ and define real numbers $c_i\ (i = 1, 2, \ldots, pq)$ to be the sequence by arranging $\pm a_i \pm b_j\ (1 \leq i \leq p, 1 \leq j \leq q^\prime)$ and one 0 (if $q$ is odd) in decreasing order
  When $q^\prime \leq p$ we have
  \begin{align*}
    \rho_\mathfrak{h}(Y) &= \sum_{i=1}^{p}2(p-i+1)a_i + \sum_{i=1}^{q^\prime}(q-2i)b_i\\
    &\leq \sum_{i=1}^{q^\prime}2(p-i+1)(a_i + b_i) + \sum_{i=q^\prime+1}^{p}2(p-i+1)a_i.
  \end{align*}
  Thus it follows that 
  \[\rho_\mathfrak{g}(Y) - 2\rho_\mathfrak{h}(Y) \geq \sum_{i=1}^{q^\prime}2\{p(q-2)+i-1\}(a_i + b_i) + \sum_{i=q^\prime+1}^{p}2\{p(q-2)+i-1\}a_i.\]
  When $q >2$, we have $2\rho_\mathfrak{h}(Y) < \rho_\mathfrak{g}(Y)$.
  When $q=2$, 
  we have $c_{2i-1} = c_{2i} = a_i\ (1 \leq i \leq p)$ and
  \begin{align*}
    \rho_\mathfrak{h} &= \sum_{i=1}^{p}2(p-i+1)a_i, \\
    \rho_\mathfrak{g} &= \sum_{i=1}^{p}2(2p-i+1)c_i \\
    &= \sum_{i=1}^{p}4(p-i+1)a_i + \sum_{i=1}^{p}2(2p-2i+1)a_i \\
    &= \sum_{i=1}^{p}2(4p-4i+3)a_i.
  \end{align*}
  Then 
  \[\rho_\mathfrak{g}(Y) - 2\rho_\mathfrak{h}(Y) = \sum_{i=1}^{p}2(2p-2i+1)a_i >0.\]
  When $q^\prime > p$, 
  \[\rho_\mathfrak{h}(Y) \leq \sum_{i=1}^{p}(q-2i)(a_i + b_i) + \sum_{i=p+1}^{q^\prime}(q-2i)b_i\]
  and then
  \[\rho_\mathfrak{g}(Y) - 2\rho_\mathfrak{h}(Y) \geq \sum_{i=1}^{p}2\{q(p-1)+i+1\}(a_i + b_i) + \sum_{i=p+1}^{q^\prime}2\{q(p-1)+i+1\}b_i > 0.\]
\end{proof}

\subsection{Proof of Proposition \ref{si}}\label{sec:si}
Fix a complex simple Lie algebra $\mathfrak{h}$, and take a maximal split abelian subspace $\mathfrak{a}$ in $\mathfrak{h}$ as in Section \ref{sec:setting}.
The complexification $\mathfrak{j} := \mathfrak{a}_\mathbb{C}$ of $\mathfrak{a}$ is a Cartan subalgebra of $\mathfrak{h}$.
We denote by $W$ the Weyl group for the root system defined as in section \ref{sec:setting}. 
Let $\pi$ be an $n$-dimensional irreducible representation on $V$.
Since $\mathfrak{h}$ is simple, the image $\pi(\mathfrak{h})$ is contained in $\mathfrak{sl}(\mathbb{C}^n)$.
Moreover, if there exists a non-degenerate symmetric (resp. skew-symmetric) bilinear form on $V$ that leaves $\pi(\mathfrak{h})$ invariant, then $\pi(\mathfrak{h})$ is contained in $\mathfrak{so}(\mathbb{C}^n)$ (resp. $\mathfrak{sp}(\mathbb{C}^n)$).
All pairs $(\mathfrak{g}, \mathfrak{h})$ of complex simple Lie algebras such that $\mathfrak{g}$ is of classical type arise in this way.

We take simple roots $\Delta(\mathfrak{h}, \mathfrak{j}) = \{\alpha_1, \ldots, \alpha_l\}$.
Let $(\cdot,\cdot)$ be an inner product of the Euclidian space spanned by $\Delta(\mathfrak{h}, \mathfrak{j})$ and $\varpi_1, \ldots, \varpi_l$ fundamental weights with $(\varpi_i, \alpha_j^{\vee}) = \delta_{ij}$.
Define $L := \{\varpi_1, \ldots, \varpi_l\}_\mathbb{Z}$, which is called the weight lattice.
If a weight $\lambda \in L$ satisfies $(\lambda, \alpha_i^{\vee}) \geq 0 \quad (1 \leq i \leq l)$, it is called a dominant weight.
We denote the set of dominant weights by $L_+$.
In the actual computations, we take these data to be as in \cite[Chapter 3]{Hum}.

To prove this Proposition \ref{si}, for each $\lambda \in L_+$ we check whether $2\rho_\mathfrak{h} < \rho_\mathfrak{g}$ holds for the irreducible representation $V$ with heighest weight $\lambda$.
Let $V(\lambda)$ denote the irreducible representation of $\mathfrak{h}$ with highest weight $\lambda \in L_+$, and denote the set of weights of $V(\lambda)$ by $\Lambda(\lambda) (\subset \mathfrak{a}^\ast)$.

Let $Y \in \mathfrak{a}_+ \subset \mathfrak{j}$ and $\lambda_1, \ldots, \lambda_r$ all distinct weights in the irreduible representation $V(\lambda)$ with $\lambda_1(Y) \geq \dots \geq \lambda_r(Y)$ whose multiplicities are $m_1, \ldots , m_r$ respectively.
If $\mathfrak{g} = \mathfrak{sl}_n$, then 
\[\rho_\mathfrak{g}(Y) = \sum_{1\leq i < j \leq r}m_i m_j(\lambda_i - \lambda_j)(Y).\]
If $\mathfrak{g} = \mathfrak{so}_{2m+1}$\ $(n=2m+1)$, the weights $\lambda_1, \ldots, \lambda_r$ can be rewritten as $\lambda_1$, $\ldots$ , $\lambda_{r^\prime}$, $0$, $-\lambda_{r^\prime}$, $\ldots$ , $-\lambda_1$ and we have
\[\rho_\mathfrak{g}(Y) = \sum_{1\leq i < j \leq r^\prime}m_i m_j(\lambda_i - \lambda_j)(Y) + \sum_{1\leq i < j \leq r^\prime}m_i m_j(\lambda_i + \lambda_j)(Y) + \sum_{i=1}^{r^\prime}m_i\lambda_i(Y).\]
The case where $\mathfrak{g} = \mathfrak{so}_{2m}$ and $\mathfrak{sp}_m$\ $(n=2m)$ can be computed in the same way.
We have
\begin{align*}
  &\rho_\mathfrak{g}(Y) = \sum_{1\leq i < j \leq r^\prime}m_i m_j(\lambda_i - \lambda_j)(Y) + \sum_{1\leq i < j \leq r^\prime}m_i m_j(\lambda_i + \lambda_j)(Y), \\
  &\rho_\mathfrak{g}(Y) = \sum_{1\leq i < j \leq r^\prime}m_i m_j(\lambda_i - \lambda_j)(Y) + \sum_{1\leq i < j \leq r^\prime}m_i m_j(\lambda_i + \lambda_j)(Y) + 2\sum_{i=1}^{r^\prime}m_i\lambda_i(Y)
\end{align*}
respectively.
In all cases $\mathfrak{g} = \mathfrak{sl}_n, \mathfrak{so}_{2m+1}, \mathfrak{so}_{2m}$, and $\mathfrak{sp}_m$, we have the following inequality by replacing $m_1, \dots, m_l$ with $1$:
\begin{equation}\label{ineqsi}
  \rho_\mathfrak{g}(Y) \geq \frac{1}{2}\sum_{\substack{1\leq i < j \leq r \\ \lambda_i + \lambda_j \neq 0}}(\lambda_i - \lambda_j)(Y).
\end{equation}
The right-hand side of the inequality (\ref{ineqsi}) depends only on the heighest weight $\lambda$ and $Y \in \mathfrak{a}_+$.
We denote it by $f(\lambda ; Y)$.
We often use this inequality in the computation of $\rho_\mathfrak{g}$.

Let $\lambda$, $\mu$ be dominant weights.
If $\mu \prec \lambda$, i.e., $\lambda - \mu \in \sum_{i=1}^{l}\mathbb{Z}_{\geq 0}\alpha_i$, 
the weight set $\Lambda(\mu)$ is contained in that of $\Lambda(\lambda)$.
In fact, $\Lambda(\lambda)$ is saturated, and $\mu, \sigma \mu \prec \lambda$ for any $\sigma \in W$, see \cite[21.3]{Hum}.
Hence $f(\mu ; Y) \leq f(\lambda ; Y)$ holds for any $Y \in \mathfrak{a}_+$.
We regard $\mathfrak{h}$ as a Lie subalgebra of simple Lie algebras $\mathfrak{g}_\mu$ and $\mathfrak{g}_\lambda$  of classical type via the irreducible representations $(\pi_\mu, V(\mu))$ and $(\pi_\lambda, V(\lambda))$ respectively.
When $2\rho_\mathfrak{h} < f(\mu; \cdot)$ on $\mathfrak{a}_+$, then $2\rho_\mathfrak{h} < \rho_{\mathfrak{g}_\lambda}$ on $\mathfrak{h}$ holds. 
Therefore, once it is verified that the inequality $2\rho_\mathfrak{h} < f(\mu; \cdot)$ on $\mathfrak{a}_+$ holds for a dominant weight $\mu$, it follows that the inequality $2\rho_\mathfrak{h} < \rho_{\mathfrak{g}_\lambda}$ on $\mathfrak{h}$ also holds for any dominant weight $\lambda$ with $\lambda \succ \mu$.

We prove Proposition \ref{si} by dividing into cases according to which simple Lie algebra $\mathfrak{h}$ is.

\begin{proof}[Proof of Proposition \ref{si}]
  Let $\mathfrak{g}$ be a complex simple Lie algebra of classical type and $\mathfrak{h} \subset \mathfrak{g}$ a complex simple Lie subalgebra.
  By Proposition 3.5 in \cite{BKIII}, the only pairs $(\mathfrak{g}, \mathfrak{h})$ for which $\rho_\mathfrak{h} \nleq \rho_\mathfrak{q}$ are $(\mathfrak{g}, \mathfrak{h}) = (\mathfrak{sl}_{2n}, \mathfrak{sp}_n), (\mathfrak{so}_7, \mathfrak{g}_2)$ and $(\mathfrak{so}_8, \mathfrak{so}_7)$.
  We show that $\rho_\mathfrak{h} < \rho_\mathfrak{q}$ holds  on $\mathfrak{a} \setminus \{0\}$ for all pairs except for these three cases.

  For each simple Lie algebra $\mathfrak{h}$, we describe the partial order on dominant weights and list the computations of $\rho_\mathfrak{h}$ and $f(\lambda; \cdot)$ for the irreducible representation $\pi_\lambda$.

  Note that when the highest weight $\lambda = 0$, $V(\lambda)$ is a trivial representation of $\mathfrak{h}$, so this case need not be considered. 

  First, we consider the case where $\mathfrak{h}$ is of classical type.
  We write $\sum_{i=1}^{k}c_i \varepsilon_i$ as $(c_1, \dots, c_k)$.\\

  \noindent(i) $\mathfrak{h} = \mathfrak{sl}_n$. 
  We begin with the case $n=2$.
  There exist $2$ minimal dominant weights $0$, $\varpi_1$ with respect to the partial order $\prec$.
  \[
  \begin{tikzpicture}[auto]
    \node (0) at (0,0) {$0$};
    \node (2) at (0,1) {$2\varpi_1$};
    \node at (-1,1) {$(2,0)=$};
    \node (4) at (0,2) {$4\varpi_1$};
    \node at (-1,2) {$(4,0)=$};
    \node (1) at (2,0) {$\varpi_1$};
    \node at (2.9,0) {$=(1,0)$};
    \node (3) at (2,1) {$3\varpi_1$};
    \node at (3.0,1) {$=(3,0)$};
    \draw (0) --node {$+\alpha_1$}(2);
    \draw (2) --node {$+\alpha_1$}(4);
    \draw (1) --node[swap] {$+\alpha_1$}(3);
    \begin{scope}[every path/.style=dashed]
      \draw (4) --(0,2.8);
      \draw (3) --(2,1.8);
    \end{scope}
  \end{tikzpicture}
  \]
  
  In the following, we list the computations for each heighest weight $\lambda$.
  For $Y = (a, -a) \in \mathfrak{a}_+$, we have $\rho_\mathfrak{h}(Y) = 2a$.

  For $\lambda = \varpi_1$,
  this case need not be considered since $\mathfrak{h} \simeq \pi_{\varpi_1}(\mathfrak{h}) = \mathfrak{g}$.

  For $\lambda = 2\varpi_1$, 
  the case $\mathfrak{g} = \mathfrak{so}_3$ need not be considered since $\mathfrak{h} \simeq \pi_{2\varpi_1}(\mathfrak{h}) \simeq \mathfrak{so}_3$.
  When $\mathfrak{g} = \mathfrak{sl}_3$, we have
  $\rho_{\mathfrak{sl}_3}(\pi_{2\varpi_1}(Y)) = 8a > 2\rho_\mathfrak{h}(Y)$.

  For $\lambda = 3\varpi_1$, we have
  \begin{align*}
    &\Lambda(3\varpi_1) = \{\pm 3\varepsilon_1, \pm\varepsilon_1\}, \\
    &f(3\varpi_1; Y) = 6\varepsilon_1(Y) = 6a > 2\rho_\mathfrak{h}(Y).
  \end{align*}

  For $\lambda = 4\varpi_1$, we have
  $f(4\varpi_1; Y) = 14a > 2\rho_\mathfrak{h}(Y)$.\\

  In case $n=3$,
  there exist $3$ minimal dominant weights $0$, $\varpi_1$, $\varpi_2$ with respect to the partial order $\prec$.
  \[
  \begin{tikzpicture}[auto]
    \node (0) at (0,0) {$0$};
    \node (01) at (0,1) {$\varpi_1 + \varpi_2$};
    \node at (-1.7,1) {$(1,0,-1)=$};
    \node (20) at (2,0) {$\varpi_1$};
    \node at (3.0,0) {$=(1,0,0)$};
    \node (21) at (2,1) {$2\varpi_2$};
    \node at (3.1,1) {$=(2,2,0)$};
    \draw (0) --node {$+\alpha_1+\alpha_2$}(01);
    \draw (20) --node[swap] {$+\alpha_2$}(21);
    \begin{scope}[every path/.style=dashed]
      \draw (01) --(0,1.8);
      \draw (21) --(2,1.8);
    \end{scope}
  \end{tikzpicture}
  \]

  For $Y \in \mathfrak{a}_+$, we have 
  \[\rho_\mathfrak{h}(Y) = \sum_{1 \leq i < j \leq 3}(\varepsilon_i - \varepsilon_j)(Y) = 2(\varepsilon_1 - \varepsilon_3)(Y).\]

  For $\lambda = \varpi_1$, we have
  $\mathfrak{h} \simeq \pi_{\varpi_1}(\mathfrak{h}) = \mathfrak{g}$.

  For $\lambda = \varpi_2$, we have
  $\mathfrak{h} \simeq \pi_{\varpi_2}(\mathfrak{h}) = \mathfrak{g}$.

  For $\lambda = \varpi_1 + \varpi_2$ (the adjoint representation $(\text{ad},\mathfrak{h})$), we have
  \begin{align*}
    &\Lambda(\varpi_1 + \varpi_2) = \{\varepsilon_i - \varepsilon_j\ (i \neq j), 0\}, \\
    &f(\varpi_1 + \varpi_2; Y) > 2\sum_{1 \leq i < j \leq 3}(\varepsilon_i - \varepsilon_j)(Y) = 2\rho_\mathfrak{h}(Y).
  \end{align*}

  For $\lambda = 2\varpi_2$, we have
  \begin{align*}
    &\Lambda(2\varpi_2) = \{\varepsilon_i + \varepsilon_j - \varepsilon_k\ (\{i, j, k\} = \{1,2,3\}), \varepsilon_1, \varepsilon_2, \varepsilon_3\}, \\
    &f(2\varpi_2; Y) > 2\sum_{1 \leq i < j \leq 3}(\varepsilon_i - \varepsilon_j)(Y) = 2\rho_\mathfrak{h}(Y).
  \end{align*}\\

  In case $n=4$,
  there exist $4$ minimal dominant weights $0$, $\varpi_1$, $\varpi_2$, $\varpi_3$ with respect to the partial order $\prec$.
  \[
  \begin{tikzpicture}[auto]
    \hspace{-30pt}
    \node (0) at (0,0) {$0$};
    \node (01) at (0,1) {$\varpi_1 + \varpi_3$};
    \node at (-1.8,1) {$(1,0,0,-1)=$};
    \node (20) at (3,0) {$\varpi_1$};
    \node at (4.2,0) {$=(1,0,0,0)$};
    \node (21) at (3,1) {$\varpi_2 + \varpi_3$};
    \node at (4.8,1) {$=(1,1,0,-1)$};
    \draw (0) --node {$+\alpha_1 + \alpha_2 + \alpha_3$}(01);
    \draw (20) --node[swap] {$+\alpha_2+\alpha_3$}(21);
    \begin{scope}[every path/.style=dashed]
      \draw (01) --(0,1.8);
      \draw (21) --(3,1.8);
    \end{scope}
  \end{tikzpicture}
  \]
  \vskip\baselineskip
  \[
  \begin{tikzpicture}[auto]
    \hspace{30pt}
    \node (0) at (0,0) {$\varpi_2$};
    \node at (-1.2,0) {$(1,1,0,0)=$};
    \node (01) at (-2,1) {$2\varpi_1$};
    \node at (-3.3,1) {$(2,0,0,0)=$};
    \node (11) at (2,1) {$2\varpi_3$};
    \node at (3.5,1) {$=(1,1,1,-1)$};
    \node at (-1.8,0.5) {$+\alpha_1$};
    \node at (1.7,0.4) {$+\alpha_3$};
    \draw (0) --(01);
    \draw (0) --(11);
    \begin{scope}[every path/.style=dashed]
      \draw (01) --(-2,1.8);
      \draw (11) --(2,1.8);
    \end{scope}
  \end{tikzpicture}
  \]

  For $Y \in \mathfrak{a}_+$, we have 
  \[\rho_\mathfrak{h}(Y) = \sum_{1 \leq i < j \leq 4}(\varepsilon_i - \varepsilon_j)(Y) = (3\varepsilon_1 + \varepsilon_2 -\varepsilon_3 -3 \varepsilon_4)(Y).\]

  For $\lambda = \varpi_1$, we have
  $\mathfrak{h} \simeq \pi_{\varpi_1}(\mathfrak{h}) = \mathfrak{g}$.

  For $\lambda = \varpi_2$, we have
  $\mathfrak{h} \simeq \pi_{\varpi_2}(\mathfrak{h}) \simeq \mathfrak{so}_6 = \mathfrak{g}$.

  For $\lambda = \varpi_1 + \varpi_3$ (the adjoint representation $(\text{ad},\mathfrak{h})$), we have
  \begin{align*}
    &\Lambda(\varpi_1 + \varpi_3) = \{\varepsilon_i - \varepsilon_j\ (i \neq j), 0\}, \\
    &f(\varpi_1 + \varpi_3; Y) \geq 3\sum_{1 \leq i < j \leq 4}(\varepsilon_i - \varepsilon_j)(Y) > 2\rho_\mathfrak{h}(Y).
  \end{align*}

  For $\lambda = \varpi_2 + \varpi_3$, we have
  \[\Lambda(\varpi_2 + \varpi_3) = \{\varepsilon_i + \varepsilon_j - \varepsilon_k\ (1 \leq i,j,k \leq 4,\ i, j, k\ \text{distinct}), \varepsilon_i\ (1 \leq i \leq 4)\}.\]
  For $i, j\ (1 \leq i < j \leq 4)$, $f(\varpi_2 + \varpi_3; Y)$ contains the terms $\varepsilon_i - \varepsilon_j$ in the following forms 
  \begin{gather*}
    (\varepsilon_i + \varepsilon_k - \varepsilon_l) - (\varepsilon_j + \varepsilon_k - \varepsilon_l),\quad 
    (\varepsilon_k + \varepsilon_l - \varepsilon_j) - (\varepsilon_k + \varepsilon_l - \varepsilon_i), \\
    (\varepsilon_i + \varepsilon_k - \varepsilon_j) - \varepsilon_k,\quad \varepsilon_k - (\varepsilon_j + \varepsilon_k - \varepsilon_i),\quad 
    \varepsilon_i - \varepsilon_j.
  \end{gather*}
  Also, $f(\varpi_2 + \varpi_3; Y)$ contains the terms $2(\varepsilon_i - \varepsilon_j)$ in the following form  
  \[(\varepsilon_i + \varepsilon_k - \varepsilon_j) - (\varepsilon_j + \varepsilon_k - \varepsilon_i).\]
  Then we have
  \[f(\varpi_2 + \varpi_3; Y) \geq 6\sum_{1 \leq i < j \leq 4}(\varepsilon_i - \varepsilon_j)(Y) > 2\rho_\mathfrak{h}(Y).\]

  For $\lambda = 2\varpi_1$, we have
  \[\Lambda(2\varpi_1) = \{2\varepsilon_i\ (1 \leq i \leq 4), \varepsilon_i + \varepsilon_j\ (1 \leq i < j \leq 4)\}.\]
  For $i, j\ (1 \leq i < j \leq 4)$, $f(2\varpi_1; Y)$ contains the terms $\varepsilon_i - \varepsilon_j$ in the following forms  
  \begin{gather*}
    (\varepsilon_i + \varepsilon_k) - (\varepsilon_j + \varepsilon_k),\quad 
    2\varepsilon_i - (\varepsilon_i + \varepsilon_j), \quad
    (\varepsilon_i + \varepsilon_j) - 2\varepsilon_j.
  \end{gather*}
  Also, $f(2\varpi_1; Y)$ contains the terms $2(\varepsilon_i - \varepsilon_j)$ in the following form  
  \[2\varepsilon_i - 2\varepsilon_j.\]
  Then we have
  \[f(2\varpi_1; Y) \geq 3\sum_{1 \leq i < j \leq 4}(\varepsilon_i - \varepsilon_j)(Y) > 2\rho_\mathfrak{h}(Y).\]

  In case $n \geq 5$,
  there exist $n$ minimal dominant weights $0$, $\varpi_1, \dots, \varpi_{n-1}$ with respect to the partial order $\prec$.
  By the symmetry of the Dynkin diagram of $\mathfrak{sl}_n$, for $m\ (1 \leq m \leq \left\lfloor \frac{n}{2} \right\rfloor)$, the computations for the dominant weights in the diagram with $\varpi_{n-m}$ as the minimal dominant weight are analogous to those for the dominant weights in the diagram with $\varpi_m$ as the minimal dominant weight.
  \[
  \begin{tikzpicture}[auto]
    \hspace{-10pt}
    \node (0) at (0,0) {$0$};
    \node (01) at (0,1) {$\varpi_1 + \varpi_{n-1}$};
    \node at (-2.3,1) {$(1,0, \dots, 0,-1)=$};
    \node (20) at (3,0) {$\varpi_1$};
    \node at (4.3,0) {$=(1, 0,\dots, 0)$};
    \node (21) at (3,1) {$\varpi_2 + \varpi_{n-1}$};
    \node at (5.4,1) {$=(1,1,0, \dots, 0,-1)$};
    \draw (0) --node {$+\alpha_1 + \dots + \alpha_{n-1}$}(01);
    \draw (20) --node[swap] {$+\alpha_2 + \dots + \alpha_{n-1}$}(21);
    \begin{scope}[every path/.style=dashed]
      \draw (01) --(0,1.8);
      \draw (21) --(3,1.8);
    \end{scope}
  \end{tikzpicture}
  \]
  \vskip\baselineskip
  \[
  \begin{tikzpicture}[auto]
    \node (0) at (0,0) {
      \begin{tabular}{c}
        $\varpi_k$\\
        $(2 \leq k \leq \left\lfloor\frac{n}{2}\right\rfloor)$
      \end{tabular}
    };
    \begin{scope}[every path/.style=dashed]
      \draw (0) --(0,1.2);
    \end{scope}
  \end{tikzpicture}
  \]

  For $Y \in \mathfrak{a}_+$, we have 
  \[\rho_\mathfrak{h}(Y) = \sum_{1 \leq i < j \leq n}(\varepsilon_i - \varepsilon_j)(Y) = \sum_{i=1}^{n}(n-2i+1)\varepsilon_i(Y).\]

  For $\lambda = \varpi_1$, we have
  $\mathfrak{h} \simeq \pi_{\varpi_1}(\mathfrak{h}) = \mathfrak{g}$.

  For $\lambda = \varpi_m\ (2 \leq m \leq \left\lfloor \frac{n}{2} \right\rfloor)$, we have
  \[\Lambda(\varpi_m) = \{\varepsilon_{i_1} + \dots + \varepsilon_{i_m}\ (1 \leq i_1 < \dots < i_m \leq n)\}.\]
  For $i, j\ (1 \leq i < j \leq 4)$, $f(\varpi_m; Y)$ contains the terms $\varepsilon_i - \varepsilon_j$ in the following form  
  \begin{gather*}
    (\varepsilon_i + \varepsilon_{i_1} + \dots + \varepsilon_{i_{m-1}}) - (\varepsilon_j + \varepsilon_{i_1} + \dots + \varepsilon_{i_{m-1}})
  \end{gather*}
  where $i_1, \dots, i_{m-1}$ are distinct from $i, j$.
  Since there exist $\binom{n-2}{m-1}$ terms of this form for $i, j$ and other positive terms, we have
  \[f(\varpi_m; Y) > \frac{1}{2}\binom{n-2}{m-1}\sum_{1 \leq i < j \leq n}(\varepsilon_i - \varepsilon_j)(Y).\]
  When $n \geq 6$ and $2 \leq m \leq \left\lfloor \frac{n}{2} \right\rfloor$, we have $\binom{n-2}{m-1} \geq 4$ and $f(\varpi_m; Y) > 2\rho_\mathfrak{h}(Y)$.

  When $n=5$ and $m=2$, we carry out a more detailed computation.
  For $i, j, k, l\ (1 \leq i, j, k, l \leq 5)$, the terms in the following form appear in $f(\varpi_2; Y)$; 
  \[(\varepsilon_i + \varepsilon_k) - (\varepsilon_j + \varepsilon_l)\]
  if it is non-negative at $Y$.
  Then we have
  \[f(\varpi_2; Y) \geq 2(\varepsilon_1 - \varepsilon_5)(Y) + 2\sum_{1 \leq i < j \leq n}(\varepsilon_i - \varepsilon_j)(Y) > 2\rho_\mathfrak{h}(Y).\]

  For $\lambda = \varpi_1 + \varpi_{n-1}$, we have
  \[\Lambda(\varpi_1 + \varpi_{n-1}) = \{\varepsilon_i - \varepsilon_j\ (1 \leq i < j \leq n), 0\}.\]
  For $i, j\ (1 \leq i < j \leq n)$, $f(\varpi_1 + \varpi_{n-1}; Y)$ contains the terms $\varepsilon_i - \varepsilon_j$ in the following forms 
  \begin{gather*}
    (\varepsilon_i - \varepsilon_k) - (\varepsilon_j - \varepsilon_k), \quad
    (\varepsilon_k - \varepsilon_j) - (\varepsilon_k - \varepsilon_i), \\
    (\varepsilon_i - \varepsilon_j) - 0, \quad
    0 - (\varepsilon_j - \varepsilon_i).
  \end{gather*}
  Then we have 
  \[f(\varpi_1 + \varpi_{n-1}; Y) > (n-1)\sum_{1 \leq i < j \leq n}(\varepsilon_i - \varepsilon_j)(Y) > 2\rho_\mathfrak{h}(Y).\]

  For $\lambda = \varpi_2 + \varpi_{n-1}$, we have
  \[\Lambda(\varpi_2 + \varpi_{n-1}) = \{\varepsilon_i + \varepsilon_j - \varepsilon_k\ (i, j, k\ \text{distinct}), \varepsilon_i\ (1 \leq i \leq n)\}.\]
  For $i, j\ (1 \leq i < j \leq n)$, $f(\varpi_2 + \varpi_{n-1}; Y)$ contains the terms $\varepsilon_i - \varepsilon_j$ in the following form 
  \begin{gather*}
    (\varepsilon_i + \varepsilon_k - \varepsilon_l) - (\varepsilon_j + \varepsilon_k - \varepsilon_l).
  \end{gather*}
  Then we have 
  \[f(\varpi_2 + \varpi_{n-1}; Y) > \frac{1}{2}(n-2)(n-3)\sum_{1 \leq i < j \leq n}(\varepsilon_i - \varepsilon_j)(Y) > 2\rho_\mathfrak{h}(Y).\]

  \noindent (ii) $\mathfrak{h} = \mathfrak{so}_{2n+1} (n \geq 3)$. 
  There exist $2$ minimal dominant weights $0$, $\varpi_n$ with respect to the partial order $\prec$.
  \[
  \begin{tikzpicture}[auto]
    \node (0) at (0,0) {$0$};
    \node (01) at (0,1) {$\varpi_1$};
    \node at (-1.4,1) {$(1,0, \dots, 0)=$};
    \node (02) at (0,2) {$\varpi_2$};
    \node at (-1.53,2) {$(1,1,0, \dots, 0)=$};
    \node (20) at (3,0) {$\varpi_n$};
    \node at (4.3,0) {$=\frac{1}{2}(1,\dots, 1)$};
    \node (21) at (3,1) {$\varpi_1 + \varpi_n$};
    \node at (4.9,1) {$=\frac{1}{2}(3,1, \dots, 1)$};
    \draw (0) --node {$+\alpha_1 + \dots + \alpha_n$}(01);
    \draw (01) --node {$+\alpha_2 + \dots + \alpha_n$}(02);
    \draw (20) --node[swap] {$+\alpha_1 + \dots + \alpha_n$}(21);
    \begin{scope}[every path/.style=dashed]
      \draw (02) --(0,2.8);
      \draw (21) --(3,1.8);
    \end{scope}
  \end{tikzpicture}
  \]

  For $Y \in \mathfrak{a}_+$, we have 
  \[\rho_\mathfrak{h}(Y) = \sum_{1 \leq i < j \leq n}(\varepsilon_i - \varepsilon_j)(Y) + \sum_{1 \leq i < j \leq n}(\varepsilon_i + \varepsilon_j)(Y) + \sum_{1 \leq i \leq n}\varepsilon_i(Y).\]

  For $\lambda = \varpi_1$, 
  the case $\mathfrak{g} = \mathfrak{so}_{2n+1}$ need not be considered.
  When $\mathfrak{g} = \mathfrak{sl}_{2n+1}$, it follows by Proposition \ref{incl} that $2\rho_\mathfrak{h} < \rho_\mathfrak{g}$.

  For $\lambda = \varpi_2$, 
  we have \[\Lambda(\varpi_2) = \{\pm \varepsilon_i \pm \varepsilon_j\ (1 \leq i < j \leq n), \pm \varepsilon_i\ (1 \leq i \leq n), 0\}.\]
  For $i, j\ (1 \leq i < j \leq n)$, $f(\varpi_2; Y)$ contains the terms $\varepsilon_i - \varepsilon_j$ in the following forms 
  \begin{gather*}
    (\varepsilon_i \pm \varepsilon_k) - (\varepsilon_j \pm \varepsilon_k), \quad
    (-\varepsilon_j \pm \varepsilon_k) - (-\varepsilon_i \pm \varepsilon_k),
  \end{gather*}
  and the similar statement holds for the terms $\varepsilon_i + \varepsilon_j$. 
  Also, $f(\varpi_2; Y)$ contains the terms $\varepsilon_i$ in the following forms
  \begin{gather*}
    (\varepsilon_i \pm \varepsilon_k) - (\pm \varepsilon_k), \quad
    (\pm \varepsilon_k) - (-\varepsilon_i \pm \varepsilon_k).
  \end{gather*}
  Then we have
  \begin{align*}
    f(\varpi_2; Y) &\geq 
    2(n-2)\sum_{1 \leq i < j \leq n}(\varepsilon_i - \varepsilon_j)(Y) \\
    &\hspace{20pt}+ 2(n-2)\sum_{1 \leq i < j \leq n}(\varepsilon_i + \varepsilon_j)(Y) 
    + 2(n-1)\sum_{1 \leq i \leq n}\varepsilon_i(Y) \\
    &> 2\rho_\mathfrak{h}(Y).
  \end{align*}

  For $\lambda = \varpi_n$, we have
  \begin{align*}
    &\Lambda(\varpi_n) = \left\{\pm \frac{1}{2}(\varepsilon_1 \pm \varepsilon_2 \dots \pm \varepsilon_n)\right\}, \\
    &f(\varpi_n; Y) > 2^{n-3}\left\{\sum_{1 \leq i < j \leq n}(\varepsilon_i - \varepsilon_j)(Y) + \sum_{1 \leq i < j \leq n}(\varepsilon_i + \varepsilon_j)(Y) + \sum_{1 \leq i \leq n}\varepsilon_i(Y)\right\} \\
    &\hspace{40pt}\geq 2\rho_\mathfrak{h}(Y).
  \end{align*}
  When $n \geq 4$, we have $2\rho_\mathfrak{h} < \rho_\mathfrak{g}$.
  When $n=3$, it follows that $2\rho_\mathfrak{h} \nleq \rho_\mathfrak{g}$.
  Then we consider the next dominant weight.

  For $\lambda = \varpi_1 + \varpi_3$, 
  we have 
  \[\Lambda(\varpi_1 + \varpi_3) = \left\{\pm \frac{3}{2}\varepsilon_i \pm \frac{1}{2} \varepsilon_j \pm \frac{1}{2} \varepsilon_k\ (\{i, j, k\} = \{1, 2, 3\}), \pm \frac{1}{2}\varepsilon_1 \pm \frac{1}{2} \varepsilon_2 \pm \frac{1}{2} \varepsilon_3\right\}.\]
  For $i, j\ (1 \leq i < j \leq 3)$, $f(\varpi_1 + \varpi_3; Y)$ contains the terms $\varepsilon_i - \varepsilon_j$ and $2(\varepsilon_i - \varepsilon_j)$ in the following forms 
  \begin{gather*}
    \left(\frac{3}{2}\varepsilon_i + \frac{1}{2}\varepsilon_j \pm \frac{1}{2}\varepsilon_k\right) - \left(\frac{1}{2}\varepsilon_i + \frac{3}{2}\varepsilon_j \pm \frac{1}{2}\varepsilon_k\right), \\
    \left(-\frac{1}{2}\varepsilon_i - \frac{3}{2}\varepsilon_j \pm \frac{1}{2}\varepsilon_k\right) - \left(-\frac{3}{2}\varepsilon_i - \frac{1}{2}\varepsilon_j \pm \frac{1}{2}\varepsilon_k\right), \\
    \left(\frac{3}{2}\varepsilon_i - \frac{1}{2}\varepsilon_j \pm \frac{1}{2}\varepsilon_k\right) - \left(-\frac{1}{2}\varepsilon_i + \frac{3}{2}\varepsilon_j \pm \frac{1}{2}\varepsilon_k\right),
  \end{gather*}
  and the similar statement holds for the terms $\varepsilon_i + \varepsilon_j$, $2(\varepsilon_i + \varepsilon_j)$. 
  Also, $f(\varpi_2; Y)$ contains the terms $3\varepsilon_i$ in the following forms
  \begin{gather*}
    \left(\frac{3}{2}\varepsilon_i \pm \frac{1}{2}\varepsilon_k \pm \frac{1}{2}\varepsilon_l\right) - \left(-\frac{3}{2}\varepsilon_i \pm \frac{1}{2}\varepsilon_k \pm \frac{1}{2}\varepsilon_l\right).
  \end{gather*}
  Then we have
  \begin{align*}
    &f(\varpi_1 + \varpi_3; Y) \geq 4\sum_{1 \leq i < j \leq n}(\varepsilon_i - \varepsilon_j)(Y) + 4\sum_{1 \leq i < j \leq n}(\varepsilon_i + \varepsilon_j)(Y) + 6\sum_{1 \leq i \leq n}\varepsilon_i(Y) \\
    &\hspace{65pt}> 2\rho_\mathfrak{h}(Y).
  \end{align*}\\

  \noindent (iii) $\mathfrak{h} = \mathfrak{sp}_n (n \geq 2)$. 
  In case $n=2$,
  there exist $2$ minimal dominant weights $0$, $\varpi_1$ with respect to the partial order $\prec$.
  \[
  \begin{tikzpicture}[auto]
    \node (0) at (0,0) {$0$};
    \node (01) at (0,1) {$\varpi_2$};
    \node at (-0.9,1) {$(1,1)=$};
    \node (02) at (0,2) {$2\varpi_1$};
    \node at (-1,2) {$(2,0)=$};
    \node (20) at (3,0) {$\varpi_1$};
    \node at (3.9,0) {$=(1,0)$};
    \node (21) at (3,1) {$\varpi_1 + \varpi_2$};
    \node at (4.3,1) {$=(2,1)$};
    \draw (0) --node {$+\alpha_1 + \alpha_2$}(01);
    \draw (01) --node {$+\alpha_1$}(02);
    \draw (20) --node[swap] {$+\alpha_1 + \alpha_2$}(21);
    \begin{scope}[every path/.style=dashed]
      \draw (02) --(0,2.8);
      \draw (21) --(3,1.8);
    \end{scope}
  \end{tikzpicture}
  \]

  For $Y \in \mathfrak{a}_+$, we have 
  \[\rho_\mathfrak{h}(Y) = (\varepsilon_1 - \varepsilon_2)(Y) + (\varepsilon_1 + \varepsilon_2)(Y) + 2\varepsilon_1(Y) + 2\varepsilon_2(Y) = (4\varepsilon_1 + 2\varepsilon_2)(Y).\]

  For $\lambda = \varpi_1$, 
  the case $\mathfrak{g} = \mathfrak{sp}_{2}$ need not be considered.
  When $\mathfrak{g} = \mathfrak{sl}_{4}$, it follows by Proposition \ref{incl} that $2\rho_\mathfrak{h} \nless \rho_\mathfrak{g}$.

  For $\lambda = \varpi_2$, we have
  $\mathfrak{h} \simeq \pi_{\varpi_2}(\mathfrak{h}) = \mathfrak{so}_5$.
  The case $\mathfrak{g} = \mathfrak{so}_5$ need not be considered.
  When $\mathfrak{g} = \mathfrak{sl}_5$, it follows by Proposition \ref{incl} that $2\rho_\mathfrak{h} < \rho_\mathfrak{g}$.

  For $\lambda = 2\varpi_1$ (the adjoint representation $(\text{ad},\mathfrak{h})$), we have
  \begin{align*}
    &\Lambda(2\varpi_1) = \{\pm \varepsilon_1 \pm \varepsilon_2, \pm 2\varepsilon_1, \pm 2\varepsilon_2, 0\}, \\
    &f(2\varpi_1; Y) \geq \frac{5}{2}(\varepsilon_1 - \varepsilon_2)(Y) + \frac{5}{2}(\varepsilon_1 + \varepsilon_2)(Y) + 4\varepsilon_1(Y) + 4\varepsilon_2(Y) > 2\rho_\mathfrak{h}(Y).
  \end{align*}

  For $\lambda = \varpi_1 + \varpi_2$, we have
  \begin{align*}
    &\Lambda(\varpi_1 + \varpi_2) = \{\pm 2\varepsilon_1 \pm \varepsilon_2, \pm \varepsilon_1 \pm 2\varepsilon_2, \pm \varepsilon_1, \pm \varepsilon_2\}, \\
    &f(\varpi_1 + \varpi_2; Y) \geq 4(\varepsilon_1 - \varepsilon_2)(Y) + 4(\varepsilon_1 + \varepsilon_2)(Y) + 4\varepsilon_1(Y) + 4\varepsilon_2(Y) > 2\rho_\mathfrak{h}(Y).
  \end{align*}\\

  In case $n \geq 3$,
  there exist $2$ minimal dominant weights $0$, $\varpi_1$ with respect to the partial order $\prec$.
  \[
  \begin{tikzpicture}[auto]
    \node (0) at (0,0) {$0$};
    \node (01) at (0,1) {$\varpi_2$};
    \node at (-1.5,1) {$(1,1,0, \dots, 0)=$};
    \node (20) at (2,0) {$\varpi_1$};
    \node at (3.3,0) {$=(1,0, \dots, 0)$};
    \node (21) at (2,1) {$\varpi_3$};
    \node at (3.7,1) {$=(1,1,1,0, \dots, 0)$};
    \draw (0) --node {$+\alpha_1 + 2\alpha_2 + \dots + 2\alpha_{n-1} + \alpha_n$}(01);
    \draw (20) --node[swap] {$+\alpha_2 + 2\alpha_3 + \dots + 2\alpha_{n-1} + \alpha_n$}(21);
    \begin{scope}[every path/.style=dashed]
      \draw (01) --(0,1.8);
      \draw (21) --(2,1.8);
    \end{scope}
  \end{tikzpicture}
  \]

  For $Y \in \mathfrak{a}_+$, we have 
  \begin{align*}
    \rho_\mathfrak{h}(Y) &= \sum_{1 \leq i < j \leq n}(\varepsilon_i - \varepsilon_j)(Y) + \sum_{1 \leq i < j \leq n}(\varepsilon_i + \varepsilon_j)(Y) + \sum_{1 \leq i \leq n}2\varepsilon_i(Y) \\
    &= \sum_{1 \leq i \leq n}2(n-i+1)\varepsilon_i(Y).
  \end{align*}

  For $\lambda = \varpi_1$, 
  the case $\mathfrak{g} = \mathfrak{sp}_{n}$ need not be considered.
  When $\mathfrak{g} = \mathfrak{sl}_{2n}$, it follows by Proposition \ref{incl} that $2\rho_\mathfrak{h} \nless \rho_\mathfrak{g}$.

  For $\lambda = \varpi_2$, we have
  \begin{align*}
    &\Lambda(\varpi_2) = \{\pm \varepsilon_i \pm \varepsilon_j\ (1 \leq i < j \leq n), 0\}, \\
    &f(\varpi_2; Y) \geq \sum_{1 \leq i < j \leq n}3(\varepsilon_i - \varepsilon_j)(Y) + \sum_{1 \leq i < j \leq n}3(\varepsilon_i + \varepsilon_j)(Y) + \sum_{1 \leq i \leq n}2(n-1)\varepsilon_i(Y) \\
    &\hspace{40pt}> 2\rho_\mathfrak{h}(Y).
  \end{align*}

  For $\lambda = \varpi_3$, we have
  \begin{align*}
    &\Lambda(\varpi_3) = \{\pm \varepsilon_i \pm \varepsilon_j \pm \varepsilon_k\ (1 \leq i,j,k \leq n,\ i,j,k\ \text{distinct}), \pm \varepsilon_i\ (1 \leq i \leq n)\}, \\
    &f(\varpi_3; Y) > \sum_{1 \leq i < j \leq n}2(n-2)(\varepsilon_i - \varepsilon_j)(Y) + \sum_{1 \leq i < j \leq n}2(n-2)(\varepsilon_i + \varepsilon_j)(Y) \\
    &\hspace{190pt} + \sum_{1 \leq i \leq n}2(n-1)(n-2)\varepsilon_i(Y) \\
    &\hspace{40pt} \geq 2\rho_\mathfrak{h}(Y).
  \end{align*}\\

  \noindent(iv) $\mathfrak{h} = \mathfrak{so}_{2n} (n \geq 4)$. 
  In case $n=4$,
  there exist $4$ minimal dominant weights $0$, $\varpi_1$, $\varpi_{n-1}$, $\varpi_n$ with respect to the partial order $\prec$.
  \[
  \begin{tikzpicture}[auto]
    \hspace{-20pt}
    \node (0) at (0,0) {$0$};
    \node (01) at (0,1) {$\varpi_2$};
    \node at (-1.25,1) {$(1,1,0,0)=$};
    \node (20) at (2,0) {$\varpi_1$};
    \node at (3.2,0) {$=(1,0,0,0)$};
    \node (21) at (2,1) {$\varpi_3 + \varpi_4$};
    \node at (3.7,1) {$=(1,1,1,0)$};
    \draw (0) --node {$+\alpha_1 + 2\alpha_2 + \alpha_3 + \alpha_4$}(01);
    \draw (20) --node[swap] {$+\alpha_2 + \alpha_3 + \alpha_4$}(21);
    \begin{scope}[every path/.style=dashed]
      \draw (01) --(0,1.8);
      \draw (21) --(2,1.8);
    \end{scope}
  \end{tikzpicture}
  \]
  \vskip\baselineskip
  \[
  \begin{tikzpicture}[auto]
    \hspace{20pt}
    \node (0) at (0,0) {$\varpi_3$};
    \node at (-1.4,0) {$\frac{1}{2}(1,1,1,1)=$};
    \node (01) at (0,1) {$\varpi_1 + \varpi_4$};
    \node at (-2,1) {$\frac{1}{2}(3,1,1,-1)=$};
    \node (20) at (2,0) {$\varpi_4$};
    \node at (3.45,0) {$=\frac{1}{2}(1,1,1,-1)$};
    \node (21) at (2,1) {$\varpi_1 + \varpi_3$};
    \node at (3.75,1) {$=\frac{1}{2}(3,1,1,1)$};
    \draw (0) --node {$+\alpha_1 + \alpha_2 + \alpha_3$}(01);
    \draw (20) --node[swap] {$+\alpha_1 + \alpha_2 + \alpha_4$}(21);
    \begin{scope}[every path/.style=dashed]
      \draw (01) --(0,1.8);
      \draw (21) --(2,1.8);
    \end{scope}
  \end{tikzpicture}
  \]

  For $Y \in \mathfrak{a}_+$, we have 
  \[\rho_\mathfrak{h}(Y) = \sum_{1 \leq i < j \leq 4}(\varepsilon_i - \varepsilon_j)(Y) + \sum_{1 \leq i < j \leq 4}(\varepsilon_i + \varepsilon_j)(Y) = \sum_{1 \leq i \leq 4}2(4-i)\varepsilon_i(Y).\]

  For $\lambda = \varpi_1$,
  the case $\mathfrak{g} = \mathfrak{so}_{8}$ need not be considered.
  When $\mathfrak{g} = \mathfrak{sl}_{8}$, it follows by Proposition \ref{incl} that $2\rho_\mathfrak{h} < \rho_\mathfrak{g}$.

  For $\lambda = \varpi_2$, we have
  \begin{align*}
    &\Lambda(\varpi_2) = \{\pm \varepsilon_i \pm \varepsilon_j\ (1 \leq i < j \leq 4), 0\}, \\
    &f(\varpi_2; Y) \geq 4\sum_{1 \leq i < j \leq 4}(\varepsilon_i - \varepsilon_j)(Y) + 4\sum_{1 \leq i < j \leq 4}(\varepsilon_i + \varepsilon_j)(Y) > 2\rho_\mathfrak{h}(Y).
  \end{align*}

  For $\lambda = \varpi_3, \varpi_4$,
  it follows by Proposition \ref{incl} that $2\rho_\mathfrak{h} \nless \rho_\mathfrak{g}$.

  For $\lambda = \varpi_3 + \varpi_4$, we have
  \begin{align*}
    &\Lambda(\varpi_3 + \varpi_4) \supset \{\pm \varepsilon_i \pm \varepsilon_j \pm \varepsilon_k\ (1 \leq i < j < k \leq 4), \pm \varepsilon_i\ (1 \leq i \leq 4)\}, \\
    &f(\varpi_3 + \varpi_4; Y) \geq 4\sum_{1 \leq i < j \leq 4}(\varepsilon_i - \varepsilon_j)(Y) + 4\sum_{1 \leq i < j \leq 4}(\varepsilon_i + \varepsilon_j)(Y) > 2\rho_\mathfrak{h}(Y).
  \end{align*}

  For $\lambda = \varpi_1 + \varpi_4$, we have
  \begin{align*}
    &\Lambda(\varpi_1 + \varpi_4) \supset \left\{
      \begin{gathered}
        \frac{(-1)^{\nu_i}3}{2}\varepsilon_i + \frac{(-1)^{\nu_j}}{2}\varepsilon_j + \frac{(-1)^{\nu_k}}{2}\varepsilon_k + \frac{(-1)^{\nu_l}}{2}\varepsilon_l, \\
        \{i,j,k,l\} = \{1,2,3,4\}, \nu_i+\nu_j+\nu_k+\nu_l = 1 \mod 2
      \end{gathered}\right\}, \\
    &f(\varpi_1 + \varpi_4; Y) \geq 3\sum_{1 \leq i < j \leq 4}(\varepsilon_i - \varepsilon_j)(Y) + 3\sum_{1 \leq i < j \leq 4}(\varepsilon_i + \varepsilon_j)(Y) > 2\rho_\mathfrak{h}(Y).
  \end{align*}

  For $\lambda = \varpi_1 + \varpi_3$, 
  by the same computation as in the case of $\lambda = \varpi_1 + \varpi_4$, we have 
  \[f(\varpi_1 + \varpi_3; Y) \geq 3\sum_{1 \leq i < j \leq 4}(\varepsilon_i - \varepsilon_j)(Y) + 3\sum_{1 \leq i < j \leq 4}(\varepsilon_i + \varepsilon_j)(Y) > 2\rho_\mathfrak{h}(Y).\]

  In case $n \geq 5$,
  there exist $4$ minimal dominant weights $0$, $\varpi_1$, $\varpi_{n-1}$, $\varpi_n$ with respect to the partial order $\prec$.
  \[
  \begin{tikzpicture}[auto]
    \node (0) at (0,0) {$0$};
    \node (01) at (0,1) {$\varpi_2$};
    \node at (-1.55,1) {$(1,1,0, \dots, 0)=$};
    \node (20) at (1,0) {$\varpi_1$};
    \node at (2.3,0) {$=(1,0, \dots, 0)$};
    \node (21) at (1,1) {$\varpi_3$};
    \node at (2.7,1) {$=(1,1,1,0, \dots, 0)$};
    \draw (0) --node {$+\alpha_1 + 2\alpha_2 + \dots + 2\alpha_{n-2} + \alpha_{n-1} + \alpha_n$}(01);
    \draw (20) --node[swap] {$+\alpha_2 + 2\alpha_3 + \dots + 2\alpha_{n-2} + \alpha_{n-1} + \alpha_n$}(21);
    \begin{scope}[every path/.style=dashed]
      \draw (01) --(0,1.8);
      \draw (21) --(1,1.8);
    \end{scope}
  \end{tikzpicture}
  \]
  \vskip\baselineskip
  \[
  \begin{tikzpicture}[auto]
    \node (0) at (0,0) {$\varpi_{n-1}$};
    \node at (-1.75,0) {$\frac{1}{2}(1, \dots, 1,1)=$};
    \node (20) at (2,0) {$\varpi_n$};
    \node at (3.6,0) {$=\frac{1}{2}(1, \dots, 1,-1)$};
    \begin{scope}[every path/.style=dashed]
      \draw (0) --(0,0.8);
      \draw (20) --(2,0.8);
    \end{scope}
  \end{tikzpicture}
  \]

  For $Y \in \mathfrak{a}_+$, we have 
  \[\rho_\mathfrak{h}(Y) = \sum_{1 \leq i < j \leq n}(\varepsilon_i - \varepsilon_j)(Y) + \sum_{1 \leq i < j \leq n}(\varepsilon_i + \varepsilon_j)(Y) = \sum_{1 \leq i \leq n}2(n-i)\varepsilon_i(Y).\]

  For $\lambda = \varpi_1$, 
  the case $\mathfrak{g} = \mathfrak{so}_{2n}$ need not be considered.
  When $\mathfrak{g} = \mathfrak{sl}_{2n}$, it follows by Proposition \ref{incl} that $2\rho_\mathfrak{h} < \rho_\mathfrak{g}$.

  For $\lambda = \varpi_2$, we have
  \begin{align*}
    &\Lambda(\varpi_2) = \{\pm\varepsilon_i \pm\varepsilon_j\ (1 \leq i < j \leq n), 0\}, \\
    &f(\varpi_2; Y) \geq (n-2)\sum_{1 \leq i < j \leq n}(\varepsilon_i - \varepsilon_j)(Y) + (n-2)\sum_{1 \leq i < j \leq n}(\varepsilon_i + \varepsilon_j)(Y)\\
    &\hspace{40pt}> 2\rho_\mathfrak{h}(Y).
  \end{align*}

  For $\lambda = \varpi_3$, we have
  \begin{align*}
    &\Lambda(\varpi_3) = \{\pm\varepsilon_i \pm\varepsilon_j \pm\varepsilon_k\ (1 \leq i < j < k \leq n), \pm\varepsilon_i\ (1 \leq i \leq n)\}, \\
    &f(\varpi_3; Y) \geq (n-2)(n-3)\sum_{1 \leq i < j \leq n}(\varepsilon_i - \varepsilon_j)(Y) \\
    & \hspace{80pt} + (n-2)(n-3)\sum_{1 \leq i < j \leq n}(\varepsilon_i + \varepsilon_j)(Y)\\
    &\hspace{40pt}> 2\rho_\mathfrak{h}(Y).
  \end{align*}

  For $\lambda = \varpi_{n-1}$, we have
  \[\Lambda(\varpi_{n-1}) \supset \left\{
      \sum_{i=1}^{n}\frac{1}{2}(-1)^{\nu_i}\varepsilon_i \relmiddle| \sum_{i=1}^{n}\nu_i \equiv 0 \mod 2\right\}.\]
  For $i, j\ (1 \leq i < j \leq n)$, $f(\varpi_{n-1}; Y)$ contains the terms $\varepsilon_i - \varepsilon_j$ in the following forms 
  \begin{gather*}
    \frac{1}{2}(\cdots + \varepsilon_i \cdots -\varepsilon_j \cdots)
    - \frac{1}{2}(\cdots - \varepsilon_i \cdots + \varepsilon_j \cdots), 
  \end{gather*}
  and the similar statement holds for the terms $\varepsilon_i + \varepsilon_j$. 
  Then we have
  \begin{align*}
    &f(\varpi_{n-1}; Y) > 2^{n-4}\sum_{1 \leq i < j \leq n}(\varepsilon_i - \varepsilon_j)(Y) + 2^{n-4}\sum_{1 \leq i < j \leq n}(\varepsilon_i + \varepsilon_j)(Y) \geq 2\rho_\mathfrak{h}(Y).
  \end{align*}

  For $\lambda = \varpi_n$, 
  by the same computation as in the case of $\lambda = \varpi_{n-1}$, we have 
  \[f(\varpi_n; Y) > 2^{n-4}\sum_{1 \leq i < j \leq n}(\varepsilon_i - \varepsilon_j)(Y) + 2^{n-4}\sum_{1 \leq i < j \leq n}(\varepsilon_i + \varepsilon_j)(Y) \geq 2\rho_\mathfrak{h}(Y).\]

  Next, we consider the case where $\mathfrak{h}$ is of exceptional type.
  In contrast to the classical case, we write $\sum_{i=1}^{k}c_i \alpha_i$ as $(c_1, \dots, c_k)$. \\

  \noindent(v) $\mathfrak{h} = \mathfrak{g}_2$. 
  The minimal dominant weight with respect to the partial order $\prec$ is only $0$.
  \[
  \begin{tikzpicture}[auto]
    \node (0) at (0,0) {$0$};
    \node (1) at (0,1) {$\varpi_1$};
    \node at (0.9,1) {$=(2,1)$};
    \node (2) at (0,2) {$\varpi_2$};
    \node at (1,2) {$=(3,2)$};
    \draw (0) --node[swap] {$+2\alpha_1+\alpha_2$}(1);
    \draw (1) --node[swap] {$+\alpha_1+\alpha_2$}(2);
    \begin{scope}[every path/.style=dashed]
      \draw (2) --(0,2.8);
    \end{scope}
  \end{tikzpicture}
  \]

  For $Y \in \mathfrak{a}_+$, we have 
  \[\rho_\mathfrak{h}(Y) = (10\alpha_1 + 6\alpha_2)(Y).\]

  For $\lambda = \varpi_1$,
  when $\mathfrak{g} = \mathfrak{so}_7$, we have
  \begin{align*}
    &\Lambda(\varpi_1) = \{\pm(2\alpha_1+\alpha_2), \pm(\alpha_1+\alpha_2), \pm\alpha_1, 0\}, \\
    &\rho_\mathfrak{g}(Y) = (14\alpha_1+8\alpha_2)(Y).
  \end{align*}
  Thus it follows that $2\rho_\mathfrak{h} \nleq \rho_\mathfrak{g}$.
  When $\mathfrak{g} = \mathfrak{sl}_7$, we have
  \[\rho_\mathfrak{g}(Y) = (36\alpha_1+20\alpha_2)(Y)\]
  and then $2\rho_\mathfrak{h}(Y) < \rho_\mathfrak{g}(Y)$.

  For $\lambda = \varpi_2$
  this irreducible representation coincides with the adjoint representation of $\mathfrak{h}$, and 
  \begin{align*}
    \Lambda(\varpi_2) &= \Phi(\mathfrak{h}, \mathfrak{a}) \cup \{0\} \\
    &= \{\pm(3\alpha_1+2\alpha_2), \pm(3\alpha_1+\alpha_2), \pm(2\alpha_1+\alpha_2), \pm(\alpha_1+\alpha_2), \pm\alpha_1, \pm\alpha_2, 0\}.
  \end{align*}
  Suppose that $\alpha_1(Y) \geq \alpha_2(Y)$.
  Let $\lambda_1 = -\lambda_{13} = 3\alpha_1+2\alpha_2$, $\lambda_2 = -\lambda_{12} = 2\alpha_1+2\alpha_2$, $\lambda_3 = -\lambda_{11} = 2\alpha_1+\alpha_2$, $\lambda_4 = -\lambda_{10} = \alpha_1+\alpha_2$, $\lambda_5 = -\lambda_9 = \alpha_1$, $\lambda_6 = -\lambda_8 = \alpha_2$, $\lambda_7 = 0$.
  Then we have
  \begin{align*}
    f(\varpi_2; Y) &= \frac{1}{2}\sum_{\substack{1\leq i < j \leq 13 \\ \lambda_i + \lambda_j \neq 0}}(\lambda_i - \lambda_j)(Y) \\
    &= \frac{1}{2}\sum_{1\leq i < j \leq 13}(\lambda_i - \lambda_j)(Y) - \frac{1}{2}\sum_{1\leq i \leq 6}2\lambda_i(Y) \\
    &= \sum_{1\leq i \leq 6}(14-2i)\lambda_i(Y) - \sum_{1\leq i \leq 6}\lambda_i(Y) \\
    &= \sum_{1\leq i \leq 6}(13-2i)\lambda_i(Y) \\
    &= \{11(3\alpha_1+2\alpha_2) + 9(3\alpha_1+\alpha_2) + 7(2\alpha_1+\alpha_2) + 5(\alpha_1+\alpha_2) + 3\alpha_1 + \alpha_2\}(Y) \\
    &= (82\alpha_1 + 44\alpha_2)(Y) \\
    &> 2\rho_\mathfrak{h}(Y).
  \end{align*}
  The case $\alpha_1(Y) < \alpha_2(Y)$ can be treated in the same way, which yields $f(\varpi_2; Y) > 2\rho_\mathfrak{h}(Y)$.

  \noindent(vi) $\mathfrak{h} = \mathfrak{f}_4$. 
  The minimal dominant weight with respect to the partial order $\prec$ is only $0$.
  \[
  \begin{tikzpicture}[auto]
    \node (0) at (0,0) {$0$};
    \node (1) at (0,1) {$\varpi_4$};
    \node at (1.2,1) {$=(1,2,3,2)$};
    \node (2) at (0,2) {$\varpi_1$};
    \node at (1.2,2) {$=(2,3,4,2)$};
    \draw (0) --node[swap] {$+\alpha_1+2\alpha_2+3\alpha_3+2\alpha_4$}(1);
    \draw (1) --node[swap] {$+\alpha_1+\alpha_2+2\alpha_3$}(2);
    \begin{scope}[every path/.style=dashed]
      \draw (2) --(0,2.8);
    \end{scope}
  \end{tikzpicture}
  \]

  For $Y \in \mathfrak{a}_+$, we have 
  \[\rho_\mathfrak{h}(Y) = (16\alpha_1 + 30\alpha_2 + 42\alpha_3 + 22\alpha_4)(Y).\]

  For $\lambda = \varpi_4$, we have 
  \[\Lambda(\varpi_4) = \left\{
    \begin{gathered}
      \pm(1, 2, 3, 2),\ \pm(1, 2, 3, 1),\ 
      \pm(1, 2, 2, 1),\ \pm(1, 1, 2, 1), \\
      \pm(1, 1, 1, 1),\ \pm(0, 1, 2, 1),\ 
      \pm(1, 1, 1, 0),\ \pm(0, 1, 1, 1), \\
      \pm(0, 1, 1, 0),\ \pm(0, 0, 1, 1),\ \pm(0, 0, 1, 0),\ \pm(0, 0, 0, 1),\ 0
    \end{gathered}\right\}.\]
  By direct computation one can see that $f(\varpi_4; Y) > 2\rho_\mathfrak{h}(Y)$.\\

  \noindent(vii) $\mathfrak{h} = \mathfrak{e}_6$. 
  There exist $3$ minimal dominant weights $0$, $\varpi_1$, $\varpi_6$ with respect to the partial order $\prec$.
  By the symmetry of the Dynkin diagram of $\mathfrak{e}_6$, the computations for the dominant weights in the diagram with $\varpi_6$ as the minimal dominant weight are analogous to those for the dominant weights in the diagram with $\varpi_1$ as the minimal dominant weight.
  \[
  \begin{tikzpicture}[auto]
    \node (0) at (0,0) {$0$};
    \node (01) at (0,1) {$\varpi_2$};
    \node at (-1.6,1) {$(1,2,2,3,2,1)=$};
    \node (20) at (2,0) {$\varpi_1$};
    \node at (3.6,0) {$=\frac{1}{3}(4,3,5,6,4,2)$};
    \node (21) at (2,1) {$\varpi_5$};
    \node at (3.8,1) {$=\frac{1}{3}(4,6,8,12,10,5)$};
    \draw (0) --node{$+\alpha_1+2\alpha_2+2\alpha_3+3\alpha_4+2\alpha_5+\alpha_6$}(01);    
    \draw (20) --node[swap] {$+\alpha_2+\alpha_3+2\alpha_4+2\alpha_5+\alpha_6$}(21);
    \begin{scope}[every path/.style=dashed]
      \draw (01) --(0,1.8);
      \draw (21) --(2,1.8);
    \end{scope}
  \end{tikzpicture}
  \]
  \[
  \begin{tikzpicture}[auto]
    \node (50) at (5,0) {$\varpi_6$};
    \node at (6.7,0) {$=\frac{1}{3}(2,3,4,6,5,4)$};
    \node (51) at (5,1) {$\varpi_3$};
    \node at (6.8,1) {$=\frac{1}{3}(5,6,10,12,8,4)$};
    \draw (50) --node[swap] {$+\alpha_1+\alpha_2+2\alpha_3+2\alpha_4+\alpha_5$}(51);
    \begin{scope}[every path/.style=dashed]
      \draw (51) --(5,1.8);
    \end{scope}
  \end{tikzpicture}
  \]
  
  Under the notation of Section \ref{sec:setting}, we can compute as follows.
  \begin{align*}
    &2\varepsilon_1 = \alpha_2 - \alpha_3,\quad 2\varepsilon_2 = \alpha_2 + \alpha_3,\quad 2\varepsilon_3 = \alpha_2 + \alpha_3 + 2\alpha_4,\\
    &2\varepsilon_4 = \alpha_2 + \alpha_3 + 2\alpha_4 + 2\alpha_5,\quad 2\varepsilon_5 = \alpha_2 + \alpha_3 + 2\alpha_4 + 2\alpha_5 + 2\alpha_6, \\
    &2(\varepsilon_8 - \varepsilon_7 - \varepsilon_6) = 4\alpha_1 + 3\alpha_2 + 5\alpha_3 + 6\alpha_4 + 4\alpha_5 + 2\alpha_6,
  \end{align*}
  \begin{align*}
    &\Phi^+(\mathfrak{h}, \mathfrak{a}) = \left\{\varepsilon_j \pm \varepsilon_i \ (1 \leq i < j \leq 5)\right\}  \\
    &\hspace{45pt}\cup \{\textstyle\frac{1}{2}(\varepsilon_8 - \varepsilon_7 - \varepsilon_6 + \sum_{i=1}^{5}(-1)^{\nu_i}\varepsilon_i) \mid \sum_{i=1}^{5} \nu_i = 0 \mod 2\}.
  \end{align*}
  For $Y \in \mathfrak{a}_+$, we have
  \begin{align*}
    \rho_\mathfrak{h}(Y) &= \left(\sum_{j=1}^{5}2(j-1)\varepsilon_j + 8(\varepsilon_8 -\varepsilon_7 -\varepsilon_6)\right)(Y) \\
    &= (2\varepsilon_2 + 4\varepsilon_3 + 6\varepsilon_4 + 8\varepsilon_5 + 8(\varepsilon_8 - \varepsilon_7 - \varepsilon_6))(Y) \\
    &= (16\alpha_1 + 22\alpha_2 + 30\alpha_3 + 42\alpha_4 + 30\alpha_5 + 16\alpha_6)(Y).
  \end{align*}

  For $\lambda = \varpi_1$, 
    $\Lambda(\varpi_1)$ contains following weights 
  \[\begin{gathered}
      \frac{1}{3}(4,3,5,6,4,2), \ 
      \frac{1}{3}(1,3,5,6,4,2), \ 
      \frac{1}{3}(1,3,2,6,4,2), \ 
      \frac{1}{3}(1,3,2,3,4,2), \\
      \frac{1}{3}(1,0,2,3,4,2), \ 
      \frac{1}{3}(1,3,2,3,1,2), \ 
      \frac{1}{3}(1,0,2,3,1,2), \ 
      \frac{1}{3}(1,0,2,0,1,2), \\
      \frac{1}{3}(1,0,2,0,1,-1), \ 
      \frac{1}{3}(1,0,-1,0,1,2),\ 
      -\frac{1}{3}(2,0,1,0,-1,1),\ 
      -\frac{1}{3}(-1,0,1,0,2,1),\\
      -\frac{1}{3}(2,0,1,0,2,1), \ 
      -\frac{1}{3}(2,0,1,3,2,1), \ 
      -\frac{1}{3}(2,3,1,3,2,1), \ 
      -\frac{1}{3}(2,0,4,3,2,1), \\
      -\frac{1}{3}(2,3,4,3,2,1), \ 
      -\frac{1}{3}(2,3,4,6,2,1), \ 
      -\frac{1}{3}(2,3,4,6,5,1), \ 
      -\frac{1}{3}(2,3,4,6,5,4).
    \end{gathered}\]
  By taking the sum of the differences of these weights, we can see that 
  $f(\varpi_1; Y) > 2\rho_\mathfrak{h}(Y)$.

  For $\lambda = \varpi_2$, 
  this irreducible representation coincides with the adjoint representation of $\mathfrak{h}$, and 
  \[\Lambda(\varpi_2) = \Phi(\mathfrak{h}, \mathfrak{a}) \cup \{0\}.\]
  By taking the sum of the differences of weights, we can see that 
  $f(\varpi_2; Y) > 2\rho_\mathfrak{h}(Y)$.\\

  \noindent(viii) $\mathfrak{h} = \mathfrak{e}_7$. 
  There exist $2$ minimal dominant weights $0$, $\varpi_7$ with respect to the partial order $\prec$.
  \[
  \begin{tikzpicture}[auto]
    \node (0) at (0,0) {$0$};
    \node (01) at (0,1) {$\varpi_1$};
    \node at (-1.7,1) {$(2,2,3,4,3,2,1)=$};
    \draw (0) --node {$+2\alpha_1+2\alpha_2+3\alpha_3+4\alpha_4+3\alpha_5+2\alpha_6+\alpha_7$}(01);    
    \begin{scope}[every path/.style=dashed]
      \draw (01) --(0,1.8);
    \end{scope}
  \end{tikzpicture}
  \]
  \[
  \begin{tikzpicture}[auto]
    \node (20) at (1,0) {$\varpi_7$};
    \node at (2.8,0) {$=\frac{1}{2}(2,3,4,6,5,4,3)$};
    \node (21) at (1,1) {$\varpi_2$};
    \node at (2.9,1) {$=\frac{1}{2}(4,7,8,12,9,6,3)$}; 
    \draw (20) --node[swap] {$+\alpha_1+2\alpha_2+2\alpha_3+3\alpha_4+2\alpha_5+\alpha_6$}(21);
    \begin{scope}[every path/.style=dashed]
      \draw (21) --(1,1.8);
    \end{scope}
  \end{tikzpicture}
  \]

  Under the notation of Section \ref{sec:setting}, we can compute as follows.
  \begin{align*}
    &2\varepsilon_1 = \alpha_2 - \alpha_3,\quad 2\varepsilon_2 = \alpha_2 + \alpha_3,\quad 2\varepsilon_3 = \alpha_2 + \alpha_3 + 2\alpha_4,\\
    &2\varepsilon_4 = \alpha_2 + \alpha_3 + 2\alpha_4 + 2\alpha_5,\quad 2\varepsilon_5 = \alpha_2 + \alpha_3 + 2\alpha_4 + 2\alpha_5 + 2\alpha_6, \\
    &2\varepsilon_6 = \alpha_2 + \alpha_3 + 2\alpha_4 + 2\alpha_5 + 2\alpha_6 + 2\alpha_7, \\
    &2(\varepsilon_8 - \varepsilon_7) = 4\alpha_1 + 4\alpha_2 + 6\alpha_3 + 8\alpha_4 + 6\alpha_5 + 4\alpha_6 + 2\alpha_7,
  \end{align*}
  \begin{align*}
    &\Phi^+(\mathfrak{h}, \mathfrak{a}) = \left\{\varepsilon_j \pm \varepsilon_i \ (1 \leq i < j \leq 6)\right\} \cup \{\varepsilon_8 - \varepsilon_7\} \\
        &\hspace{45pt}\cup \{\textstyle\frac{1}{2}(\varepsilon_8 - \varepsilon_7 + \sum_{i=1}^{6}(-1)^{\nu_i}\varepsilon_i) \mid \sum_{i=1}^{6} \nu_i = 1 \mod 2\}.
  \end{align*}
  For $Y \in \mathfrak{a}_+$, we have
  \begin{align*}
    \rho_\mathfrak{h}(Y) &= \left(\sum_{j=1}^{6}2(j-1)\varepsilon_j + 17(\varepsilon_8 -\varepsilon_7)\right)(Y) \\
    &= (2\varepsilon_2 + 4\varepsilon_3 + 6\varepsilon_4 + 8\varepsilon_5 + 10\varepsilon_6 + 17(\varepsilon_8 - \varepsilon_7))(Y) \\
    &= (34\alpha_1 + 49\alpha_2 + 66\alpha_3 + 96\alpha_4 + 75\alpha_5 + 52\alpha_6 + 27\alpha_7)(Y).
  \end{align*}

  For $\lambda = \varpi_1$,
  this irreducible representation coincides with the adjoint representation of $\mathfrak{h}$, and 
  \[\Lambda(\varpi_1) = \Phi(\mathfrak{h}, \mathfrak{a}) \cup \{0\}.\]
  By taking the sum of the differences of weights, we can see that 
  $f(\varpi_1; Y) > 2\rho_\mathfrak{h}(Y)$.

  For $\lambda = \varpi_7$, 
    $\Lambda(\varpi_7)$ contains following weights 
  \[\begin{gathered}
      \pm\frac{1}{2}(2, 3, 4, 6, 5, 4, 3), \ 
      \pm\frac{1}{2}(2, 3, 4, 6, 5, 4, 1), \ 
      \pm\frac{1}{2}(2, 3, 4, 6, 5, 2, 1), \ 
      \pm\frac{1}{2}(2, 3, 4, 6, 3, 2, 1), \\ 
      \pm\frac{1}{2}(2, 3, 4, 4, 3, 2, 1), \ 
      \pm\frac{1}{2}(2, 1, 4, 4, 3, 2, 1), \ 
      \pm\frac{1}{2}(2, 1, 2, 4, 3, 2, 1), \ 
      \pm\frac{1}{2}(2, 1, 2, 2, 3, 2, 1), \\
      \pm\frac{1}{2}(2, 1, 2, 2, 1, 2, 1), \ 
      \pm\frac{1}{2}(2, 1, 2, 2, 1, 0, 1), \ 
      \pm\frac{1}{2}(0, 3, 2, 4, 3, 2, 1), \ 
      \pm\frac{1}{2}(0, 1, 2, 4, 3, 2, 1), \\
      \pm\frac{1}{2}(0, 1, 2, 2, 3, 2, 1), \ 
      \pm\frac{1}{2}(0, 1, 2, 2, 1, 2, 1), \ 
      \pm\frac{1}{2}(0, 1, 2, 2, 1, 0, 1), \ 
      \pm\frac{1}{2}(0, 1, 0, 2, 1, 0, 1), \\
      \pm\frac{1}{2}(0, 1, 0, 0, 1, 0, 1).
    \end{gathered}\]
  By taking the sum of the differences of weights, we can see that 
  $f(\varpi_7; Y) > 2\rho_\mathfrak{h}(Y)$.\\

  \noindent(viii) $\mathfrak{h} = \mathfrak{e}_8$. 
  The minimal dominant weight with respect to the partial order $\prec$ is only $0$.
  \[
  \begin{tikzpicture}[auto]
    \node (0) at (0,0) {$0$};
    \node (01) at (0,1) {$\varpi_8$};
    \node at (1.9,1) {$=(2,3,4,6,5,4,3,2)$};
    \draw (0) --node[swap] {$+2\alpha_1+3\alpha_2+4\alpha_3+6\alpha_4+5\alpha_5+4\alpha_6+3\alpha_7+2\alpha_8$}(01);    
    \begin{scope}[every path/.style=dashed]
      \draw (01) --(0,1.8);
    \end{scope}
  \end{tikzpicture}
  \]

  For $\lambda = \varpi_8$,
  this irreducible representation coincides with the adjoint representation of $\mathfrak{h}$, and 
  \[\Lambda(\varpi_8) = \Phi(\mathfrak{h}, \mathfrak{a}) \cup \{0\}.\]
  By taking the sum of the differences of weights, we can see that 
  $f(\varpi_8; Y) > 2\rho_\mathfrak{h}(Y)$.
\end{proof}

\subsection{Proof of Proposition \ref{red3}}
\begin{proof}[Proof of Proposition \ref{red3}]
  The direct implication, namely, that $2\rho_\mathfrak{h} < \rho_\mathfrak{g}$ implies the condtions on $n_i$ is proved by explicit computations for some elements in $\mathfrak{a}_+$.
  The proof of opposite implication is carried out by induction on $r$.

  \noindent(1) 
  Let $(\mathfrak{g}, \mathfrak{h}) = (\mathfrak{sl}_n, \mathfrak{sl}_{n_1} \oplus \cdots \oplus \mathfrak{sl}_{n_r})$.
  If $2\rho_\mathfrak{h} < \rho_\mathfrak{g}$ holds, considering the element $Y_1 = (1, 0, \dots, 0, -1) \in \mathfrak{sl}_{n_1}$ we have 
  \[\rho_\mathfrak{h}(Y_1) = 2(n_1-1), \quad \rho_\mathfrak{g}(Y_1) = 2(n-1).\]
  Then $2n_1 \leq n$ holds.

  When $r \geq 2$, considering the element $Y_2 = (\underbrace{1, 0, \dots, 0, -1}_{n_1}, \underbrace{1, 0, \dots, 0, -1}_{n_2}) \in \mathfrak{sl}_{n_1} \oplus \mathfrak{sl}_{n_2}$,
  \[\rho_\mathfrak{h}(Y_2) = 2(n_1 + n_2 -2), \quad \rho_\mathfrak{g}(Y_2) = 4(n-2).\]
  Then we have $n_1 + n_2 \leq n-1$.
  Thus the direct implication is proved.
  
  We now prove the opposite implication 
  \begin{equation}\label{opp:sl3}
    (2n_1 \leq n \ \text{and}\  n_1 + n_2 \leq n-1) \Rightarrow \rho_\mathfrak{h} < \rho_\mathfrak{q}
  \end{equation}
  by induction on $r$.

  We define $\lambda_i\ (1 \leq i \leq n)$ to be the sequence by arranging $n_k-2j_k+1\ (1 \leq j_k \leq n_k, 1 \leq k \leq r)$ and $n-\sum_{k=1}^{r}n_k$ zeros in decreasing order, and $\mu_i := n-2i+1\ (1 \leq i \leq n)$.
  Similarly, for an element $Y = (a_{k,j})_{1 \leq k \leq r, 1 \leq j_k \leq n_k} \in \mathfrak{a}_+$, we define real numbers $c_i\ (1 \leq i \leq n)$ to be the sequence by arranging $a_{k,j}$ in decreasing order.

  Suppose $2n_1 \leq n$ and $n_1 + n_2 \leq n-1$.

  When $r=1$, 
  by the assumption $2n_1 \leq n$, it follows that 
  \[\rho_\mathfrak{h}(Y) = \sum_{i=1}^{n_1}(n_1-2i+1)a_{1,i} \leq \sum_{i=1}^{n_1}|n_1-2i+1| |a_{1,i}| < \sum_{i=1}^{n_1}(n-n_1)|a_{1,i}| = \rho_\mathfrak{q}(Y)\]
  
  When $r=2$,
  we have
  \begin{align*}
    \rho_\mathfrak{h}(Y) &= \sum_{i=1}^{n_1}(n_1-2i+1)a_{1,i} + \sum_{j=1}^{n_2}(n_2-2j+1)a_{2,j} \leq \sum_{i=1}^{n}\lambda_i c_i, \\
    \rho_\mathfrak{g}(Y) &= \sum_{i=1}^{n}(n-2i+1)c_i = \sum_{i=1}^{n}\mu_i c_i.
  \end{align*}
  By the assumption $2n_1 \leq n$ and $n_1 + n_2 \leq n-1$ and definitions of $\lambda_i$, $\mu_i$, the following inequalities hold 
  \[0 < 2\lambda_1 < \mu_1,\quad 0 \leq 2\lambda_i \leq \mu_i\ \left(2 \leq i \leq n^\prime\right)\] 
  where $n^\prime := \lfloor \frac{n}{2} \rfloor$ is the greatest integer less than or equal to $\frac{n}{2}$.
  Since $\lambda_i = -\lambda_{n-i+1}$ and $\mu_i = -\mu_{n-i+1}$, then we have 
  \[2\rho_\mathfrak{h}(Y) \leq \sum_{i=1}^{n^\prime} 2\lambda_i(c_i - c_{n-i+1}) < \sum_{i=1}^{n^\prime} \mu_i(c_i - c_{n-i+1}) = \rho_\mathfrak{g}(Y).\]

  When $r=3$, we show that 
  \begin{equation}\label{slr3}
    \begin{split}
      &\text{there exists}\  i\ (1 \leq i \leq n^\prime)\ \text{such that}\ 0 \leq \mu_i < 2\lambda_i\\
    &\text{if and only if}\ n=n_1 + n_2 + n_3\ \text{and}\ n_1, n_2, n_3\ \text{are even}.
    \end{split}
  \end{equation}
  If $n=n_1 + n_2 + n_3$ and $n_1, n_2, n_3$ are even, we can see that $n^\prime = n/2$ and $2\lambda_{n/2} = 2 > 1 = \mu_{n/2}$. 
  Moreover, we have $0 \leq 2\lambda_i \leq \mu_i$ for $i\ (1 \leq i \leq \frac{n}{2}-1)$.
  To prove the `only if' part of (\ref{slr3}), it suffices to consider the case $n = n_1 + n_2 + n_3$ and at least one of $n_1, n_2, n_3$ is odd.
  When exactly one of $n_1, n_2, n_3$ is odd, $\lambda_{n^\prime-2} = 2, \lambda_{n^\prime-1} = \lambda_{n^\prime} = 1$ and $\mu_{n^\prime-2} = 6, \mu_{n^\prime-1} = 4, \mu_{n^\prime} =2$.
  When exactly two of $n_1, n_2, n_3$ are odd, $\lambda_{n^\prime-2} = 2, \lambda_{n^\prime-1} = 1, \lambda_{n^\prime} = 0$ and $\mu_{n^\prime-2} = 5, \mu_{n^\prime-1} = 3, \mu_{n^\prime} =1$.
  When all $n_1, n_2, n_3$ are odd, $\lambda_{n^\prime-2} = \lambda_{n^\prime-1} = 2, \lambda_{n^\prime} = 0$ and $\mu_{n^\prime-2} = 6, \mu_{n^\prime-1} = 4, \mu_{n^\prime} =2$.
  Hence, $0 \le 2\lambda_i \leq \mu_i\ (i = n^\prime-2, n^\prime-1, n^\prime)$ holds in every case.
  We take an integer $l \in \mathbb{Z}_{\geq 0}$ satisfying $\lambda_{n^\prime-3l} \leq n_3 -1 \leq \lambda_{n^\prime-3l-2}$.
  Since
  \[\lambda_{n^\prime-3i-c} = \lambda_{n^\prime-c} + 2i, \quad\mu_{n^\prime-3i-c} = \mu_{n^\prime-c} + 6i\]
  for $c=0, 1, 2$, then we have 
  \[0 \leq 2\lambda_k \leq \mu_k\ (0 \leq k \leq l).\]
  For the remaining $\lambda_i\ (1 \leq i \leq n^\prime-3l-3)$, we may write $\lambda_i = n_1-2j+1$ or $n_2-2k+1$ for some $j\ (1 \leq j \leq \left\lfloor \frac{n_1}{2} \right\rfloor)$ or $k\ (1 \leq k \leq \left\lfloor \frac{n_2}{2} \right\rfloor)$.
  By the assumption of (\ref{opp:sl3}) and same argument in the case $r=2$, we have 
  \[0 < 2\lambda_1 < \mu_1, \quad 0 \leq 2\lambda_i \leq \mu_i\ (2 \leq i \leq n^\prime-3l-3).\]
  This proves the equivalence (\ref{slr3}).

  Except when $n=n_1 + n_2 + n_3$ and $n_1, n_2, n_3$ are even, we have
  \begin{align*}
    2\rho_\mathfrak{h}(Y) &\leq 2\lambda_1(c_1 - c_n) + \sum_{i=2}^{n^\prime}2\lambda_i(c_i - c_{n-i+1}) \\
    &< \mu_1(c_1 - c_n) + \sum_{i=2}^{n^\prime}\mu_i(c_i - c_{n-i+1}) \\
    &= \rho_\mathfrak{g}(Y).
  \end{align*}

  When $n=n_1 + n_2 + n_3$ and $n_1, n_2, n_3$ are even, we have $2\lambda_{n^\prime-1} = 2\lambda_{n^\prime} = 2$, $\mu_{n^\prime-1} = 3$, $\mu_{n^\prime} = 1$ and then
  \begin{align*}
    &2\lambda_{n^\prime-1}(c_{n^\prime-1} - c_{n^\prime+2}) + 2\lambda_{n^\prime}(c_{n^\prime} - c_{n^\prime+1}) \\
    &\leq \mu_{n^\prime-1}(c_{n^\prime-1} - c_{n^\prime+2}) + \mu_{n^\prime}(c_{n^\prime} - c_{n^\prime+1}).
  \end{align*}
  Since $n^\prime \geq 3$, it follows that
  \begin{align*}
    2\rho_\mathfrak{h}(Y) &\leq 2\lambda_1(c_1 - c_n) + \sum_{i=2}^{n^\prime-2}2\lambda_i(c_i - c_{n-i+1}) \\
    &\hspace{50pt} + 2\lambda_{n^\prime-1}(c_{n^\prime-1} - c_{n^\prime+2}) + 2\lambda_{n^\prime}(c_{n^\prime} - c_{n^\prime+1}) \\
    &< \mu_1(c_1 - c_n) + \sum_{i=2}^{n^\prime-2}\mu_i(c_i - c_{n-i+1}) \\
    &\hspace{50pt} + \mu_{n^\prime-1}(c_{n^\prime-1} - c_{n^\prime+2}) + \mu_{n^\prime}(c_{n^\prime} - c_{n^\prime+1}) \\
    &= \rho_\mathfrak{g}(Y).
  \end{align*}

  For $r \geq 4$, we show that the statement (\ref{opp:sl3}) can be reduced to the case $r-1$.
  Assume that the implication (\ref{opp:sl3}) holds for $r-1$.
  We set $\widetilde{\mathfrak{h}} := \mathfrak{sl}_{n_1} \oplus \dots \oplus \mathfrak{sl}_{n_{r-2}} \oplus \mathfrak{sl}_{n_{r-1} + n_r} \subset \mathfrak{sl}_n$.

  When $n_1 \geq n_{r-1} + n_r$, $2\rho_{\widetilde{\mathfrak{h}}} < \rho_\mathfrak{g}$ holds from the induction hypothesis, then $2\rho_\mathfrak{h} < \rho_\mathfrak{g}$ holds.

  Suppose $n_1 < n_{r-1} + n_r$.
  Then the inequalities $2(n_{r-1} + n_r) \leq n$ and $n_1 + (n_{r-1} + n_r) \leq n-1$ always hold.
  Thus we have $2\rho_\mathfrak{h} < \rho_\mathfrak{g}$ as above.
  Therefore, we have proved the implication (\ref{opp:sl3}) for any $r\geq1$.

  Next, we determine all witness vectors.
  In the case $r=1$, when $2n_1 = n+1$, we see from the proof for (\ref{opp:sl3}) that $\rho_\mathfrak{h}(Y) = \rho_\mathfrak{q}(Y)$ holds if and only if $a_2 = \dots = a_{n_1-1} = 0$.

  When $r \geq 2$, we devide the case where a witness vector exists into the following three cases: (i) $r=2$, $2n_1 = n+1$ and $2n_2 = n-1$, (ii) $r = 2$ and $2n_1 = 2n_2 = n$, (iii) $r \geq 2$, $2n_1 = n+1$ and $2n_2 < n-1$.

  Cases (i) and (ii) have already been considered in Proposition \ref{red2}, since $n = n_1 + n_2$.
  For (iii), we have $2\lambda_1 = 2(n_1-1) = n-1 = \mu_1$. 
  By the same argument as above, we can see that $Y$ is a witness vector if and only if $Y = (a_1, 0 \dots, 0, -a_1) \in \mathfrak{a}_+ \cap \mathfrak{sl}_{n_1}$.\\
  
  \noindent(2) 
  Let $(\mathfrak{g}, \mathfrak{h}) = (\mathfrak{so}_n, \mathfrak{so}_{n_1} \oplus \cdots \oplus \mathfrak{so}_{n_r})$.
  If $2\rho_\mathfrak{h} < \rho_\mathfrak{g}$ holds, considering the element $Y_1 = (1, 0, 0, \dots, 0) \in \mathfrak{so}_{n_1}$ we have 
  \[\rho_\mathfrak{h}(Y_1) = n_1-2, \quad \rho_\mathfrak{g}(Y_1) = n-2.\]
  Then $2n_1 \leq n+1$ holds.

  We now prove the opposite implication 
  \begin{equation}\label{opp:so3}
    2n_1 \leq n+1 \Rightarrow \rho_\mathfrak{h} < \rho_\mathfrak{q}
  \end{equation}
  by induction on $r$.

  Let $n^\prime := \lfloor \frac{n}{2} \rfloor$ and ${n_k}^\prime = \lfloor \frac{n_k}{2} \rfloor$ for $k\ (1 \leq k \leq r)$.
  We define real numbers $\lambda_i\ (1 \leq i \leq n^\prime)$ to be the sequence by arranging $n_k-2j_k\ (1 \leq j_k \leq {n_k}^\prime, 1 \leq k \leq r)$ and $n^\prime-\sum_{k=1}^{r}{n_k}^\prime$ zeros in decreasing order, and $\mu_i := n-2i\ (1 \leq i \leq n^\prime)$.
  Similarly, for an element $Y=(a_{k,j})_{1 \leq k \leq r, 1 \leq j \leq {n_k}^\prime} \in \mathfrak{a}_+$, we define real numbers $c_i\ (1 \leq i \leq n^\prime)$ to be the sequence by arranging $a_{k,j}$ and $n^\prime-\sum_{k=1}^{r}{n_k}^\prime$ zeros in decreasing order.

  Suppose $2n_1 \leq n+1$.

  When $r=1$, 
  by the assumption $2n_1 \leq n+1$, we have 
  \[2\rho_\mathfrak{h}(Y) = \sum_{i=1}^{{n_1}^\prime}2(n_1-2i)a_i < \sum_{i=1}^{{n_1}^\prime}(n-2i)a_i = \rho_\mathfrak{g}(Y).\]

  When $r=2$, 
  it follows from the assumption $2n_1 \leq n+1$ that $2\lambda_1 < \mu_1, 2\lambda_i \leq \mu_i\ (2\leq i \leq n^\prime)$.
  Thus we have 
  \[2\rho_\mathfrak{h}(Y) \leq 2\lambda_1c_1 + \sum_{i=2}^{n^\prime}2\lambda_i c_i 
  < \mu_1c_1 + \sum_{i=2}^{n^\prime}\mu_ic_i = \rho_\mathfrak{g}(Y).\]

  When $r=3$, by the assumption $2n_1 \leq n+1$, we have $2\lambda_1 < \mu_1$.
  We show that $2\lambda_i \leq \mu_i\ (2 \leq i \leq n^\prime)$.
  First, we note that $\mu_{n^\prime} \geq 4$, $\mu_{n^\prime-1} \geq 2$ and $\mu_{n^\prime} \geq 0$.
  When all $n_1, n_2, n_3$ are even, $\lambda_{n^\prime-2} = \lambda_{n^\prime-1} = \lambda_{n^\prime} = 0$. 
  When exactly one of $n_1, n_2, n_3$ is odd, $\lambda_{n^\prime-2} \leq 1, \lambda_{n^\prime-1} = \lambda_{n^\prime} = 0$.
  When exactly two of $n_1, n_2, n_3$ are odd, $\lambda_{n^\prime-2}, \lambda_{n^\prime-1} \leq 1, \lambda_{n^\prime} = 0$.
  When all $n_1, n_2, n_3$ are odd, $n^\prime \geq {n_1}^\prime + {n_2}^\prime + {n_3}^\prime +1$ since
  \[n \geq n_1 + n_2 + n_3 = 2({n_1}^\prime + {n_2}^\prime + {n_3}^\prime +1)+1.\]
  Thus, one can see that $\lambda_{n^\prime-2}, \lambda_{n^\prime-1} \leq 1, \lambda_{n^\prime} = 0$.
  In any cases, we have 
  \[0 \le 2\lambda_i \leq \mu_i\ (i = n^\prime-2, n^\prime-1, n^\prime).\]
  Therefore, arguing as in (1), it follows that $2\lambda_i \leq \mu_i\ (2 \leq i \leq n^\prime)$.
  Then we have
  \[2\rho_\mathfrak{h}(Y) \leq 2\lambda_1c_1 + \sum_{i=2}^{n^\prime}2\lambda_i c_i 
  < \mu_1c_1 + \sum_{i=2}^{n^\prime}\mu_ic_i = \rho_\mathfrak{g}(Y).\]

  For $r \geq 4$, we show that the statement (\ref{opp:so3}) can be reduced to the case $r-1$.
  Assume that the implication (\ref{opp:so3}) holds for $r-1$.
  We set $\widetilde{\mathfrak{h}} := \mathfrak{so}_{n_1} \oplus \dots \oplus \mathfrak{so}_{n_{r-2}} \oplus \mathfrak{so}_{n_{r-1} + n_r} \subset \mathfrak{so}_n$.

  When $n_1 \geq n_{r-1} + n_r$, $2\rho_{\widetilde{\mathfrak{h}}} < \rho_\mathfrak{g}$ holds from the induction hypothesis, then $2\rho_\mathfrak{h} < \rho_\mathfrak{g}$ holds.

  Suppose $n_1 < n_{r-1} + n_r$.
  Then the inequality $2(n_{r-1} + n_r) \leq n+1$ always holds.
  Thus we have $2\rho_\mathfrak{h} < \rho_\mathfrak{g}$ as above.
  Therefore, we have proved the implication (\ref{opp:so3}) for any $r\geq1$.

  We determine all witness vectors.
  In the case $r=1$, when $2n_1 = n+2$, we have 
  $2\lambda_1 = \mu_1$ and $2\lambda_i < \mu_i\ (i \geq 2)$. 
  Then every witness vector is of the form $(a_1, 0, \dots, 0) \in \mathfrak{a}_+$.
  This can be seen from the proof, and the same statement ($a_1>0$, others equal to $0$) holds for any $r\geq 2$.\\

  \noindent(3) 
  Let $(\mathfrak{g}, \mathfrak{h}) = (\mathfrak{sp}_4, \mathfrak{sp}_2 \oplus \mathfrak{sp}_1 \oplus \mathfrak{sp}_1)$.
  For an element $Y=(a_1, a_2, b, c) \in \mathfrak{a}_+$, we assume that $a_2 \geq b \geq c$.
  Then we have 
  \[\rho_\mathfrak{h}(Y) = 4a_1 + 2a_2 + 2b + 2c,\quad \rho_\mathfrak{g}(Y) = 8a_1 + 6a_2 + 4b + 2c.\]
  Thus $2\rho_\mathfrak{h}(Y) = \rho_\mathfrak{g}(Y)$ holds if and only if $a_2 = b = c$.
  The other cases are similar.

  Let $(\mathfrak{g}, \mathfrak{h}) = (\mathfrak{sp}_3, \mathfrak{sp}_1 \oplus \mathfrak{sp}_1 \oplus \mathfrak{sp}_1)$.
  For an element $Y=(a, b, c) \in \mathfrak{a}_+$, we assume that $a \geq b \geq c$.
  Then we have 
  \[\rho_\mathfrak{h}(Y) = 2a + 2b + 2c,\quad \rho_\mathfrak{g}(Y) = 6a + 4b + 2c.\]
  Thus $2\rho_\mathfrak{h}(Y) = \rho_\mathfrak{g}(Y)$ holds if and only if $a = b = c$.
  The other cases are similar.

  In the following, we consider the cases other than $(\mathfrak{g}, \mathfrak{h}) = (\mathfrak{sp}_4, \mathfrak{sp}_2 \oplus \mathfrak{sp}_1 \oplus \mathfrak{ap}_1), (\mathfrak{sp}_3, \mathfrak{sp}_1 \oplus \mathfrak{sp}_1 \oplus \mathfrak{sp}_1)$.
  
  If $2\rho_\mathfrak{h} < \rho_\mathfrak{g}$ holds, considering the element $Y_1 = (1, 0, 0, \dots, 0) \in \mathfrak{sp}_{n_1}$ we have 
  \[\rho_\mathfrak{h}(Y_1) = 2n_1, \quad \rho_\mathfrak{g}(Y_1) = 2n.\]
  Then $2n_1 \leq n-1$ holds.

  We now prove the opposite implication 
  \begin{equation}\label{opp:sp3}
    2n_1 \leq n-1 \Rightarrow \rho_\mathfrak{h} < \rho_\mathfrak{q}
  \end{equation}
  by induction on $r$.

  We define real numbers $\lambda_i\ (1 \leq i \leq n)$ to be the sequence by arranging $2(n_k-j_k+1)\ (1 \leq j_k \leq n_k, 1 \leq k \leq r)$ and $n-\sum_{k=1}^{r}n_k$ zeros in decreasing order, and $\mu_i := 2(n-i+1)\ (1 \leq i \leq n)$.
  Similarly, for an element $Y=(a_{k,j})_{1 \leq k \leq r, 1 \leq j \leq n_k} \in \mathfrak{a}_+$, we define real numbers $c_i\ (1 \leq i \leq n)$ to be the sequence by arranging $a_{k,j}$ and $n-\sum_{k=1}^{r}n_k$ zeros in decreasing order.

  Suppose $2n_1 \leq n-1$.

  When $r=1$, 
  for a nonzero element $Y=(a_1, \dots, a_{n_1}) \in \mathfrak{a}_+$, we have 
  \[2\rho_\mathfrak{h}(Y) = \sum_{i=1}^{n_1}2(n_1-i+1)a_i, \quad \rho_\mathfrak{g}(Y) = \sum_{i=1}^{{n_1}}2(n-i+1)a_i\]
  by the assumption $2n_1 \leq n-1$, then $2\rho_\mathfrak{h}(Y) < \rho_\mathfrak{g}(Y)$ holds.

  When $r=2$,
  it follows by the assumption $2n_1 \leq n-1$ that $2\lambda_1 < \mu_1,\quad 2\lambda_i \leq \mu_i\ (2\leq i \leq n)$.
  Thus we have
    \[2\rho_\mathfrak{h}(Y) \leq 2\lambda_1c_1 + \sum_{i=2}^{n}2\lambda_i c_i 
    < \mu_1c_1 + \sum_{i=2}^{n}\mu_i c_i 
    = \rho_\mathfrak{g}(Y).\]

  When $r=3$,
  by the assumption $2n_1 \leq n-1$, we have $2\lambda_1 < \mu_1$.
  Let $l_1 := n_1-n_2$ and $l_2 := n_2-n_3$, then 
  \begin{equation*}
    \lambda_i = 
    \begin{cases}
      2(n_1-i+1) & (1 \leq i \leq l_1), \\
      2(n_1-l_1-k+1) & (i = l_1+2k-1 \quad \text{for}\ 1 \leq k \leq l_2), \\
      2(n_2-k+1) & (i = l_1+2k \quad \text{for}\ 1 \leq k \leq l_2), \\
      2(n_1-l_1-l_2-k+1) & (i=l_1+2l_2+3k-2 \quad \text{for}\ 1 \leq k \leq n_3), \\
      2(n_2-l_2-k+1) & (i=l_1+2l_2+3k-1 \quad \text{for}\ 1 \leq k \leq n_3), \\
      2(n_3-k+1) & (i=l_1+2l_2+3k \quad \text{for}\ 1 \leq k \leq n_3).
    \end{cases}
  \end{equation*}
  Let $m := n_1+n_2+n_3 = l_1+2l_2+3n_3$.
  We note that 
  \[\mu_m-2\lambda_m = 2(n-m+1) - 4 =2(n-m-1) \geq -2.\]
  If $2\lambda_m = \mu_m$, then we have $2\lambda_i \leq \mu_i\ (2 \leq i \leq n)$ and 
  \[2\rho_\mathfrak{h}(Y) \leq 2\lambda_1 c_1 + \sum_{i=2}^{n}2\lambda_i c_i < \mu_1c_1 + \sum_{i=2}^{n}\mu_i c_i = \rho_\mathfrak{g}(Y). \] 
  If $\mu_m-2\lambda_m = -2$, then $n = m = n_1+n_2+n_3$ and it follows that 
  \[\mu_{m-1}-2\lambda_{m-1} = 0,\quad \mu_{m-2}-2\lambda_{m-2} = 2. \]
  In this case, when $n \geq 4$, we have
  \begin{align*}
    2\rho_\mathfrak{h}(Y) &\leq 2\lambda_1 c_1 + \sum_{i=2}^{n-3}2\lambda_i c_i + (2\lambda_{n-2}c_{n-2} + 2\lambda_{n-1}c_{n-1} + 2\lambda_n c_n) \\
    &< \mu_1 c_1 + \sum_{i=2}^{n-3}\mu_i c_i + (\mu_{n-2}c_{n-2} + \mu_{n-1}c_{n-1} + \mu_n c_n) \\
    &= \rho_\mathfrak{g}(Y).
  \end{align*}
  Here, we used $n \geq 4$ in the second inequality.
  When $n=3$, we have $n=m=3$, $n_1=n_2=n_3=1$, which is excluded. 

  When $r \geq 4$,
  we assume that the implication (\ref{opp:sp3}) holds for $r-1$.
  We set $\widetilde{\mathfrak{h}} := \mathfrak{sp}_{n_1} \oplus \dots \oplus \mathfrak{sp}_{n_{r-2}} \oplus \mathfrak{sp}_{n_{r-1} + n_r} \subset \mathfrak{sp}_n$.

  When $n_1 \geq n_{r-1} + n_r$, $2\rho_{\widetilde{\mathfrak{h}}} < \rho_\mathfrak{g}$ holds from the induction hypothesis, then $2\rho_\mathfrak{h} < \rho_\mathfrak{g}$ holds.

  Suppose $n_1 < n_{r-1} + n_r$.
  The inequality $2(n_{r-1} + n_r) \leq n-1$ does not hold if and only if $(\mathfrak{g}, \mathfrak{h}) = (\mathfrak{sp}_{4p}, {\mathfrak{sp}_p}^{\oplus 4})$.
  If $(\mathfrak{g}, \mathfrak{h}) \neq (\mathfrak{sp}_{4p}, {\mathfrak{sp}_p}^{\oplus 4})$, then we have $2\rho_\mathfrak{h} < \rho_\mathfrak{g}$ from the induction hypothesis.
  When $(\mathfrak{g}, \mathfrak{h}) = (\mathfrak{sp}_{4p}, {\mathfrak{sp}_p}^{\oplus 4})$, computing $\mu_i - 2\lambda_i$ as in the case $r=3$, one can see that  $2\rho_\mathfrak{h}(Y) < \rho_\mathfrak{g}(Y)$.

  We determine all witness vectors.
  In the case $r=1$, when $2n_1 = n$, we see from the proof for (\ref{opp:sl3}) that $\rho_\mathfrak{h}(Y) = \rho_\mathfrak{q}(Y)$ holds if and only if $a_2 = \dots = a_{n_1} = 0$.
  This can be seen from the proof, and the same statement ($a_1>0$, others equal to $0$) holds for the case $r\geq 2$ except for $(\mathfrak{g}, \mathfrak{h}) = (\mathfrak{sp}_{4}, \mathfrak{sp}_2 \oplus \mathfrak{sp}_1 \oplus \mathfrak{sp}_1)$.
\end{proof}

\subsection{Proof of Proposition \ref{redu}}\label{subsec:redu}
\begin{proof}[Proof of Proposition \ref{redu}]
  (1) 
  Let $(\mathfrak{g}, \mathfrak{h}) = (\mathfrak{sl}_{2p+q}, \mathfrak{sl}_p \oplus \mathfrak{so}_q)$.
  Suppose $q > 1$.
  For the element $Y_0 = (1, 0, \dots, 0, -1) \in \mathfrak{sl}_q$, we have
  \[\rho_\mathfrak{h}(Y_0) = 2(q-1), \quad \rho_\mathfrak{g}(Y_0) = 2(2p+q-1).\]
  If $2\rho_\mathfrak{h} < \rho_\mathfrak{g}$ holds, then $q \leq 2p$.
  In the case $q=1$, $2\rho_\mathfrak{h} \equiv \rho_\mathfrak{g}$ follows, as explained below.
  
  Assume that $1 \leq q \leq 2p$.
  The proof for the opposite implication $1 < q \leq 2p \Rightarrow 2\rho_\mathfrak{h} < \rho_\mathfrak{g}$ can be devided into the two cases; $q$ is odd or even.
  For an element $Y = (a, \dots, a_p, b_1, \dots, b_q) \in \mathfrak{a}_+$, we have
  \[\rho_\mathfrak{h}(Y) = \sum_{i=1}^{p}2(p-i+1)a_i + \sum_{i=1}^{q}(q-2i+1)b_i.\]
  Defining $Y^\prime = (c_1, \dots, c_{2p+q})$ where real numbers $c_i\ (1\leq i \leq 2p+q)$ are obtained by arranging $\pm a_j\ \ (1 \leq j \leq p)$ and $b_k\ \ (1 \leq k \leq q)$ in decreasing order, then
  \[\rho_\mathfrak{g}(Y) = \sum_{i=1}^{2p+q}(2p+q-2i+1)c_i.\]
  We define a function $\phi(X) = \phi(x_1, \dots, x_{2p+q})$ on $\mathbb{R}^{2p+q}$ by 
  \[\phi(X) := \sum_{i=1}^{2p+q}(2p+q-2i+1)x_i.\]
  The symmetric group $\mathfrak{S}_{2p+q}$ acts on $\mathbb{R}^{2p+q}$ by permuting the entries.
  We remark that $\phi(\sigma(Y)) \leq \rho_\mathfrak{g}(Y)$ for any $\sigma \in \mathfrak{S}_{2p+q}$ and $\phi(\sigma(Y)) = \rho_\mathfrak{g}(Y)$ if $\sigma(Y) = Y^\prime$.

  (i) Case $q = 2q^\prime+1$ odd: We set $x_i$ as follows;
  \begin{equation*}
    x_i = 
    \begin{cases}
      b_i & (1 \leq i \leq q^\prime), \\
      a_{i-q^\prime} & (q^\prime+1 \leq i \leq p+q^\prime), \\
      b_{q^\prime+1} & (i = p+q^\prime+1), \\
      -a_{2p+q^\prime+2-i} & (p+q^\prime+2 \leq i \leq 2p+q^\prime+1), \\
      b_{i-2p} & (2p+q^\prime+2 \leq i \leq 2p+q). 
    \end{cases}
  \end{equation*}
  Then we have 
  \begin{align*}
    \rho_\mathfrak{g}(Y) \geq \phi(X) &= \sum_{i=1}^{p}2(2p+q-2(q^\prime+i)+1)a_i + \sum_{i=1}^{q^\prime}(2p+q-2i+1)b_i \\
    &\hspace{100pt} + \sum_{i=q^\prime+2}^{q}(2p+q-2(2p+i)+1)b_i \\
    &= \sum_{i=1}^{p}4(p-i+1)a_i + \sum_{i=1}^{q^\prime}(2p+q-2i+1)b_i \\
    &\hspace{100pt}+ \sum_{i=q^\prime+2}^{q}(-2p+q-2i+1)b_i.
  \end{align*}
  By assumption $q \leq 2p$,
  \begin{equation}\label{ineq:ACA}
    \begin{split}
      \rho_\mathfrak{g}(Y) - 2\rho_\mathfrak{h}(Y) &\geq \sum_{i=1}^{q^\prime}\{(2p+q-2i+1)-2(q-2i+1)\}b_i  \\
      &\hspace{70pt} +\sum_{i=q^\prime+2}^{q}\{(-2p+q-2i+1)-2(q-2i+1)\}b_i  \\
      &= \sum_{i=1}^{q^\prime}(2p-q+2i-1)(b_i-b_{q-i+1}) \\
      &\geq \sum_{i=1}^{q^\prime}(2i-1)(b_i-b_{q-i+1}). 
    \end{split}
  \end{equation}
  If $b_1 - b_q > 0$, then we have $2\rho_\mathfrak{h}(Y) < \rho_\mathfrak{g}(Y)$.
  When $b_1 = \dots = b_q = 0$, setting $x_i = a_i$, $x_{2p+q-i+1} = -a_i\ (1 \leq i \leq p)$, we have $\rho_\mathfrak{g}(Y) = \phi(X)$ and
  \begin{align*}
    \rho_\mathfrak{g}(Y) - 2\rho_\mathfrak{h}(Y) &= \phi(X) - 2\rho_\mathfrak{h}(Y) \\
    &= \sum_{i=1}^{p}2(2p+q-2i+1)a_i - \sum_{i=1}^{p}4(p-i+1)a_i \\
    &= 2(q-1)\sum_{i=1}^{p}a_i.
  \end{align*}
  Thus $2\rho_\mathfrak{h} < \rho_\mathfrak{g}$ if $q \geq 2$, and $2\rho_\mathfrak{h} \equiv  \rho_\mathfrak{g}$ if $q=1$.

  (ii) Case $q = 2q^\prime$ even: The inequality $\rho_\mathfrak{g} < 2\rho_\mathfrak{h}$ can be proved in the same way.

  Suppose that $q=2p+1$. 
  We prove that an element $Y = (a_1, \ldots, a_p, b_1, \ldots, b_{2p+1}) \in \mathfrak{a}_+$ is a witness vector if and only if $b_1 = -b_{2p+1} > 0$ and $a_1 = \cdots = a_p = b_2 = \cdots = b_{2p} = 0$.
  By the inequality (\ref{ineq:ACA}), we have
  $\rho_\mathfrak{g}(Y) - 2\rho_\mathfrak{h}(Y) = \sum_{i=1}^{p}(2i-1)(b_i-b_{2p-i+2})$.
  If $b_2-b_{2p} > 0$, then $2\rho_\mathfrak{h} < \rho_\mathfrak{g}$ holds.
  When $b_2-b_{2p} = b_3-b_{2p-1} = \dots = b_p-b_{p+2} = 0$ (i.e., $b_2 = \dots = b_{2p}$), by setting 
  \begin{equation*}
    x_i = 
    \begin{cases}
      b_i & (i=1), \\
      a_{i-1} & (2 \leq i \leq p+1), \\
      b_{i-p} & (p+2 \leq i \leq 3p), \\
      -a_{4p+1-i} & (3p+1 \leq i \leq 4p), \\
      b_{2p+1} & (i=4p+1),
    \end{cases}
  \end{equation*}
  we have the following inequality
  \[\rho_\mathfrak{g}(Y) \geq \phi(X) = 4p(b_1-b_{2p+1}) + \sum_{i=1}^{p}4(2p-i)a_i.\]
  Thus 
  \[\rho_\mathfrak{g}(Y) - 2\rho_\mathfrak{h}(Y) \geq 4(p-1)\sum_{i=1}^{p}a_i.\]
  If $p \geq 2$ and $a_1 > 0$, then $2\rho_\mathfrak{h} < \rho_\mathfrak{g}$ holds.
  The remaining case is when both $b_2 = \dots = b_{2p}$ and ($p=1$ or $a_1 = 0$) hold, which can be devided into the two cases; (A) $b_2 = \dots = b_{2p}$, $p>1$ and $a_1 = 0$, (B) $b_2 = \dots = b_{2p}$ and $p=1$.
  
  (A) When $b_2 = \dots = b_{2p} \geq 0$, by setting $x_i=b_i\ (1 \leq i \leq 2p)$, $x_i = 0\ (2p+1 \leq i \leq 4p)$ and $c_{4p+1}=b_{2p+1}$, we have
  \begin{align*}
    \rho_\mathfrak{g}(Y) - 2\rho_\mathfrak{h}(Y) &= \phi(X) - 2\rho_\mathfrak{h}(Y) \\
    &= \sum_{i=1}^{2p}2(2p-i+1)b_i -4p b_{2p+1} - \sum_{i=1}^{2p+1}4(p-i+1)b_i \\
    &= \sum_{i=1}^{2p}2(i-1)b_i.
  \end{align*}
  Therefore, if $b_2 = \dots = b_{2p} > 0$, then $2\rho_\mathfrak{h}(Y) < \rho_\mathfrak{g}(Y)$.
  If $b_2 = \dots = b_{2p} = 0$, then $Y$ is a witness vector.

  When $b_2 = \dots = b_{2p} < 0$, by setting $x_1 = b_1$, $x_i = 0\ (2 \leq i \leq 2p+1)$ and $x_i = b_{i-2p}\ (2p+2 \leq i \leq 4p+1)$, we have
  \begin{align*}
    \rho_\mathfrak{g}(Y) - 2\rho_\mathfrak{h}(Y) &= 4p b_1 + \sum_{i=2}^{2p+1}2(1-i)b_i - \sum_{i=1}^{q}4(p-i+1)b_i \\
    &= \sum_{i=2}^{2p+1}2(i-2p-1)b_i > 0.
  \end{align*}

  (B) In this case, $(\mathfrak{g}, \mathfrak{h}) = (\mathfrak{sl}_5, \mathfrak{sp}_1 \oplus \mathfrak{sl}_3)$.
  This pair is included in Proposition \ref{red2} $(1)$ with $p = q+1$.\\

  \noindent(2) For the element $Y_0 = (1, 0, \dots, 0) \in \mathfrak{so}_q$, we have
  \[\rho_\mathfrak{h}(Y_0) = q-2, \quad \rho_\mathfrak{g}(Y_0) = 2p+q-2.\]
  If $2\rho_\mathfrak{h} < \rho_\mathfrak{g}$ holds, then $q \leq 2p+1$.
  In the case $q=1$, $2\rho_\mathfrak{h} \nless \rho_\mathfrak{g}$ follows, as explained below.

  Assume that $1 \leq q \leq 2p+1$. 
  The proof for the implication $1 < q \leq 2p+1 \Rightarrow 2\rho_\mathfrak{h} < \rho_\mathfrak{g}$ can be devided into the two cases; (i) $q=2q^\prime+1$ odd, (ii) $q=2q^\prime$ even.

  (i) Case $q = 2q^\prime+1 \geq 3$ odd: We note that $q^\prime \leq p$ by assumption.
  For an element $Y = (a_1, \dots, a_p, b_1, \dots, b_{q^\prime}) \in \mathfrak{a}_+$, we define real numbers $\widetilde{a_i}$ to be the sequence obtained by arranging $|a_1|, \dots, |a_p|$ in decreasing order.
  Then we have
  \begin{align*}
    \rho_\mathfrak{h}(Y) &= \sum_{i=1}^{p}(p-2i+1)a_i + \sum_{i=1}^{q^\prime}(q-2i)b_i \\
    &\leq \sum_{i=1}^{\left\lfloor \frac{p}{2} \right\rfloor}(p-2i+1)(\widetilde{a_{2i-1}}+\widetilde{a_{2i}}) + \sum_{i=1}^{q^\prime}(q-2i)b_i.
  \end{align*}
  We also define real numbers $c_i\ (1 \leq i \leq 2p+q)$ to be the sequence obtained by arranging $\pm a_j\ (1 \leq j \leq p)$, $\pm b_k\ (1 \leq k \leq q^\prime)$ and $0$ in decreasing order, then
  \[\rho_\mathfrak{g}(Y) = \sum_{i=1}^{p+q^\prime}(2p+q-2i)c_i.\]
  As in the proof for (1), we define a function $\phi(X) = \phi(x_1, \dots, x_{p+q^\prime})$ on $\mathbb{R}^{p+q^\prime}$ by 
  \[\phi(X) := \sum_{i=1}^{p+q^\prime}(2p+q-2i)x_i.\]
  By setting $x_{2i-1} = b_i, x_{2i} = \widetilde{a_i}\ (1 \leq i \leq q^\prime)$ and $x_i = \widetilde{a_{i-q^\prime}}\ (q \leq i \leq p+q^\prime)$,
  \begin{align*}
    \rho_\mathfrak{g}(Y) &= \sum_{i=1}^{q^\prime}\{2p+q-2(2i-1)\}c_{2i-1} + \sum_{i=1}^{q^\prime}(2p+q-4i)c_{2i} \\
    &\hspace{150pt}+ \sum_{i=1}^{p-q^\prime}\{2p+q-2(2q^\prime+i)\}c_{2q^\prime+i} \\
    &\geq \phi(X) \\
    &= \sum_{i=1}^{q^\prime}(2p+q-4i+2)b_i + \sum_{i=1}^{q^\prime}(2p+q-4i)\widetilde{a_i} + \sum_{i=q^\prime+1}^{p}(2p-2i+1)\widetilde{a_i},
  \end{align*}
  and we have
  \begin{align*}
    \rho_\mathfrak{g}(Y) &- 2\rho_\mathfrak{h}(Y) \\
    &\geq \sum_{i=1}^{q^\prime}(2p+2-q)b_i + \sum_{2i-1 \leq q^\prime}(q-4i+1)\widetilde{a_{2i-1}} + \sum_{2i \leq q^\prime}(q-4i-1)\widetilde{a_{2i}} \\
    &\hspace{180pt}+ \sum_{i=1}^{\left\lfloor \frac{p}{2} \right\rfloor}\widetilde{a_{2i-1}} - \sum_{i=1}^{\left\lfloor \frac{p}{2} \right\rfloor}\widetilde{a_{2i}}.
  \end{align*}
  If $b_1 > 0$, we have $2\rho_\mathfrak{h}(Y) < \rho_\mathfrak{g}(Y)$.
  When $b_1 = 0$, by setting $x_i = \widetilde{a_i}\ (1 \leq i \leq p)$ and $x_i = 0\ (p+1 \leq i \leq p+q^\prime)$, 
  \begin{align*}
    \rho_\mathfrak{g}(Y) - 2\rho_\mathfrak{h}(Y) &= \phi(X) - 2\rho_\mathfrak{h}(Y) \\
    &\geq \sum_{i=1}^{p}(2p+q-2i)\widetilde{a_i} - \sum_{i=1}^{\left\lfloor \frac{p}{2} \right\rfloor}2(p-2i+1)(\widetilde{a_{2i-1}}+\widetilde{a_{2i}}) \\
    &= \sum_{1 \leq 2i-1 \leq p}q \widetilde{a_{2i-1}} + \sum_{i=1}^{\left\lfloor \frac{p}{2} \right\rfloor}(q-2) \widetilde{a_{2i}}
  \end{align*}
  By assumption $q \geq 3$, we have $2\rho_\mathfrak{h}(Y) < \rho_\mathfrak{g}(Y)$.

  When $q=1$, for $Y = (a_1, \dots, a_p) \in \mathfrak{sl}_p$ we have
  \[\rho_\mathfrak{g}(Y) - 2\rho_\mathfrak{h}(Y) \geq \sum_{1 \leq 2i-1 \leq p}\widetilde{a_{2i-1}} - \sum_{1 \leq 2i \leq p}\widetilde{a_{2i}}.\]
  This shows that $Y$ is a witness vector if and only if $\widetilde{a_{2i-1}} = \widetilde{a_{2i}}$ and $a_i - a_{p-i+1} = \widetilde{a_{2i-1}} + \widetilde{a_{2i}}$ for any $(1 \leq i \leq \frac{p}{2})$, that is, $a_i = -a_{p-i+1}$ for any $(1 \leq i \leq \frac{p}{2})$.

  (ii) Case $q = 2q^\prime$ even: The inequality $2\rho_\mathfrak{h} < \rho_\mathfrak{g}$ can be proved in the same way.

  Next, we assume that $q=2p+2$.
  By setting $x_{2i-1} = b_i, x_{2i} = \widetilde{a_i}\ (1 \leq i \leq p)$ and $x_{2p+1} = b_{p+1}$, the following inequalities holds;
  \begin{align*}
    &\rho_\mathfrak{g}(Y) \geq \phi(X) = \sum_{i=1}^{p+1}4(p-i+1)b_i + \sum_{i=1}^{p}2(2p-2i+1)\widetilde{a_i}, \\
    &\rho_\mathfrak{g}(Y) - 2\rho_\mathfrak{h}(Y) \geq \sum_{2i-1 \leq p}2(p-2i+2)\widetilde{a_{2i-1}} + \sum_{2i \leq p}2(p-2i)\widetilde{a_{2i}}.
  \end{align*}
  Since $2(p-2i+2) \geq 2$, $2\rho_\mathfrak{h}(Y) < \rho_\mathfrak{g}(Y)$ holds if $\widetilde{a_1} > 0$.
  If $a_1 = \dots = a_p = 0$, then
  \begin{align*}
    &\rho_\mathfrak{g}(Y) = \sum_{i=1}^{p}2(2p-i+1)b_i + 2p|b_{p+1}|, \\
    &\rho_\mathfrak{g}(Y) - 2\rho_\mathfrak{h}(Y) = \sum_{i=1}^p 2(i-1)b_i + 2p|b_{p+1}|.
  \end{align*}
  This shows that $Y$ is a witness vector if and only if $b_1 > 0$ and $b_2 = \dots = b_{p+1} = 0$.\\

  \noindent(3) For the element $Y_0 = (1, 0, \dots, 0) \in \mathfrak{sp}_q$, we have
  \[\rho_\mathfrak{h}(Y_0) = 2q, \quad \rho_\mathfrak{g}(Y_0) = 2(p+q).\]
  If $2\rho_\mathfrak{h} < \rho_\mathfrak{g}$ holds, then $q \leq p-1$.
  
  We assume that $q \leq p-1$.
  For an element $Y = (a_1, \dots, a_p, b_1, \dots, b_q) \in \mathfrak{a}_+$, we define $\widetilde{a_i}\ (1 \leq i \leq p)$ as in (2).
  Then 
  \begin{align*}
    \rho_\mathfrak{h}(Y) &= \sum_{i=1}^{p}(p-2i+1)a_i + \sum_{j=1}^{p}2(q-i+1)b_j \\
    &\leq \sum_{1 \leq i \leq \left\lfloor \frac{p}{2} \right\rfloor}(p-2i+1)(\widetilde{a_{2i-1}} + \widetilde{a_{2i}}) + \sum_{i=1}^
    q 2(q-i+1)b_i.
  \end{align*}
  We also define real numbers $c_i\ (1 \leq i \leq p+q)$ to be the sequence obtained by arranging $\widetilde{a_j}\ (1 \leq j \leq p)$, $b_k\ (1 \leq k \leq q)$ in decreasing order, then
  \[\rho_\mathfrak{g}(Y) = \sum_{i=1}^{p+q}2(p+q-i+1)c_i.\]
  As in the proof for (i) or (ii), we define a function $\phi(X) = \phi(x_1, \dots, x_{p+q})$ on $\mathbb{R}^{p+q}$ by 
  \[\phi(X) := \sum_{i=1}^{p+q}2(p+q-i+1)x_i.\]
  By setting $x_{2i-1} = b_i, x_{2i} = \widetilde{a_i}\ (1 \leq i \leq q)$ and $x_i = \widetilde{a_{i-q}}\ (2q+1 \leq i \leq p+q)$,
  we have the following inequalities
  \begin{align}\label{ineq:CAC}
    \rho_\mathfrak{g}(Y) &\geq \phi(X) \\
    &= \sum_{i=1}^{q}2(p+q-2i+2)b_i + \sum_{i=1}^{q}2(p+q-2i+1)\widetilde{a_i} + \sum_{i=q+1}^{p}2(p-i+1)\widetilde{a_i} \notag\\
    \rho_\mathfrak{g}(Y) &- 2\rho_\mathfrak{h}(Y) \geq \sum_{i=1}^{q}2(p-q)b_i + \sum_{2i-1 \leq q}2(q-2i+2)\widetilde{a_{2i-1}} \notag\\
    &\hspace{150pt}+ \sum_{2i \leq q}2(q-2i)\widetilde{a_{2i}} + \sum_{q+1 \leq 2i-1}2\widetilde{a_{2i-1}}.
  \end{align}
  This implies $2\rho_\mathfrak{h} < \rho_\mathfrak{g}$.

  In the case $p=q$, it follows from the inequality (\ref{ineq:CAC}) that $2\rho_\mathfrak{h}(Y) < \rho_\mathfrak{g}(Y)$ if $a_1 > 0$.
  When $a_1 = \dots a_p = 0$, we can see that 
  \[\rho_\mathfrak{g}(Y) - 2\rho_\mathfrak{h}(Y) = \sum_{i=1}^{q}2(i-1)b_i\]
  by $c_i = b_i\ (1 \leq i \leq q)$.
  This implies that $Y$ is a witness vector if and only if $b_1 > 0$ and  $a_1 = \dots = a_p = b_2 = \dots = b_q = 0$. \\

  \noindent(4) The element $Y_0 = (1, 1, 0, \dots, 0) \in \mathfrak{sp}_p$ is conjugate to $(1, 1, 1, 1, 0, \dots, 0) \in \mathfrak{so}_{4p}$ by an inner automorphism of $\mathfrak{so}_{4p}$, and
  \[\rho_\mathfrak{h}(Y_0) = 4p-2, \quad \rho_\mathfrak{g}(Y_0) = 16p-20.\]
  If $2\rho_\mathfrak{h} < \rho_\mathfrak{g}$ holds, then $p \geq 3$.
  Suppose that $p \geq 3$ and take an element $Y = (a_1, \dots, a_p) \in \mathfrak{a}+$.
  We can see that 
  \begin{align*}
    \rho_\mathfrak{h}(Y) &= \sum_{i=1}^{p}2(p-i+1)a_i, \\
    \rho_\mathfrak{g}(Y) &= \sum_{i=1}^{p}2(2p-2i+1)a_i + \sum_{i=1}^{p}2(2p-2i)a_i \\
    &= \sum_{i=1}^{p}2(4p-4i+1)a_i,\\
    \rho_\mathfrak{g}(Y) - 2\rho_\mathfrak{h}(Y)  &= \sum_{i=1}^{p}2(2p-2i-1)a_i.
  \end{align*}
  Then the inequality $2\rho_\mathfrak{h} < \rho_\mathfrak{g}$ holds if $p\geq3$.
  When $p=2$, we have $\rho_\mathfrak{g}(Y) - 2\rho_\mathfrak{h}(Y) = 2(a_1 - a_2)$ by the previous equation.
  This implies that $Y$ is a witness vector if and only if $a_1 = a_2$.
\end{proof}

\subsection{Proof of Proposition \ref{redex}}\label{subsec:redex}
  The proof is carried out by giving explicit computations for each $q$.
  More precisely, we give lower estimates of $\rho_\mathfrak{g}$ by using functions $\phi$ defined as in \S \ref{subsec:redu}.
  Let $q^\prime := \left\lfloor\frac{q}{2}\right\rfloor$

\begin{proof}[Proof of Proposition \ref{redex}]
  (1) Let $(\mathfrak{g}, \mathfrak{h}) = (\mathfrak{so}_{7+q}, \mathfrak{g}_2 \oplus \mathfrak{so}_q)$.
  Let $Y = (Y_1, Y_2) = (a_1, a_2, a_3, b_1, \dots, b_{q^\prime}) \in \mathfrak{a}_+$, that is, $Y_1 = (a_1, a_2, a_3) \in \mathfrak{a}_+ \cap \mathfrak{g}_2$ and $Y_2 = (b_1, \dots, b_{q^\prime}) \in \mathfrak{a}_+ \cap \mathfrak{so}_q$.
  Let $\pi : \mathfrak{g}_2 \rightarrow \mathfrak{so}_7$ be the unique $7$-dimensional irreducible representation of $\mathfrak{g}_2$.
  The set of weights $\Lambda$ occuring in this representation of $\mathfrak{g}_2$ is 
  \[\Lambda = \{\pm(2\alpha_1 + \alpha_2), \pm(\alpha_1 + \alpha_2), \pm\alpha_1\} \cup \{0\}.\]
  Let $\pm \widetilde{a_i}\ (1 \leq i \leq 3)$ be the values of $Y_1$ with respect to weights, that is
  \[\widetilde{a_1} := (2\alpha_1 + \alpha_2)(Y_1),\quad \widetilde{a_2} := (\alpha_1 + \alpha_2)(Y_2),\quad \widetilde{a_3} := \alpha_1(Y_1).\]
  Let $s = \lfloor \frac{q+7}{2} \rfloor$ be the greatest integer less than or equal to $\frac{q+7}{2}$ and we define real numbers $c_i\ (1 \leq i \leq s)$ to be the sequence obtained by arranging $\widetilde{a_j}\ (1 \leq j \leq 3)$, $b_k\ (1 \leq k \leq q^\prime)$ (and $0$ if $q$ is odd) in decreasing order, then we have 
  \begin{align*}
    \rho_\mathfrak{h}(Y) &= (10\alpha_1 + 6\alpha_2)(Y_1) + \sum_{i=1}^{q^\prime}(q-2i)b_i, \\
    \rho_\mathfrak{g}(Y) &= \sum_{i=1}^s(q+7-2i)c_i.
  \end{align*}
  We also define a function $\phi(X) = \phi(x_1, \dots, x_s)$ on $\mathbb{R}^{s}$ by 
  \[\phi(X) := \sum_{i=1}^{s}(q+7-2i)x_i.\]
  Note again that $\phi(\sigma(Y)) \leq \rho_\mathfrak{g}(Y)$ for any $\sigma \in \mathfrak{S}_{s}$.
  
  We list below the lower estimates of $\rho_\mathfrak{g}(Y)$ for each $2 \leq q \leq 9$.

  When $q=2$, we set $(x_1, \dots, x_4) = (\widetilde{a_1}, \widetilde{a_2}, \widetilde{a_3}, 0)$. Then we have
  \begin{align*}
    \rho_\mathfrak{h}(Y) &= (10\alpha_1 + 6\alpha_2)(Y_1), \\
    \rho_\mathfrak{g}(Y) &\geq \phi(X) \\
    &= 7\widetilde{a_1} + 5\widetilde{a_2} + 3\widetilde{a_3} \\
    &=(22\alpha_1 + 12\alpha_2)(Y_1).
  \end{align*}
  We can see that $2\rho_\mathfrak{h}(Y) = \rho_\mathfrak{g}(Y)$ if and only if $\alpha_1(Y_1) = 0$, i.e., $a_1 = a_2$.

  When $q=3$, we set $(x_1, \dots, x_5) = (\widetilde{a_1}, \widetilde{a_2}, b_1, \widetilde{a_3}, 0)$. Then we have
  \begin{align*}
    \rho_\mathfrak{h}(Y) &= (10\alpha_1 + 6\alpha_2)(Y_1) + b_1, \\
    \rho_\mathfrak{g}(Y) &\geq \phi(X) \\
    &= 8\widetilde{a_1} + 6\widetilde{a_2} + 2\widetilde{a_3} + 4b_1\\
    &=(24\alpha_1 + 14\alpha_2)(Y_1) + 4b_1.
  \end{align*}
  Then we have $2\rho_\mathfrak{h} < \rho_\mathfrak{g}$.

  When $q=4$, we set $(x_1, \dots, x_5) = (\widetilde{a_1}, \widetilde{a_2}, b_1, \widetilde{a_3}, |b_2|).$ Then we have
  \begin{align*}
    \rho_\mathfrak{h}(Y) &= (10\alpha_1 + 6\alpha_2)(Y_1) + 2b_1, \\
    \rho_\mathfrak{g}(Y) &\geq \phi(X) \\
    &= 9\widetilde{a_1} + 7\widetilde{a_2} + 3\widetilde{a_3} + 5b_1 + |b_2|\\
    &=(28\alpha_1 + 16\alpha_2)(Y_1) + 5b_1 + |b_2|.
  \end{align*}
  Then we have $2\rho_\mathfrak{h} < \rho_\mathfrak{g}$.

  When $q=5$, we set $(x_1, \dots, x_6) = (\widetilde{a_1}, b_1, \widetilde{a_2}, \widetilde{a_3}, b_2, 0).$ Then we have
  \begin{align*}
    \rho_\mathfrak{h}(Y) &= (10\alpha_1 + 6\alpha_2)(Y_1) + 3b_1 + b_2, \\
    \rho_\mathfrak{g}(Y) &\geq \phi(X) \\
    &= 10\widetilde{a_1} + 6\widetilde{a_2} + 4\widetilde{a_3} + 8b_1 + 2b_2\\
    &=(30\alpha_1 + 16\alpha_2)(Y_1) + 8b_1 + 2b_2.
  \end{align*}
  Then we have $2\rho_\mathfrak{h} < \rho_\mathfrak{g}$.

  When $q=6$, we set $(x_1, \dots, x_6) = (\widetilde{a_1}, b_1, \widetilde{a_2}, b_2, \widetilde{a_3}, |b_3|).$ Then we have
  \begin{align*}
    \rho_\mathfrak{h}(Y) &= (10\alpha_1 + 6\alpha_2)(Y_1) + 4b_1 + 2b_2, \\
    \rho_\mathfrak{g}(Y) &\geq \phi(X) \\
    &= 11\widetilde{a_1} + 7\widetilde{a_2} + 3\widetilde{a_3} + 9b_1 + 5b_2 + |b_3|\\
    &=(32\alpha_1 + 18\alpha_2)(Y_1) + 9b_1 + 5b_2 + |b_3|.
  \end{align*}
  Then we have $2\rho_\mathfrak{h} < \rho_\mathfrak{g}$.

  When $q=7$, we set $(x_1, \dots, x_7) = (b_1, \widetilde{a_1}, \widetilde{a_2}, b_2, \widetilde{a_3}, b_3, 0).$ Then we have
  \begin{align*}
    \rho_\mathfrak{h}(Y) &= (10\alpha_1 + 6\alpha_2)(Y_1) + 5b_1 + 3b_2 + b_3, \\
    \rho_\mathfrak{g}(Y) &\geq \phi(X) \\
    &= 10\widetilde{a_1} + 7\widetilde{a_2} + 3\widetilde{a_3} + 12b_1 + 6b_2 + 2b_3\\
    &=(32\alpha_1 + 18\alpha_2)(Y_1) + 12b_1 + 6b_2 + 2b_3.
  \end{align*}
  Then we have $2\rho_\mathfrak{h} < \rho_\mathfrak{g}$.

  When $q=8$, we set $(x_1, \dots, x_7) = (b_1, \widetilde{a_1}, b_2, \widetilde{a_2}, b_3, \widetilde{a_3}, |b_4|).$ Then we have
  \begin{align*}
    \rho_\mathfrak{h}(Y) &= (10\alpha_1 + 6\alpha_2)(Y_1) + 6b_1 + 4b_2 + 2b_3, \\
    \rho_\mathfrak{g}(Y) &\geq \phi(X) \\
    &= 11\widetilde{a_1} + 7\widetilde{a_2} + 3\widetilde{a_3} + 13b_1 + 9b_2 + 5b_3 + |b_4|\\
    &=(32\alpha_1 + 18\alpha_2)(Y_1) + 13b_1 + 9b_2 + 5b_3 + |b_4|.
  \end{align*}
  Then we have $2\rho_\mathfrak{h} < \rho_\mathfrak{g}$.

  When $q=9$, we set $(x_1, \dots, x_8) = (b_1, \widetilde{a_1}, b_2, \widetilde{a_2}, b_3, \widetilde{a_3}, b_4, 0).$ Then we have
  \begin{align*}
    \rho_\mathfrak{h}(Y) &= (10\alpha_1 + 6\alpha_2)(Y_1) + 7b_1 + 5b_2 + 3b_3 + b_4, \\
    \rho_\mathfrak{g}(Y) &\geq \phi(X) \\
    &= 12\widetilde{a_1} + 8\widetilde{a_2} + 4\widetilde{a_3} + 14b_1 + 10b_2 + 6b_3 + 2b_4\\
    &=(36\alpha_1 + 20\alpha_2)(Y_1) + 14b_1 + 10b_2 + 6b_3 + 2b_4.
  \end{align*}
  We can see that $2\rho_\mathfrak{h}(Y) = \rho_\mathfrak{g}(Y)$ if and only if $Y_1 = 0$, i.e., $a_1 = a_2 = a_3 = b_2 = b_3 = b_4 = 0$.\\

  \noindent(2) 
  Let $(\mathfrak{g}, \mathfrak{h}) = (\mathfrak{so}_{8+q}, \mathfrak{so}_7 \oplus \mathfrak{so}_q)$.
  Let $Y = (Y_1, Y_2) = (a_1, a_2, a_3, b_1, \dots, b_{q^\prime}) \in \mathfrak{a}_+$, that is, $Y_1 = (a_1, a_2, a_3) \in \mathfrak{a}_+ \cap \mathfrak{so}_7$ and $Y_2 = (b_1, \dots, b_{q^\prime}) \in \mathfrak{a}_+ \cap \mathfrak{so}_q$.
  Let $\pi : \mathfrak{so}_7 \rightarrow \mathfrak{so}_8$ be the spin representation of $\mathfrak{so}_7$.
  The set of weights $\Lambda$  occuring in this representation is 
  \[\Lambda = \left\{\pm \displaystyle\frac{1}{2}(\varepsilon_1 \pm \varepsilon_2 \pm \varepsilon_3)\right\}.\]
  Let $\pm \widetilde{a_i}\ (1 \leq i \leq 4)$ be the values of $Y_1$ with respect to weights, that is
  \begin{align*}
    \widetilde{a_1} &:= \frac{1}{2}(a_1+a_2+a_3), & \widetilde{a_2} &:= \frac{1}{2}(a_1+a_2-a_3), \\
    \widetilde{a_3} &:= \frac{1}{2}(a_1-a_2+a_3), & \widetilde{a_4} &:= \frac{1}{2}(-a_1+a_2+a_3).
  \end{align*}
  We define real numbers $c_i\ (1 \leq i \leq q+8)$ to be the sequence obtained by arranging $\pm \widetilde{a_j}\ (1 \leq j \leq 4)$, $\pm b_k\ (1 \leq k \leq q^\prime)$ in decreasing order, then
  \begin{align*}
    \rho_\mathfrak{h}(Y) &= 5a_1 + 3a_2 + a_3 + \sum_{i=1}^{q^\prime}(q-2i)b_i, \\
    \rho_\mathfrak{g}(Y) &= \sum_{i=1}^{q^\prime+4}(q+8-2i)c_i.
  \end{align*}
  We also define a function $\phi(X) = \phi(x_1, \dots, x_{q^\prime+4})$ on $\mathbb{R}^{q^\prime+4}$ by 
  \[\phi(X) := \sum_{i=1}^{q^\prime+4}(q+8-2i)x_i.\]
  
  We list below the lower estimates of $\rho_\mathfrak{g}(Y)$ for each $3 \leq q \leq 10$.
  
  When $q=3$, we set $(x_1, \dots, x_5) = (\widetilde{a_1}, \widetilde{a_2}, \widetilde{a_3}, b_1, \widetilde{a_4}).$ Then we have
  \begin{align*}
    \rho_\mathfrak{h}(Y) &= 5a_1 + 3a_2 + a_3 + b_1, \\
    \rho_\mathfrak{g}(Y) &\geq \phi(X) \\
    &= 9\widetilde{a_1} + 7\widetilde{a_2} + 5\widetilde{a_3} +3b_1 + \widetilde{a_4} \\
    &=10a_1 + 6a_2 + 4a_3 + 3b_1.
  \end{align*}
  When $a_3 > 0$ or $b_1 > 0$, then $2\rho_\mathfrak{h}(Y) < \rho_\mathfrak{g}(Y)$.
  When $a_3 = b_1 = 0$, \[\rho_\mathfrak{h}(Y) = 5a_1 + 3a_2,\quad \rho_\mathfrak{g}(Y) = 12a_1 + 4a_2.\]
  Thus we have $2\rho_\mathfrak{h}(Y) < \rho_\mathfrak{g}(Y)$ if $a_1 > a_2$.
  The witness vector exhausted by elements of the form $(a_1, a_1, 0, 0)$.

  When $q=4$, we set $(x_1, \dots, x_6) = (\widetilde{a_1}, \widetilde{a_2}, b_1, \widetilde{a_3}, \widetilde{a_4}, b_2).$ Then we have
  \begin{align*}
    \rho_\mathfrak{h}(Y) &= 5a_1 + 3a_2 + a_3 + 2b_1, \\
    \rho_\mathfrak{g}(Y) &\geq \phi(X) \\
    &=10\widetilde{a_1} + 8\widetilde{a_2} + 4\widetilde{a_3} + 2\widetilde{a_4} + 6b_1 \\
    &=10a_1 + 8a_2 + 4a_3 + 6b_1.
  \end{align*}
  When $a_2 > 0$ or $b_1 > 0$, then $2\rho_\mathfrak{h}(Y) < \rho_\mathfrak{g}(Y)$.
  When $a_2 = a_3 = b_1 = b_2 = 0$ and $a_1 > 0$, we have
  \begin{align*}
    &\rho_\mathfrak{g}(Y) = 10\widetilde{a_1} + 8\widetilde{a_2} + 6\widetilde{a_3} + 4|\widetilde{a_4}| = 14a_1,\\
    &\rho_\mathfrak{g}(Y) - 2\rho_\mathfrak{h}(Y) = 4a_1 > 0.
  \end{align*}

  When $q=5$, we set $(x_1, \dots, x_6) = (\widetilde{a_1}, \widetilde{a_2}, b_1, \widetilde{a_3}, b_2, \widetilde{a_4}).$ Then we have
  \begin{align*}
    \rho_\mathfrak{h}(Y) &= 5a_1 + 3a_2 + a_3 + 3b_1 + b_2, \\
    \rho_\mathfrak{g}(Y) &\geq \phi(X) \\
    &=11\widetilde{a_1} + 9\widetilde{a_2} + 5\widetilde{a_3} + \widetilde{a_4} + 7b_1 + 3b_2 \\
    &=12a_1 + 8a_2 + 4a_3 + 7b_1 + 3b_2.
  \end{align*}
  Then we have $2\rho_\mathfrak{h} < \rho_\mathfrak{g}$.

  When $q=6$, we set $(x_1, \dots, x_7) = (\widetilde{a_1}, b_1, \widetilde{a_2}, \widetilde{a_3}, b_2, \widetilde{a_4}, b_3).$ Then we have
  \begin{align*}
    \rho_\mathfrak{h}(Y) &= 5a_1 + 3a_2 + a_3 + 4b_1 + 2b_2, \\
    \rho_\mathfrak{g}(Y) &\geq \phi(X) \\
    &=12\widetilde{a_1} + 8\widetilde{a_2} + 6\widetilde{a_3} + 2\widetilde{a_4} + 10b_1 + 4b_2 \\
    &=12a_1 + 8a_2 + 6a_3 + 10b_1 + 4b_2.
  \end{align*}
  Then we have $2\rho_\mathfrak{h} < \rho_\mathfrak{g}$.

  When $q=7$, we set $(x_1, \dots, x_7) = (\widetilde{a_1}, b_1, \widetilde{a_2}, b_2, \widetilde{a_3}, b_3, \widetilde{a_4}).$ Then we have
  \begin{align*}
    \rho_\mathfrak{h}(Y) &= 5a_1 + 3a_2 + a_3 + 5b_1 + 3b_2 + b_3, \\
    \rho_\mathfrak{g}(Y) &\geq \phi(X) \\
    &= 13\widetilde{a_1} + 9\widetilde{a_2} + 5\widetilde{a_3} + \widetilde{a_4} + 11b_1 + 7b_2 + 3b_3 \\
    &=13a_1 + 9a_2 + 5a_3 + 11b_1 + 7b_2 + 3b_3.
  \end{align*}
  Then we have $2\rho_\mathfrak{h} < \rho_\mathfrak{g}$.

  When $q=8$, we set $(x_1, \dots, x_8) = (b_1, \widetilde{a_1}, \widetilde{a_2}, b_2, \widetilde{a_3}, b_3, \widetilde{a_4}, b_4).$ Then we have
  \begin{align*}
    \rho_\mathfrak{h}(Y) &= 5a_1 + 3a_2 + a_3 + 6b_1 + 4b_2 + 2b_3, \\
    \rho_\mathfrak{g}(Y) &\geq \phi(X) \\
    &= 12\widetilde{a_1} + 10\widetilde{a_2} + 6\widetilde{a_3} + 2\widetilde{a_4} + 14b_1 + 8b_2 + 4b_3 \\
    &=13a_1 + 9a_2 + 5a_3 + 14b_1 + 8b_2 + 4b_3.
  \end{align*}
  Then we have $2\rho_\mathfrak{h} < \rho_\mathfrak{g}$.

  When $q=9$, we set $(x_1, \dots, x_8) = (b_1, \widetilde{a_1}, b_2, \widetilde{a_2}, b_3, \widetilde{a_3}, b_4, \widetilde{a_4}).$ Then we have
  \begin{align*}
    \rho_\mathfrak{h}(Y) &= 5a_1 + 3a_2 + a_3 + 7b_1 + 5b_2 + 3b_3 + b_4, \\
    \rho_\mathfrak{g}(Y) &\geq \phi(X) \\
    &= 13\widetilde{a_1} + 9\widetilde{a_2} + 5\widetilde{a_3} + \widetilde{a_4} + 15b_1 + 11b_2 + 7b_3 +3b_4 \\
    &=13a_1 + 9a_2 + 5a_3 + 15b_1 + 11b_2 + 7b_3 + 3b_4.
  \end{align*}
  Then we have $2\rho_\mathfrak{h} < \rho_\mathfrak{g}$.

  When $q=10$, we set $(x_1, \dots, x_9) = (b_1, b_2, \widetilde{a_1}, b_3, \widetilde{a_2}, b_4, \widetilde{a_3}, \widetilde{a_4}, b_5).$ Then we have
  \begin{align*}
    \rho_\mathfrak{h}(Y) &= 5a_1 + 3a_2 + a_3 + 8b_1 + 6b_2 + 4b_3 + 2b_4, \\
    \rho_\mathfrak{g}(Y) &\geq \phi(X) \\
    &= 12\widetilde{a_1} + 8\widetilde{a_2} + 4\widetilde{a_3} + 2\widetilde{a_4} + 16b_1 + 14b_2 + 10b_3 +6b_4 \\
    &=11a_1 + 9a_2 + 5a_3 + 16b_1 + 14b_2 + 10b_3 + 6b_4.
  \end{align*}
  We can see that $2\rho_\mathfrak{h}(Y) = \rho_\mathfrak{g}(Y)$ if and only if $b_1 > 0$ and $a_1 = a_2 = a_3 = b_2 = b_3 = b_4 = b_5 = 0$.
\end{proof}

\subsection{proof of Proposition \ref{final}}
\begin{proof}
  (1) 
  Let $\mathfrak{g} = \mathfrak{sl}_{p+q} \supset \mathfrak{sl}_p \oplus \mathfrak{sl}_q \supset \mathfrak{h}$.
  We show that $2\rho_\mathfrak{h} < \rho_\mathfrak{g}$ does not hold in three cases stated in the claim (b).
  When $p=q+1$ and $\mathfrak{h} \supset \mathfrak{sl}_p$, the element $Y_0 = (1, 0, \dots, 0, -1) \in \mathfrak{sl}_p$ is a witness vector.
  For the other two cases, it has already been shown in Propositions \ref{red2}, \ref{redu} that $2\rho_\mathfrak{h} < \rho_\mathfrak{g}$ does not hold.

  Assume that the inequality $2\rho_\mathfrak{h} \leq \rho_\mathfrak{g}$ holds.
  In order to complete the proof of the statement, we show that 
  \begin{equation*}\label{list:sl}
    \left(\begin{gathered}
      \text{neither}\ p=q+1 \ \text{and}\ \mathfrak{h} \supset \mathfrak{sl}_p; \ \text{nor}\  \\
      p=q \ \text{and}\  \mathfrak{h} = \mathfrak{sl}_p \oplus \mathfrak{sl}_q; \ \text{nor}\  \\
      p\ \text{is even},\  q=1 \ \text{and}\  \mathfrak{sp}_{p/2}.
    \end{gathered}\right)
    \Rightarrow \rho_{\mathfrak{h}} < \rho_{\mathfrak{q}}.
  \end{equation*}
  From now on, we assume that $(\mathfrak{g}, \mathfrak{h})$ is none of these and satisfies $2\rho_\mathfrak{h} \leq \rho_\mathfrak{g}$.
  
  First, we consider the case where $\mathfrak{h}$ contains $\mathfrak{sl}_p$.
  In this case, we only need to consider the case $p \leq q+1$ by the assumption $2\rho_\mathfrak{h} \leq \rho_\mathfrak{g}$.
  Since the case $p=q+1$ has been excluded, only the case $p=q$ remains.
  We write $\mathfrak{h} = \mathfrak{sl}_p \oplus \mathfrak{h}_2$ with $\mathfrak{h}_2 \subsetneq \mathfrak{sl}_p$.
  It can be devided into three cases (i) $\mathfrak{h}_2$ is simple and acts irreducibly on $\mathbb{C}^p$, (ii) $\mathfrak{h}_2$ is nonsimple and acts irreducibly on $\mathbb{C}^p$, and (iii) $\mathfrak{h}_2$ acts reducibly on $\mathbb{C}^p$.

  (i) When $p=q$ is even and $\mathfrak{h}_2 = \mathfrak{sp}_{p/2}$, we have $2\rho_\mathfrak{h} < \rho_\mathfrak{g}$ by Proposition \ref{redu}.
  In the remaining cases, it follows by Proposition \ref{si} and Lemma \ref{kh} that $2\rho_\mathfrak{h} < \rho_\mathfrak{g}$ holds.

  (ii) By Theorem \ref{Dtens}, it reduces to Proposition \ref{tens}, then we have $2\rho_{\mathfrak{h}_2} < \rho_{\mathfrak{sl}_p}$.  
  Lemma \ref{kh} implies that $2\rho_\mathfrak{h} < \rho_\mathfrak{g}$ holds.

  (iii) There exist positive integers $p_1, p_2$ such that $p=p_1+p_2$ and $\mathfrak{h}_2 \subset \mathfrak{sl}_{p_1} \oplus \mathfrak{sl}_{p_2}$.
  We set $\widetilde{\mathfrak{h}} := \mathfrak{sl}_p \oplus \mathfrak{sl}_{p_1} \oplus \mathfrak{sl}_{p_2}$.
  Then $\rho_{\widetilde{\mathfrak{h}}} < \rho_{\mathfrak{g}/\widetilde{\mathfrak{h}}}$ holds by Proposition \ref{red3}, we have $2\rho_\mathfrak{h} < \rho_\mathfrak{g}$. 

  Next, we consider the case where $\mathfrak{h}$ does not contain $\mathfrak{sl}_p$.
  When $\mathfrak{h} \simeq \Delta\mathfrak{sl}_p$, by direct computation we have $2\rho_\mathfrak{h} < \rho_\mathfrak{g}$.
  When $\mathfrak{h} \not\simeq \Delta\mathfrak{sl}_p$, we set $\mathfrak{h}^\prime := \mathfrak{h} + \mathfrak{sl}_q = \mathfrak{h}^\prime_1 \oplus \mathfrak{sl}_q$.
  By assumption that $\mathfrak{h}$ acts irreducibly on $\mathbb{C}^p$, $\mathfrak{h}^\prime_1$ also acts irreducibly on $\mathbb{C}^p$.
  Depending on whether $\mathfrak{h}^\prime_1$ is simple or nonsimple, we repeat the same arguments as above, and we have $2\rho_\mathfrak{h} < \rho_\mathfrak{g}$.\\

  \noindent(2) 
  Let $\mathfrak{g} = \mathfrak{so}_{p+q} \supset \mathfrak{so}_p \oplus \mathfrak{so}_q \supset \mathfrak{h}$.
  We show that $2\rho_\mathfrak{h} < \rho_\mathfrak{g}$ does not hold in three cases stated in the claim (b).
  When $p=q+2$ and $\mathfrak{h} = \mathfrak{so}_p \oplus \mathfrak{h}_2$, the element $Y=(1, 0, \dots, 0) \in \mathfrak{so}_p$ is a witness vector.
  For the other two cases, it has already been shown in Proposition \ref{redex} when $(\mathfrak{g}, \mathfrak{h}) = (\mathfrak{so}_9, \mathfrak{g}_2)$ or $(\mathfrak{so}_{11}, \mathfrak{so}_7 \oplus \mathfrak{so}_3)$.
  In particular, when $(\mathfrak{g}, \mathfrak{h}) = (\mathfrak{so}_{11}, \mathfrak{so}_7 \oplus \mathfrak{h}_2)$ with $\mathfrak{h}_2 \subset \mathfrak{so}_3$, we can take a witness vector in $\mathfrak{so}_7$.

  Assume that the inequality $2\rho_\mathfrak{h} \leq \rho_\mathfrak{g}$ holds.
  In order to complete the proof of the statement, we show that 
  \begin{equation*}
    \left(\begin{gathered}
      \text{neither}\ p=q+2 \ \text{and}\ \mathfrak{h} \supset \mathfrak{so}_p; \ \text{nor}\  \\
      p=7, q=2 \ \text{and}\  \mathfrak{h} = \mathfrak{g}_2; \ \text{nor}\  \\
      p=8, q=3\ \text{and}\ \mathfrak{h} \supset \mathfrak{so}_{7}.
    \end{gathered}\right)
    \Rightarrow \rho_{\mathfrak{h}} < \rho_{\mathfrak{q}}.
  \end{equation*}
  From now on, we assume that $(\mathfrak{g}, \mathfrak{h})$ is none of these and consider the case where $2\rho_\mathfrak{h} \leq \rho_\mathfrak{g}$ holds.

  First, we consider the case where $\mathfrak{h}$ contains $\mathfrak{so}_p$.
  In this case, we only need to consider the case $p \leq q+2$ by the assumption $2\rho_\mathfrak{h} \leq \rho_\mathfrak{g}$.
  Since the case $p=q+2$ has been excluded, only the case $p\leq q+1$ remains.
  For the Lie algebra $\widetilde{\mathfrak{h}} := \mathfrak{so}_p \oplus \mathfrak{so}_q$ containing $\mathfrak{h}$, we have $\rho_{\widetilde{\mathfrak{h}}} < \rho_{\widetilde{\mathfrak{q}}}$ by Proposition \ref{red2}.
  Therefore, $\rho_\mathfrak{h} < \rho_\mathfrak{q}$ holds.

  Next, we consider the case where $\mathfrak{h}$ does not contain $\mathfrak{so}_p$.
  When $\mathfrak{h} \simeq \Delta\mathfrak{so}_p$, by direct computation we have $2\rho_\mathfrak{h} < \rho_\mathfrak{g}$.
  When $\mathfrak{h} \not\simeq \Delta\mathfrak{so}_p$, we set $\mathfrak{h}^\prime := \mathfrak{h} + \mathfrak{so}_q = \mathfrak{h}^\prime_1 \oplus \mathfrak{so}_q$.
  It can be devided into two cases (i) $\mathfrak{h}^\prime_1$ is simple and acts irreducibly on $\mathbb{C}^p$, (ii) $\mathfrak{h}^\prime_1$ is nonsimple and acts irreducibly on $\mathbb{C}^p$.

  (i) When $p=7$ and $\mathfrak{h}^\prime_1 = \mathfrak{g}_2$, it was shown in Proposition \ref{redex} that $2\rho_{\mathfrak{h}^\prime} < \rho_\mathfrak{g}$ if and only if $3 \leq q \leq 8$.
  If $3 \leq q\ (\leq p = 7)$, then $2\rho_\mathfrak{h} < \rho_\mathfrak{g}$ holds.
  The case $q=2$ has been excluded.

  When $p=8$ and $\mathfrak{h}^\prime_1 = \mathfrak{so}_7$, it was shown in Proposition \ref{redex} that $2\rho_{\mathfrak{h}^\prime} < \rho_\mathfrak{g}$ if and only if $4 \leq q \leq 9$.
  If $4 \leq q\ (\leq p = 8)$, then $2\rho_\mathfrak{h} < \rho_\mathfrak{g}$ holds.
  The case $q \leq 2$ has been excluded by the assumption $2\rho_\mathfrak{h} < \rho_\mathfrak{g}$.

  In the remaining cases, we have $2\rho_{\mathfrak{h}^\prime} < \rho_\mathfrak{g}$ by Proposition \ref{si} and Lemma \ref{kh}, and then $2\rho_\mathfrak{h} < \rho_\mathfrak{g}$.

  (ii) Theorem \ref{Dtens} and Proposition \ref{tens} imply that $2\rho_{\mathfrak{h}^\prime} < \rho_\mathfrak{g}$ holds if $\mathfrak{h}^\prime_1 \not\simeq \mathfrak{sp}_2 \oplus \mathfrak{sp}_1$.
  When $\mathfrak{h}^\prime_1 = \mathfrak{sp}_2 \oplus \mathfrak{sp}_1$, it follows from Lemma below that $2\rho_{\mathfrak{h}^\prime} < \rho_\mathfrak{g}$ holds, and in particular, $2\rho_\mathfrak{h} < \rho_\mathfrak{g}$ holds.

  \begin{lemma}\label{lem:sotens}
    Let $q \leq 8$ and $\mathfrak{g} = \mathfrak{so}_{8+q} \supset \mathfrak{so}_8 \oplus \mathfrak{so}_q \supset \mathfrak{k} := \mathfrak{sp}_2 \oplus \mathfrak{sp}_1 \oplus \mathfrak{so}_q$.
    Then $2\rho_{\mathfrak{k}} < \rho_{\mathfrak{g}}$.
  \end{lemma}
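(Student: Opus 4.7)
The plan is to verify the equivalent inequality $\rho_\mathfrak{k} < \rho_{\mathfrak{g}/\mathfrak{k}}$ on $\mathfrak{a}\setminus\{0\}$, where $\mathfrak{a}$ is a maximal split abelian subspace of $\mathfrak{k}$. Write $\mathfrak{k} = \mathfrak{k}_1 \oplus \mathfrak{k}_2$ with $\mathfrak{k}_1 := \mathfrak{sp}_2 \oplus \mathfrak{sp}_1$ and $\mathfrak{k}_2 := \mathfrak{so}_q$, and split $\mathfrak{a} = \mathfrak{a}_1 \oplus \mathfrak{a}_2$ accordingly. As a $\mathfrak{k}$-module, the quotient decomposes as
\[
\mathfrak{g}/\mathfrak{k} \;\cong\; (\mathfrak{so}_8/\mathfrak{k}_1) \oplus M, \qquad M := \mathfrak{g}/(\mathfrak{so}_8 \oplus \mathfrak{so}_q) \cong \mathbb{C}^8 \otimes \mathbb{C}^q,
\]
where $\mathbb{C}^8$ is the orthogonal $\mathfrak{k}_1$-module arising from the tensor product of the defining representations of $\mathfrak{sp}_2$ and $\mathfrak{sp}_1$. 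Since $\mathfrak{k}_2$ acts trivially on $\mathfrak{so}_8/\mathfrak{k}_1$, for $Y = Y_1 + Y_2 \in \mathfrak{a}_1 \oplus \mathfrak{a}_2$ the quantity $\rho_{\mathfrak{g}/\mathfrak{k}}(Y) - \rho_\mathfrak{k}(Y)$ splits as
\[
\bigl[\rho_{\mathfrak{so}_8/\mathfrak{k}_1}(Y_1) - \rho_{\mathfrak{k}_1}(Y_1)\bigr] + \bigl[\rho_M(Y) - \rho_{\mathfrak{k}_2}(Y_2)\bigr],
\]
reducing the lemma to showing that the first bracket is nonnegative and the second is strictly positive for every nonzero $Y$.

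The first bracket is handled by Proposition \ref{tens} (3) with parameters $(p,q) = (2,1)$: this is exactly the statement $\rho_{\mathfrak{k}_1} \leq \rho_{\mathfrak{so}_8/\mathfrak{k}_1}$ for the orthogonal embedding $\mathfrak{sp}_2 \oplus \mathfrak{sp}_1 \hookrightarrow \mathfrak{so}_8$. Note that this inequality is only non-strict (since $pq = 2$), which is exactly why we need the $M$-contribution to supply the strictness.

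For the second bracket, I separate two cases. If $Y_2 \neq 0$, then Lemma \ref{reduction1} (1) yields $\rho_M(Y) \geq \rho_M(Y_2)$, and applying Lemma \ref{khcomp} to the triple $(\mathfrak{so}_{8+q}, \mathfrak{so}_8 \oplus \mathfrak{so}_q, \mathfrak{so}_q)$ (permissible since $8 \geq q$ by hypothesis) gives $\rho_M(Y_2) = \rho_{\mathfrak{g}/(\mathfrak{so}_8\oplus\mathfrak{so}_q)}(Y_2) > \rho_{\mathfrak{so}_q}(Y_2) = \rho_{\mathfrak{k}_2}(Y_2)$. If instead $Y_2 = 0$ but $Y_1 \neq 0$, then $\rho_{\mathfrak{k}_2}(Y_2) = 0$, and evaluating the weights of $M = \mathbb{C}^8 \otimes \mathbb{C}^q$ at $Y_1$ reduces each weight to a weight of $\mathbb{C}^8$ counted with multiplicity $\dim \mathbb{C}^q = q$, so that $\rho_M(Y_1) = q\,\rho_{\mathbb{C}^8}(Y_1) > 0$ by faithfulness of $\mathbb{C}^8$ as a $\mathfrak{k}_1$-module. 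In either case the second bracket is strictly positive, completing the proof.

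I do not foresee a serious obstacle: the argument is a clean application of the tower lemma paradigm (Lemmas \ref{reduction1} and \ref{khcomp}) combined with the borderline case of Proposition \ref{tens} (3). The only subtle point is the branch $Y_2 = 0$, where Proposition \ref{tens} (3) alone permits equality; the essential role of $M$ is to provide the strict excess precisely on the $Y_1$-locus where the $\mathfrak{so}_8/\mathfrak{k}_1$-contribution degenerates.
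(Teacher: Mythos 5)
Your argument is correct. Note first that the paper states Lemma \ref{lem:sotens} without supplying a proof (it is embedded in the proof of Proposition \ref{final}(2) with no proof environment attached), so there is nothing to compare line by line; the implicit intended argument, judging from the surrounding text, would be a case-by-case lower estimate of $\rho_{\mathfrak{g}}$ for each $q=1,\dots,8$ via the auxiliary functions $\phi$, exactly as in the proof of Proposition \ref{redex}. Your route is more structural and uniform in $q$: the decomposition $\mathfrak{g}/\mathfrak{k}\cong(\mathfrak{so}_8/\mathfrak{k}_1)\oplus(\mathbb{C}^8\otimes\mathbb{C}^q)$ is correct, the first bracket is nonnegative by the non-strict part of Proposition \ref{tens}(3), and the second bracket is strictly positive in both branches ($Y_2\neq 0$ via Lemma \ref{reduction1}(1) plus Lemma \ref{khcomp}, which applies since $8\geq q$; $Y_2=0$ via $\rho_M(Y_1)=q\,\rho_{\mathbb{C}^8}(Y_1)>0$ by faithfulness). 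This buys a shorter proof and isolates precisely where the strictness comes from, at the cost of leaning on three earlier lemmas rather than a self-contained computation. Two small points to tidy: Proposition \ref{tens}(3) is stated under the convention $q>1$, so you should invoke it with $(p,q)=(1,2)$ rather than $(2,1)$ (the subalgebra is the same up to reordering the factors); and when you apply Lemma \ref{reduction1}(1) with the roles of the two ideals swapped, the averaging argument behind that lemma requires semisimplicity of the ideal containing the \emph{added} element, which here is $\mathfrak{k}_1=\mathfrak{sp}_2\oplus\mathfrak{sp}_1$ --- so the application is legitimate even for $q\leq 2$, where $\mathfrak{so}_q$ fails to be semisimple, but this deserves a sentence since the lemma as stated assumes $\mathfrak{h}_1\oplus\mathfrak{h}_2$ semisimple.
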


  \noindent(3) 
  Let $\mathfrak{g} = \mathfrak{sp}_{p+q} \supset \mathfrak{sp}_p \oplus \mathfrak{sp}_q \supset \mathfrak{h}$.
  When $\mathfrak{h}$ contains $\mathfrak{sp}_p$, we have to consider the case $p=q$ by the assumption $2\rho_\mathfrak{h} \leq \rho_\mathfrak{g}$.
  In this case, we can see that the elements $Y_0 = (a_1, 0, \dots, 0) \in \mathfrak{sp}_p$ are witness vectors, so we have $2\rho_\mathfrak{h} \nless \rho_\mathfrak{g}$.

  In order to complete the proof of the statement, we show that 
  \[\mathfrak{h} \not\supset \mathfrak{sp}_p \Rightarrow \rho_\mathfrak{h} < \rho_\mathfrak{q}.\]
  From now on, we assume that $\mathfrak{h}$ does not contain $\mathfrak{sp}_p$.

  When $p \geq q+1$, We set $\mathfrak{h}^\prime := \mathfrak{h} + \mathfrak{sp}_q = \mathfrak{h}^\prime_1 \oplus \mathfrak{sp}_q$.
  If $\mathfrak{h}^\prime_1$ is a simple Lie algebra other than $\mathfrak{sp}_p$ which acts irreducibly on $\mathbb{C}^{2p}$, it follows from Proposition \ref{si}  that $2\rho_{\mathfrak{h}^\prime_1} < \rho_\mathfrak{g}$. By Lemma \ref{kh}, we have $2\rho_\mathfrak{h} < \rho_\mathfrak{g}$.
  If $\mathfrak{h}^\prime_1$ is a nonsimple Lie algebra which acts irreducibly on $\mathbb{C}^{2p}$, it follows from Proposition \ref{tens} and Lemma \ref{kh} that $2\rho_\mathfrak{h} < \rho_\mathfrak{g}$.

  Thus, we consider the case $p=q$.

  When $\mathfrak{h} \simeq \Delta\mathfrak{sp}_p$, by direct computation we have $2\rho_\mathfrak{h} < \rho_\mathfrak{g}$.

  When $\mathfrak{h} \not\simeq \Delta\mathfrak{sp}_p$, 
  let $\pi_1\ (\text{resp.}\ \pi_2) : \mathfrak{sp}_p \oplus \mathfrak{sp}_p \rightarrow \mathfrak{sp}_p$ be the first (resp. the second) projection and define $\mathfrak{h}_i := \pi_i(\mathfrak{h})$ so that $\mathfrak{h}_1 \oplus \mathfrak{h}_2 \supset \mathfrak{h}$.  
  By the assumption that $\mathfrak{h}$ acts irreducibly on $\mathbb{C}^{2p}$, $\mathfrak{h}_1$ also acts irreducibly on $\mathbb{C}^{2p}$.
  If $\mathfrak{h}_2$ acts irreducibly on $\mathbb{C}^{2p}$, it follows by Proposition \ref{si} that $2\rho_{\mathfrak{h}_i} < \rho_{\mathfrak{sp}_p}$ for $i=1, 2$, then we have
  \[2\rho_\mathfrak{h} \leq 2\rho_{\mathfrak{h}_1} + 2\rho_{\mathfrak{h}_2} < \rho_{\mathfrak{sp}_p} + \rho_{\mathfrak{sp}_p} \leq \rho_\mathfrak{g}.\]
  Thus $2\rho_\mathfrak{h} < \rho_\mathfrak{g}$ holds.

  In the case where $\mathfrak{h}_2$ acts reducibly on $\mathbb{C}^{2p}$, we have $\mathfrak{h}_2 \subset \bigoplus_i \mathfrak{sp}_{n_i} \oplus \bigoplus_{j} \mathfrak{sl}_{m_j}$.
  If $\mathfrak{h}_2 \subset \bigoplus_{j} \mathfrak{sl}_{m_j}$, then $\mathfrak{h}_2 \subset \mathfrak{sl}_p$ and it follows by Proposition \ref{incl} that $2\rho_{\mathfrak{h}_2} < \rho_{\mathfrak{sp}_p}$.
  Therefore we can see that $2\rho_\mathfrak{h} < \rho_\mathfrak{g}$.

  If at least one $n_i \geq 1$, it can be rewritten as $\mathfrak{h}_2 \subset \mathfrak{sp}_{n_1} \oplus \mathfrak{sp}_{n_2}\ (n_1 + n_2 = p, n_1 \geq n_2 \geq 1)$.
  We define Lie subalgebras $\mathfrak{k}$, $\mathfrak{h}^\prime$ of $\mathfrak{sp}_{2p}$ to be $\mathfrak{k} = \mathfrak{sp}_p \oplus \mathfrak{sp}_{n_1} \oplus \mathfrak{sp}_{n_2}$, $\mathfrak{h}^\prime := \mathfrak{h} + (\mathfrak{sp}_{n_1} \oplus \mathfrak{sp}_{n_2}) = \mathfrak{h}^\prime_1 \oplus \mathfrak{sp}_{n_1} \oplus \mathfrak{sp}_{n_2}$.
  Since $2\rho_{\mathfrak{h}^\prime_1} < \rho_{\mathfrak{sp}_p}$ and $\rho_{\mathfrak{sp}_{n_1} \oplus \mathfrak{sp}_{n_2}} < \rho_{\mathfrak{sp}_{2p}/{(\mathfrak{sp}_{p} \oplus \mathfrak{sp}_{n_1} \oplus \mathfrak{sp}_{n_2})}}$, we can apply Lemma \ref{kh} to obtain that $2\rho_\mathfrak{h} < \rho_\mathfrak{g}$.
\end{proof}


\begin{thebibliography}{10}    
    \bibitem{BK}
    Y. Benoist and T. Kobayashi, \textit{Tempered reductive homogeneous spaces}.
    J. Eur. Math. Soc. (JEMS) $\mathbf{17}$ (2015), no.12, 3015--3036.
    
    \bibitem{BKII}
    Y. Benoist and T. Kobayashi, \textit{Tempered homogeneous spaces II}. 
    Dynamics, geometry, number theory-- the impact of Margulis on modern mathematics, 213--245. University of Chicago Press, Chicago, IL, [2022].
    
    \bibitem{BKIII}
    Y. Benoist and T. Kobayashi, \textit{Tempered homogeneous spaces III}. 
    J. Lie Theory $\mathbf{31}$ (2021), no.3, 833--869. 
    
    \bibitem{BKIV}
    Y. Benoist and T. Kobayashi,
    \textit{Tempered homogeneous spaces IV}.
    J. Inst. Math. Jussieu $\mathbf{22}$ (2023), no.6, 2879--2906.
    
    \bibitem{Del}
    P. Delorme,
    \textit{Formule de Plancherel pour les espaces symétriques réductifs}.
    Ann. of Math. (2) $\mathbf{147}$ (1998), no.2, 417--452.
   
    \bibitem{DyS}
    E. Dynkin, 
    \textit{Semisimple subalgebras of semisimple Lie algebras}. 
    Mat. Sbornik $\mathbf{30}$ (1952), 349--462 and Transl. AMS. $\mathbf{6}$ (1957), 111--244.

    \bibitem{DyM}
    E. Dynkin, 
    \textit{Maximal subgroups of the classical groups}. 
    Trudy Moskov. Mat. Ob\v{s}\v{c}. $\mathbf{1}$ (1952), 39--166 and Transl. AMS. $\mathbf{6}$ (1957), 245--378.
    
    \bibitem{Hum}
    J. E. Humphreys. 
    \textit{Introduction to Lie algebras and representation theory. Graduate Texts in Mathematics}.
    Vol. $\mathbf{9}$. 
    Springer-Verlag, New York--Berlin, 1972.

    \bibitem{MO}
    K. Maeda and Y. Oshima, 
    \textit{Square integrability of regular representations on reductive homogeneous spaces}.
    arXiv:2601.02188.

    \bibitem{BaSc}
    E. van den Ban and H. Schlichtkrull, 
    \textit{The Plancherel decomposition for a reductive symmetric space. II representation theory}.
    Invent. Math. $\mathbf{161}$ (2005), no.3, 567--628.
    \end{thebibliography}
\end{document}